\CompileMatrices\SelectTips{cm}{12}
\theoremstyle{plain}
\newtheorem{Thm}{\sc Theorem}[section]
\newtheorem{Theorem}[Thm]{\sc Theorem}
\newtheorem{Corollary}[Thm]{\sc Corollary}
\newtheorem*{Corollary*}{\sc Corollary}
\newtheorem{Proposition}[Thm]{\sc Proposition}
\newtheorem*{Proposition*}{\sc Proposition}
\newtheorem{Lemma}[Thm]{\sc Lemma}
\theoremstyle{definition}
\newtheorem{Definition}[Thm]{Definition}
\theoremstyle{remark}
\newtheorem{Remark}[Thm]{Remark}
\newtheorem{Example}[Thm]{Example}
\newtheorem*{Example*}{Example}
\newtheorem*{Remark*}{Remark}
\renewcommand{\AA}{{\mathbb A}}
\newcommand{\ZZ}{{\mathbb Z}}
\newcommand{\PP}{{\mathbb P}}
\newcommand{\QQ}{{\mathbb Q}}
\newcommand{\RR}{{\mathbb R}}
\newcommand{\cA}{{\mathcal A}}
\newcommand{\cB}{{\mathcal B}}
\newcommand{\cD}{{\mathcal D}}
\newcommand{\cE}{{\mathcal E}}
\newcommand{\cF}{{\mathcal F}}
\newcommand{\cG}{{\mathcal G}}
\newcommand{\cH}{{\mathcal H}}
\newcommand{\cL}{{\mathcal L}}
\newcommand{\cM}{{\mathcal M}}
\newcommand{\cO}{{\mathcal O}}
\newcommand{\cS}{{\mathcal S}}
\newcommand{\cT}{{\mathcal T}}
\newcommand{\cX}{{\mathcal X}}
\newcommand{\cY}{{\mathcal Y}}
\newcommand{\Pic}{{\mathop{\rm Pic \, }}}
\newcommand{\Hig}{{\mathop{\operatorname{HIG}\, }}}
\newcommand{\Coh}{{\mathop{\operatorname{Coh}\, }}}
\renewcommand{\Ref}{{\mathop{\operatorname{Ref}\, }}}
\newcommand{\Mic}{{\mathop{\operatorname{MIC}\, }}}
\newcommand{\Mod}[1]{\mathop{\operatorname{{#1}-Mod}\, }}
\newcommand{\refMod}[1]{\mathop{\operatorname{{#1}-Mod}^{\mathop{\rm ref}}\, }}
\newcommand{\Hom}{{\mathop{{\rm Hom}}}}
\newcommand{\ch}{{\mathop{\rm ch \, }}}
\newcommand{\cHom}{{\mathop{{\cal H}om}}}
\newcommand{\Gr}{{\mathop{Gr}}}
\newcommand{\id}{\mathop{\rm Id}}
\newcommand{\ad}{{\mathop{\rm ad \, }}}
\newcommand{\can}{{\mathop{\rm can}}}
\newcommand{\End}{{\mathop{{\mathcal E}nd}}}
\newcommand{\rk}{{\mathop{\rm rk \,}}}
\newcommand{\Sym}{{\mathop{{\rm Sym}}}}
\newcommand{\Spec}{{\mathop{{\rm Spec\, }}}}
\newcommand{\refl}{{\mathop{\operatorname{\rm ref} }}}
\newcommand{\sslash}{\mathbin{
		\mathchoice{/\mkern-6mu/}
		{/\mkern-6mu/}
		{/\mkern-5mu/}
		{/\mkern-5mu/}}}
\begin{document}

\markboth {\rm }{}

\title{Bogomolov's inequality and Higgs sheaves on normal varieties in positive characteristic}
\author{Adrian Langer} \date{\today}

\maketitle


{\noindent \sc Address:}\\
Institute of Mathematics, University of Warsaw,
ul.\ Banacha 2, 02-097 Warszawa, Poland\\
e-mail: {\tt alan@mimuw.edu.pl}

\medskip

\begin{abstract}
We prove Bogomolov's inequality on a normal projective  variety in positive characteristic and we use it to show some new restriction theorems and a new boundedness result. Then we redefine Higgs sheaves on normal varieties and we prove restriction theorems and Bogomolov type inequalities for semistable logarithmic Higgs sheaves on some normal varieties in an arbitrary characteristic. 
\end{abstract}

\section*{Introduction}

Let $X$ be a smooth projective variety of dimension $n$ defined over an algebraically closed field $k$  and let $H$ be an ample line bundle on $X$. Let $\cE$ be a coherent torsion free $\cO_X$-module of rank $r$ and let us set  $\Delta (\cE):=2r c_2 (\cE)-(r-1)c_1 (\cE)^2$.

A well known Bogomolov's theorem says that if $k$ has characteristic zero and  $\cE$ is slope $H$-semistable then 
$$\int_X \Delta (\cE) H^{n-2}\ge 0.$$
This theorem was first proven by F. Bogomolov in the surface case. The higher dimensional case  follows from restriction theorems for semistability (e.g., one can use the Mehta--Ramanathan restriction theorem).
An analogue of this theorem for slope $H$-stable Higgs bundles was proven by C. Simpson in \cite{Si} using analytic methods. Simpson's paper contains also  applications of this result to uniformization and the Miyaoka --Yau  inequality in higher dimensions (although only in the non-logarithmic case). 
Later, T. Mochizuki in \cite{Mo} generalized this inequality to the logarithmic case (he also used analytic methods). An algebraic proof of Bogomolov's inequality for Higgs sheaves 
appeared in \cite{La3} and in the logarithmic case in \cite{La4}. These papers contained also generalization of these results to positive characteristic.

More recently, in characteristic zero the above results have been generalized in \cite{GKPT2}
to projective varieties with klt singularities (but not in the logarithmic case). In the  mildly singular logarithmic case one also knows the Miyaoka--Yau inequality (see \cite[Chapter 10]{Ko} and \cite{La0} for the $2$-dimensional case and \cite{GT} for higher dimensions).

\medskip

One of the main motivations behind this paper is generalization of the above results to positive characteristic and strengthening of the results known in the characteristic zero. We also deal with semistability defined by a collection of nef line bundles instead of one ample line bundle. An importance of considering this generalized situation was first recognized by
Y. Miyaoka in \cite{Mi}, who proved Bogomolov's inequality for torsion free sheaves on normal 
varieties smooth in codimension $2$ in case of collections of ample and one nef line bundles.
However, it is not completely clear to the author if the original proof of Mehta--Ramanathan's restriction theorem works so easily for multipolarizations on normal varieties as claimed in \cite[Corollary 3.13]{Mi}. In case of one ample line bundle on a normal projective variety defined over an algebraically closed field of characteristic zero, restriction theorem for semistable sheaves has been proven by H. Flenner in \cite{Fl}. However, it seems that his proof cannot be generalized to multipolarizations. 

In case of smooth projective varieties the corresponding Bogomolov's inequality in any characteristic was proven in \cite{La1} (however, the proof uses a different, new restriction theorem).
Using resolution of singularities one can use this to obtain Mehta--Ramanathan's restriction theorem 
for multipolarizations on normal varieties in characteristic zero. 
In case of Higgs sheaves on smooth projective varieties, restriction theorem and Bogomolov's inequality for multipolarizations has been proven in \cite{La3} and in the logarithmic case in \cite{La4}.

\medskip

Our first main result is a strong restriction theorem for multipolarized normal varieties in positive characteristic, analogous to \cite[Theorem 5.2 and Corollary 5.4]{La1}. One of the problems here is with the definition of Chern classes. Below we use Chern classes of reflexive sheaves defined in \cite{La-Chern} (see Subsection \ref{Subsection:intersection-theory} for a few basic properties). 

Let $X$ be a normal projective variety of dimension $n$ defined over an algebraically closed field $k$ of characteristic $p>0$. Let us fix a collection $(L_1,...,L_{n-1})$ of ample line bundles on $X$ 
(in fact, we usually need weaker assumptions  on this collection).  Let  
us set  $d=L_1^2L_2...L_{n-1}$. Then we have the following 
result (see Subsection \ref{Subsection:beta} for the definition of $\beta_r$).

\begin{Theorem}\label{Main1}
Let  $\cE$ be a coherent reflexive $\cO_X$-module of rank $r\ge 2$. Let  $m$ be an integer such that
$$m>\left\lfloor \frac{r-1}{ r}\int_X\Delta (\cE)L_2\dots L_{n-1} +\frac{1}{dr(r-1)}
+\frac{(r-1)\beta _r}{dr}\right\rfloor $$
and let $H\in |L_1^{\otimes m}|$  be a normal hypersurface.
 If $\cE$ is slope $(L_1,...,L_{n-1})$-stable 	then $\cE|_H$ is  slope $(L_2|_H,\dots ,L_{n-1}|_H)$-stable.
\end{Theorem}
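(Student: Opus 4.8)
The plan is to follow the strategy of the restriction theorems in \cite{La1} and \cite{La3}, adapted to the reflexive/normal setting via the Chern class formalism of \cite{La-Chern}. Suppose for contradiction that $\cE|_H$ is not slope $(L_2|_H,\dots,L_{n-1}|_H)$-stable. First I would set up a uniform choice of hypersurface: since the bound on $m$ is strict, I want to produce a whole family of normal hypersurfaces in $|L_1^{\otimes m}|$ so that I can invoke a Grothendieck-type boundedness argument. The key point is that a destabilizing subsheaf $\cF \subset \cE|_H$ on a fixed $H$ has bounded slope (from below by $(L_2|_H,\dots)$-semistability considerations and from above by the destabilizing hypothesis), so the family of all such pairs $(H, \cF)$ with $H$ normal in $|L_1^{\otimes m}|$ and $\cF$ a maximal destabilizing subsheaf is bounded. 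Restricting to a general point of a component of the parameter space, one obtains a subsheaf $\cF \subset \cE$ (the saturation of the relative version, or a reflexive hull on $X$ away from a small locus), whose restriction to the generic $H$ in the family is the destabilizer.

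Next I would run the standard comparison of discriminants. The destabilizing subsheaf $\cF \subset \cE$ (of some rank $r'$ with $1 \le r' \le r-1$) gives, via the Bogomolov-type / Hodge-index inequality applied on $X$ with respect to the polarization $(L_1,\dots,L_{n-1})$, a lower bound on $\int_X \Delta(\cE) L_2\cdots L_{n-1}$ in terms of the failure of $(L_1,\dots,L_{n-1})$-stability of $\cE$ on $X$. Since $\cE$ is assumed $(L_1,\dots,L_{n-1})$-stable, $\cF$ does not destabilize $\cE$ on $X$, so $\mu_{L_1,\dots,L_{n-1}}(\cF) \le \mu_{L_1,\dots,L_{n-1}}(\cE) - \frac{1}{dr(r-1)}$ roughly (here the $\frac{1}{dr(r-1)}$ term and the $d = L_1^2 L_2 \cdots L_{n-1}$ normalization enter, as denominators of slope differences on $X$ are bounded by $d r(r-1)$). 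On the other hand $\cF|_H$ destabilizes $\cE|_H$, i.e. $\mu_{L_2|_H,\dots}(\cF|_H) \ge \mu_{L_2|_H,\dots}(\cE|_H)$. Writing $\mu_{L_2|_H,\dots}(-) = \frac{1}{m}\mu_{L_1,\dots,L_{n-1}}(-)$ (the restriction to $H \in |L_1^{\otimes m}|$ multiplies the relevant intersection number by $m$), these two inequalities are compatible only if $m$ is bounded above — and the $\beta_r$ term enters precisely to control the discrepancy between the genuine subsheaf $\cF$ on $X$ and its possibly non-saturated, or non-reflexive, behaviour, i.e. the Bogomolov inequality for the non-semistable sheaf $\cE$ must be corrected by $\beta_r$ as in \cite{La1}. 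Combining everything yields $m \le \lfloor \frac{r-1}{r}\int_X \Delta(\cE) L_2\cdots L_{n-1} + \frac{1}{dr(r-1)} + \frac{(r-1)\beta_r}{dr}\rfloor$, contradicting the hypothesis on $m$.

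The main obstacle I expect is the boundedness step together with the need to work with reflexive sheaves on singular $X$: one must ensure that the destabilizing subsheaves $\cF|_H$ for varying normal $H$ really do come from a single bounded family, and that one can pass from a relative destabilizer over the parameter space to an honest coherent subsheaf of $\cE$ on $X$ whose Chern classes (in the sense of \cite{La-Chern}) satisfy the needed inequalities — the subtlety being that restriction of reflexive sheaves to hypersurfaces need not be reflexive, and $H$ itself is only normal (smooth in codimension one), so the intersection theory and the slope formulas have to be justified using the properties listed in Subsection~\ref{Subsection:intersection-theory}. A secondary technical point is verifying that a \emph{general} member of $|L_1^{\otimes m}|$, or at least a member through which the destabilizer propagates, can be taken normal; this should follow from a Bertini-type statement for normal varieties once $m$ is large, which is implicitly guaranteed by the hypothesis that such an $H$ exists. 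Finally, one needs the Hodge index / Bogomolov inequality input on $X$ itself — presumably the first main theorem of the paper (Bogomolov's inequality on normal projective varieties in positive characteristic) — applied to the maximal destabilizing subsheaf and quotient, which is where the term $\int_X \Delta(\cE) L_2\cdots L_{n-1}$ and the constant $\beta_r$ get combined into the stated bound.
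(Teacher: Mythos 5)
Your proposal takes the Mehta--Ramanathan route (propagate a destabilizer of $\cE|_H$ through a bounded family of hypersurfaces to a subsheaf $\cF\subset\cE$ on $X$), and this has two fatal problems here. First, it cannot prove the statement as formulated: the theorem is for an \emph{arbitrary fixed} normal $H\in |L_1^{\otimes m}|$, while any family/genericity argument only controls a general member; moreover the paper explicitly avoids this route because it is unclear that the Mehta--Ramanathan mechanism works for multipolarizations on normal varieties (cf.\ the discussion of \cite[Corollary 3.13]{Mi} in the introduction). Second, your numerical conclusion does not actually follow from your setup: if the destabilizer really extended to $\cF\subset\cE$, then since restriction to $H\in|L_1^{\otimes m}|$ rescales all slopes by the same factor $m$, the inequality $\mu_{L_2|_H,\dots}(\cF|_H)\ge\mu_{L_2|_H,\dots}(\cE|_H)$ would contradict stability of $\cE$ for \emph{every} $m$, so the threshold involving $\int_X\Delta(\cE)L_2\cdots L_{n-1}$, $\tfrac{1}{dr(r-1)}$ and $\beta_r$ could not emerge from that comparison; in Mehta--Ramanathan the role of large $m$ is to force the (non-effective, delicate) extension of the destabilizer, a step you acknowledge as an ``obstacle'' but never carry out, and which would not yield the stated effective bound in any case. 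Also, applying a Bogomolov inequality ``to the maximal destabilizing subsheaf and quotient'' of $\cE$ is vacuous since $\cE$ is stable.

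The paper's proof (Theorem \ref{strong-restriction-stable}) is instead the elementary-modification argument of \cite[Theorem 5.2]{La1}, adapted to reflexive sheaves: for a saturated destabilizing subsheaf $\cS\subset i^*\cE$ with quotient $\cT$, one sets $\cE':=\ker(\cE\to i_*\cT)$, so that $\cE(-H)\subset\cE'\subset\cE$; Lemma \ref{Delta-computation} computes $\int_X\Delta(\cE')L_2\cdots L_{n-1}$ in terms of $\int_X\Delta(\cE)L_2\cdots L_{n-1}$, $\rho(r-\rho)m^2d$ and $c_1(\cT)$; stability of $\cE$ bounds $\mu_{\max}(\cE')-\mu(\cE')$ and $\mu(\cE')-\mu_{\min}(\cE')$ by $\tfrac{r-\rho}{r}md-\tfrac{1}{r(r-1)}$ and $\tfrac{\rho}{r}md-\tfrac{1}{r(r-1)}$ respectively; and then the Bogomolov inequality for arbitrary (possibly unstable) reflexive sheaves, Theorem \ref{Bogomolov-unstable-inequality} (which is where $\beta_r$ enters), applied to $\cE'$ yields, when $\mu(\cS)\ge\mu(i^*\cE)$, exactly the bound $m\le\bigl\lfloor \tfrac{r-1}{r}\int_X\Delta(\cE)L_2\cdots L_{n-1}+\tfrac{1}{dr(r-1)}+\tfrac{(r-1)\beta_r}{dr}\bigr\rfloor$, contradicting the hypothesis. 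This works for the fixed normal $H$, needs no boundedness or genericity, and is where the effective constants actually come from; your proposal is missing this construction entirely.
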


The above theorem implies the following boundedness result. 

\begin{Theorem} \label{Main2}
	Let us fix some positive integer $r$, integer $\ch_1$ and some real
	numbers $\ch _2$ and $\mu _{\max}$. 
	Then the set of reflexive coherent $\cO_X$-modules $\cE$ of rank $r$ with $\int _X \ch_1 (\cE) L_1...L_{n-1}=\ch_1$, $\int _X \ch_2 (\cE) L_1...\widehat{L_i}...L_{n-1} \ge \ch_2$ for $i=1,...,n-1$,  and $\mu _{\max } (\cE)\le \mu _{\max}$ is bounded.
\end{Theorem}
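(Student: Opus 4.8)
The plan is to reduce boundedness to a statement about restrictions to hypersurfaces, where one has control via Theorem \ref{Main1}, and then to run the standard inductive machinery on the dimension $n$, using the Grothendieck-type estimates on the Hilbert polynomial that boundedness of the family of quotients provides.

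\medskip

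\noindent\emph{Step 1: Reduce to bounding $\mu_{\max}$ of restrictions.} Fix $\cE$ in the family. Since $\cE$ is reflexive, it is determined by its restriction to the smooth locus, and a family of such sheaves on $X$ is bounded as soon as the family of restrictions to a fixed ample hypersurface $H\in |L_1^{\otimes m}|$ (for a suitable fixed $m$) is bounded, together with control of $H^0(\cE(k))$ for one large $k$. The key point is therefore to bound $\mu_{\max}(\cE|_H)$ with respect to $(L_2|_H,\dots,L_{n-1}|_H)$ in terms of the fixed data $r$, $\ch_1$, $\ch_2$, $\mu_{\max}(\cE)$, so that one can apply induction on $n$ (the base case $n=1$, or $n=2$, being classical boundedness on curves or surfaces). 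To this end, choose $m$ large enough and uniform over the family: this is possible precisely because $\int_X \Delta(\cE) L_2\dots L_{n-1}$ is bounded \emph{above} — indeed $\ch_2 = \frac{1}{2r}(c_1^2 - r^{-1}\Delta)$-type relations and the hypothesis $\int_X \ch_2(\cE) L_1\dots\widehat{L_i}\dots L_{n-1}\ge \ch_2$ together with the bound on $\int_X \ch_1(\cE) L_1\dots L_{n-1}$ and the Hodge index theorem bound $\int_X \Delta(\cE)L_2\dots L_{n-1}$ from above. So the floor appearing in Theorem \ref{Main1} is bounded by a constant depending only on the fixed data.

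\medskip

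\noindent\emph{Step 2: Bound the maximal destabilizing slope on a general hypersurface.} Pick $H\in |L_1^{\otimes m}|$ general (hence normal, by Bertini applied on the smooth locus plus the fact that $X$ is normal), with $m$ the uniform integer from Step 1. If $\cE$ itself is $(L_1,\dots,L_{n-1})$-stable, Theorem \ref{Main1} gives that $\cE|_H$ is $(L_2|_H,\dots,L_{n-1}|_H)$-stable, so its $\mu_{\max}$ equals its slope, which is $\frac{\ch_1}{r}$ up to the factor $m$ — bounded. In general $\cE$ need not be stable, but one filters $\cE$ by its Harder--Narasimhan filtration with respect to $(L_1,\dots,L_{n-1})$: each graded piece is semistable, has rank $<r$, slope between $\mu_{\min}(\cE)$ and $\mu_{\max}(\cE)\le\mu_{\max}$, and $\sum\Delta$ of the pieces differs from $\Delta(\cE)$ by boundedly-nonnegative correction terms, so each piece again has $\int\Delta\cdot L_2\dots L_{n-1}$ bounded above. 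Applying a semistable version of Theorem \ref{Main1} (or bounding $\mu_{\max}$ of the restriction of a semistable sheaf by $\mu + C$ for an explicit $C$, which is the content of the effective restriction estimates underlying that theorem) to each graded piece, and reassembling, bounds $\mu_{\max}(\cE|_H)$ in terms of the fixed constants only.

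\medskip

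\noindent\emph{Step 3: Induction and conclusion.} By induction on $n$, the bound on the rank, on the relevant $\ch_1,\ch_2$ intersection numbers of $\cE|_H$ (which are computed from those of $\cE$ against $L_1$, hence fixed up to the constant $m$), and on $\mu_{\max}(\cE|_H)$, give that the family $\{\cE|_H\}$ is bounded. Combined with a uniform bound on $h^0(X,\cE(kL_1))$ for a single large $k$ — obtained from $\mu_{\max}(\cE)\le \mu_{\max}$ via the standard $h^0 \le \mathrm{const}\cdot (\mu_{\max}+ k + \mathrm{const})^n$ estimate (Le Potier--Simpson type, valid for reflexive sheaves using the Chern classes of \cite{La-Chern}) — one concludes that $\cE$ lies in a bounded family by the usual argument: a bounded family on $H$ plus bounded $h^0$ on $X$ forces the Hilbert polynomials $P(\cE, kL_1)$ to lie in a finite set, and then Kleiman's criterion (in its reflexive-sheaf form) finishes. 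The main obstacle is Step 1--2: making the choice of $m$ and all the slope estimates genuinely \emph{uniform} over the family, i.e. checking that every quantity entering the bound in Theorem \ref{Main1} — especially $\beta_r$, the Hodge-index bound on $\int\Delta\cdot L_2\dots L_{n-1}$, and the HN-filtration corrections — depends only on $r,\ch_1,\ch_2,\mu_{\max}$ and the fixed polarizations, not on $\cE$.
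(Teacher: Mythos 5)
Your Steps 1--2 are essentially the paper's strategy: bound $\int_X\Delta(\cE)L_2\dots L_{n-1}$ from above using the $\ch_2$ lower bounds, the fixed degree of $c_1$ and the Hodge index theorem, then pass to the reflexive hulls of the Harder--Narasimhan factors and use the quantitative restriction estimates (the paper uses Corollary \ref{Bogomolov-restriction}(2) applied to a hypersurface in $|L_1|$ itself, rather than choosing $m$ large and invoking Theorem \ref{Main1}; both give a uniform bound on $\mu_{\max}(\cE|_H)$, and the paper additionally needs the uniform lower bound $\int\Delta\ge C$ for semistable pieces from Corollary \ref{Bogomolov's-inequality} to control the Jordan--H\"older corrections). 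The genuine gap is in Step 3. You conclude by asserting that boundedness of $\{\cE|_H\}$ plus a single $h^0$ bound forces finiteness of Hilbert polynomials, and then invoke ``Kleiman's criterion in its reflexive-sheaf form'' and a ``Le Potier--Simpson type'' estimate. None of this is available here, and the paper explicitly flags the issue right after the statement: for reflexive sheaves on a normal variety it is not even clear a priori that the set of Hilbert polynomials is finite, because $\ch_2$ of reflexive sheaves is not deformation invariant (Example \ref{singular-quadric}) and there is no Riemann--Roch expressing $\chi(\cE(m))$ in terms of the fixed intersection numbers. So the step ``bounded on $H$ $\Rightarrow$ bounded on $X$'' cannot be outsourced to Kleiman; it is the real content of the proof.

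What the paper does instead (proof of Theorem \ref{boundedness}) is a Castelnuovo--Mumford regularity induction: from boundedness of the restrictions it extracts uniform constants $a,b,c$ controlling $h^i(H,\cE|_H(m))$, uses the exact sequences $0\to\cE(m-1)\to\cE(m)\to\cE(m)|_H\to0$ together with Serre vanishing, the Enriques--Severi--Zariski lemma for reflexive sheaves, and the commutative diagram of multiplication maps to show that $h^1(X,\cE(m))$ strictly decreases to $0$ in a controlled range, yielding a uniform regularity bound and hence boundedness via \cite[Lemma 1.7.6]{HL}. If you want to salvage your outline you must supply this argument (or an equivalent one) in place of the appeal to Kleiman/Le Potier--Simpson. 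Two smaller points to repair as well: the ``very general'' hypersurface for which $\ch_2$ restricts correctly and the HN/JH quotients restrict torsion-freely depends on $\cE$, so you must fix $H$, treat the subfamily of sheaves good for that $H$, and then use boundedness of the linear system to recover the whole family (as the paper does); and the reduction to an uncountable base field is needed before using Theorem \ref{properties-of-ch_2}(3).
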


In the statement above it is not even clear that the number of Hilbert polynomials of sheaves in the considered family is finite. In case $L_1=...=L_{n-1}$ the above theorem follows from \cite[Theorem 4.4]{La1}. If $X$ is smooth then the above theorem follows from \cite[Corollary 5.4]{La1}. But
it is no longer the case if we consider multipolarizations on normal varieties.

\medskip

We also prove an analogue of Theorem \ref{Main1}  for  (semi)stable Higgs sheaves on normal varieties (see Theorem \ref{restriction-for-Higgs} for a more precise version).

\begin{Theorem}	\label{Main5}
	Let  $D\subset X$ be an effective reduced Weil divisor and let $(\cE , \theta )$ be a reflexive logarithmic Higgs sheaf of rank $r\ge 2$ on $(X,D)$. 
Let $m_0$ be a non-negative integer such that $T_X (\log \, D)\otimes L_1^{\otimes m_0}$ is globally generated. Let   $m$ be an integer such that
	$$m>\max \left(\left\lfloor \frac{r-1}{ r}\int_X\Delta (\cE)L_2\dots L_{n-1} +\frac{1}{dr(r-1)}
	+\frac{(r-1)\beta _r}{dr}\right\rfloor , 2(r-1)m_0^2 \right).$$
and let $H\in |L_1^{\otimes m}|$ be a general divisor. If  $(\cE , \theta )$ is slope $(L_1,...,L_{n-1})$-stable then the logarithmic Higgs sheaf $(\cE , \theta )|_H$ on $(H,D\cap H)$ is  slope $(L_2|_H,\dots ,L_{n-1}|_H)$-stable.
\end{Theorem}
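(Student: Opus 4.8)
The plan is to reduce the logarithmic Higgs statement to the non-logarithmic restriction theorem for multipolarized normal varieties, namely Theorem~\ref{Main1}, exactly as one reduces Simpson's Higgs--Bogomolov inequality to the classical one by passing to an auxiliary sheaf. First I would recall the standard trick: a logarithmic Higgs sheaf $(\cE,\theta)$ on $(X,D)$ with $\theta\colon \cE\to \cE\otimes \Omega_X^1(\log D)$ being a reflexive version of a Higgs field gives rise, after restricting to the smooth locus, to a filtered object; but the cleaner device is to produce from $(\cE,\theta)$ a new \emph{ordinary} reflexive sheaf whose slope-stability with respect to the collection $(L_1,\dots,L_{n-1})$ encodes slope-stability of $(\cE,\theta)$. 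Concretely, saturated $\theta$-invariant subsheaves of $\cE$ are exactly the ones controlling stability, and the condition ``$T_X(\log D)\otimes L_1^{\otimes m_0}$ globally generated'' lets one bound, for a general $H\in |L_1^{\otimes m}|$, how the Higgs field interacts with restriction: a destabilizing $\theta|_H$-invariant subsheaf of $\cE|_H$ on $(H,D\cap H)$ must, for $m>2(r-1)m_0^2$, come from a $\theta$-invariant subsheaf of $\cE$ on $(X,D)$ by a lifting/extension argument. This is the role of the second term in the max defining the lower bound on $m$.

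The key steps, in order, would be: (i) restrict attention to the open subset $U\subseteq X$ where $X$ is smooth and $D$ is a simple normal crossing divisor, using that reflexive sheaves and their Chern classes (in the sense of \cite{La-Chern}) are determined on the complement of a codimension-$2$ set, so that slope-(semi)stability and the discriminant $\Delta(\cE)$ are read off from $U$; (ii) apply Theorem~\ref{Main1} to the underlying reflexive $\cO_X$-module $\cE$ with the \emph{same} numerical hypothesis $m>\lfloor \tfrac{r-1}{r}\int_X\Delta(\cE)L_2\cdots L_{n-1}+\tfrac{1}{dr(r-1)}+\tfrac{(r-1)\beta_r}{dr}\rfloor$, so that for a general $H\in|L_1^{\otimes m}|$ the restriction $\cE|_H$ is slope $(L_2|_H,\dots,L_{n-1}|_H)$-stable \emph{as an $\cO_H$-module}; (iii) observe that ordinary stability of $\cE|_H$ trivially implies stability of the logarithmic Higgs sheaf $(\cE,\theta)|_H$, since every $\theta|_H$-invariant subsheaf is in particular a subsheaf. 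Wait---step (iii) shows the theorem is essentially a corollary of Theorem~\ref{Main1} once we know $H$ can be taken both normal (so that ``reflexive on $H$'' makes sense) and general enough that the Higgs structure restricts well; the hypothesis $m>2(r-1)m_0^2$ guarantees, via a Bertini-type argument using global generation of $T_X(\log D)\otimes L_1^{\otimes m_0}$, that a general $H$ meets $D$ transversally in its smooth locus and that $\theta$ restricts to a genuine logarithmic Higgs field $\theta|_H\colon \cE|_H\to \cE|_H\otimes\Omega^1_H(\log(D\cap H))$.

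So the real content reduces to two independent inputs: the numerical condition coming from Theorem~\ref{Main1} (which I would invoke as a black box, noting that $\Delta(\cE)$ and $\beta_r$ are the same objects there), and a Bertini/transversality analysis on the singular pair $(X,D)$ controlled by $m_0$. The main obstacle I anticipate is step (ii)--(iii) interface: Theorem~\ref{Main1} as stated assumes $\cE$ is slope-stable, whereas here we only assume $(\cE,\theta)$ is slope-stable as a Higgs sheaf, which is a weaker hypothesis. To bridge this gap one cannot apply Theorem~\ref{Main1} directly to $\cE$; instead one must run the proof of Theorem~\ref{Main1} (its Bogomolov-inequality/boundedness engine) in the $\theta$-invariant category, i.e.\ replace ``subsheaf'' by ``$\theta$-invariant subsheaf'' throughout, using that the discriminant inequality $\int_X\Delta(\cE)L_2\cdots L_{n-1}\ge 0$ already holds for slope-semistable logarithmic Higgs sheaves on such normal varieties (the Higgs--Bogomolov inequality proved elsewhere in the paper). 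The combinatorics of the bound---the $\lfloor\cdot\rfloor$ term and the appearance of $2(r-1)m_0^2$---then come out exactly as in \cite[Theorem 5.2]{La1} and \cite{La4}, the former handling the restriction of the sheaf and the latter handling the restriction of the Higgs field, and I would present the argument as the parallel transport of those two proofs to the multipolarized normal setting, citing Theorem~\ref{restriction-for-Higgs} for the sharper bookkeeping.
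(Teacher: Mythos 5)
Your overall strategy -- run the restriction argument in the invariant category rather than apply Theorem~\ref{Main1} to the underlying sheaf -- is the right instinct, and you correctly spotted that $\cE$ itself need not be slope stable. But your proposed engine is not available under the hypotheses of Theorem~\ref{Main5}: the Bogomolov inequality for semistable logarithmic Higgs sheaves proved in this paper (Theorem~\ref{log-Bogomolov-with-lifting}, i.e.\ Theorem~\ref{Main3}) requires that $(X,D)$ be almost liftable to $W_2(k)$ with $F$-liftable singularities in codimension $2$ and that $r\le p$, none of which is assumed here. The paper's proof deliberately avoids it: Theorem~\ref{restriction-for-operators} is proved by repeating the proof of Theorem~\ref{strong-restriction-stable} (elementary modification $\cE'=\ker(\cE\to i_*\cT)$, which is $\theta$-invariant when the destabilizer is, plus the \emph{unconditional} inequalities of Theorems~\ref{Bog2} and~\ref{Bogomolov-unstable-inequality} with the $\beta_r$ correction). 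That is exactly why $\beta_r$ appears in the bound for $m$ and why no liftability or rank restriction is needed; in \cite{La3} the $W_2$-lifting hypothesis served only to delete $\beta_r$. As written, your argument either silently imports those extra hypotheses or has no Bogomolov-type input at all.

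The second gap is that you misidentify the role of $m>2(r-1)m_0^2$. For a general $H$ the pair $(H,D\cap H)$ being ``good'' and the Higgs field restricting to a logarithmic Higgs field on $(H,D\cap H)$ are pure Bertini statements, valid for every $m\ge 1$ via the canonical map $T_H(\log D_H)\to (i^*T_X(\log D))^{**}$; no numerical condition is involved there. What the condition $m>2(r-1)m_0^2$ actually buys is the passage from the conclusion of Theorem~\ref{restriction-for-operators} -- no destabilizing subsheaf of $\cE|_H$ invariant under \emph{all} restricted ambient operators $i^*T_X(\log D)$ -- to the conclusion of Theorem~\ref{Main5}, where one must exclude destabilizers invariant only under the smaller algebra $T_H(\log D_H)$. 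Since invariance under $T_H(\log D_H)$ is a weaker condition, there are a priori more destabilizers, and your step (iii) (``operator stability on $H$ trivially gives Higgs stability on $H$'') goes in the wrong direction for what your corrected argument would produce. The missing step, which is the content of the proof of \cite[Theorem 10]{La3} that the paper invokes, is to show that a $T_H(\log D_H)$-invariant destabilizer is automatically invariant under the full $i^*T_X(\log D)$: the obstruction is a map from the destabilizer to the quotient twisted by (roughly) $\cO_H(-H)$, and one kills it by a slope estimate combining the global generation of $T_X(\log D)\otimes L_1^{\otimes m_0}$ (via Lemma~\ref{L-module-ss-bound}) with $m>2(r-1)m_0^2$. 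This step does not appear in your proposal, and citing Theorem~\ref{restriction-for-Higgs} for the ``bookkeeping'' is circular, since that theorem is precisely the statement to be proved.
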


This theorem generalizes \cite[Theorem 10]{La3} that works for smooth varieties liftable to $W_2(k)$. 

\medskip 

Finally, we use the above results to prove the following Bogomolov's inequality for reflexive Higgs sheaves on mildly singular normal varieties. Note that, unlike previously known results for singular varieties in characteristic zero, our theorem holds for log pairs.

\begin{Theorem} \label{Main3} 
	Let $D\subset X$ be an effective reduced Weil divisor such that the pair $(X,D)$ is almost liftable to $W_2(k)$ and it has  $F$-liftable singularities in codimension $2$. Then for any
	slope $(L_1,...,L_{n-1})$-semistable logarithmic reflexive Higgs sheaf $(\cE, \theta)$ of rank
	$r\le p$ we have
	$$\int_X \Delta (\cE) L_2...L_{n-1}\ge 0.$$
\end{Theorem}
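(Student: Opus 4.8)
The plan is to reduce the singular multipolarized statement to the smooth one-polarization case via two successive reductions: first cutting down to a surface by repeated hyperplane sections, and then passing to a resolution of that surface. For the first reduction, apply the restriction theorem for Higgs sheaves on normal varieties, Theorem \ref{Main5} (or its sharper incarnation \ref{restriction-for-Higgs}), a total of $n-2$ times. Choose very general hypersurfaces $H_1\in |L_1^{\otimes m_1}|,\dots,H_{n-2}\in |L_1^{\otimes m_{n-2}}|$ cutting out a normal surface $S=H_1\cap\dots\cap H_{n-2}$, with each $m_i$ large enough to meet the hypotheses of Theorem \ref{Main5} at the relevant stage (so that each successive restriction preserves slope $(L_1,\dots)$-semistability of the restricted Higgs sheaf). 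One should also check that the singularity hypotheses — almost liftability to $W_2(k)$ and $F$-liftable singularities in codimension $2$ — are inherited by very general such sections, using generic smoothness of the strata and Bertini-type arguments; this is where the "in codimension $2$" phrasing is exploited, since on a surface codimension $2$ means isolated points. The upshot: $(\cE,\theta)|_S$ is a slope-semistable logarithmic reflexive Higgs sheaf on the normal surface $(S, D\cap S)$, and by the projection/product formula for Chern classes of reflexive sheaves (Subsection \ref{Subsection:intersection-theory}), $\int_X\Delta(\cE)L_2\cdots L_{n-1}=\int_S\Delta(\cE|_S)$ up to the positive factor $m_1\cdots m_{n-2}$, so it suffices to prove the inequality on $S$.

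For the surface case the strategy is to pull back to a resolution. Let $\pi\colon \tilde S\to S$ be a resolution of singularities such that the total transform of $D\cap S$ together with the exceptional locus is a simple normal crossing divisor $\tilde D$; here the $F$-liftability of the surface singularities, combined with almost liftability to $W_2(k)$, should guarantee that $(\tilde S,\tilde D)$ is itself liftable to $W_2(k)$, which is exactly the hypothesis needed to invoke the known algebraic Bogomolov inequality for logarithmic Higgs sheaves on smooth surfaces liftable to $W_2(k)$ (as in \cite{La4}, the logarithmic refinement of \cite{La3}). Form the reflexive pullback $\tilde{\cE}=(\pi^*\cE)^{\vee\vee}$ with the induced logarithmic Higgs field $\tilde\theta$ (well-defined since $T_{\tilde S}(\log\tilde D)\to \pi^*T_S(\log D\cap S)$ away from a codimension-$2$, hence finite, set). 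One must verify that $(\tilde{\cE},\tilde\theta)$ is slope-semistable with respect to a suitable polarization on $\tilde S$: the clean way is to use a polarization of the form $\pi^*(L_1|_S)-\epsilon E$ for small rational $\epsilon>0$ and exceptional $E$, or simply to argue that any destabilizing Higgs subsheaf would push forward to one on $S$ since $\pi_*\cO_{\tilde S}=\cO_S$ and $\pi$ contracts only curves of $\pi^*L_1$-degree zero.

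The final step compares the discriminants. By the definition of Chern classes of reflexive sheaves and the behaviour of $c_1,c_2$ under reflexive pullback along a resolution, one gets $\int_{\tilde S}\Delta(\tilde{\cE})=\int_S\Delta(\cE|_S) + (\text{correction supported on the exceptional locus})$, and the point is that this correction has the right sign — it is non-positive, of the shape $-\sum (\text{something})\cdot(\text{negative-definite exceptional intersection form})$ — so that $\int_S\Delta(\cE|_S)\ge \int_{\tilde S}\Delta(\tilde{\cE})\ge 0$, the last inequality being the smooth liftable Bogomolov inequality for $(\tilde{\cE},\tilde\theta)$, valid in rank $r\le p$. I expect the main obstacle to be precisely this last comparison together with the verification that the resolution of an $F$-liftable, $W_2(k)$-almost-liftable surface singularity is genuinely $W_2(k)$-liftable as a log pair: one needs the local $W_2$-liftings of the singularities to be compatible with a global lifting, which is what "almost liftable" and "$F$-liftable singularities" are designed to provide, but assembling the local data into the global statement — and controlling the exceptional-divisor contribution to $\Delta$ in the logarithmic setting, where parabolic/log Chern classes enter — is the delicate part. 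A secondary subtlety is ensuring that all the numerical thresholds $m_i$ in the iterated restriction can be chosen simultaneously, which is routine since each depends only on $\Delta$ of the sheaf restricted so far and those discriminants are controlled inductively.
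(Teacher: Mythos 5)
Your route (cut down to a surface, resolve, and quote the smooth logarithmic Bogomolov inequality of \cite{La3}, \cite{La4}) is genuinely different from the paper's, which never passes to a resolution: the paper runs the canonical Higgs--de Rham sequence directly on $X$, using the reflexivized inverse Cartier transform (defined by pushing forward from a big log smooth liftable open subset), the uniform bounds of Lemma \ref{L-module-ss-bound} and Corollary \ref{Bogomolov's-inequality}, and the identity $\int_X\ch_2(C^{[-1]}\cE)L_1\dots L_{n-2}=p^2\int_X\ch_2(\cE)L_1\dots L_{n-2}$ of Corollary \ref{Chern-classes-Cartier-transform} (this is precisely where the hypothesis of $F$-liftable singularities in codimension $2$ is used), and then concludes by dividing by $p^{2m}$. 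Your proposal, however, has genuine gaps in the surface step, which is exactly the step the paper's machinery is designed to avoid. First, the claim that the log resolution $(\tilde S,\tilde D)$ of the cut surface is liftable to $W_2(k)$ is unsupported: $F$-liftable singularities need not be rational or klt (the paper recalls that the cone over an ordinary elliptic curve is $F$-liftable), and there is no mechanism in the hypotheses that turns an almost $W_2$-lifting of $S$ plus local $F$-liftings into a $W_2$-lifting of a resolution together with its exceptional divisors; without this you cannot invoke the smooth liftable case. Second, the induced Higgs field $\tilde\theta$ on $(\pi^*\cE)^{**}$ is not ``well-defined away from a codimension-$2$ set'': the locus where $\pi$ is an isomorphism is the complement of the exceptional divisor, which has codimension $1$ in $\tilde S$, so you cannot define $\tilde\theta$ there and extend by reflexivity; what you would need is an extension map $\pi^*\Omega_S^{[1]}(\log D_S)\to\Omega_{\tilde S}(\log\tilde D)$, i.e.\ a positive-characteristic analogue of Kebekus's pullback for reflexive differentials, which the paper explicitly avoids and which is unknown (and not expected) for these singularities. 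Third, the asserted sign of the exceptional correction, $\int_S\Delta(\cE|_S)\ge\int_{\tilde S}\Delta(\tilde\cE)$, is not justified: the local terms at the (possibly non-rational) singular points have no definite sign in general, and the paper's Chern classes on normal surfaces are defined intrinsically via Frobenius limits precisely because no such resolution comparison is available (in characteristic zero the control comes from the finite cover of Lemma \ref{surface-cover}, which has no analogue here; cf.\ Example \ref{singular-quadric} for how delicately these invariants behave).

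There is also a smaller, fixable slip in the reduction to a surface: cutting with divisors in $|L_1^{\otimes m_i}|$ computes multiples of $\int_X\Delta(\cE)L_1L_3\dots L_{n-1}$ and the like, not $\int_X\Delta(\cE)L_2\dots L_{n-1}$; to reach the stated inequality you must cut successively with members of $|L_2^{\otimes m_1}|,\dots,|L_{n-1}^{\otimes m_{n-2}}|$, using the symmetry of slope semistability in the collection and Theorem \ref{restriction-for-Higgs} with the roles of the $L_i$ permuted (and, for the semistable case, the torsion-freeness condition on restricted Jordan--H\"older quotients). Even granting that, the three issues above remain; in particular the second and third would require new theorems that the paper does not prove and that are the very obstacles its reflexive-sheaf, big-open-subset approach is built to circumvent.
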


For the meaning of almost liftable log pair and  $F$-liftable singularities  we refer the reader to Definitions \ref{definition-F-liftable} and \ref{definition-F-liftable-2}. If $X$ is liftable to $W_2(k)$ then it is almost liftable to $W_2(k)$ and almost all reductions of varieties from characteristic zero satisfy this condition. 
To understand the second notion we note that a reduction of  quotient surface singularity is $F$-liftable in large characteristics (see Subsection \ref{F-liftability}). In fact, for a dense set of primes, reductions of surfaces with log canonical singularities have $F$-liftable singularities (we do not prove this non-trivial fact as we will not need it in the following).

\medskip

Now let $X$ be a normal projective variety of dimension $n$ defined over an algebraically closed field $k$ of characteristic $0$. Assume that $X$ has at most quotient singularities in codimension $2$.
Let  us fix a collection $(L_1,...,L_{n-1})$ of ample line bundles on $X$ and  set  $d=L_1^2L_2...L_{n-1}$.   Then Theorem \ref{Main1} implies the following restriction theorem:

\begin{Theorem}\label{Main7}
	Let  $\cE$ be a coherent reflexive $\cO_X$-module of rank $r\ge 2$. Let  $m$ be an integer such that
	$$m>\left\lfloor \frac{r-1}{ r}\int_X\Delta (\cE)L_2\dots L_{n-1} +\frac{1}{dr(r-1)}
	\right\rfloor $$
	and let $H\in |L_1^{\otimes m}|$  be a normal hypersurface.
	If $\cE$ is slope $(L_1,...,L_{n-1})$-stable 	then $\cE|_H$ is  slope $(L_2|_H,\dots ,L_{n-1}|_H)$-stable.
\end{Theorem}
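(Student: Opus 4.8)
The plan is to deduce Theorem~\ref{Main7} from Theorem~\ref{Main1} by reduction modulo~$p$. First I would choose a finitely generated $\ZZ$-subalgebra $R\subset k$, which we may take to be a domain, over which the projective variety $X$, the ample line bundles $L_1,\dots,L_{n-1}$, the reflexive sheaf $\cE$ and the normal hypersurface $H\in|L_1^{\otimes m}|$ are all defined; this produces a projective flat morphism $\cX\to\Spec R$ with relative ample line bundles $\mathcal L_i$, a coherent sheaf $\mathcal E$ on $\cX$ flat over $R$, and a relative divisor $\mathcal H\in|\mathcal L_1^{\otimes m}|$ whose base change to $k$ along $\kappa(\eta)=\mathrm{Frac}(R)$ recovers the original data. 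The generic fibre $\cE_\eta$ is reflexive and geometrically slope $(\mathcal L_{1,\eta},\dots,\mathcal L_{n-1,\eta})$-stable over $\kappa(\eta)$ (geometric stability follows from the stability of $\cE$ over $k$ by descent), and $X_\eta$, $H_\eta$ are normal. Using the openness in flat families of normality, reflexivity and of (geometric) slope stability for the multipolarization, after shrinking $\Spec R$ I may assume that for every closed point $s\in\Spec R$ the fibre $X_s$ is a normal projective variety, $\mathcal E_s$ is a reflexive $\cO_{X_s}$-module of rank $r$ which is slope $(\mathcal L_{1,s},\dots,\mathcal L_{n-1,s})$-stable, and $H_s\in|\mathcal L_{1,s}^{\otimes m}|$ is a normal hypersurface (note that $\Spec R$ still dominates all but finitely many primes of $\ZZ$).

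Next I would verify the numerical hypothesis of Theorem~\ref{Main1} for these reductions once the residue characteristic is large. Intersection numbers of line bundles are constant in flat families, so $d_s:=\mathcal L_{1,s}^2\mathcal L_{2,s}\cdots\mathcal L_{n-1,s}=d$; and by the compatibility of the reflexive Chern classes of \cite{La-Chern} with reduction modulo~$p$ — this is where the standing hypothesis that $X$ has at most quotient singularities in codimension~$2$ is used (it also ensures that the $X_s$ have quotient singularities in codimension~$2$ for $p\gg0$) — one gets $\int_{X_s}\Delta(\mathcal E_s)\mathcal L_{2,s}\cdots\mathcal L_{n-1,s}=\int_X\Delta(\cE)L_2\cdots L_{n-1}$. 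Write $a:=\frac{r-1}{r}\int_X\Delta(\cE)L_2\cdots L_{n-1}+\frac{1}{dr(r-1)}$; by hypothesis the integer $m$ satisfies $m\ge\lfloor a\rfloor+1>a$. Since $\beta_r=\beta_r(p)\to0$ as $p\to\infty$ (see Subsection~\ref{Subsection:beta}), for all sufficiently large $p$ we have $\frac{(r-1)\beta_r}{dr}<\lfloor a\rfloor+1-a$, hence $m>\big\lfloor a+\frac{(r-1)\beta_r}{dr}\big\rfloor$, and also $r\le p$. Therefore, for all but finitely many primes there is a closed point $s\in\Spec R$ of that residue characteristic for which $(X_s,\mathcal E_s,H_s)$ satisfies \emph{all} the hypotheses of Theorem~\ref{Main1}.

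Applying Theorem~\ref{Main1} over the algebraically closed field $\overline{\kappa(s)}$ for such an $s$ shows that $\mathcal E_s|_{H_s}$ is slope $(\mathcal L_{2,s}|_{H_s},\dots,\mathcal L_{n-1,s}|_{H_s})$-stable. Consequently, the locus of points of $\Spec R$ over which the restriction of the flat family $\mathcal E|_{\mathcal H}$ along $\mathcal H\to\Spec R$ is geometrically slope stable is a \emph{nonempty} open subset; since $\Spec R$ is irreducible, it contains the generic point, so $\cE_\eta|_{\mathcal H_\eta}$ is geometrically slope stable over $\kappa(\eta)$. Base changing along $\kappa(\eta)\hookrightarrow k$ we conclude that $\cE|_H$ is slope $(L_2|_H,\dots,L_{n-1}|_H)$-stable, which is the assertion of Theorem~\ref{Main7}.

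The step I expect to be the real obstacle is the one used in the second paragraph: proving that the reflexive-sheaf invariant $\int_X\Delta(\cE)L_2\cdots L_{n-1}$ is genuinely preserved under reduction modulo~$p$. This concerns the behaviour of the Chern class construction of \cite{La-Chern} in a family over the mixed-characteristic base $\Spec R$, and it is precisely here that the quotient-singularities-in-codimension-$2$ hypothesis does its work. The remaining ingredients — openness of (geometric) slope stability for multipolarizations, and the asymptotics $\beta_r(p)\to0$ — are comparatively routine, the latter being immediate from the definition of $\beta_r$.
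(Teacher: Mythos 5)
Your overall strategy---spread the data out over a finitely generated $\ZZ$-algebra, use openness of geometric slope stability and of normality/reflexivity in the flat family, observe that $\beta_r(p)\to 0$ as $p\to\infty$, and apply Theorem \ref{Main1} to a closed fibre of large residue characteristic---is exactly the route the paper takes (Theorem \ref{Mehta-Ramanathan} is proved by ``the same argument'' as Theorem \ref{restriction-for-Higgs-char-0}); your floor-function bookkeeping and the constancy of $d$ are also fine, and the condition $r\le p$ you impose is not needed for Theorem \ref{Main1} but is harmless. However, the step you defer is a genuine gap, not a citable fact: there is no general ``compatibility of the Chern classes of \cite{La-Chern} with reduction modulo $p$''. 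Chern numbers of reflexive sheaves on normal surfaces are \emph{not} deformation invariant (the paper's Example \ref{singular-quadric} exhibits a jump in a flat family), so the equality $\int_{\cX_{\bar s}}\Delta(\tilde\cE_{\bar s})\cL_{2,\bar s}\cdots\cL_{n-1,\bar s}=\int_X\Delta(\cE)L_2\cdots L_{n-1}$ for fibres over a dense open subset of the base is precisely the nontrivial content of the proof and must be earned from the quotient-singularity hypothesis by an actual argument, not asserted.

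The paper closes exactly this gap as follows. Using Theorems \ref{main1} and \ref{properties-of-ch_2} (together with \cite[Theorem 5.8]{La-Chern}) one cuts by general divisors to reduce the comparison of $\int\Delta$ to the surface case. There Lemma \ref{surface-cover}---this is where quotient singularities in codimension $2$ enter---produces a finite cover $\pi\colon Y\to X$ with $\pi^{[*]}\cE$ locally free and $\int_X\Delta(\cE)=\frac{1}{\deg\pi}\int_Y\Delta(\pi^{[*]}\cE)$. One spreads out $\pi$ to $\tilde\pi\colon\cY\to\cX$ over the base $S$; since $\tilde\pi^{[*]}\tilde\cE$ is reflexive, it is locally free outside a closed subset of codimension $\ge 2$ that misses the generic fibre, hence it is locally free on the fibres over a nonempty open $S'\subset S$, and for $s\in S'$ the canonical map $\tilde\pi_s^{[*]}(i_s^*\tilde\cE)\to j_s^*(\tilde\pi^{[*]}\tilde\cE)$ is an isomorphism (it is one over a big open subset of the normal fibre $\cY_s$ and both sides are reflexive). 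Since Chern numbers of the locally free sheaf $\tilde\pi^{[*]}\tilde\cE$ \emph{are} constant in the flat family, applying the degree formula of Lemma \ref{surface-cover} again on the fibres yields $\int_{\cX_s}\Delta(\tilde\cE_s)=\int_X\Delta(\cE)$. With this argument (or an equivalent one) inserted, your proof is complete and coincides with the paper's.
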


Now let  us also fix an effective reduced Weil divisor $D\subset X$ such that the pair $(X,D)$ is log canonical in codimension $2$. Theorem \ref{Main5}  implies the following result:

\begin{Theorem}	\label{Main6}
Let  $(\cE , \theta )$ be a reflexive logarithmic Higgs sheaf of rank $r\ge 2$ on $(X,D)$.
Let $m_0$ be a non-negative integer such that $T_X (\log \, D)\otimes L_1^{\otimes m_0}$ is globally generated. Let   $m$ be an integer such that
$$m>\max \left(\left\lfloor \frac{r-1}{ r}\int_X\Delta (\cE)L_2\dots L_{n-1} +\frac{1}{dr(r-1)}
\right\rfloor , 2(r-1)m_0^2 \right).$$
and let $H\in |L_1^{\otimes m}|$ be a general divisor. If  $(\cE , \theta )$ is slope $(L_1,...,L_{n-1})$-stable then the logarithmic Higgs sheaf $(\cE , \theta )|_H$ on $(H,D\cap H)$ is  slope $(L_2|_H,\dots ,L_{n-1}|_H)$-stable.
\end{Theorem}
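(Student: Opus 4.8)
The plan is to deduce this statement from the positive characteristic restriction theorem, Theorem~\ref{Main5}, by reduction modulo $p$. First I would spread out the data: choose a finitely generated $\ZZ$-subalgebra $R\subset k$ over which $X$, $D$, the line bundles $L_1,\dots,L_{n-1}$, the sheaf $\cE$ and the Higgs field $\theta$ are all defined, and, after shrinking $\Spec R$, obtain a projective flat morphism $\cX\to\Spec R$ with normal geometrically integral fibres, a flat family $\cD\subset\cX$ of effective reduced Weil divisors, relatively ample line bundles $\cL_i$, a coherent $\cO_\cX$-module $\widetilde\cE$ with reflexive fibres, and an integrable relative logarithmic Higgs field $\widetilde\theta$, all restricting to the original data over $\Spec k\to\Spec R$. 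Shrinking $\Spec R$ further, I may assume that for every closed point $s\in\Spec R$ the reflexive sheaves $T_{X_s}(\log D_s)$ and $\Omega_{X_s}(\log D_s)$ are the specializations of $T_X(\log D)$ and $\Omega_X(\log D)$, that $T_{X_s}(\log D_s)\otimes\cL_{1,s}^{\otimes m_0}$ is globally generated, and — using boundedness of the family of $\widetilde\theta$-invariant saturated subsheaves of bounded slope (in the spirit of Theorem~\ref{Main2}) — that $(\cE_s,\theta_s)$ is slope $(L_{1,s},\dots,L_{n-1,s})$-stable for every closed point $s$.

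Next I would control the numerical bound. Intersection numbers of the $\cL_i$ are locally constant, so $L_{1,s}^2L_{2,s}\cdots L_{n-1,s}=d$, and — using that $X$ has quotient singularities in codimension $2$ and $(X,D)$ is log canonical in codimension $2$, which is exactly what makes the Chern class construction of \cite{La-Chern} compatible with specialization for $p\gg 0$ (cf. Subsection~\ref{Subsection:intersection-theory}) — I may assume $\int_{X_s}\Delta(\cE_s)L_{2,s}\cdots L_{n-1,s}=\int_X\Delta(\cE)L_2\cdots L_{n-1}=:A_0$. Put $A=\tfrac{r-1}{r}A_0+\tfrac1{dr(r-1)}$, so the given integer $m$ satisfies $m>\max(\lfloor A\rfloor,2(r-1)m_0^2)$, in particular $m>A$. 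Since the quantity $\beta_r$ attached to $X_s$ tends to $0$ as $p=\operatorname{char}k(s)\to\infty$ (Subsection~\ref{Subsection:beta}), for all closed points $s$ of sufficiently large residue characteristic one has $0\le\tfrac{(r-1)\beta_r}{dr}<m-A$, hence
$$m>\Big\lfloor A+\tfrac{(r-1)\beta_r}{dr}\Big\rfloor ,\qquad m>2(r-1)m_0^2 .$$
Thus for such $s$ all hypotheses of Theorem~\ref{Main5} hold for $(X_s,D_s)$, $(\cE_s,\theta_s)$ and $m$, and Theorem~\ref{Main5} yields that for $H_s$ a general member of $|\cL_{1,s}^{\otimes m}|$ the logarithmic Higgs sheaf $(\cE_s,\theta_s)|_{H_s}$ on $(H_s,D_s\cap H_s)$ is slope $(L_{2,s}|_{H_s},\dots,L_{n-1,s}|_{H_s})$-stable.

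Finally I would descend to characteristic zero by a standard spreading-out argument. Let $\PP=\PP(H^0(X,L_1^{\otimes m})^{\vee})$, let $\cH\subset\cX\times_R\PP_R$ be the universal divisor in $|\cL_1^{\otimes m}|$, and restrict $\mathrm{pr}_\cX^*\widetilde\cE$ to $\cH$; over a nonempty open $\cV_0\subset\PP_R$ this restriction is flat with reflexive (or at least torsion free) fibres and carries an induced relative logarithmic Higgs field, and by openness of stability in families — valid in arbitrary characteristic — the locus $\cV\subset\cV_0$ where the fibrewise restriction is slope-stable on the corresponding pair is open. The previous paragraph shows $\cV_s\neq\emptyset$ for all closed $s$ of large residue characteristic, a set dense in $\Spec R$, so the image of $\cV$ dominates $\Spec R$; hence $\cV\times_R\operatorname{Frac}(R)\neq\emptyset$ and therefore $\cV\times_R k\neq\emptyset$. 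As $\cV\times_R k$ is a nonempty open subset of the dense open $\cV_0\times_R k\subset\PP_k=|L_1^{\otimes m}|$, a general $H\in|L_1^{\otimes m}|$ has the required property, proving the theorem.

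The step I expect to be the main obstacle is the compatibility of the reflexive Chern class theory of \cite{La-Chern} with reduction modulo $p$, i.e.\ the equality $\int\Delta(\cE_s)L_{2,s}\cdots L_{n-1,s}=\int\Delta(\cE)L_2\cdots L_{n-1}$ for $p\gg0$, together with the fact that the log tangent sheaf, reflexivity of $\cE$, and slope stability of $(\cE,\theta)$ all spread out correctly; it is here that the codimension-$2$ mildness of the singularities of $X$ and of the pair $(X,D)$ enters, since $\Delta(\cE)L_2\cdots L_{n-1}$ and $T_X(\log D)$ only see the codimension-$2$ local structure. The remaining ingredients — boundedness of destabilizing subsheaves, openness of stability in families, and the vanishing $\beta_r\to0$ — are available from the earlier parts of the paper, and the floor function causes no difficulty because $m$ is an integer strictly exceeding $A$ while the correction term $\tfrac{(r-1)\beta_r}{dr}$ tends to $0$.
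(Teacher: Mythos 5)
Your overall strategy --- spread out over a finitely generated $\ZZ$-algebra, pass to closed fibres of large residue characteristic so that $\beta_r\to 0$ kills the extra term in the bound, apply the positive characteristic restriction theorem (Theorem \ref{restriction-for-Higgs}), and descend via openness of slope stability in flat families --- is exactly the route the paper takes in Theorem \ref{restriction-for-Higgs-char-0}. The only genuinely different (and harmless) choice is that you work with the universal divisor over $\PP(H^0(X,L_1^{\otimes m})^{\vee})_R$, which yields the statement for a general $H$, whereas the paper fixes one divisor $H$ good for $(X,D)$, spreads out that divisor, and thereby proves the stronger statement for every such $H$.

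There is, however, one genuine gap, located precisely at the step you flag: the equality $\int_{X_s}\Delta(\cE_s)L_{2,s}\cdots L_{n-1,s}=\int_X\Delta(\cE)L_2\cdots L_{n-1}$ for $p\gg 0$ is not something you may ``assume after shrinking $\Spec R$''; it is the heart of the proof and genuinely requires an argument, since Example \ref{singular-quadric} shows that $\int\ch_2$ of reflexive sheaves on normal surfaces is \emph{not} constant in flat families. The paper's argument runs as follows: reduce to the surface case by restricting to very general complete intersection surfaces (Theorems \ref{main1}, \ref{properties-of-ch_2} and \cite[Theorem 5.8]{La-Chern}); then, using that $X$ has quotient singularities in codimension $2$, invoke Lemma \ref{surface-cover} to produce a finite cover $\pi\colon Y\to X$ with $\pi^{[*]}\cE$ locally free; spread out $\pi$ to $\tilde\pi\colon \cY\to\cX$ and check that for $s$ in a nonempty open subset of $\Spec R$ the comparison map $\tilde\pi_s^{[*]}(\tilde\cE_s)\to (\tilde\pi^{[*]}\tilde\cE)_s$ is an isomorphism onto a locally free sheaf (both sides are reflexive and the map is an isomorphism over a big open subset of $\cY_s$). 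One then gets $\int_{X_s}\Delta(\cE_s)=\frac{1}{\deg\pi}\int_{\cY_s}\Delta\bigl((\tilde\pi^{[*]}\tilde\cE)_s\bigr)$, which is constant because Chern numbers of locally free sheaves are deformation invariant. Without this flattening-cover argument (or some substitute), the numerical hypothesis of Theorem \ref{restriction-for-Higgs} on the special fibre is simply not verified and the reduction modulo $p$ does not go through. The remaining ingredients of your proposal --- openness of stability, $\beta_r\to 0$, and the passage from a dense set of special fibres to the generic fibre --- are sound and agree with the paper.
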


Theorem \ref{Main6} generalizes \cite[Theorem 5.22]{GKPT2} and \cite[Theorem 6.1]{GKPT1}, which are non-effective. In fact, we prove a stronger version of Theorem \ref{Main6} (see Theorem \ref{restriction-for-Higgs-char-0}) that works for all normal divisors $H$ for which  restriction of $(\cE , \theta )$ to $H$ gives a  logarithmic Higgs sheaf  on $(H,D\cap H)$.

Similarly, Theorem  \ref{Main3} can be used to prove the following inequality generalizing \cite[Theorem 6.1]{GKPT2}.

\begin{Theorem} \label{Main4} 
For any slope $(L_1,...,L_{n-1})$-semistable logarithmic reflexive Higgs sheaf $(\cE, \theta)$ we have
	$$\int_X \Delta (\cE) L_2...L_{n-1}\ge 0.$$
\end{Theorem}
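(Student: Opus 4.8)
The plan is to reduce Theorem \ref{Main4} to Theorem \ref{Main3} by a standard spreading-out and reduction mod $p$ argument, exploiting that quotient singularities in codimension $2$ and log canonicity in codimension $2$ become $F$-liftable singularities and almost liftability to $W_2(k)$ for a dense set of primes. First I would choose a finitely generated $\ZZ$-subalgebra $R\subset k$ over which everything in sight is defined: the normal variety $X$, the ample line bundles $L_1,\dots,L_{n-1}$, the reduced Weil divisor $D$, and the reflexive Higgs sheaf $(\cE,\theta)$, together with its slope $(L_1,\dots,L_{n-1})$-semistability (semistability is a closed condition and is preserved after possibly shrinking $\Spec R$). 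Spreading out, I may also assume the relative dimension, normality of the fibers, reflexivity of the fibers of $\cE$, and — crucially — that the codimension-$2$ behaviour is controlled on fibers: since $X$ has quotient singularities in codimension $2$ and $(X,D)$ is log canonical in codimension $2$, after shrinking $\Spec R$ the same holds fiberwise. The intersection numbers $\int_X \Delta(\cE) L_2\cdots L_{n-1}$ are locally constant on $\Spec R$ because the Chern classes of \cite{La-Chern} are compatible with base change on a dense open of the base.

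The key point is then to invoke that, for a dense set of closed points $s\in \Spec R$ with residue characteristic $p=p(s)$ large, the fiber $(X_s,D_s)$ is almost liftable to $W_2(k(s))$ and has $F$-liftable singularities in codimension $2$. For almost liftability this is exactly the assertion quoted in the excerpt ("almost all reductions of varieties from characteristic zero satisfy this condition") applied to a projective $X$; for $F$-liftability of the codimension-$2$ singularities one uses that a reduction of a quotient singularity is $F$-liftable in large characteristic (Subsection \ref{F-liftability}), and more generally that for a dense set of primes reductions of log canonical surface singularities are $F$-liftable. Since these conditions are checked in codimension $2$, one restricts to the generic points of codimension-$2$ subvarieties, where $X_s$ has (reductions of) quotient singularities and $(X_s,D_s)$ has (reductions of) log canonical singularities, and applies the cited $F$-liftability statements there. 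One also needs $r\le p$, which holds for all but finitely many $s$ since $r$ is fixed.

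Having arranged this, for such $s$ the fiber $(\cE_s,\theta_s)$ is a slope $(L_{1,s},\dots,L_{n-1,s})$-semistable logarithmic reflexive Higgs sheaf of rank $r\le p$ on $(X_s,D_s)$, a pair almost liftable to $W_2(k(s))$ with $F$-liftable singularities in codimension $2$. Theorem \ref{Main3} then gives
\[
\int_{X_s} \Delta(\cE_s) L_{2,s}\cdots L_{n-1,s}\ge 0.
\]
By the base-change invariance of these intersection numbers this is the same rational number as $\int_X \Delta(\cE) L_2\cdots L_{n-1}$, so the latter is $\ge 0$, proving Theorem \ref{Main4}.

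The main obstacle, and the step requiring the most care, is the spreading-out of the \emph{reflexive} structures and of the Chern classes of \cite{La-Chern}: one must ensure that the fibers $\cE_s$ remain reflexive (not merely torsion free), that taking double duals commutes with the base change $R\to k(s)$ for a dense set of $s$, and that the Chern class numbers $\int \Delta(\cE_s)L_{2,s}\cdots L_{n-1,s}$ genuinely equal $\int \Delta(\cE)L_2\cdots L_{n-1}$ rather than merely agreeing up to error terms — this uses the flatness and base-change compatibility properties of the intersection theory recorded in Subsection \ref{Subsection:intersection-theory}. A secondary, more bookkeeping-type subtlety is that semistability of $(\cE,\theta)$ in characteristic $0$ should specialize to semistability of $(\cE_s,\theta_s)$ for a dense set of $s$; this is standard via openness of (semi)stability in families once one knows the relevant moduli-theoretic boundedness, and for a \emph{dense} set of primes it suffices to use that a destabilizing Higgs subsheaf, if it existed for all large $s$, would spread out to one in characteristic $0$.
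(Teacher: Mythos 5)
Your overall strategy (spread out and apply Theorem \ref{Main3} fiberwise) is a reasonable-looking shortcut, but as written it has two genuine gaps, and the paper deliberately takes a different route to avoid exactly these points. The first and most serious gap is your assertion that $\int_X\Delta(\cE)L_2\cdots L_{n-1}$ agrees with $\int_{X_s}\Delta(\cE_s)L_{2,s}\cdots L_{n-1,s}$ for a dense set of closed points "because the Chern classes of \cite{La-Chern} are compatible with base change on a dense open of the base." No such compatibility is available for reflexive, non-locally-free sheaves under specialization from characteristic $0$ to characteristic $p$: Theorem \ref{properties-of-ch_2}(2) only concerns extensions of algebraically closed fields in a fixed positive characteristic, and Example \ref{singular-quadric} shows that Chern numbers of reflexive sheaves on normal surfaces genuinely jump in flat families. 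This is precisely why the paper does not argue fiberwise in dimension $n$ directly: the equality of Chern numbers between $\cE$ and its reductions is established inside the proof of Theorem \ref{restriction-for-Higgs-char-0} by first cutting to surfaces and then using the quotient-singularities hypothesis via Lemma \ref{surface-cover} to produce a finite cover on which the reflexive pullback becomes locally free, spreading out that cover, and comparing Chern numbers through it. Without reproducing that argument (or an equivalent one) your specialization step is a missing proof, not bookkeeping. Relatedly, the paper's actual proof of Theorem \ref{Main4} first uses the characteristic-zero restriction theorem (Theorem \ref{restriction-for-Higgs-char-0}, itself proved by the spreading-out just described) to reduce to the surface case in characteristic zero, and only then reduces modulo $p$ and applies Theorem \ref{log-Bogomolov-with-lifting}; the nef-polarization case is handled afterwards by perturbing with an ample class.

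The second gap concerns $F$-liftability of the reduced pair in codimension $2$. You justify it either by the quotient-surface-singularity statement, which concerns $X$ alone and produces no $F$-lifting compatible with $\tilde D$, or by "for a dense set of primes, reductions of log canonical surface singularities are $F$-liftable," which is exactly the non-trivial fact the paper states it will \emph{not} prove and does not use. What is actually available, and what the paper uses, is that under the standing hypotheses (quotient singularities of $X$ in codimension $2$ together with log canonicity of $(X,D)$ in codimension $2$) the pair is, in codimension $2$, a finite quotient of a log smooth pair, and reductions of such quotients are $F$-liftable as pairs in large characteristic (\cite[Lemma 4.21]{Zd}). So your plan can likely be repaired by substituting this statement and by importing the Chern-number comparison via Lemma \ref{surface-cover}, but as written both key steps rest on facts that are either unavailable or explicitly left unproved in the paper.
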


Note that Bogomolov's inequality for logarithmic Higgs sheaves has not been known so far even 
on klt pairs. As in \cite{Si} and \cite{GKPT2} the above theorem implies Miyaoka--Yau inequalities for singular log pairs. Here we show only some simple applications of Theorem \ref{Main3} to general Miyaoka--Yau inequalities in positive characteristic (see Section \ref{Section:Miyaoka-Yau}), leaving statement of general results in characteristic zero to the interested reader. Let us remark that unlike previous works on Chern number inequalities in higher dimensions (e.g., \cite{GKPT2} and \cite{GT})
our method should work in much more general situations in characteristic zero, the only obstacle being  unknown behaviour of Chern classes of reflexive sheaves under tensor operations on normal surfaces (see \cite{La-Chern}). In particular, an analogue of Theorems \ref{Main1}, \ref{Main2} and \ref{Main5}
should hold on any normal variety in characteristic zero and an analogue of Theorem \ref{Main3} should hold for any pair $(X,D)$ which is log canonical in codimension $2$. Appropriate versions are also expected if $D$ is an arbitrary effective Weil $\QQ$-divisor.

\medskip

Here we should  warn the reader that our definition of a reflexive Higgs sheaf is weaker than the one used in \cite{GKPT2} and \cite{GKPT1}. More precisely, a logarithmic reflexive Higgs sheaf $(\cE, \theta)$  in our sense is a pair consisting of a coherent reflexive $\cO_X$-module $\cE$ and an $\cO_X$-linear map $T_X (\log D)\otimes _{\cO_X} \cE\to \cE$ satisfying additional integrability condition. In the situation of \cite{GKPT2} this would correspond to considering 
$\cE\to (\cE\otimes_{\cO_X} \Omega _X)^{**}$ instead of $\cE\to \cE\otimes _{\cO_X}(\Omega _X)^{**}$.
This is  why in Section \ref{Section:Higgs-sheaves} we carefully explain differences between our approach and \cite{GKPT2}. D. Greb et al. use a different definition as they need to pullback Higgs sheaves by birational morphisms to pass to a resolution of singularities. On the other hand, they cannot take duals or pushforward Higgs sheaves by open embeddings as is allowed in our approach. In this paper we do not use  Kebekus's pullback functor for reflexive differentials on klt pairs (see \cite{Ke}) and we do not pullback Higgs sheaves by birational morphisms  (cf. Subsection \ref{pullback-char-0}).  This allows us to obtain stronger results, e.g., Bogomolov's inequality for reflexive extensions of semistable Higgs sheaves on the regular locus (cf. \cite[Theorem 6.1]{GKPT2}).

\medskip

Further applications of the obtained results to non-abelian Hodge theory and Simpson's correspondence are 
postponed to \cite{La9}.

\medskip

The structure of the paper is as follows. In the first section we gather some auxiliary results and introduce some notation. In Section 2 we prove a few results on Chern classes of reflexive sheaves on normal varieties in positive characteristic. These resuts are used in Section 3 to prove generalized versions of Theorems \ref{Main1} and \ref{Main2}. In Section 4 we study modules over Lie algebroids and generalized Higgs sheaves on normal varieties. In Section 5 we prove generalized versions of Theorems  \ref{Main5} and \ref{Main3}. In Section 6 we apply these results to obtain the Miyaoka--Yau inequality for some normal varieties in positive characteristic. In Section 7 we show some applications of the obtained results in characteristic zero, proving Theorems \ref{Main7}, \ref{Main6} and \ref{Main4}.
Section 8 contains an appendix in which we recall construction of the inverse Cartier  transform  used in Section 5.

\subsection*{Notation}

If $f: X\to Y$ is a morphism between normal schemes and $\cE$
is a coherent reflexive $\cO_Y$-module then we set
$$f^{[*]}\cE=(f^*\cE)^{**}.$$
If $f$ is flat then we have $f^{[*]}\cE= f^*\cE $.
But if $f$ is not flat then usually the canonical map $f^*\cE \to f^{[*]}\cE$ is not an isomorphism.

If $X$ is a normal scheme of characteristic $p$ then we denote by $F_X$ its absolute Frobenius morphism. If $\cE$ is a coherent reflexive $\cO_X$-module then for any positive integer $m$ we set
$$F_X^{[m]}\cE=(F_X^m)^{[*]}\cE .$$

\section{Preliminaries}

\subsection{Reflexive sheaves}\label{reflexive-sheaves}

In this subsection $X$ is an integral normal scheme, which is locally of finite type over a field $k$.
By  $\Ref(\cO_X)$  we denote the category of coherent reflexive $\cO_X$-modules. It is a full subcategory of the category $\Coh(\cO_X)$ of coherent  $\cO_X$-modules.  The inclusion functor $\Ref(\cO_X)\to \Coh (\cO_X)$ comes with a left adjoint $(\cdot )^{**}:\Coh(\cO_X)\to \Ref (\cO_X)$ given by the reflexive hull. The category $\Ref(\cO_X)$ 
comes with an associative and symmetric tensor product $\hat\otimes$ given by
$$\cE\hat\otimes \cF:=(\cE\otimes _{\cO_X} \cF)^{**}.$$
An open subset $U\subset X$ is called \emph{big}  if its complement $X\backslash U$  has codimension $\ge 2$ in $X$. If we consider $U$ as a subscheme of $X$ then we talk about a big open subscheme.
The following well-known lemma can be found in \cite[Lemma 0EBJ]{SP}.

\begin{Lemma}\label{restriction-to-open} 
	Let $j: U\hookrightarrow X $ be a big open subscheme.  Then  $j_*$ and $j^*$ define adjoint equivalences of categories $\Ref (\cO_X)$ and $\Ref(\cO_U)$. 
\end{Lemma}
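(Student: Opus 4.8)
The plan is to verify that the adjoint pair $(j^*, j_*)$ restricts to an equivalence between $\Ref(\cO_X)$ and $\Ref(\cO_U)$, the crucial input being that on a normal scheme regular functions — and more generally sections of reflexive sheaves — extend across closed subsets of codimension $\ge 2$.

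First I would observe that $j^*$ sends $\Ref(\cO_X)$ into $\Ref(\cO_U)$: formation of the dual $\cHom(\,\cdot\,,\cO)$ commutes with restriction to an open subscheme, hence so does biduality, so $\cE|_U$ is reflexive whenever $\cE$ is. Next I would show that $j_*$ sends $\Ref(\cO_U)$ into $\Ref(\cO_X)$. The key point is that $j_*\cO_U = \cO_X$: since $X$ is normal it satisfies Serre's condition $S_2$, and $X\setminus U$ has codimension $\ge 2$, so every regular function on $U$ extends uniquely to $X$. Now given $\cF\in \Ref(\cO_U)$, write $\cF = \cHom_U(\cF^\vee, \cO_U)$ and choose a coherent extension $\cG$ of $\cF^\vee$ to $X$, which exists because $X$ is Noetherian. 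Using the standard identity $j_*\cHom_U(j^*\cG, \cO_U) \cong \cHom_X(\cG, j_*\cO_U)$, valid for any open immersion $j$, together with $j_*\cO_U = \cO_X$, we obtain $j_*\cF \cong \cHom_X(\cG, \cO_X)$, which is coherent and reflexive. Hence $j_*$ is well-defined on $\Ref$.

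It then remains to check that the unit and counit of the adjunction $(j^*, j_*)$ are isomorphisms on these subcategories. The counit $j^*j_*\cF \to \cF$ is an isomorphism for every sheaf, since $j$ is an open immersion and $j^*j_* = \id$. For the unit $\cE \to j_*j^*\cE$ with $\cE\in\Ref(\cO_X)$, I would rerun the computation above: since $\cE = \cHom_X(\cE^\vee, \cO_X)$, we get $j_*j^*\cE = j_*\cHom_U(j^*(\cE^\vee), \cO_U) \cong \cHom_X(\cE^\vee, j_*\cO_U) = \cHom_X(\cE^\vee, \cO_X) = \cE$, and one checks this isomorphism is the canonical unit map (alternatively, one invokes directly that a reflexive sheaf on a normal scheme has the extension property $\Gamma(V,\cE)\xrightarrow{\ \sim\ }\Gamma(V\cap U,\cE|_U)$ for every open $V$). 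Thus $j^*$ and $j_*$ are mutually inverse equivalences. The step I expect to be the main — essentially the only — obstacle is the coherence of $j_*\cF$, equivalently the extension statement $j_*\cO_U = \cO_X$, which is precisely where normality of $X$ and the codimension-$\ge 2$ hypothesis on $X\setminus U$ enter; the rest is formal manipulation of adjunctions and internal $\cHom$.
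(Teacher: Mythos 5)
Your proof is correct; the paper gives no argument of its own for this lemma, simply citing [SP, Lemma 0EBJ], and your reasoning --- reducing everything to $j_*\cO_U=\cO_X$ (normality gives $S_2$, and the complement has codimension $\ge 2$) together with the adjunction $j_*\cHom_U(j^*\cG,\cO_U)\simeq \cHom_X(\cG,j_*\cO_U)$ --- is exactly the standard argument underlying that reference. The only point worth flagging is that the existence of a coherent extension of $\cF^\vee$ to $X$ requires quasi-compactness, which is not assumed here ($X$ is only locally of finite type over $k$); since coherence and reflexivity of $j_*\cF$ are local on $X$, this is harmless, but it deserves a word.
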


\medskip

Since $X$ is normal, its regular locus $X_{\mathrm {reg}}\subset X$ is a big open subset.

\medskip

\begin{Lemma}\label{pull-back-of-dual}
Let $f: X\to Y$ be a finite dominant  morphism of integral locally Noetherian normal schemes. 
If $\cE$ is a coherent $\cO_Y$-module then we have a canonical isomorphism
	$$f^{[*]}(\cE ^*)\simeq (f^*\cE)^*.$$
\end{Lemma}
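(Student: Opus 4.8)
The goal is to show that for a finite dominant morphism $f:X\to Y$ of integral locally Noetherian normal schemes and a coherent $\cO_Y$-module $\cE$, there is a canonical isomorphism $f^{[*]}(\cE^*)\simeq (f^*\cE)^*$, where $\cE^*=\cHom_{\cO_Y}(\cE,\cO_Y)$ and $f^{[*]}$ denotes the reflexive pullback $(f^*(-))^{**}$.

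\medskip

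\textit{Plan of proof.} The strategy is to reduce everything to a big open subscheme where matters are controlled, using Lemma \ref{restriction-to-open}. First I would recall that $\cE^*$ is always reflexive (a dual sheaf is reflexive on a normal integral scheme), so $f^{[*]}(\cE^*)=(f^*(\cE^*))^{**}$ is a coherent reflexive $\cO_X$-module; likewise $(f^*\cE)^*$ is reflexive. Hence by Lemma \ref{restriction-to-open} it suffices to produce a canonical isomorphism after restricting to a big open $U\subset X$ of our choosing, and the natural candidate is $U=f^{-1}(V)\cap X_{\mathrm{reg}}$ where $V\subset Y$ is a big open subscheme over which $\cE$ is locally free. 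Such $V$ exists because $\cE$ is coherent and $Y$ is integral: the locus where $\cE$ is locally free contains the generic point, and removing the singular locus of $Y$ we may also assume $V\subset Y_{\mathrm{reg}}$; since $f$ is finite (hence quasi-finite and closed), $f^{-1}(Y\setminus V)$ is closed of codimension $\ge 1$, and using that $f$ is dominant between integral schemes of the same dimension one checks $f^{-1}(V)$ is still big in $X$ — here I would use that a finite dominant morphism of integral Noetherian schemes does not drop dimension of closed subsets, or more directly restrict further to the flat locus.

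\medskip

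Over such a $U$, the morphism $f|_U$ is flat (indeed $\cE|_V$ is free, and one can also arrange $f$ itself to be flat over the relevant big open since a finite morphism between regular schemes, or more generally a finite morphism with Cohen--Macaulay source and regular target, is flat by miracle flatness), so $f^{[*]}$ agrees with the ordinary pullback $f^*$ there. Then the claim becomes the standard fact that for a flat morphism and a sheaf that is locally free (of finite rank), pullback commutes with the formation of the dual: the canonical base-change map $f^*\cHom_{\cO_Y}(\cE,\cO_Y)\to \cHom_{\cO_X}(f^*\cE,\cO_X)$ is an isomorphism — this is elementary and functorial since $\cHom$ out of a finite locally free sheaf is just a finite direct sum of copies of the structure sheaf. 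This gives a canonical isomorphism $\varphi_U: f^{[*]}(\cE^*)|_U=f^*(\cE^*)|_U\xrightarrow{\ \sim\ }(f^*\cE)^*|_U$.

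\medskip

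Finally I would glue: by Lemma \ref{restriction-to-open} applied to the big open $U\hookrightarrow X$, the restriction functor is an equivalence between $\Ref(\cO_X)$ and $\Ref(\cO_U)$, so $\varphi_U$ extends uniquely to an isomorphism $f^{[*]}(\cE^*)\simeq (f^*\cE)^*$ of reflexive $\cO_X$-modules. Canonicity follows because on $U$ the map $\varphi_U$ is the canonical base-change morphism, and the canonical map $f^*(\cE^*)\to f^{[*]}(\cE^*)$ together with the canonical natural transformation $f^*\cHom\to\cHom f^*$ are defined globally and agree with $\varphi_U$ on $U$; thus the extension is the reflexivization of the globally defined natural map.

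\medskip

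\textit{Main obstacle.} The only subtle point is verifying that $f^{-1}(V)\cap X_{\mathrm{reg}}$ is genuinely a big open subscheme of $X$, i.e. that pulling back a codimension-$\ge 2$ closed subset of $Y$ under the finite dominant map $f$ keeps codimension $\ge 2$ in $X$, and simultaneously arranging flatness of $f$ over this open set. This is where the normality and finiteness hypotheses are really used; everything else is formal adjunction and the trivial compatibility of flat pullback with duals of locally free sheaves.
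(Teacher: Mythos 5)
Your overall strategy --- write down the canonical base-change map, check it is an isomorphism on a big open subset where the relevant sheaf is locally free, and then extend across the complement using reflexivity of both sides and Lemma \ref{restriction-to-open} --- is exactly the paper's argument for the key case. But there is a genuine gap at the very first step: you claim that for an \emph{arbitrary} coherent $\cO_Y$-module $\cE$ there is a \emph{big} open $V\subset Y$ on which $\cE$ is locally free, ``because the locally free locus contains the generic point''. That only gives a dense open, not one whose complement has codimension $\ge 2$. The lemma is stated for arbitrary coherent $\cE$, and if $\cE$ has torsion along a prime divisor $D$ (e.g.\ $\cE=\cO_Y\oplus\cO_D$) then the non-locally-free locus contains all of $D$, which has codimension one, so no such $V$ exists and your open set $U$ is not big; the rest of the argument then never gets off the ground. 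Local freeness in codimension one is automatic only for \emph{torsion-free} coherent sheaves on a normal scheme (stalks at codimension-one points are finitely generated torsion-free modules over DVRs, hence free).

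The missing ingredient is the reduction to the torsion-free case, which is precisely the second half of the paper's proof: set $\tilde\cE=\cE/\mathrm{Torsion}$. Then $\cE^*=\tilde\cE^*$, and since $f$ is dominant the kernel of the surjection $f^*\cE\to f^*\tilde\cE$ is a torsion $\cO_X$-module, so dualizing gives $(f^*\tilde\cE)^*\simeq(f^*\cE)^*$; applying the torsion-free case to $\tilde\cE$ then yields the statement. With this reduction inserted, your proof is correct and coincides with the paper's. A minor remark: the detour through flatness of $f$ on $U$ (miracle flatness, etc.) is unnecessary --- on $U$ the sheaf $f^*(\cE^*)$ is locally free, hence already reflexive, so $f^{[*]}(\cE^*)|_U=f^*(\cE^*)|_U$, and the base-change map $f^*\cHom_{\cO_Y}(\cE,\cO_Y)\to\cHom_{\cO_X}(f^*\cE,\cO_X)$ is an isomorphism whenever $\cE$ is locally free, with no flatness hypothesis on $f$; the paper's proof never uses flatness of $f$.
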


\begin{proof}	
We have a natural map 
$$f^*(\cE^*)=f^*\cHom_{\cO_Y} (\cE, \cO_Y)\to
	 \cHom_{\cO_X} (f^*\cE , f^*\cO _Y)= (f^*\cE)^*.$$
If $\cE$ is  torsion free then there exists a big open subset $V\subset Y$ such that $\cE _V$ is finite locally free.
Then the above map is an isomorphism on $U=f^{-1}(V)$.  This subset of $X$ is big because $f$ is finite and dominant. Since $(f^*\cE)^*$ is reflexive, this induces an isomorphism $f^{[*]}(\cE ^*)\simeq (f^*\cE)^*.$

If $\cE$ is not torsion free, then the pullback of the quotient map $\cE\to \tilde \cE:=\cE/  {\mathrm{Torsion}}$ is  surjective.
Hence  the dual map $( f^*\tilde \cE)^* \to (f^*\cE)^* $ is an isomorphism. But if we apply the lemma to $\tilde \cE $
 then we get  $f^{[*]}(\cE ^*)\simeq (f^*\tilde \cE )^*$, which proves the required assertion.
\end{proof}

\subsection{F-liftable schemes}\label{F-liftability}

A Weil divisor on a locally Noetherian integral scheme $X$ is a formal sum $\sum a_i D_i$,
where $a_i\in \ZZ$ and $D_i$ are prime divisors. However, if all nonzero $a_i$ are equal to $1$ then we can consider an effective reduced Weil divisor as a reduced induced scheme structure on  $D:=\bigcup_{\{ i: a_i\ne 0\}}  D_i\subset X$. If $f: X\to S$ is a morphism of schemes and $X$ is an integral locally Noetherian normal scheme then we say that a subscheme $D\subset X$ is a \emph{relative effective reduced  Weil divisor} on $X/S$ if $D$ is an effective reduced Weil divisor and $D\to S$ is a flat morphism.

\medskip

A \emph{log pair} $(X,D)$ is a pair consisting of a normal variety $X$ defined over a perfect field $k$ and an effective reduced Weil divisor $D$ on $X$ (we allow $D=0$). We say that $(X, D)$ is \emph{log smooth} if $X$ is smooth and $D$ is a normal crossing divisor. In this subsection we assume that $k$ has positive characteristic $p$. We also set $S=\Spec k$ and $\tilde S=\Spec W_2(k)$.

\begin{Definition}\label{definition-F-liftable}
	Let $(X, D)$ be a log pair and let us write $D=\sum D_i$, where $D_i$ are irreducible. 
	We say that $(X,D)$ is
	\begin{enumerate}
	\item \emph{liftable to $W_2(k)$} if there exists a flat morphism $\tilde X\to \tilde S$ and 
a relative effective Weil divisor $\tilde D=\sum \tilde D_i$ on $\tilde X/\tilde S$ such that $(X,D_i)= (\tilde X\times _{\tilde S}S , \tilde D_i\times _{\tilde S}S)$ for all $i$. 
Such a pair 
		$(\tilde X, \tilde D)$ is called a \emph{lifting} of $(X, D)$ to $W_2(k)$. 
	\item \emph{$F$-liftable} if there exists a lifting  $(\tilde X, \tilde D)$  of $(X, D)$ to $W_2(k)$
and a morphism $\tilde F_X: \tilde X\to \tilde X$ restricting to $F_X$ modulo $p$ such that
for each $D_i$ the image of $\tilde F_X^*I_{\tilde D_i}\to \cO_{\tilde X}$ is contained in  $ I_{\tilde D_i}^p$. In this case we say that $\tilde F_X$ \emph{is compatible with} $\tilde D$ and we call $\tilde F_X$ an \emph{$F$-lifting of $(X,D)$ (compatible with $(\tilde X, \tilde D)$)}.
\item \emph{almost liftable to $W_2(k)$}  if there exists a big open subset $U\subset X$ such that the pair $(U, D_U=D\cap U)$ is liftable to $W_2(k)$. The corresponding lifting of $(U, D_U)$ is called an \emph{almost lifting} of $(X,D)$.
	\item \emph{almost $F$-liftable} if there exists a big open subset $U\subset X$ such that the pair
	$(U, D_U)$ is $F$-liftable. The corresponding lifting  is called an \emph{almost $F$-lifting} of $(X,D)$.	
\end{enumerate}
\end{Definition}

\begin{Remark}
	\begin{enumerate}
		\item 
		If $U\subset X$ as in (3)-(4) exists then we can always find a big open subset $V\subset X$
		such that $(V, D_V)$ is log smooth and the corresponding condition is satisfied.
		\item 
		If $(X, D)$ is almost $F$-liftable and $D=\bigcup D_i$, where $D_i\subset X$ are prime divisors then $(D_i, (\bigcup _{j\ne i }D_j)\cap D_i)$  is also almost $F$-liftable. This observation follows from the corresponding fact for $F$-liftable log smooth pairs (see \cite[Lemma 3.2]{AWZ2} for a simple proof).
	\end{enumerate}
\end{Remark}

We also need to introduce some notions of singularities in presence of liftings:

\begin{Definition}\label{definition-F-liftable-2}
	\begin{enumerate}
		\item If $(X,D)$ is liftable to $W_2(k)$ then we say that it is  \emph{locally $F$-liftable} if there exists a lifting $(\tilde X, \tilde D)$ of $(X, D)$ such that every $x\in X$ has an open neighbourhood $V\subset X$ for which there exists an $F$-lifting of $(V, D_V)$	compatible with the lifting induced from $(\tilde X, \tilde D)$.
		\item 
		If $(X,D)$ is almost liftable to $W_2(k)$ then we say that it is  \emph{locally $F$-liftable} if there exists a big open subset $U\subset X $ and a  lifting $(\tilde U, \tilde D_U)$ of a  $(U, D_U)$ such that 
		every point of $x\in X$ has an open neighbourhood $V\subset X$ for which there exists an $F$-lifting of $(V, D_V)$ compatible with the lifting of $(V\cap U, D_{V\cap U})$ induced from $(\tilde U, \tilde D_U)$.
		\item If $(X,D)$ is almost liftable to $W_2(k)$ then we say that $(X,D)$ \emph{has $F$-liftable singularities in codimension $2$} if there exists a closed subset $Z\subset X$ of codimension $\ge 3$ such that $(X\backslash Z, D\backslash Z)$ is  locally $F$-liftable.
	\end{enumerate}
\end{Definition}

\begin{Remark}\label{remarks-on-definition-loc-F-lift}
	\begin{enumerate}
		\item If $(X,D)$ is log smooth and (almost) liftable to $W_2(k)$  then it is also locally $F$-liftable.
		\item Note that	if there exists a big open subset $U\subset X$ such that the pair $(U, D_U=D\cap U)$ is liftable to $W_2(k)$ and locally $F$-liftable then $(X,D)$ is almost liftable to $W_2(k)$ but it does not need to be locally $F$-liftable.
	\end{enumerate}
\end{Remark}

\medskip

A characteristic $p$ scheme $X$ is called \emph{$F$-split} if there exists an $\cO_X$-linear map $\varphi :(F_X)_*\cO_X\to \cO_X$ splitting   $F_X^{\sharp}: \cO_X\to (F_X)_*\cO_X$.
If $Y_1,...,Y_s$ are closed subschemes of $X$ then we say that they are \emph{compatibly $F$-split} by $\varphi$ if $\varphi ((F_{X})_* I_{Y_j})\subset I_{Y_j}$ for all $j$.
For the basic facts about these notions we refer the reader to \cite{BK}.
In the proof of the next proposition we need the following generalization of the second part of \cite[Proposition 1.3.11]{BK}.

\begin{Lemma}\label{generalized-BK}
	Let $X$ be a smooth variety defined over an algebraically closed field of characteristic $p>0$.
	Let	us assume that $\varphi \in H^0(X, \omega_X^{1-p})\simeq \Hom _{\cO_X} ((F_X)_*\cO_X, \cO_X)$ splits $X$. Let $Z(\varphi)=(p-1)D+D'$ be the divisor of zeroes of $\varphi$, where $D$ and $D'$
	are effective divisors. Then $D$ is reduced and $\varphi$ splits $X$ compatibly with all irreducible components of $D$. 
\end{Lemma}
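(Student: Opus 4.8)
The plan is to follow the classical Mehta--Ramanathan/Brion--Kumar strategy for compatibly split subvarieties, localizing the argument so that one controls the multiplicities appearing in $Z(\varphi)$. First I would recall the precise meaning of the identification $\Hom_{\cO_X}((F_X)_*\cO_X,\cO_X)\simeq H^0(X,\omega_X^{1-p})$: a section $\varphi$ of $\omega_X^{1-p}$ splits $X$ iff the induced map sends $1$ to $1$, and for a prime divisor $E$ one has $\varphi((F_X)_*I_E)\subset I_E$ precisely when the local description of $\varphi$ in a formal neighbourhood of a general point of $E$ carries the $(p-1)$-st power of a local equation of $E$ in a way that respects $I_E$; this is exactly where the coefficient $(p-1)$ in $Z(\varphi)=(p-1)D+D'$ enters. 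Concretely, working at the generic point of an irreducible component $E$ of $\Supp D$, the local ring is a DVR (since $X$ is smooth), and after choosing a uniformizer $t$ the map $\varphi$ is, up to a unit, the composite of multiplication by $t^{a}$ (where $a=\operatorname{ord}_E Z(\varphi)$) with the standard "Cartier-type" projection $(F_X)_*\cO \to \cO$ dual to $dt/t \wedge \dots$; compatibility with $I_E=(t)$ holds iff $a\ge p-1$, and then the residual section $t^{a-(p-1)}\varphi_{\mathrm{can}}$ still splits, which after removing $(p-1)E$ from the divisor leaves a section whose order along $E$ is $a-(p-1)$.

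The key steps, in order, would be: (1) reduce to a statement about a single irreducible component $E$ of $\Supp D$ by working one component at a time and using that the conditions "$\varphi$ splits compatibly with $E$" are local at the generic point of $E$ together with the fact that a global compatible splitting is detected in codimension one (here I would invoke Lemma~\ref{restriction-to-open}, or rather its underlying principle, to reduce questions about ideal sheaves of divisors on the normal — indeed smooth — variety $X$ to a big open subset, so it suffices to check the compatibility at the generic points of the $E$'s). (2) At the generic point $\eta_E$, pass to the DVR $\cO_{X,\eta_E}$, and use the explicit form of the projection $(F_X)_*\cO_X \to \cO_X$ in the coordinate $t$: decompose $(F_X)_*\cO_{X,\eta_E}$ as a free module over $\cO_{X,\eta_E}$ with basis $1,t,\dots,t^{p-1}$ (lifting the separable residue extension trivially since $k$ is perfect and $\cO_{X,\eta_E}$ is regular), and show that $\varphi$ restricted to this module is, up to unit, $t^{a}$ times the "coefficient of $t^{p-1}$" functional. (3) Observe that the splitting property $\varphi(t^{p-1})=\text{unit}$ (needed so that $\varphi(1)=1$ after the global normalization) forces $a\le p-1$ on the component $E$ *in the part that actually contributes to splitting*; more carefully, that the splitting section, when written globally, must have $\operatorname{ord}_E Z(\varphi)=p-1$ exactly for every component $E$ of the "$(p-1)D$" part — equivalently $D$ is reduced, because a component with coefficient $\ge p$ would kill the coefficient-of-$t^{p-1}$ term and destroy the splitting, while the $D'$ part collects components with coefficient $<p-1$ (which cannot be compatibly split but also cannot prevent $\varphi(1)=1$). (4) Finally, having shown $Z(\varphi)=(p-1)D+D'$ with $D$ reduced and each component of $D$ of order exactly $p-1$, verify the compatible splitting of each such component: with $a=p-1$ the local computation gives $\varphi((F_X)_*(t))\subset (t)$ directly, and since this holds at every generic point of a component of $D$ and all sheaves in sight are torsion-free on the smooth $X$, it holds globally — this is the second part of \cite[Proposition 1.3.11]{BK} applied componentwise, which is exactly the generalization being asserted.

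The main obstacle I expect is step (3): making rigorous the dichotomy that a component of $\operatorname{ord}\ge p-1$ in $Z(\varphi)$ is automatically compatibly split and of order \emph{exactly} $p-1$, while isolating the "residual" divisor $D'$ consisting of components of order $<p-1$. The subtlety is bookkeeping between the global normalization $\varphi(1)=1$ (a single scalar condition) and the pointwise orders along the various prime components; one must argue that if some component had coefficient $\ge p$ then the induced splitting on the generic formal neighbourhood would send $1$ into the maximal ideal, contradicting that $\varphi$ splits $X$. Once the local DVR picture is set up carefully — which is where the hypotheses "$X$ smooth" and "$k$ algebraically closed/perfect" do real work, guaranteeing the convenient basis $1,t,\dots,t^{p-1}$ of $(F_X)_*\cO$ over $\cO$ near $\eta_E$ — the rest is the standard compatible-splitting formalism and should go through as in \cite{BK}.
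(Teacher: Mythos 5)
Your overall strategy---show that the splitting normalization forces $\operatorname{ord}_E Z(\varphi)\le p-1$ along every prime divisor $E$, deduce from $Z(\varphi)\ge (p-1)D$ that each component of $D$ has multiplicity one and that $\varphi$ is compatible with it in codimension one, then globalize via the formalism of \cite[Lemma 1.1.7]{BK}---is the same as the paper's. However, your local model at the generic point is incorrect in dimension $\ge 2$, and this is exactly where your acknowledged ``main obstacle'' (step (3)) lives. At $\eta_E$ the local ring is a DVR whose residue field $k(E)$ is a function field over $k$, hence imperfect as soon as $\dim X\ge 2$; consequently $(F_X)_*\cO_{X,\eta_E}$ is free of rank $p^{\dim X}$ over $\cO_{X,\eta_E}$, not of rank $p$, so the basis $1,t,\dots,t^{p-1}$ and the description of $\varphi$ as ``$t^a$ times the coefficient-of-$t^{p-1}$ functional'' are simply not available. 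As a result the pivotal claim---that $\operatorname{ord}_E Z(\varphi)\ge p$ would force $\varphi(1)$ to vanish at $\eta_E$, contradicting $\varphi(1)=1$---is stated but never actually established in your write-up.

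The gap is repairable along your lines: take $t_1=t$ together with $t_2,\dots,t_n$ whose residues form a $p$-basis of $k(E)$ over $k$; then the monomials $t^{\alpha}$ with $0\le\alpha_i\le p-1$ form a basis of $\cO_{X,\eta_E}$ over its subring of $p$-th powers, and writing $\varphi=f\,(dt_1\wedge\dots\wedge dt_n)^{1-p}$ one has $\varphi(u)=h_{(p-1,\dots,p-1)}$ where $uf=\sum h_\alpha^p t^{\alpha}$; uniqueness of this decomposition gives that $uf\in(t_1^p)$ if and only if all $h_\alpha\in(t_1)$, which is precisely the no-cancellation fact needed both to rule out $\operatorname{ord}_E Z(\varphi)\ge p$ and to get compatibility once $\operatorname{ord}_E Z(\varphi)\ge p-1$. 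The paper sidesteps the imperfect-residue-field bookkeeping entirely by working at a general \emph{closed} point $x$ of the component (a smooth point of $\Supp Z(\varphi)$), where honest local coordinates exist and \cite[Theorem 1.3.8]{BK} directly gives that the coefficient of $(t_1\cdots t_n)^{p-1}$ in the expansion of $f$ is nonzero, forcing $\operatorname{ord}_{t_1}f\le p-1$, hence $m=1$ and compatibility at $x$; \cite[Lemma 1.1.7, (ii)]{BK} then globalizes. Finally, your side remarks that $D'$ collects the components of multiplicity $<p-1$ and that these ``cannot be compatibly split'' are neither needed nor correct as stated: the decomposition $Z(\varphi)=(p-1)D+D'$ is given, not canonical, and a component of $D'$ appearing in $Z(\varphi)$ with multiplicity exactly $p-1$ is compatibly split by the very same argument.
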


\begin{proof}
	Let $Y$ be an irreducible component of $D$ and let $x$ be a smooth point of the support of $Z(\varphi)$ that belongs to $Y$. Then we can choose a system of local coordinates $(t_1,...,t_n)$ at $x$ such that the local equation of $Y$ is given by $t_1=0$.
	Note that by assumption the local expansion of $\varphi$ at $x$ is given by
	$$t_1^{m(p-1)}g(t_1,...,t_n)(dt_1\wedge ...\wedge dt_n)^{1-p},$$ 
	where $g(t_1,...,t_n)$ is not divisible by $t_1$ and $m\ge 1$ is the multiplicity of $Y$ in $D$.	
	Since $\varphi $ splits $X$  by \cite[Theorem 1.3.8]{BK} the coefficient of the monomial $(t_1...t_n)^{p-1}$ in $t_1^{m(p-1)}g(t_1,...,t_n)$ is nonzero. Hence $m=1$  and the splitting $\varphi$ is compatible with $Y$ at $x$. It follows that $\varphi$ is compatible with $Y$ at smooth points of the support of $ Z (\varphi)$. So the required assertion follows from \cite[Lemma 1.1.7, (ii)]{BK}. 
\end{proof}

\medskip

\begin{Proposition}\label{liftable-split}
	Let $(X,D)$ be a log pair.
	\begin{enumerate}
		\item 
		If $X$ is $F$-split  compatibly with all irreducible components of $D$ then  $(X,D)$ is liftable to $W_2(k)$.		
		\item  
		If $(X,D)$ is almost $F$-liftable then $X$ is $F$-split compatibly with all irreducible components of $D$.
	\end{enumerate}
\end{Proposition}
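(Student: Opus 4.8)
The plan is to treat the two parts separately, reducing everything to the local/smooth situation on a big open subset and then invoking Lemma \ref{restriction-to-open} to descend the conclusion back to $X$, together with the classical Deligne--Illusie correspondence between Frobenius liftings and $W_2(k)$-liftings in the smooth case.

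For part (1): suppose $\varphi\colon (F_X)_*\cO_X\to \cO_X$ is a splitting of $X$ compatible with all irreducible components $D_1,\dots,D_s$ of $D$. Let $U=X_{\mathrm{reg}}$, a big open subscheme since $X$ is normal, and restrict $\varphi$ to $U$; the restriction still splits $U$ compatibly with each $D_i\cap U$. I would next reduce to a log smooth situation: after shrinking $U$ (keeping it big, using the first Remark after Definition \ref{definition-F-liftable}) I may assume $(U,D_U)$ is log smooth. On a smooth variety a Frobenius splitting is the same datum as a section $\varphi\in H^0(U,\omega_U^{1-p})$, and compatibility with the $D_i$ forces the divisor of zeroes to contain $(p-1)\sum D_i$ by Lemma \ref{generalized-BK} (more precisely: on the smooth locus a splitting compatible with a reduced divisor forces that divisor to appear with coefficient exactly $p-1$ in $Z(\varphi)$). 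From here the standard argument (Deligne--Illusie, or Bhatt--Scholze-type results, or \cite[Proposition 1.3.11]{BK} combined with the local construction of a Frobenius lifting out of a splitting section) produces an $F$-lifting $\tilde F_U$ of $(U,D_U)$ compatible with a lift $(\tilde U,\tilde D_U)$; and a log smooth pair that is $F$-liftable is in particular liftable to $W_2(k)$. This gives a lifting of $(U,D_U)$, and then Lemma \ref{restriction-to-open} (applied to the structure sheaves, the ideal sheaves $\cO_X(-D_i)$, and—with a little care about flatness of $\tilde D\to\tilde S$—to the whole pair) descends this to a lifting of $(X,D)$ to $W_2(k)$, which is what part (1) asserts.

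For part (2): suppose $(X,D)$ is almost $F$-liftable, so there is a big open $U\subset X$ and an $F$-lifting $\tilde F_U\colon \tilde U\to\tilde U$ of $(U,D_U)$ compatible with a lift $(\tilde U,\tilde D_U)$. By shrinking $U$ I may again assume $(U,D_U)$ is log smooth. The key point is the well-known fact that a Frobenius lifting modulo $p^2$ on a smooth variety yields a Frobenius splitting: concretely, the lift $\tilde F_U$ gives, via the standard construction, a map $(F_U)_*\cO_U\to\cO_U$ splitting $F_U^\sharp$, and the compatibility condition $\tilde F_U^*I_{\tilde D_i}\subset I_{\tilde D_i}^p$ translates exactly into compatibility of this splitting with each component $D_i\cap U$ of $D_U$ (this is the content of, e.g., the cited results around \cite[Proposition 1.3.11]{BK} and \cite[Lemma 3.2]{AWZ2}). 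Thus $U$ is $F$-split compatibly with all components of $D_U$. Finally, because $U$ is a big open subscheme, a splitting $\varphi_U\in\Hom_{\cO_U}((F_U)_*\cO_U,\cO_U)=H^0(U,\cHom((F_U)_*\cO_U,\cO_U))$ of a reflexive (indeed locally free after further shrinking — but one should argue with the ambient $X$) sheaf extends uniquely to a homomorphism $\varphi\in\Hom_{\cO_X}((F_X)_*\cO_X,\cO_X)$, using that $(F_X)_*\cO_X$ and $\cO_X$ restrict isomorphically from $X$ to $U$ in the sense that $\Hom$ over $X$ equals $\Hom$ over $U$ — this is precisely Lemma \ref{restriction-to-open} applied to the (torsion-free) sheaves involved. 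One checks the extended $\varphi$ still splits $F_X^\sharp$ and is still compatible with the $D_i$ by checking these identities on the big open $U$ and using that both sides are maps of torsion-free (or reflexive) sheaves determined by their restriction to $U$.

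The main obstacle, and the step I would spend the most care on, is the descent in both directions across the big open immersion $j\colon U\hookrightarrow X$: Lemma \ref{restriction-to-open} is stated for \emph{reflexive} sheaves, so I must check that the relevant objects — $(F_X)_*\cO_X$, the $\cHom$-sheaf controlling splittings, and the flat families $\tilde X\to\tilde S$, $\tilde D\to\tilde S$ with their divisor structure — behave well under $j_*$ and $j^*$. For the splitting map this is essentially automatic since $\cHom((F_X)_*\cO_X,\cO_X)$ is reflexive (being a sheaf $\cHom$ into a reflexive sheaf) and $\varphi$, $F_X^\sharp$, and the compatibility conditions are all equalities of maps of reflexive sheaves, hence can be tested on $U$. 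For the $W_2(k)$-lifting in part (1) the subtle point is flatness of $\tilde D\to\tilde S$ and of $\tilde X\to\tilde S$: one extends the flat lift $\tilde U\to\tilde S$ across the codimension-$\ge 2$ locus by taking $j_*$ of the structure sheaf and checking the result is still flat over $W_2(k)$ and still restricts to $X$ modulo $p$, and likewise for each $\tilde D_i$; here one uses that reflexive extension does not change the generic fibre and that flatness over the length-two base $W_2(k)$ can be checked by the local criterion together with normality of $X$. Everything else — the local passage from splitting section to Frobenius lifting and back, and the coefficient-$(p-1)$ bookkeeping — is standard and covered by Lemma \ref{generalized-BK} and the references \cite{BK}, \cite{AWZ2}.
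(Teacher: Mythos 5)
Your part (2) glosses over exactly the point that makes this proposition non-trivial. You assert that the compatibility $\tilde F_U^*I_{\tilde D_i}\subset I_{\tilde D_i}^p$ ``translates exactly'' into compatibility of the induced splitting with the components $D_i$, citing \cite[Proposition 1.3.11]{BK} and \cite{AWZ2}. But \cite[Proposition 1.3.11]{BK} applies to splittings that are $(p-1)$-st powers of sections of $\omega^{-1}$, and the splitting induced by an $F$-lifting need not be of this form: the paper's explicit example ($x_1\mapsto x_1^p$, $x_2\mapsto x_2^p+px_2^2$) produces a splitting that is not such a power on any open subset, and this is precisely the gap the paper points out in \cite[Lemma 3.1]{AWZ2}. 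What is actually available is only that the induced splitting vanishes to order $p-1$ along $D$; compatibility then follows from Lemma \ref{generalized-BK}, which is proved in the paper exactly to fill this gap, and which your argument never invokes at this step. So the central claim of your part (2) is unproved as written. (Your extension of the splitting and of the compatibility from the big open subset $U$ to $X$, using torsion-freeness/reflexivity of the relevant sheaves, is fine and corresponds to the paper's appeal to \cite[Lemma 1.1.7, (ii) and (iii)]{BK}.)

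Your part (1) is off track in two ways. First, you propose to manufacture an $F$-lifting of $(U,D_U)$ out of the compatible splitting; but $F$-splitting does not imply $F$-liftability (the implication goes only from lifting to splitting): the paper's own examples, e.g.\ the ordinary double points $(x_1^2+\dots+x_n^2=0)\subset\AA^n_k$ for $n\ge 5$, are $F$-split but not even locally $F$-liftable. What the hypothesis gives directly is a $W_2(k)$-lifting of the pair, by \cite[Lemma 5.2.1]{AZ} (and \cite[Proposition 4]{La3} when $D=0$), applied globally to $(X,D)$ -- this is the paper's one-line proof, with no restriction to the smooth locus. Second, and independently, your plan to lift only $(U,D_U)$ and then descend by taking $j_*\cO_{\tilde U}$ cannot work: flatness of the pushforward over $W_2(k)$ can fail (the obstruction sits in local cohomology along $X\setminus U$), and indeed almost-liftability does not imply liftability -- as the paper remarks, every proper open subset of a smooth projective surface lifts to $W_2(k)$ while the surface itself need not, and Theorem \ref{equivalences-for-almost-liftings} requires the much stronger local $F$-liftability hypothesis, together with the canonical uniqueness of $F$-liftings, to extend a lifting across codimension two. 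So both the detour through the big open subset and the extension step in your part (1) are genuine gaps; the correct argument for (1) is global and bypasses $U$ entirely.
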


\begin{proof}
	In case $D=0$ the first part is contained in \cite[Proposition 4]{La3}
	and the second one  follows from \cite[Theorem 2]{BTLM} (see also \cite[Section 2]{AWZ2}).
	In general, the first part follows from \cite[Lemma 5.2.1]{AZ}.	 
	 By \cite[Lemma 1.1.7, (ii) and (iii)]{BK} to prove the second part it is sufficient to prove that if $(X,D)$ is log smooth and $F$-liftable then irreducible components of $D$ are compatibly $F$-split. 
	Note that the $F$-splitting induced by a lifting $\tilde F_X$ that is compatible with $\tilde D$
	vanishes to order $(p-1)$ along $D$ (see the proof of \cite[Lemma 3.1]{AWZ2}). So we can conclude by Lemma \ref{generalized-BK}.
\end{proof}

\begin{Remark}
	If $(X,D)$ is log smooth then the fact that $D$ is compatibly $F$-split is claimed in
	\cite[Lemma 3.1]{AWZ2} but the proof there contains a gap. 
	The problem is that the Frobenius splitting coming from the lifting of the Frobenius morphism to $W_2(k)$ does not need to come from $(p-1)$-th power of a section of $H^0(X, \omega_X^{-1})$. 
See below for an explicit example.	
\end{Remark}

\begin{Example}
Let us consider divisor $D:=(x_1=0)\subset X:=\Spec k[x_1,x_2]$, where $k$ is a perfect field of characteristic $p>2$. Let $\tilde X:=\Spec W_2(k)[x_1,x_2]$ be a lifting of $X$ to $W_2(k)$
and let $\tilde D:=(x_1=0)\subset \tilde X$ be a lifting of $D\subset X$.
Let us consider a lifting $\tilde F_X$ of $F_X$ given by $x_1\to x_1^p$ and $x_2\to x_2^p+px_2^2$. This lifting is compatible with $\tilde D$. However, it is easy to see that the Frobenius splitting associated to $\tilde F_X$ is given by 
 $$\varphi=x_1^{p-1}x_2(x_2^{p-2}+2)(dx_1\wedge dx_2)^{1-p}\in H^0(X, \omega _X^{1-p}),$$
so $\varphi$ is not  a $(p-1)$-th power of a section of $H^0(X, \omega_X^{-1})$. In fact, in this case one cannot find any open subset $U\subset X$ such that  $\varphi|_U$ is a $(p-1)$-th power of a section of $H^0(U, \omega_U^{-1})$. On $U=\{x_2(x_2^{p-2}+2)\ne 0\}$ one can multiply $\varphi$ by an invertible $u\in \Gamma (U, \cO_U^*)$ so that $u\cdot \varphi|_U=\psi ^{p-1}$ for some $\psi \in H^0(U, \omega_U^{-1})$ and apply \cite[Proposition 1.3.11]{BK} to this new splitting.
This shows that $u\cdot \varphi|_U$ splits $U$ compatibly with $D\cap U$. However, this is not sufficient to apply \cite[Lemma 1.1.7, (ii)]{BK} to conclude that $\varphi $ splits $X$ compatibly with $D$.
\end{Example}

\medskip

\begin{Example}
	The following example is motivated by \cite[Example 5.1]{Zd} (note that the argument showing $F$-liftability works for $p>2$;  for $p=2$ $F$-liftability needs to be proven using \cite[Corollary 4.12]{Zd}).
	
	Let us consider divisor $D:=(x_1x_2(x_1+x_2)=0)\subset X:=\Spec k[x_1,x_2]$, where $k$ is a perfect field of characteristic $p>0$. Then $\tilde X:=\Spec W_2(k)[x_1,x_2]$ is a lifting of $X$ to $W_2(k)$ and it has a natural  lifting $\tilde F_X$ of $F_X$
	given by $x_i\to x_i^p$ for $i=1,2$. Let $\tilde D:=(x_1x_2(x_1+x_2)=0)\subset \tilde X$ be a lifting of $D\subset X$. If $p>2$ then $\tilde F _X$ induces a compatible lifting $\tilde F_X|_{\tilde D}: \tilde D\to \tilde D$ of $F_D$. However,  $\tilde F_X$ {is not compatible with} $\tilde D$ as $\tilde F_X^*\tilde D=(x_1^px_2^p(x_1^p+x_2^p)=0)$ is not equal to $p\tilde D=(x_1^px_2^p(x_1+x_2)^p=0)$.
	In fact, an explicit computation shows that $(X, D)$ is not $F$-liftable. Note however that there exist splittings of $X$ that are compatible with $D$. For example, one can take splitting of $X$ corresponding to 
	$$\varphi=x_1^{p-1}x_2^{p-1}(x_1+x_2)^{p-1}(dx_1\wedge dx_2)^{1-p}\in H^0(X, \omega _X^{1-p}).$$
\end{Example}

\medskip

We need also the following logarithmic version of  \cite[Theorem 3.3.6 (a), (iii)]{AWZ}. The proof is analogous to the one from \cite{AWZ} and we leave it to the reader.

\begin{Lemma}\label{AWZ-lemma}
Let $(X,D)$ be a log scheme and let $U\subset X$ be a big open subset of $X$. Let $(\tilde X,\tilde D)$ be a $W_2(k)$-lifting of $(X,D)$ and let $F_{\tilde U}$ be an $F$-lifting of  $(\tilde U,\tilde D_U)$, where $\tilde U=(U, \cO_{\tilde X}|_U)$ and $\tilde D_U=(D_U, \cO_{\tilde D}|_{D_U})$. Then there exists 
an  $F$-lifting $\tilde F_X: \tilde X\to \tilde X$ compatible with $\tilde D$.
\end{Lemma}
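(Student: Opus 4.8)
The plan is to reduce the statement to the known non-logarithmic extension result, namely \cite[Theorem 3.3.6 (a), (iii)]{AWZ}, applied to the $F$-lifting $F_{\tilde U}$ of the pair $(\tilde U, \tilde D_U)$, treated as an $F$-lifting of $\tilde U$ that happens to be compatible with $\tilde D_U$. First I would recall the setup: $F_{\tilde U}\colon \tilde U\to \tilde U$ is a morphism lifting $F_U$ modulo $p$, and the compatibility with $\tilde D_U$ means that for each component $D_i$ of $D$ meeting $U$ the image of $F_{\tilde U}^* I_{\tilde D_i\cap \tilde U}\to \cO_{\tilde U}$ lies in $I_{\tilde D_i\cap \tilde U}^{\,p}$. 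Since $U$ is big in $X$, and $X$ is normal, the obstruction-theoretic argument of \cite{AWZ} that extends an $F$-lifting from a big open subscheme to all of $\tilde X$ applies verbatim: the relevant obstruction classes live in $H^1$ (or in suitable $\Ext$ or local cohomology) groups that vanish because of the codimension $\ge 2$ condition and the reflexivity of the sheaves involved, so $F_{\tilde U}$ extends uniquely to a morphism $\tilde F_X\colon \tilde X\to \tilde X$ lifting $F_X$.

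The second and only genuinely new point is to check that the extended $\tilde F_X$ remains compatible with $\tilde D$, i.e.\ that for each irreducible component $\tilde D_i$ of $\tilde D$ the image of $\tilde F_X^* I_{\tilde D_i}\to \cO_{\tilde X}$ is contained in $I_{\tilde D_i}^{\,p}$. I would argue this by a codimension/normality reduction. The sheaf $\cO_{\tilde X}/I_{\tilde D_i}^{\,p}$ is (the structure sheaf of) the $p$-th infinitesimal thickening of $\tilde D_i$; the statement ``$\im(\tilde F_X^* I_{\tilde D_i})\subset I_{\tilde D_i}^{\,p}$'' is equivalent to the vanishing of a certain morphism $\tilde F_X^* I_{\tilde D_i}\to \cO_{\tilde X}/I_{\tilde D_i}^{\,p}$, which is a section of a coherent sheaf on $\tilde X$. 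Since $\tilde X$ is flat over $W_2(k)$ with normal special fibre $X$ and the divisor $\tilde D_i$ is flat over $W_2(k)$ with reduced fibre $D_i$, one checks that this target sheaf has no associated primes in codimension $\le 1$ along $\tilde D_i$ away from $U$ — more precisely, $X\setminus U$ has codimension $\ge 2$ in $X$ hence in $\tilde X$, and $I_{\tilde D_i}/I_{\tilde D_i}^{\,p}$ is a line-bundle-type filtered sheaf supported on $\tilde D_i$ whose relevant depth is controlled by normality of $X$. So a section of it vanishing on the big open $\tilde U$ vanishes identically, and the compatibility on $\tilde U$ supplied by hypothesis propagates to all of $\tilde X$.

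In carrying this out I would first pass, using part (1) of the Remark after Definition \ref{definition-F-liftable}, to a further big open subset on which $(X,D)$ is log smooth, so that \cite[Theorem 3.3.6 (a), (iii)]{AWZ} can be invoked in its original log-smooth form; then extend as above; then verify compatibility on the larger scheme by the density argument. It is convenient throughout to work with the ideal sheaves $I_{\tilde D_i}$ individually (as the definition of $F$-liftability does) rather than with $I_{\tilde D}$, so no subtlety about the singular locus of $D$ itself enters.

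The main obstacle I anticipate is the compatibility verification in the second paragraph: one must be careful that the relevant sheaf whose section is being shown to vanish genuinely has depth $\ge 2$ (equivalently, satisfies Hartogs extension) along $\tilde X\setminus \tilde U$. This uses that $X$ is normal (so $\cO_X$ satisfies $S_2$) and that $\tilde D_i\to \tilde S$ is flat with reduced fibre, so that $\cO_{\tilde D_i}$ is $W_2(k)$-flat and its special fibre $\cO_{D_i}$ is reduced and hence, by Serre's criterion applied to the normalization, has $S_1$; combined with the codimension-$\ge 2$ hypothesis on $X\setminus U$ this gives the needed extension property for the successive graded pieces $I_{\tilde D_i}^{\,j}/I_{\tilde D_i}^{\,j+1}$ of $I_{\tilde D_i}/I_{\tilde D_i}^{\,p}$. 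Since the paper explicitly says ``the proof is analogous to the one from \cite{AWZ} and we leave it to the reader,'' I would keep this to a short paragraph pointing at \cite{AWZ} for the extension step and indicating the density argument for compatibility, rather than reproducing the obstruction-theory computation.
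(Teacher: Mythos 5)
Your first step is fine and is what the paper means by ``analogous to \cite{AWZ}'': since $X$ is normal and $U$ is big, $j_*\cO_{\tilde U}=\cO_{\tilde X}$ (use $0\to\cO_U\to\cO_{\tilde U}\to\cO_U\to 0$ and $j_*\cO_U=\cO_X$), so $F_{\tilde U}^\sharp$ simply pushes forward to a ring map $\cO_{\tilde X}\to (F_X)_*\cO_{\tilde X}$ lifting Frobenius; no obstruction classes are needed. The genuine gap is in your second paragraph. The depth property you assert for the graded pieces $I_{\tilde D_i}^j/I_{\tilde D_i}^{j+1}$ (hence for $\cO_{\tilde X}/I_{\tilde D_i}^p$) is false when $D_i$ is a Weil divisor that is not Cartier: reducedness of $D_i$ only controls $\cO_{\tilde D_i}=I^0/I^1$ (this does give $I_{\tilde D_i}=j_*(I_{\tilde D_i}|_U)$), but the higher graded pieces acquire embedded torsion supported on the non-Cartier locus of $D_i$, which has codimension $\ge 2$ in $X$ and may lie entirely in $X\setminus U$. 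For example, on $X=\Spec k[x,y,z]/(xy-z^2)$ with $D=V(x,z)$ one has $y\cdot \bar x=\overline{z^2}=0$ in $I/I^2$, so $I/I^2$ has an associated point at the vertex. Consequently ``the obstruction map vanishes on $U$, hence everywhere'' is not available for the map $\tilde F_X^*I_{\tilde D_i}\to\cO_{\tilde X}/I_{\tilde D_i}^p$.

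Moreover this is not merely a defect of your justification: the statement you are trying to prove by density (that the canonical extension is compatible in the strict sense of Definition \ref{definition-F-liftable}(2)) can fail. Take $p=3$, $\tilde X=\Spec W_2(k)[x,y,z]/(xy-z^2)$, $\tilde D=V(x,z)$, $U$ the complement of the vertex, and let $\tilde F_1$ be the toric lifting $x\mapsto x^3$, $y\mapsto y^3$, $z\mapsto z^3$, which is compatible with $\tilde D$. The triple $(0,\,2y^2,\,xz)$ kills the relation $y^3F^*dx+x^3F^*dy-2z^3F^*dz$ (because $xz^4=x^3y^2$), hence defines $\xi\in\Hom_{\cO_X}(F_X^*\Omega_X,\cO_X)$, and $\tilde F_2^\sharp(g):=\tilde F_1^\sharp(g)+3\,\xi(F^*dg)$ is again a Frobenius lifting of $\tilde X$. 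On $\{y\ne 0\}$ one has $I_{\tilde D}=(z)$ and $\tilde F_2^\sharp(z)=z^3+3xz=z^3(1+3y^{-1})\in I_{\tilde D}^3$, so $\tilde F_2|_{\tilde U}$ is compatible with $\tilde D_U$; but at the vertex $3xz\ne 0$ has degree $2$ while $(x,z)^3$ is generated in degree $3$, so $\tilde F_2^\sharp(z)\notin I_{\tilde D}^3$ and $\tilde F_2$ is not compatible with $\tilde D$. What pushing forward does give automatically is $\tilde F_X^\sharp(I_{\tilde D_i})\subset j_*(I_{\tilde D_i}^p|_U)$, i.e.\ containment in the $p$-th symbolic (divisorial) power; your argument is therefore correct exactly where $D_i$ is Cartier near $X\setminus U$, where the two powers coincide and the graded pieces are line bundles on $\tilde D_i$. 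To get the lemma as stated (which only asserts the existence of some compatible $\tilde F_X$, not compatibility of the particular extension) you need an additional idea beyond the density/depth argument, e.g.\ interpreting the compatibility condition divisorially or correcting the extended lifting.
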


The following theorem shows that an almost liftable log pair, which is locally almost $F$-liftable is already liftable to $W_2$ and locally $F$-liftable.

\begin{Theorem}\label{equivalences-for-almost-liftings}
Let $(X,D)$ be a log pair. Then the following conditions are equivalent:
\begin{enumerate}
	\item $(X,D)$ is liftable to $W_2(k)$ and it is locally $F$-liftable.	
	\item $(X,D)$ is almost liftable to $W_2(k)$ and it is locally $F$-liftable.
	\item There exists a big open subset $U\subset X$  and a lifting  $(\tilde U, \tilde D_U)$ of $(U, D_U=D\cap U)$ such that every $x\in X$ has an open neighbourhood $V\subset X$ for which there exists an almost $F$-lifting of $(V, D_V)$ compatible with the almost lifting  induced from $(\tilde U, \tilde D_U)$. 
\end{enumerate}
\end{Theorem}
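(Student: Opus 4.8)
The plan is to prove the cycle of implications $(1)\Rightarrow(2)\Rightarrow(3)\Rightarrow(1)$. The implication $(1)\Rightarrow(2)$ is immediate: a lifting to $W_2(k)$ of the whole pair $(X,D)$ is in particular an almost lifting (take $U=X$), so there is nothing to prove apart from unwinding Definition \ref{definition-F-liftable-2}. The implication $(2)\Rightarrow(3)$ is also essentially formal. If $(X,D)$ is almost liftable to $W_2(k)$ and locally $F$-liftable, then by part (2) of Definition \ref{definition-F-liftable-2} there is a big open $U\subset X$, a lifting $(\tilde U,\tilde D_U)$, and for every $x\in X$ an open neighbourhood $V$ with an $F$-lifting of $(V,D_V)$ compatible with the lifting of $(V\cap U, D_{V\cap U})$ induced from $(\tilde U,\tilde D_U)$. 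An $F$-lifting of $(V,D_V)$ is in particular an almost $F$-lifting of $(V,D_V)$ (again take the big open subset to be all of $V$), so condition (3) holds with the same $U$ and $(\tilde U,\tilde D_U)$.

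The substance of the theorem is the implication $(3)\Rightarrow(1)$, and this is where Lemma \ref{AWZ-lemma} enters. Assume (3), so we have a big open $U\subset X$, a lifting $(\tilde U,\tilde D_U)$, and an open cover $\{V_\alpha\}$ of $X$ together with almost $F$-liftings of $(V_\alpha, D_{V_\alpha})$ compatible with the almost lifting of $(V_\alpha\cap U, D_{V_\alpha\cap U})$ induced from $(\tilde U,\tilde D_U)$. The first step is to globalize the lifting: I would glue the $W_2(k)$-liftings of the $V_\alpha$ coming from the almost $F$-liftings with the given lifting $(\tilde U,\tilde D_U)$ to produce a genuine lifting $(\tilde X,\tilde D)$ of the whole pair $(X,D)$ to $W_2(k)$. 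Here one must check that on overlaps $V_\alpha\cap V_\beta$ and on $V_\alpha\cap U$ the liftings agree (or can be chosen to agree) — this uses that over a big open subset of each $V_\alpha$ the almost lifting is by hypothesis the one induced from $(\tilde U,\tilde D_U)$, and that a lifting of a normal scheme (and of a reduced Weil divisor on it) is determined, via Lemma \ref{restriction-to-open}, by its restriction to a big open subset, since flat liftings to $W_2(k)$ are controlled by $\cO_X$-modules and reflexive sheaves extend uniquely across codimension $\ge 2$. Once $(\tilde X,\tilde D)$ is constructed, the second step is to produce, for each point $x\in X$, an $F$-lifting of a neighbourhood compatible with $(\tilde X,\tilde D)$: shrinking $V_\alpha$ around $x$, we have an almost $F$-lifting of $(V_\alpha, D_{V_\alpha})$, i.e.\ a big open $W\subset V_\alpha$ with a $W_2(k)$-lifting $\tilde W$ and an $F$-lifting $F_{\tilde W}$ of $(\tilde W,\tilde D_W)$, and by the glueing $\tilde W$ agrees with the restriction of $(\tilde X,\tilde D)$ to $W$; now apply Lemma \ref{AWZ-lemma} to the pair $(V_\alpha, D_{V_\alpha})$ with its big open $W$ and the $F$-lifting $F_{\tilde W}$ to obtain an $F$-lifting $\tilde F_{V_\alpha}:\tilde V_\alpha\to\tilde V_\alpha$ compatible with $\tilde D_{V_\alpha}$. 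This gives condition (1).

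The main obstacle I anticipate is the glueing in step one of the $(3)\Rightarrow(1)$ argument: one has to be careful that the $W_2(k)$-liftings of the various $V_\alpha$ provided by the almost $F$-liftings are mutually compatible and compatible with $(\tilde U,\tilde D_U)$, not merely abstractly isomorphic. The point is that condition (3) builds in the phrase ``compatible with the almost lifting induced from $(\tilde U,\tilde D_U)$'', which pins down each local lifting over a big open subset of $V_\alpha$; combined with the equivalence of categories $\Ref(\cO_{V_\alpha})\simeq\Ref(\cO_{W})$ for $W\subset V_\alpha$ big (Lemma \ref{restriction-to-open}) and the fact that a $W_2(k)$-lifting of a normal variety, being a flat $W_2(k)$-scheme, is reconstructed from its structure sheaf which is reflexive, one sees that the local liftings are canonically identified on overlaps and hence glue. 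A secondary technical point is to verify that the glued $\tilde D$ is flat over $\tilde S$, i.e.\ a relative effective reduced Weil divisor; this again follows because flatness is checked over a big open subset where $D$ is a Cartier divisor and liftings are transparent. Once these identifications are in place, the rest is bookkeeping and a direct appeal to Lemma \ref{AWZ-lemma}.
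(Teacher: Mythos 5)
There is a genuine gap, and it sits exactly at the heart of the theorem, in your step one of $(3)\Rightarrow(1)$. Condition (3) only hands you $W_2(k)$-liftings of \emph{big open subsets} $W_\alpha\subset V_\alpha$ (that is what an almost $F$-lifting is), and by the compatibility requirement these can be taken inside $V_\alpha\cap U$ with lifting equal to the restriction of $(\tilde U,\tilde D_U)$. So the objects you propose to glue -- ``the $W_2(k)$-liftings of the $V_\alpha$'' -- do not exist yet; glueing the available data with $(\tilde U,\tilde D_U)$ just returns a lifting of a big open subset of $X$, and produces nothing over the codimension-$\ge 2$ locus $X\setminus U$. Your justification that a lifting is ``determined by its restriction to a big open subset'' via Lemma \ref{restriction-to-open} does not repair this: $\cO_{\tilde V}$ is a sheaf of $W_2(k)$-algebras, not an $\cO_V$-module, so that lemma does not apply as stated, and more importantly determination (uniqueness) is not existence -- a lifting of a big open subset need not extend across codimension $2$. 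The paper's remark immediately after the theorem makes this point explicitly: every proper open subset of a smooth projective surface lifts to $W_2(k)$ even when the surface itself does not, so pushing forward or ``extending'' a lifting from $U$ cannot be the mechanism. Consequently your second step also cannot start, since Lemma \ref{AWZ-lemma} requires as input a $W_2(k)$-lifting of the \emph{whole} pair $(V_\alpha,D_{V_\alpha})$ whose restriction carries the given $F$-lifting.

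The missing idea is the Frobenius-splitting detour, which is what the paper uses to manufacture liftings of full neighbourhoods. Given $x\in X$ and $V$ as in (3), one first uses Lemma \ref{AWZ-lemma} to extend the $F$-lifting from the big open $V'\subset V\cap U$ to all of $\widetilde{V\cap U}$ (compatibly with $\tilde D_{U\cap V}$). Then, since $(V,D_V)$ is almost $F$-liftable, Proposition \ref{liftable-split}(2) shows $V$ is $F$-split compatibly with the components of $D_V$, and Proposition \ref{liftable-split}(1) converts this splitting into a \emph{canonical} $W_2(k)$-lifting of the whole $(V,D_V)$; one checks this canonical lifting extends $(\widetilde{V\cap U},\tilde D_{U\cap V})$, applies Lemma \ref{AWZ-lemma} a second time to extend the $F$-lifting to $(V,D_V)$, and finally glues these canonical local liftings with $(\tilde U,\tilde D_U)$ using the fact that an $F$-lifting is unique up to canonical isomorphism (the log version of \cite[Theorem 2.7]{AWZ2}). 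Without the splitting step there is no source of liftings over $X\setminus U$, so your argument as written does not prove $(3)\Rightarrow(1)$. Your treatment of $(1)\Rightarrow(2)\Rightarrow(3)$ is fine and matches the paper, which dismisses those implications as clear.
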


\begin{proof}
Implications $(1)\Rightarrow (2)$ and $(2)\Rightarrow (3)$ are clear. So let us assume (3). 
Then every point $x\in X$ has an open neighbourhood $V\subset X$ and a big open subset $V'\subset V\cap U$ with a compatible $F$-lifting $F_{\tilde V'}:(\tilde V', \tilde D') \to (\tilde V', \tilde D')$, where $\tilde V'=(V', \cO_{\tilde U}|_{V'})$ and  $\tilde D'=(D', \cO_{\tilde D}|_{D'})$.
By Lemma \ref{AWZ-lemma} we can extend $F_{\tilde V'}$ to  $F_{\widetilde{V\cap U}}$, where $\widetilde{V\cap U}:= (V\cap U, \cO_{\tilde U}|_{V\cap U})$. Moreover,  $F_{\widetilde{V\cap U}}$ is compatible with $\tilde D _{U\cap V}:= (V\cap D_U, \cO_{\tilde D_U}|_{V\cap D_U})$.

By Proposition \ref{liftable-split} we know that $V$ is $F$-split compatibly with irreducible components of $D_V$ and hence we have a canonical lifting of 
$(V,D_V)$ to $W_2$. Moreover, this lifting extends lifting $(\widetilde{V\cap U}, \tilde D _{U\cap V})$. So again using  Lemma \ref{AWZ-lemma} we can extend  $F_{\widetilde{V\cap U}}$ to an $F$-lifting of 
$(V, D_V)$. This shows (2). 

Now let us remark that for all $x$ we can glue the obtained canonical liftings $(\tilde V, \tilde D_V)$ to $(\tilde U, \tilde D_U)$, obtaining a lifting of  $(X,D)$ to $W_2(k)$, which is locally $F$-liftable.
One can do that since an $F$-lifting is uniquely determined up to a canonical isomorphism (this is a log version of \cite[Theorem 2.7]{AWZ2}). 
\end{proof}

The above theorem immediately implies the following corollary:

\begin{Corollary}
If $(X,D)$ is almost liftable to $W_2(k)$ and it has $F$-liftable singularities in codimension $2$
then there exists a closed subset $Z\subset X$ of codimension $\ge 3$ such that $(X\backslash Z, D\backslash Z)$ is liftable to $W_2(k)$ and it is locally $F$-liftable.
\end{Corollary}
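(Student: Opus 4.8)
The plan is to deduce this corollary directly from Theorem \ref{equivalences-for-almost-liftings} by unwinding the definition of ``$F$-liftable singularities in codimension $2$'' (Definition \ref{definition-F-liftable-2}(3)) and then applying the theorem on a suitable open subset. First, by hypothesis $(X,D)$ is almost liftable to $W_2(k)$, so there is a big open $U\subset X$ with $(U,D_U)$ liftable, and there is a closed subset $Z\subset X$ of codimension $\ge 3$ such that the log pair $(X\backslash Z, D\backslash Z)$ is locally $F$-liftable in the sense of Definition \ref{definition-F-liftable-2}(2). Set $X':=X\backslash Z$ and $D':=D\backslash Z=D\cap X'$. Since $\mathrm{codim}(Z)\ge 3\ge 2$, the open subset $U':=U\cap X'$ is still big in $X'$ (its complement in $X'$ is contained in $(X\backslash U)\cup Z$, both of codimension $\ge 2$ in $X$, hence in $X'$), and $(U',D_{U'})$ is an open subscheme of the liftable pair $(U,D_U)$, so it inherits a lifting by restriction; thus $(X',D')$ is almost liftable to $W_2(k)$.

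Next I would observe that ``locally $F$-liftable'' for the almost liftable pair $(X',D')$ is exactly condition (2) of Theorem \ref{equivalences-for-almost-liftings} applied to the log pair $(X',D')$: there is a big open $\hat U\subset X'$ and a lifting $(\tilde{\hat U}, \tilde D_{\hat U})$ of $(\hat U, D\cap \hat U)$ such that every point of $X'$ has a neighbourhood admitting a compatible $F$-lifting. (One has to be mildly careful that Definition \ref{definition-F-liftable-2}(2) was phrased with the big open $U$ coming from the almost-liftability data, but this is harmless: any two big opens have a common big refinement, and by Lemma \ref{restriction-to-open} the liftings agree after restriction, since over a big open the category of reflexive sheaves — and in particular the relevant deformation data — is unchanged; so we may assume a single big open $\hat U$ works for both.) Therefore Theorem \ref{equivalences-for-almost-liftings}, implication $(2)\Rightarrow(1)$, applies to $(X',D')$ and yields that $(X',D')=(X\backslash Z, D\backslash Z)$ is liftable to $W_2(k)$ and locally $F$-liftable. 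This is precisely the asserted conclusion.

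The only genuine point requiring care — and the step I expect to be the main (minor) obstacle — is the bookkeeping around the big open subsets: matching the big open $U$ witnessing almost-liftability of $X$ with the big open implicitly used in Definition \ref{definition-F-liftable-2}(2) for $X\backslash Z$, and then with the big open $\hat U\subset X'$ required as input to Theorem \ref{equivalences-for-almost-liftings}. All of these can be harmonized by passing to a common big open refinement and invoking Lemma \ref{restriction-to-open} to see that restriction of a lifting along a big open immersion loses no information; after that, the corollary is a one-line application of the theorem. Everything else is purely formal manipulation of the definitions, so I would keep the write-up short, essentially: reduce to $X\backslash Z$, note it is almost liftable and locally $F$-liftable, quote Theorem \ref{equivalences-for-almost-liftings}.
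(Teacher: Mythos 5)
Your proposal is correct and is essentially the paper's own (one-line) proof: apply the implication $(2)\Rightarrow(1)$ of Theorem \ref{equivalences-for-almost-liftings} to the log pair $(X\backslash Z, D\backslash Z)$, which is almost liftable to $W_2(k)$ by restricting the given almost lifting and is locally $F$-liftable by the very definition of $F$-liftable singularities in codimension $2$. The big-open ``harmonization'' you single out as the main obstacle is actually unneeded, since Definition \ref{definition-F-liftable-2}(2) quantifies existentially over the big open subset and its lifting (so no matching with the original almost-liftability data is required); this is just as well, because Lemma \ref{restriction-to-open} concerns reflexive sheaves and would not by itself show that two liftings agree after restriction to a common big open.
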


\begin{Remark}
 Note that it is usually much easier to lift to $W_2(k)$ a big open subset of $X$ than the whole $X$.
	For example, if $X$ is a smooth projective surface then any open subset $U\subsetneq X$ is liftable to $W_2(k)$. This follows from the fact that the obstruction  to lifting of $U$ to $W_2(k)$ lies in $H^2 (T_U)$, which vanishes by Lichtenbaum's theorem (see \cite[Theorem 0G5F]{SP}). Theorem 
\ref{equivalences-for-almost-liftings} says that if $X$ is not liftable to $W_2(k)$ then it is not locally (almost) $F$-liftable with respect to any lifting of $U$.
\end{Remark}

\begin{Remark}
If $X$ is $F$-liftable then it does not need to have rational singularities.
In fact, \cite[Example 5.2]{Zd} shows that the cone over an ordinary elliptic curve is $F$-liftable. This singularity is log canonical but not klt. Let us also recall that by \cite[Theorem 2.10, (c)]{AWZ2} if $X$ is $F$-liftable and $G$ is a linearly reductive group acting on $X$ then the quotient $X\sslash G$ is also $F$-liftable.

Finally, note that by \cite[Theorem 4.15]{Zd} ordinary double points of the form $(x_1^2+...+x_n^2=0)\subset \AA^n_k$ for $n\ge 5$ in characteristic $p\ge 3$  are $F$-split but they are not (locally) $F$-liftable.
 These singularities  are not only log canonical but even  terminal.  The hypersurface $(x_1^2+...+x_n^2=0)\subset \AA^n_k$ is almost liftable to $W_2(k)$  and it has $F$-liftable singularities in codimension $2$ (since for $n\ge 4$ it is regular in codimension $2$).
\end{Remark}

\subsection{Intersection theory on normal varieties}\label{Subsection:intersection-theory}

Let $X$ be a normal projective variety of dimension $n$ defined  over an algebraically closed field $k$.
In the following we write $A^1(X)$ for the class group of $X$, i.e., the group of Weil divisors modulo 
rational equivalence on $X$. If $\cE$ is a coherent $\cO_X$-module of rank $r\ge 1$ then the sheaf $\det \cE =(\bigwedge ^r \cE)^{**}$ is  reflexive of rank $1$ and we can consider the associated class $c_1 (\cE)\in A^1(X)$ of Weil divisors on $X$. 

More generally, we write $A_m (X)$ for the group of $m$-cycles modulo rational equivalence on $X$. Chern classes of vector bundles on $X$ are considered as in \cite{Fu} as operations on $A_*(X)$.

We say that two line bundles $L$ and $M$ on $X$ are \emph{numerically equivalent}
if for every proper curve $ C\subset X$ we have
$$\int_Xc_1(L)\cap [C]= \int_Xc_1(M)\cap [C].$$
If $L$ is numerically equivalent to $\cO_X$ then we say that $L$ is \emph{numerically trivial}. The group of line bundles modulo numerical equivalence is denoted by 
$N^1(X)$. This is a torsion free quotient of the N\'eron--Severi group of $X$. So by theorem of the base, $N^1(X)$ is a free $\ZZ$-module of finite rank. 

\medskip

Below we recall some results from \cite{La-Chern}.

\begin{Theorem}\label{main1}
	For any Weil divisors $D_1$ and $D_2$ on $X$ there exists a $\ZZ$-multilinear symmetric form $N^1(X)^{\times (n-2)}\to \QQ$, $(L_1,... ,L_{n-2})\to D_1.D_2.L_1...L_{n-2}$ such that:
	\begin{enumerate}	
		\item If both $D_1$ anf $D_2$ are Cartier then  
		$$ D_1.D_2.L_1...L_{n-2}= \int_Xc_1(\cO_X (D_1))\cap c_1(\cO_X(D_2))\cap c_1(L_1)\cap ...\cap c_1(L_{n-1})\cap  [X].$$
		\item If $D_2$ is a Cartier divisor then  
		$$ D_1.D_2.L_1...L_{n-2}= \int_X c_1(\cO_X (D_2))\cap c_1(L_1)\cap ...\cap c_1(L_{n-2})\cap  [D_1]\in \ZZ.$$
		\item If $L_1,...,L_{n-2}$ are very ample
		then   for a general complete intersection surface  $S\in |L_1|\cap ...\cap |L_{n-2}|$ we have 
		$$D_1.D_2.L_1...L_{n-2} =D_1|_S.D_2|_S,$$
		where on the right hand side  we have Mumford's intersection of Weil divisors on a normal surface.
	\end{enumerate}
\end{Theorem}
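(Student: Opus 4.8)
\medskip\noindent\emph{Sketch of a proof.} I would argue by induction on $n=\dim X$. When $n=2$ there are no $L_i$ and the object to be constructed is simply Mumford's intersection number $D_1.D_2\in\QQ$; its $\ZZ$-bilinearity and symmetry in the pair $D_1,D_2$ are part of Mumford's construction, (3) is vacuous, and (1)--(2) follow from that construction together with the projection formula for a resolution of $X$. So suppose the theorem holds for all normal projective varieties of dimension $n-1$, and let $n\ge 3$. Fix a very ample line bundle $A$ on $X$ (replaced by a multiple if necessary); by a Bertini-type theorem, valid in arbitrary characteristic once the polarizing system is sufficiently positive, a general $H\in|A|$ is an irreducible normal projective $(n-1)$-fold not contained in $\Supp D_1\cup\Supp D_2$ and meeting the prime components of $D_1$, $D_2$ and the strata of $X_{\mathrm{sing}}$ properly; hence $D_i|_H$ is a well-defined Weil divisor on $H$ with $[D_i|_H]=c_1(A)\cap[D_i]$, and $L_j|_H\in N^1(H)$. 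I would then \emph{define}
$$D_1.D_2.A.L_2\cdots L_{n-2}:=(D_1|_H).(D_2|_H).(L_2|_H)\cdots(L_{n-2}|_H),$$
the right-hand side being furnished by the inductive hypothesis; by induction it is $\ZZ$-multilinear and symmetric in $L_2,\dots,L_{n-2}$ and depends only on their numerical classes. Iterating the construction replaces $X$ by a general complete intersection surface $S\in|L_1|\cap\cdots\cap|L_{n-2}|$ and the form by $(D_1|_S).(D_2|_S)$; as $S$ does not remember the order in which the sections were taken, this extends the symmetry to all of $L_1,\dots,L_{n-2}$ and proves (3).

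The remaining points — independence of the chosen general $H\in|A|$, additivity in the $A$-slot, and descent to $N^1(X)$ — are all established by deformation, the central statement, and the main obstacle, being that Mumford's pairing is invariant under flat deformation of the surface on which it is computed. Independence of $H$ is trivial when $D_2$ is Cartier: by property (2) on $H$ (inductive hypothesis) and the projection formula,
$$(D_1|_H).(D_2|_H).(L_2|_H)\cdots(L_{n-2}|_H)=\int_X c_1(\cO_X(D_2))\cap c_1(A)\cap c_1(L_2)\cap\cdots\cap c_1(L_{n-2})\cap[D_1],$$
which depends only on the numerical classes of $D_2,A,L_2,\dots,L_{n-2}$ and not on $H$; taking $A=L_1$, this identity is already property (2) for $X$, and (1) follows from it via $[D_1]=c_1(\cO_X(D_1))\cap[X]$. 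For general Weil $D_2$ I would deform: the ``good'' members of $|A|$ form an irreducible family, so any two lie on an irreducible curve $C$ of good members, over a dense open of which one has a flat family of normal $(n-1)$-folds carrying flat families of the Weil divisors $D_1$, $D_2$; cutting this family further down by general sections, the fibres become normal surfaces, so the claim reduces to the constancy of Mumford's pairing in a flat family of normal surfaces. Since all fibrewise singularities lie over $X_{\mathrm{sing}}$ and are \'etale-locally of constant type along $C$, after shrinking $C$ one may fix a simultaneous resolution and reduce to the constancy of ordinary intersection numbers of flat families of $\QQ$-divisors on a smooth family of surfaces, which is standard.

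Granting this, additivity in the $A$-slot follows by a routine degeneration of a general member of $|A\otimes A'|$ to $H\cup H'$ with $H\in|A|$, $H'\in|A'|$ general and meeting properly, the construction being additive over the two normal components; together with the symmetry above, one gets a symmetric additive function on tuples of very ample classes, which extends uniquely to a $\ZZ$-multilinear symmetric form on $\Pic(X)^{\times(n-2)}$ because every line bundle is a difference of very ample ones. That this form factors through $N^1(X)^{\times(n-2)}$ is precisely the displayed Cartier computation: reducing the other slots to very ample classes, a numerically trivial class in any slot contributes $0$. The compatibilities (1)--(3) having been obtained en route, this completes the plan; the deformation-invariance of Mumford's pairing is the single genuinely technical ingredient.
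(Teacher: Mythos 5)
Your sketch does not correspond to an argument in this paper at all: Theorem \ref{main1} is only recalled here from \cite{La-Chern}, so the comparison must be with that construction; but judged on its own terms your plan has a genuine gap at exactly the step you yourself identify as "the single genuinely technical ingredient". You reduce everything to the constancy of Mumford's pairing in a flat family of normal surfaces carrying flat families of Weil divisors, and as a general statement this is simply false: in the degeneration of smooth quadric surfaces to the quadric cone, a line of one ruling (self-intersection $0$) specializes flatly to a generator $D$ of the cone, whose Mumford self-intersection is $\frac{1}{2}$ — this is precisely the phenomenon recorded in Example \ref{singular-quadric}, where it is pointed out that these invariants are not deformation invariant in flat families. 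So the whole weight of the proof falls on the extra hypotheses you insert to rule this out ("singularities \'etale-locally of constant type along $C$", "fix a simultaneous resolution after shrinking $C$"), and these are neither proved nor routine: the transverse singularity type of $X$ along an $(n-2)$-dimensional component of $X_{\mathrm{sing}}$ need not be \'etale-locally constant even at general points (only much weaker generic equisingularity statements are available, and the local analytic type can move in moduli); simultaneous resolution of an equisingular family of normal surface singularities is a strong condition (it requires a base change already for ADE families, and fails in general); and in positive characteristic — the case this paper actually needs, since Theorem \ref{main1} is used without any characteristic restriction — you cannot invoke generic smoothness to argue that a resolution of the total space of your family restricts to resolutions of the general fibres. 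The same unestablished invariance is then used a second time, in a strictly worse situation, when you degenerate a general member of $|A\otimes A'|$ to the reducible, non-normal divisor $H\cup H'$ to obtain additivity in the $A$-slot, and a third time (implicitly) in the descent to $N^1(X)$ when both $D_1$ and $D_2$ fail to be Cartier, since then numerical triviality of an $L_i$ cannot be handled by the displayed Cartier computation.

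The parts of your sketch that do work are the easy ones: when one of $D_1,D_2$ is Cartier, property (2) on $H$ plus the projection formula gives independence of $H$ and yields (1) and (2); symmetry in the $L_i$ and the reduction of (3) to the definition are formal. These correctly isolate the real content of the theorem, namely well-definedness of $D_1|_S.D_2|_S$ for two non-Cartier Weil divisors as the complete intersection surface $S$ moves (and as the very ample systems are exchanged for numerically equivalent ones). But that is exactly the point where the work of \cite{La-Chern} has to be reproduced, and "constancy of Mumford's pairing in flat families" cannot serve as the engine, because without the unverified equisingularity and simultaneous-resolution input it is contradicted by the paper's own example.
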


\begin{Theorem}\label{properties-of-ch_2}
	Assume that $k$ has positive characteristic. For any normal projective variety $X/k$ and for any 
	coherent reflexive $\cO_X$-module $\cE$ on $X$ there exists a $\ZZ$-multilinear symmetric 
	form $\int _X \ch _2 (\cE): N^1(X) ^{\times (n-2)}\to \RR$ such that:
	\begin{enumerate}
		\item If $\cE$ is a vector bundle on $X$ then $$\int _X \ch _2 (\cE)L_1...L_{n-2}=\int_X \ch _2 (\cE)\cap c_1(L_1)\cap ...\cap c_1(L_{n-2})\cap[X].$$
	    \item If $k\subset K$ is an algebraically closed field extension then 
	    $$\int _{X_K} \ch _2 (\cE_K)(L_1)_K...(L_{n-2})_K=\int _X \ch _2 (\cE)L_1...L_{n-2}.$$
	    \item If $n>2$ and  $ L_1$ is very ample then for a very general hypersurface $H\in |L_1|$ we have
		$$\int _X \ch_2 (\cE)L_1...L_{n-2}=\int _{H}\ch_2 (\cE|_{H})L_2|_{H}...L_{n-2}|_{H}.$$
		\item If $X$ is a surface then
		$$\int _X \ch _2 (\cE)= \lim _{m\to \infty }\frac {  \chi (X,  F_X^{[m]}\cE  ) }{p^{2m}}.$$
		\item We have
		$$\int _X \ch _2 (F_X^{[*]}\cE)L_1...L_{n-2}=p^2\int _X \ch _2 (\cE)L_1...L_{n-2}.$$
	\end{enumerate}
\end{Theorem}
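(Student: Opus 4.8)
\medskip
\noindent\emph{Proof strategy.} The plan is to build the form $\int_X\ch_2(\cE)$ in two stages — first on normal projective surfaces via the Frobenius limit appearing in (4), then in arbitrary dimension by restriction to general complete intersection surfaces — and then to read off properties (1)--(5). The recurring soft input is Lemma \ref{restriction-to-open}: a coherent reflexive sheaf on a normal variety is recovered from its restriction to any big open subset, so identities between reflexive sheaves may be checked on the regular locus $X_{\mathrm{reg}}$.

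\emph{Stage 1 (surfaces).} On a normal projective surface $X$ one takes the right-hand side of (4) as the definition, and the issue is convergence. I would fix a resolution $\pi\colon Y\to X$ and a locally free sheaf $\cF$ on $Y$ agreeing with the pullback of $\cE$ over $X_{\mathrm{reg}}$ (for instance $\pi^{[*]}\cE$, made locally free by a further blow-up). Since $\pi$ is an isomorphism over $X_{\mathrm{reg}}$ and $F_X^{[m]}\cE$ is reflexive, Lemma \ref{restriction-to-open} gives $F_X^{[m]}\cE\cong(\pi_{*}(F_Y^{m*}\cF))^{**}$, whence
$$\chi(X,F_X^{[m]}\cE)=\chi(Y,F_Y^{m*}\cF)+e(m),$$
where $e(m)$ collects the length of $R^1\pi_{*}(F_Y^{m*}\cF)$ and of the cokernel of $\pi_{*}(F_Y^{m*}\cF)\hookrightarrow F_X^{[m]}\cE$, both finite-length sheaves supported on the singular locus and obtained from fixed sheaves by an $F^m$-twist. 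On the smooth surface $Y$, Riemann--Roch gives $\chi(Y,F_Y^{m*}\cF)=p^{2m}\deg_Y\ch_2(\cF)+a\,p^{m}+b$ with $a,b$ independent of $m$, since $F_Y^{m*}$ multiplies the $A_1$-part of the Chern character by $p^{m}$ and the $A_0$-part by $p^{2m}$. The crux is the estimate $e(m)=O(p^{m})$; granting it, $p^{-2m}\chi(X,F_X^{[m]}\cE)\to\deg_Y\ch_2(\cF)$, which proves (4) and shows the limit is independent of all choices. Property (5) on a surface follows from the canonical isomorphism $F_X^{[m]}(F_X^{[*]}\cE)\cong F_X^{[m+1]}\cE$ (both sides are reflexive and restrict to $(F_X^{m+1})^{*}\cE$ over $X_{\mathrm{reg}}$), giving $\int_X\ch_2(F_X^{[*]}\cE)=\lim_m p^{-2m}\chi(F_X^{[m+1]}\cE)=p^{2}\int_X\ch_2(\cE)$. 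Property (1) on a surface is the case where $\cE$ is already locally free on $X$, so $F_X^{[m]}\cE=F_X^{m*}\cE$ and $e(m)=O(1)$; Fulton's Riemann--Roch for the singular surface $X$ then identifies the limit with $\int_X\ch_2(\cE)\cap[X]$.

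\emph{Stage 2 (higher dimension).} For $n>2$ and $L_1,\dots,L_{n-2}$ very ample, set $\int_X\ch_2(\cE)L_1\cdots L_{n-2}:=\int_S\ch_2(\cE|_S)$ for a general complete intersection surface $S\in|L_1|\cap\cdots\cap|L_{n-2}|$; here $S$ is normal by Bertini--Seidenberg, and $\cE|_S$ is reflexive because restricting a presentation $0\to\cE\to\cF_0\to\cF_1$ by locally free sheaves to a general member of a very ample linear system stays exact, exhibiting $\cE|_S$ as a second syzygy. One must check (i) independence of $S$ and (ii) that this extends to a $\ZZ$-multilinear symmetric form on $N^1(X)^{\times(n-2)}$. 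For (i), I would first establish (3): for very general $H\in|L_1|$, a general complete intersection surface for $(L_1,\dots,L_{n-2})$ in $X$ is the same as one for $(L_2|_H,\dots,L_{n-2}|_H)$ in the normal variety $H$ (with $\cE|_H$ again reflexive by the syzygy argument), so $\int_X\ch_2(\cE)L_1\cdots L_{n-2}=\int_H\ch_2(\cE|_H)L_2|_H\cdots L_{n-2}|_H$ tautologically; then induct on $n$, using that over a surface the Frobenius limit is manifestly a number attached to the restricted sheaf. Iterating (3) also reduces (ii), the symmetry, and the compatibility in (1) (via the projection formula for operational Chern classes and $[S]=c_1(L_1)\cap\cdots\cap c_1(L_{n-2})\cap[X]$) to Stage 1. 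Property (5) in general follows likewise: for general $S$ one has $(F_X^{[*]}\cE)|_S\cong F_S^{[*]}(\cE|_S)$ — using $F_X|_S=F_S$ and reflexivity of the restriction — so $\int_X\ch_2(F_X^{[*]}\cE)L_1\cdots L_{n-2}=\int_S\ch_2(F_S^{[*]}(\cE|_S))=p^2\int_S\ch_2(\cE|_S)$. Finally, property (2) is flat base change: for $k\subset K$ algebraically closed, $X_K\to X$ is flat, so $\cHom$ and the reflexive hull commute with $-\otimes_k K$ and $(F_X^{[m]}\cE)_K\cong F_{X_K}^{[m]}(\cE_K)$ with $\chi(X_K,(F_X^{[m]}\cE)_K)=\chi(X,F_X^{[m]}\cE)$; in higher dimension a general $S$ over $k$ base-changes to an admissible surface $S_K$ over $K$ computing the left-hand side, and $\int_{S_K}\ch_2(\cE_K|_{S_K})=\int_S\ch_2(\cE|_S)$ by the surface case.

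The main obstacle is the convergence in Stage 1, i.e. the estimate $e(m)=O(p^{m})$ for an arbitrary (possibly non-rational, non-Cohen--Macaulay) normal surface singularity: the hard part will be obtaining bounds, uniform in $m$, on the length of $R^1\pi_{*}(F_Y^{m*}\cF)$ and on the local double-dual discrepancy at the singular points, together with the check that the resulting constant $\deg_Y\ch_2(\cF)$ does not depend on the resolution or on the locally free model. Everything else — reflexive-hull bookkeeping via Lemma \ref{restriction-to-open}, Bertini genericity, the projection formula, and flat base change — is routine.
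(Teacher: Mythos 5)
Your Stage 1 is where the proposal breaks, and the failure is not the technical estimate you flagged as ``the crux'' being hard --- it is false, and the paper's own Example \ref{singular-quadric} already contradicts your conclusion. (Note also that the paper does not prove Theorem \ref{properties-of-ch_2} at all: it is recalled from \cite{La-Chern}, so the comparison has to be with that construction.) Take $X\subset \PP^3$ the quadric cone and $\cE=\cO_X(D)$ with $D$ a ruling, $\pi:Y=\FF_2\to X$ the minimal resolution, $\tilde D=f$ a fibre, $E$ the $(-2)$-curve. Then $F_X^{[m]}\cE=\cO_X(p^mD)=\pi_*\cO_Y\bigl(p^mf+\tfrac{p^m-1}{2}E\bigr)$, and Riemann--Roch on $Y$ gives $\chi(X,\cO_X(p^mD))=\tfrac{p^{2m}}{4}+p^m+\tfrac34$, so the limit in (4) is $\tfrac14$ (and symmetrically $\int_X\ch_2(\cO_X(-D))=\tfrac14$, recovering the paper's value $D^2=\tfrac12$ for the sum). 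But any locally free model $\cF$ of $\cO_X(D)$ on a resolution is a line bundle, so $\deg_Y\ch_2(\cF)=\tfrac12 c_1(\cF)^2\in\tfrac12\ZZ$ can never equal $\tfrac14$; concretely $\cF=\cO_Y(f+kE)$ gives $k-k^2$, which moreover depends on $k$. In this example $R^1\pi_*$ vanishes (rational singularity, relatively nef twist), so the entire discrepancy sits in the cokernel of $\pi_*(F_Y^{m*}\cF)\hookrightarrow F_X^{[m]}\cE$, whose length grows like $\tfrac{p^{2m}}{4}$. So $e(m)$ is of order $p^{2m}$, not $O(p^m)$, and the limit is not $\deg_Y\ch_2(\cF)$ for any choice of model or resolution.

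This is structural, not a repairable bound: the local contributions at the singular points enter at leading order, so $\int_X\ch_2(\cE)$ equals a resolution term plus Hilbert--Kunz-type local corrections, and the real work (carried out in \cite{La-Chern}) is to prove that these local length functions themselves behave like $c\,p^{2m}+o(p^{2m})$ --- which is also why the theorem only asserts values in $\RR$ rather than $\QQ$, something your argument would contradict since it always produces a half-integer plus bounded corrections. Your Stage 2 (definition via very general complete intersection surfaces, reflexivity of restrictions by the syzygy argument, multilinearity and extension to $N^1(X)$, flat base change for (2), and $(F_X^{[*]}\cE)|_S\simeq F_S^{[*]}(\cE|_S)$ for (5)) is broadly in the right spirit and the reflexive-hull bookkeeping via Lemma \ref{restriction-to-open} is fine, but none of it can start until the surface case is established by a genuinely different method; note also that (3) really requires ``very general'' $H$ (countably many conditions, one for each Frobenius twist), not merely ``general'' as in your write-up.
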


Once we have the above theorems we can define some other Chern numbers as follows.

\begin{Definition}
	For any reflexive coherent $\cO_X$-module $\cE$ of rank $r$ and any line bundles $L_1,...,L_{n-2}$ on $X$
	we define the following Chern numbers:
	$$\int_X c_1 ^2(\cE) L_1...L_{n-2}:= c_1 (\cE)^2.L_1...L_{n-2},$$
	$$\int _X c_2 (\cE)L_1...L_{n-2}:= \frac{1}{2} \int_X c_1 ^2(\cE) L_1...L_{n-2}-\int _X \ch _2 (\cE)L_1...L_{n-2},$$
	$$\int _X \Delta (\cE)L_1...L_{n-2}=2r \int _X c_2 (\cE)L_1...L_{n-2} -(r-1)\int_X c_1 ^2(\cE) L_1...L_{n-2}.$$
\end{Definition}

By linearity we can also extend obtained forms to $\QQ$-line bundles. In this way we get symmetric $\QQ$-multilinear forms $N^1(X)_{\QQ}^{n-2}\to \RR$, where $N^1(X)_{\QQ}=N^1(X)\otimes \QQ$.

\subsection{Numerical groups of divisors}

Let $X$ be an irreducible normal scheme defined over an algebraically closed field $k$.
We say that  a Weil divisor $D$ is \emph{algebraically equivalent to zero} if there exists a smooth variety $T$, a Weil divisor $ G$ on $X\times _kT$ and $k$-points $t_1,t_2\in T$ 
such that $D= G_{t_1}-G_{t_2}$ in $A^1(X)$  (see \cite[10.3]{Fu}). Then we write $D\sim _{alg}0$. 
The group of algebraic equivalence classes of Weil divisors on $X$ is denoted by  $B^1(X)$. 
By $\bar B^1(X)$ we denote the quotient of $B^1(X)$ by torsion.

Let us recall that if $X$ is proper then $N^1(X)\simeq \Pic X/\Pic ^{\tau} X$.
If $X$ is also smooth  then $N^1(X)\simeq \bar B^1(X)$ (see \cite[Theorem 9.6.3]{Kl}).  
We will need the following variant of theorem of the base. It is a special case of \cite[Th\'eor\`eme 3]{Ka} but we provide a different simple proof in the case used in the paper.

\begin{Lemma}\label{Neron-Severi}
	Let $X$ be an irreducible, normal, proper scheme defined over an algebraically closed field $k$. Then $\bar B^1(X)$ is a free $\ZZ$-module of finite rank.
\end{Lemma}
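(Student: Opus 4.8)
The plan is to reduce the statement to the well-known theorem of the base for smooth projective varieties (\cite[Theorem 9.6.3]{Kl} or the classical Néron--Severi theorem), exactly as in the hypotheses permit us to cite. First I would reduce to the projective case: since $X$ is irreducible, normal and proper over the algebraically closed field $k$, by Chow's lemma there is a projective birational morphism $\pi : X'\to X$ with $X'$ proper; normalizing if necessary we may assume $X'$ is normal, and after a further resolution (or by a de Jong alteration if one wants to avoid resolution, but here we only need an alteration-free argument) we may even pass to a projective variety. Actually the cleanest route avoids resolution entirely: take $\pi: X'\to X$ projective birational with $X'$ normal projective; pushforward of divisors $\pi_*$ and pullback of the generic part give maps on class groups, and I would use these to compare $\bar B^1(X)$ with a subquotient of $\bar B^1(X')$.

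The key steps, in order, would be: (1) Reduce to $X$ projective via Chow's lemma and the observation that algebraic equivalence classes behave well under proper pushforward of Weil divisors (cf. \cite[Chapter 10]{Fu}); more precisely, given a projective birational $\pi:X'\to X$ with $X'$ normal, every Weil divisor on $X$ is the pushforward of a Weil divisor on $X'$ (take the strict transform), and algebraically-trivial divisors push forward to algebraically-trivial divisors, so $B^1(X)$ is a quotient of $B^1(X')$ by the subgroup generated by $\pi$-exceptional divisors and algebraic-equivalence relations. (2) Reduce to $X$ smooth: again by Chow's lemma combined with a projective resolution in the cases where resolution is available, or — to stay characteristic-free — by noting that a normal projective variety admits a projective alteration $f:\tilde X\to X$ with $\tilde X$ smooth projective (de Jong), and analyzing the transfer of algebraic equivalence along $f$ using $f_*$ and $f^*$ and the projection formula; the composite $f_*f^*$ is multiplication by $\deg f$ on $\bar B^1(X)$, which shows $\bar B^1(X)$ injects into $\bar B^1(X)\otimes\QQ$ as a lattice inside the image of a finitely generated group. (3) Invoke the smooth case: for $\tilde X$ smooth projective, $\bar B^1(\tilde X)\simeq N^1(\tilde X)$ is a finitely generated free $\ZZ$-module by the theorem of the base (\cite[Theorem 9.6.3]{Kl}). (4) Conclude: $\bar B^1(X)$ is torsion-free by construction (we quotiented out torsion), and it is a subquotient, up to torsion and finite index, of the finitely generated group $\bar B^1(\tilde X)$, hence finitely generated; a torsion-free finitely generated abelian group is free of finite rank.

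The main obstacle I expect is step (2), the descent of finite generation of the Néron--Severi-type group along an alteration (or a resolution/Chow modification) of a normal variety: one must check carefully that the map $f_*:B^1(\tilde X)\to B^1(X)$ is well-defined on algebraic equivalence classes and surjective up to torsion, and that $f^*$ (pullback of Weil divisors, defined at least on the big open locus over which $f$ is an isomorphism, then extended reflexively) composed with $f_*$ is multiplication by $\deg f$. The subtlety is that pullback of Weil divisors is only a priori defined up to exceptional divisors, so one works modulo the (finitely generated) group of exceptional divisor classes. Since $\tilde X\to X$ has finitely many exceptional prime divisors, this correction group is finitely generated, and finite generation of $\bar B^1(X)$ follows once $\bar B^1(\tilde X)$ is known finitely generated. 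I would also remark that freeness is then automatic because $\bar B^1$ is defined as the torsion-free quotient, so no separate argument for torsion-freeness is needed beyond invoking the definition.
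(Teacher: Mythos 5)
Your overall strategy---de Jong alteration, the theorem of the base on the smooth model, and descent of finite generation via $f_*f^*=\deg f$---is the same as the paper's, but your step (2) as written has a genuine gap. An alteration $f\colon \tilde X\to X$ of degree $>1$ is nowhere an isomorphism, so ``the big open locus over which $f$ is an isomorphism'' does not exist, and ``working modulo exceptional divisor classes'' is the language of birational morphisms, not of generically finite ones. The crux of the argument is to produce a map $f^*$ that is actually well defined on algebraic equivalence classes and for which $f_*f^*=\deg f$ holds on $B^1$; you assert this but never construct it, and without it the injectivity modulo torsion of $\bar B^1(X)$ into a finitely generated group is unproved. (Your step (1) via Chow's lemma is also superfluous: de Jong's theorem \cite{dJ} applies directly to the proper variety $X$ and already yields a smooth projective alteration.)

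The paper closes exactly this gap by Stein-factorizing the alteration as $\tilde Y\xrightarrow{\ g\ } Y\xrightarrow{\ \pi\ } X$ with $g$ proper birational onto a normal variety and $\pi$ finite surjective. The birational part is handled by the localization sequence: restriction to the complement of the exceptional locus gives a surjection $B^1(\tilde Y)\twoheadrightarrow B^1(Y)$, so $B^1(Y)$ is finitely generated once one knows $B^1(\tilde Y)$ is (theorem of the base, \cite{Kl}). For the finite part one uses that $\pi$ is flat over a big open subset $U\subset X$ (so that $B^1(X)\simeq B^1(U)$ and flat pullback of cycles is defined and respects algebraic equivalence, by \cite[Proposition 10.3 and Example 10.3.4]{Fu}); then $\pi_*\pi^*=\deg\pi$ on $B^1(U)$ shows the kernel of $B^1(X)\to B^1(Y)$ is torsion, whence $\bar B^1(X)\hookrightarrow \bar B^1(Y)$ and finite generation descends. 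Your outline can be repaired in the same way, or alternatively by observing that the alteration itself is finite and flat over a big open subset of $X$: the locus of positive-dimensional fibers has codimension $\ge 2$, $X$ is regular in codimension $1$, and $\tilde X$ is Cohen--Macaulay, so miracle flatness applies; with that in hand your $f^*$ and the identity $f_*f^*=\deg f$ become literally correct on $B^1$ and the rest of your argument goes through.
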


\begin{proof} 
By definition $\bar B^1(X)$ is torsion free. So it is sufficient to prove that $\bar B^1(X)$ is  finitely generated as a $\ZZ$-module. By \cite{dJ} there exists an alteration $\tilde Y\to X$ from  a smooth projective variety $\tilde Y$. Taking Stein's factorization we get a proper birational map
	$g: \tilde Y\to Y$ to normal variety and a finite surjective morphism $\pi: Y\to X$.
	
	Let $E$ be the exceptional locus of $g$. Then using the localization sequence (see \cite[Example 10.3.4]{Fu}) we get a surjective map
	$$ B^1 (\tilde Y )\twoheadrightarrow B^1 (\tilde Y\backslash E)\simeq B^1 (Y\backslash g(E))
	\simeq B^1(Y),$$
showing  that $\bar B^1(Y)$ is finitely generated.
	
There exists a big open subset $U\subset X$ such that $\pi: V:=\pi^{-1}(U)\to U$ is flat.
Then using flat pullback and the localization sequence (see \cite[Proposition 10.3 and Example 10.3.4]{Fu}) we have a well defined map
$$B^1(X)\simeq B^1(U)\mathop{\longrightarrow}^{\pi^*} B^1(V)\simeq B^1(Y)$$
induced by pullback of Weil divisors. Since  $\pi_*\pi^*$ is multiplication by the degree of $\pi$ on $A^1(U)$ (and hence also on $B^1(U)$), the induced map $\bar B^1(X)\to \bar B^1(Y)$ is injective.
This implies that  $\bar B^1(X)$ is also finitely generated.
\end{proof}

\medskip

From now one in this subsection $X$ is a normal projective variety of dimension $n$ defined  over an algebraically closed field $k$. 

\begin{Lemma}
If a Weil divisor $D_1$ is algebraically equivalent to zero then for every Weil divisor $D_2$
and all line bundles $L_1,..., L_{n-2}$ we have $$D_1.D_2.L_1...L_{n-2}=0.$$
\end{Lemma}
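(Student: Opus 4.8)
The plan is to reduce the statement to the already-established intersection-theoretic results from Subsection~\ref{Subsection:intersection-theory}, in particular Theorem~\ref{main1}, by cutting down to a surface with very ample polarizations. First I would reduce to the case where all of $L_1,\dots,L_{n-2}$ are very ample: by multilinearity of the form $D_1.D_2.L_1\dots L_{n-2}$ over $N^1(X)$ (established in Theorem~\ref{main1}), and since every line bundle class in $N^1(X)$ is a difference of very ample classes, it suffices to prove the vanishing when each $L_i$ is very ample. Next, using Theorem~\ref{main1}(3), I would pass to a general complete intersection surface $S\in|L_1|\cap\dots\cap|L_{n-2}|$, for which $D_1.D_2.L_1\dots L_{n-2}=D_1|_S.D_2|_S$, the Mumford intersection product of Weil divisors on the normal surface $S$.

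So the problem becomes: if $D_1\sim_{alg}0$ on $X$, then $D_1|_S\sim_{alg}0$ on $S$ (or at least $D_1|_S$ has zero Mumford intersection with every Weil divisor on $S$). For this I would write $D_1=G_{t_1}-G_{t_2}$ with $G$ a Weil divisor on $X\times_k T$ for a smooth variety $T$. Restricting $G$ to $S\times_k T$ (choosing $S$ general so that it meets the relevant loci properly and the restriction makes sense as a Weil divisor with the expected fibers) exhibits $D_1|_S=G'_{t_1}-G'_{t_2}$, so $D_1|_S\sim_{alg}0$ on $S$. Then I would invoke the fact — a standard property of Mumford's intersection pairing on a normal surface, which factors through numerical equivalence — that a divisor algebraically equivalent to zero is numerically trivial, hence $D_1|_S.D_2|_S=0$.

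An alternative, perhaps cleaner, route avoids surfaces: since algebraic equivalence classes form the group $B^1(X)$ and $D_1\sim_{alg}0$ means $D_1$ is in the connected "identity component", I would note that the function $D_2\mapsto D_1.D_2.L_1\dots L_{n-2}$ is locally constant in families (the intersection numbers are deformation invariant because, after reducing to the surface case via very ample $L_i$, Mumford intersection numbers of a fixed divisor against the members of an algebraic family are constant — this is where Lemma~\ref{Neron-Severi} and the identification of numerical and algebraic equivalence modulo torsion enter). Taking $D_1=G_{t_1}-G_{t_2}$ then gives $D_1.D_2.L_1\dots L_{n-2}=G_{t_1}.D_2.L_1\dots L_{n-2}-G_{t_2}.D_2.L_1\dots L_{n-2}=0$.

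The main obstacle I anticipate is the book-keeping needed to restrict the family divisor $G$ on $X\times_k T$ to $S\times_k T$ and to guarantee, for general $S$, that this restriction is again a relative Weil divisor with fibers $G_{t_i}|_S$ computing $D_1|_S$ correctly — i.e., that general position holds simultaneously with respect to $D_1$, $D_2$, the base locus of the $|L_i|$, and the fibers $G_{t_1},G_{t_2}$. Once this transversality is arranged, the rest is a direct appeal to Theorem~\ref{main1} together with the standard fact that Mumford's intersection form on a normal surface descends to numerical (hence algebraic) equivalence classes, so I expect the proof to be short modulo that transversality argument, which itself follows from a routine Bertini-type application after replacing each $L_i$ by a sufficiently high power if necessary.
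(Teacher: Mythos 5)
Your reduction steps are fine and match the paper: by multilinearity you may assume $L_1,\dots,L_{n-2}$ very ample, restrict to a general complete intersection surface $S$ using Theorem \ref{main1}(3), and observe that $D_1|_S$ is still algebraically equivalent to zero (the paper gets this cleanly from the Gysin homomorphism, \cite[Proposition 10.3]{Fu}, rather than your hands-on Bertini/transversality bookkeeping, but that is a matter of packaging). The genuine gap is at the crux, the surface case: you dispose of it by calling it ``a standard property of Mumford's intersection pairing on a normal surface, which factors through numerical equivalence --- that a divisor algebraically equivalent to zero is numerically trivial.'' That Mumford's pairing descends to numerical equivalence is true by definition of numerical equivalence with respect to that pairing; the statement actually needed is that algebraic equivalence of Weil divisors on a \emph{normal} surface implies vanishing of Mumford's pairing, and this is exactly the assertion of the lemma in the surface case, not something you can quote. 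The paper proves it: take a resolution $f\colon \tilde X\to X$, apply Mumford's pullback construction to the family divisor $G$ on $X\times_k T$ via $g=f\times\id$ so that $g^*G$ restricts to $f^*(G_t)$ on every fiber, conclude that $f^*D_1=(g^*G)_{t_1}-(g^*G)_{t_2}$ in $A^1(\tilde X)\otimes\QQ$, hence a multiple of $f^*D_1$ is algebraically equivalent to zero on the smooth surface $\tilde X$, where algebraic equivalence does imply numerical triviality, and then $D_1.D_2=f^*D_1.f^*D_2=0$ by the definition of Mumford's pairing. The nontrivial point is precisely the compatibility of Mumford's ($\QQ$-valued) pullback with the algebraic family, which your proposal never addresses.

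Your alternative route has the same gap, only relocated: ``Mumford intersection numbers of a fixed divisor against the members of an algebraic family are constant'' is again the content to be proven (and recall that on normal surfaces deformation invariance of such invariants is delicate --- cf.\ Example \ref{singular-quadric} for Chern classes of reflexive sheaves), while the appeal to ``the identification of numerical and algebraic equivalence modulo torsion'' is only available for smooth proper varieties at this point; for normal varieties the paper obtains it much later as a corollary of the boundedness results, which themselves rest on this lemma, so using it here would be circular. To repair the proposal, replace the ``standard fact'' by the resolution-plus-Mumford-pullback argument above (or some equivalent treatment of the surface case).
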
 

\begin{proof}
Let us first assume that $X$ is a surface and let $f: \tilde X\to X$ be a resolution of singularities. By assumption there exists a smooth variety $T$, a Weil divisor $G$ on $X\times _kT$ and $k$-points $t_1,t_2\in T$ such that $D= G_{t_1}-G_{t_2}$ in $A^1(X)$. 
Let us consider the map $g:=f\times \id : \tilde X\times _kT\to X \times_k T$. One can use Mumford's construction of pullback to define  $g^*G$ that restricts to $f^*(G_t)$ on $\tilde X\times \{t\}$
for every $t\in T(k)$.
Then we have $f^*D= (g^*G)_{t_1}-(g^*G)_{t_2}$ in $A^1(\tilde X)\otimes \QQ$. This implies  that some multiple of $f^* D_1$ is algebraically equivalent to zero and hence $f^*D_1.f^*D_2=D_1.D_2=0$.

	In general, by linearity of the intersection product it is sufficient to prove that  $D_1.D_2.L_1...L_{n-2}=0$ assuming that $L_1,..., L_{n-2}$ are very ample.
Let $S\in |L_1|\cap ...\cap |L_{n-2}|$ be a general complete intersection surface.
Since cycles algebraically equivalent to zero are preserved by  Gysin homomorphisms (see \cite[Proposition 10.3]{Fu}) the restriction $D_1|_S$ is algebraically equivalent to zero.
So by Theorem \ref{main1} we have
$$D_1.D_2.L_1...L_{n-2} =D_1|_S.D_2|_S=0.$$
\end{proof}

The above lemma shows that the intersection pairing $(D_1, D_2, L_1,... ,L_{n-2})\to D_1.D_2.L_1...L_{n-2}$ induces a $\ZZ$-multilinear map 
$$B^1(X)\times B^1(X)\times N^1(X)^{\times (n-2)}\to \QQ .$$

\medskip

Let us fix  a collection $L=(L_2,...,L_{n-1})$ of nef line bundles on $X$.
Assume that there exists a nef line bundle $L_1$ such that $L_1L_2....L_{n-1}$ is numerically non-trivial, i.e., there exists some Weil divisor $D$ such that $D.L_1...L_{n-1}\ne 0$. Let us consider a $\QQ$-valued intersection pairing $\langle \cdot, \cdot \rangle _L : B^1(X)\times B^1(X) \to \QQ$ defined by
$$\langle D_1, D_2\rangle _L:= D_1.D_2.L_2...L_{n-1} .$$
Let us write $N_L(X)$ for the quotient of $B^1 (X)$ modulo the radical of this intersection pairing. 
Then we have an induced non-degenerate intersection pairing
$$\langle \cdot, \cdot \rangle _L: N_L(X)\times N_L(X)\to \QQ.$$

\begin{Lemma}\label{easy-HIT}
	Let us assume that $L_1^2L_2...L_{n-1}>0$. If  $D_1.L_1L_2...L_{n-1}=0$ then  $D_1^2L_2...L_{n-1}\le 0$
	with equality if and only if the class $[D_1]\in N_L(X)$ is zero.
\end{Lemma}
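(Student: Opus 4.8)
The plan is to prove the Hodge Index type statement $D_1^2 L_2 \cdots L_{n-1} \le 0$ by reducing to the surface case, where Mumford's intersection theory on normal surfaces already provides a Hodge Index theorem. First I would invoke the assumption $L_1^2 L_2 \cdots L_{n-1} > 0$, which in particular forces the $L_i$ to give a nontrivial intersection form, and I would replace the nef line bundles $L_2,\dots,L_{n-1}$ by very ample ones: by multilinearity and continuity of the intersection pairing, and the fact that nef classes are limits of ample classes, it suffices to establish the inequality when $L_2,\dots,L_{n-1}$ are very ample. Then, cutting by general members, let $S \in |L_2| \cap \cdots \cap |L_{n-1}|$ be a general complete intersection surface; $S$ is a normal surface by Bertini (choosing the members general so that $S$ avoids the singularities of $X$ in the appropriate codimension, or using Seidenberg's theorem). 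By Theorem \ref{main1}(3), $D_1^2 L_2 \cdots L_{n-1} = D_1|_S . D_1|_S$ and similarly $D_1 . L_1 . L_2 \cdots L_{n-1} = D_1|_S . L_1|_S$, where the right-hand sides are Mumford's intersection numbers of Weil divisors on the normal surface $S$.

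Next I would apply the Hodge Index theorem for Mumford's intersection form on a normal projective surface: the intersection form on $\mathrm{Num}(S) \otimes \QQ$ has signature $(1,\text{neg})$, so on the orthogonal complement of a class of positive self-intersection it is negative semidefinite. Here $L_1|_S$ has $(L_1|_S)^2 = L_1^2 L_2 \cdots L_{n-1} > 0$, and by hypothesis $D_1|_S$ lies in the orthogonal complement of $L_1|_S$ (since $D_1 . L_1 L_2 \cdots L_{n-1} = 0$). Therefore $D_1|_S^2 \le 0$, i.e. $D_1^2 L_2 \cdots L_{n-1} \le 0$, with equality if and only if $D_1|_S$ is numerically trivial on $S$. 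It remains to translate "$D_1|_S$ numerically trivial on $S$ for general $S$" into "$[D_1] = 0$ in $N_L(X)$". One direction is immediate: if $[D_1] = 0$ in $N_L(X)$ then $D_1 . D_2' . L_2 \cdots L_{n-1} = 0$ for every Weil divisor $D_2'$, hence $D_1|_S . D_2'|_S = 0$ for all $D_2'$; since restriction $B^1(X) \to B^1(S)$ (composed with $\mathrm{Num}$) is surjective onto a finite-index subgroup of $\mathrm{Num}(S)$ by a Lefschetz-type argument (or at least has image generating over $\QQ$), this gives $D_1|_S^2 = 0$ and in fact equality throughout. Conversely, if $D_1|_S^2 = 0$ with $D_1|_S \perp L_1|_S$ and positive self-intersection present, the Hodge Index theorem forces $D_1|_S$ numerically trivial on $S$, hence $D_1 . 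D_2' . L_2 \cdots L_{n-1} = D_1|_S . D_2'|_S = 0$ for all $D_2'$, which is exactly the statement that $[D_1]$ lies in the radical of $\langle \cdot, \cdot \rangle_L$, i.e. $[D_1] = 0$ in $N_L(X)$.

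The main obstacle I anticipate is the equality clause: making precise the comparison between numerical triviality of $D_1|_S$ on the general surface $S$ and vanishing of the class $[D_1] \in N_L(X)$, which is defined via the intersection pairing against \emph{all} Weil divisors on $X$, not merely those pulled back from $S$. The clean way around this is to observe that for any Weil divisor $D_2'$ on $X$ one has $D_1 . D_2' . L_2 \cdots L_{n-1} = D_1|_S . D_2'|_S$ directly from Theorem \ref{main1}(3) (applied with $D_2 = D_2'$), so the vanishing of $D_1|_S^2$ together with the Hodge Index inequality on $S$ yields $D_1|_S \equiv 0$ on $S$, and then $D_1 . D_2' . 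L_2 \cdots L_{n-1} = 0$ for every $D_2'$ without ever needing surjectivity of restriction. Thus the argument closes: $D_1^2 L_2 \cdots L_{n-1} \le 0$ always, and equality holds precisely when $D_1$ pairs trivially with every Weil divisor against $L_2 \cdots L_{n-1}$, i.e. precisely when $[D_1] = 0$ in $N_L(X)$. A minor technical point to handle carefully is ensuring the general complete intersection surface $S$ is indeed normal and that restriction of Weil divisors behaves well (using that $S$ meets $X_{\mathrm{reg}}$ in a big open subset), but this is exactly what Theorem \ref{main1}(3) has already packaged.
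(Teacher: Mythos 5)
Your surface-cutting argument is fine when $L_2,\dots,L_{n-1}$ are very ample, but the lemma is stated for a collection of \emph{nef} line bundles (with only the numerical positivity $L_1^2L_2\cdots L_{n-1}>0$), and this is where the real content lies: the lemma is applied later with nef polarizations (e.g.\ in Lemma \ref{HIT} and Lemma \ref{needed-for-boundedness}). Your reduction to the very ample case does not work as stated. For the inequality, perturbing to $L_i+\epsilon A$ destroys the hypothesis $D_1.L_1L_2\cdots L_{n-1}=0$, so you cannot apply the orthogonal-complement form of the Hodge index theorem to the perturbed classes and then let $\epsilon\to 0$; what does pass to the limit is the unconditional inequality $D_1^2M_2\cdots M_{n-1}\cdot M_1^2M_2\cdots M_{n-1}\le \left( D_1.M_1M_2\cdots M_{n-1}\right)^2$ for ample $\QQ$-classes $M_i$, and this is exactly how the paper argues (citing \cite[Lemma 2.6]{La-Chern} and taking the limit). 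This part of your proof is repairable: prove the stronger, hypothesis-free inequality on your surface $S$ instead of the semi-definiteness statement, and then pass to the limit.

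The genuine gap is the equality clause. The assertion ``equality if and only if $[D_1]=0$ in $N_L(X)$'' concerns the radical of the pairing $\langle\cdot,\cdot\rangle_L$ for the original nef collection, and it cannot be transported from the very ample case: when the $L_i$ are merely nef there is no complete intersection surface $S\in|L_2|\cap\cdots\cap|L_{n-1}|$ to cut with, and the perturbation $L_i\mapsto L_i+\epsilon A$ changes both the hypothesis (orthogonality of $D_1$ to the perturbed product fails) and the conclusion (vanishing against the perturbed classes is not vanishing against $L_2\cdots L_{n-1}$). The paper instead deduces the equality case purely formally from the inequality already established for the nef collection: if $D_1^2L_2\cdots L_{n-1}=0$ but $D_1.D_2.L_2\cdots L_{n-1}\ne 0$ for some $D_2$, replace $D_2$ by $(L_1^2L_2\cdots L_{n-1})D_2-(D_2.L_1\cdots L_{n-1})L_1$ so that $D_2.L_1\cdots L_{n-1}=0$, apply the inequality to $tD_1+D_2$, and get $0\ge 2t\,D_1.D_2.L_2\cdots L_{n-1}+D_2^2.L_2\cdots L_{n-1}$ for all $t\in\ZZ$, a contradiction. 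Some argument of this kind is missing from your proposal. A smaller point: even in the very ample case your identity $D_1.D_2'.L_2\cdots L_{n-1}=D_1|_S.D_2'|_S$ holds for $S$ general \emph{depending on} $D_2'$; to conclude for all $D_2'$ from one fixed $S$ you should use that the pairing factors through the finitely generated group $\bar B^1(X)$ (Lemma \ref{Neron-Severi}) and choose $S$ general for a finite set of generators.
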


\begin{proof}
	Let us fix some ample line bundle $A$. Then the $\QQ$-line bundles $L_i+\epsilon A$ are ample for $\epsilon\in \QQ_{>0}$. So by \cite[Lemma 2.6]{La-Chern} we have inequalities
	$$D_1^2(L_2+\epsilon A) ...(L_{n-1}+\epsilon A)\cdot  (L_1+\epsilon A)^2(L_2+\epsilon A) ...(L_{n-1}+\epsilon A) \le \left( D_1.(L_1+\epsilon A) ...(L_{n-1}+\epsilon A) \right)^2.$$
	Taking the limit when $\epsilon\to 0$, we get  $D_1^2L_2...L_{n-1}\le 0$. Now let us assume that 	
	$D_1^2L_2...L_{n-1}= 0$ but
	$D_1.D_2.L_2...L_{n-1}\ne 0$ for some Weil divisor $D_2$. Replacing $D_2$ by $(L_1^2L_2...L_{n-1})D_2-(D_2.L_1...L_{n-1})L_1$
	we can assume that $D_2.L_1...L_{n-1}=0$.
	Therefore we have
	$$0\ge (tD_1+D_2)^2.L_2...L_{n-1}=2t D_1.D_2.L_2...L_{n-1}+D_2^2.L_2...L_{n-1},$$
	which gives a contradiction with some $t\in \ZZ$.	
\end{proof}

\medskip

The following lemma generalizes  \cite[Lemma 2.6]{La-Chern}.

\begin{Lemma}\label{HIT}
$N_L (X)$ is a free $\ZZ$-module of finite rank. If  $\rk _{\ZZ} N_L (X)=s$ then the intersection pairing $\langle \cdot , \cdot \rangle _L$ has signature $(1, s-1)$. 
\end{Lemma}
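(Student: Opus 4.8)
The plan is to prove this as an instance of the Hodge index theorem generalized to the multipolarized setting, building directly on Lemma \ref{easy-HIT}. First I would establish finiteness and freeness of $N_L(X)$ as a $\ZZ$-module: since $N_L(X)$ is by construction a quotient of $\bar B^1(X)$ (the pairing $\langle\cdot,\cdot\rangle_L$ kills torsion, because the intersection numbers factor through $\bar B^1(X)$ by the previous lemma together with $\ZZ$-multilinearity), and $\bar B^1(X)$ is a free $\ZZ$-module of finite rank by Lemma \ref{Neron-Severi}, the quotient $N_L(X)$ is a finitely generated $\ZZ$-module; dividing by the radical of $\langle\cdot,\cdot\rangle_L$ makes the induced pairing non-degenerate, and a lattice carrying a non-degenerate symmetric $\QQ$-valued pairing is torsion free, hence free of finite rank $s$.

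Next I would pin down the signature. The hypothesis is that there exists a nef line bundle $L_1$ with $L_1L_2\cdots L_{n-1}$ numerically non-trivial; by a standard limiting argument (perturbing $L_1$ by a small ample $\QQ$-divisor and using the inequalities from \cite[Lemma 2.6]{La-Chern} in the ample case as in the proof of Lemma \ref{easy-HIT}) one checks $L_1^2L_2\cdots L_{n-1}>0$, so $[L_1]\in N_L(X)$ is a vector with $\langle[L_1],[L_1]\rangle_L>0$; thus the pairing is not negative (semi)definite and has at least one positive eigenvalue. Now decompose $N_L(X)_{\QQ}=\QQ[L_1]\oplus P$, where $P=\{[D]:\langle[D],[L_1]\rangle_L=0\}$ is the orthogonal complement. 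For $[D_1]\in P$ we have $D_1.L_1L_2\cdots L_{n-1}=0$, so Lemma \ref{easy-HIT} gives $D_1^2L_2\cdots L_{n-1}\le 0$ with equality iff $[D_1]=0$ in $N_L(X)$; hence the restriction of $\langle\cdot,\cdot\rangle_L$ to $P$ is negative definite, of rank $s-1$. Combining the positive line $\QQ[L_1]$ with the negative-definite hyperplane $P$, Sylvester's law of inertia yields signature $(1,s-1)$.

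I expect the main obstacle to be the bookkeeping around $N_L(X)$ itself rather than any deep geometry: one must be careful that the orthogonal complement $P$ is genuinely a complement (i.e. $N_L(X)_{\QQ}=\QQ[L_1]\oplus P$), which relies on $\langle[L_1],[L_1]\rangle_L\ne 0$; that the pairing descends correctly from $B^1(X)$ through $\bar B^1(X)$ to $N_L(X)$ so that the rank count is legitimate; and that Lemma \ref{easy-HIT}'s equality clause is applied with the right module ($N_L(X)$, not $\bar B^1(X)$) so that negative-definiteness on $P$ — and not merely negative-semidefiniteness — is what comes out. Once these are handled, the argument is a clean application of the index theorem pattern. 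The proof would conclude by noting that the rank of $P$ is $s-1$ by dimension count, so the signature is $(1,s-1)$ as claimed.
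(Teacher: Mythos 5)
Your treatment of freeness (torsion-free quotient of $B^1(X)$, hence of $\bar B^1(X)$, which is free of finite rank by Lemma \ref{Neron-Severi}) and your signature argument in the case $L_1^2L_2\cdots L_{n-1}>0$ (orthogonal decomposition $N_L(X)_{\QQ}=\QQ[L_1]\oplus P$ and negative definiteness of $P$ via the equality clause of Lemma \ref{easy-HIT}, then Sylvester) are both correct and are exactly what the paper intends. The genuine gap is the claim that $L_1^2L_2\cdots L_{n-1}>0$ follows from the standing hypotheses ``by a standard limiting argument.'' It does not: the hypothesis only says that $L_1$ is nef and $L_1L_2\cdots L_{n-1}$ is numerically non-trivial, and this is compatible with $L_1^2L_2\cdots L_{n-1}=0$. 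For instance, already for a surface take $L_1$ the pullback of an ample class under a fibration (a fibre class): it is nef and numerically non-trivial, yet $L_1^2=0$. Perturbing by a small ample class and passing to the limit can only yield $L_1^2L_2\cdots L_{n-1}\ge 0$, never strict positivity, so your positive vector $[L_1]$ may in fact be isotropic and the decomposition $\QQ[L_1]\oplus P$ collapses.

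The paper closes exactly this case by changing the auxiliary class rather than trying to prove positivity for $L_1$ itself. Choose a Weil divisor $D$ with $D.L_1\cdots L_{n-1}>0$ (possible after a sign change by numerical non-triviality) and an ample Cartier divisor $H$; since $mH-D$ is effective for $m\gg 0$ and effective divisors pair non-negatively with the nef classes, one gets $H.L_1L_2\cdots L_{n-1}>0$, whence $M:=L_1+H$ is ample with $M^2L_2\cdots L_{n-1}>0$. Because $N_L(X)$ and $\langle\cdot,\cdot\rangle_L$ depend only on $(L_2,\dots,L_{n-1})$ and not on $L_1$, you may now run your decomposition argument verbatim with $M$ in place of $L_1$, and the signature $(1,s-1)$ follows. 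With this substitution your proof is complete; without it, the step ``$[L_1]$ has positive self-intersection'' is simply false in general.
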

 
\begin{proof}
By definition $N_L(X)$ is torsion free and it is a quotient of $B^1 (X)$. So by Lemma \ref{Neron-Severi} $N_L(X)$  is also a free $\ZZ$-module of finite rank. 
If $L_1^2L_2...L_{n-1}>0$ then  the second assertion follows from Lemma \ref{easy-HIT}.
In general,  there exists some Weil divisor $D$ such that $D.L_1...L_{n-1}\ne 0$. Without loss of generality we can assume that 
$D.L_1...L_{n-1}>0$. Then for every ample Cartier divisor $H$ we have $H^0(X,\cO_X(mH-D))\ne 0$ 
for $m\gg 0$. So  $mHL_1...L_{n-1}\ge D.L_1...L_{n-1}>0$. Then $M=(L_1+H)$ is ample and $M^2L_2....L_{n-1}>0$.  Since  the definition of $N_L(X)$ does not depend on $L_1$, we get the required assertion from the previous case.
\end{proof}
 
As in \cite[Lemma 2.1]{La7} the above lemma implies the following result (in fact, the first part follows from the proof of Lemma \ref{HIT}). 
 
\begin{Corollary}\label{HIT2}
If $H$ is an ample line bundle then $HL_1L_2...L_{n-1}>0$. Moreover, if
$D.L_1L_2...L_{n-1}=0$ for some Weil divisor $D$ then
then  $D^2.L_2...L_{n-1}\le 0$.
\end{Corollary}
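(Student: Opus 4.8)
The plan is to deduce both statements directly from Lemma \ref{HIT} (the Hodge Index type theorem for the pairing $\langle \cdot, \cdot\rangle_L$) together with Corollary \ref{HIT2}'s own hypotheses, mimicking the argument of \cite[Lemma 2.1]{La7}. The key observation is that the pairing $\langle \cdot,\cdot\rangle_L$ on $N_L(X)$ has signature $(1,s-1)$, so there is essentially a unique ray of classes with positive self-intersection, and any nef class pairing nontrivially with the rest of the collection must lie in the closure of the positive cone.

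First I would prove $HL_1L_2\dots L_{n-1}>0$ for $H$ ample. As noted parenthetically, this already follows from the proof of Lemma \ref{HIT}: there we showed that starting from an arbitrary Weil divisor $D$ with $D.L_1\dots L_{n-1}>0$ one can replace $L_1$ by an ample $M=L_1+H'$ (for suitable ample Cartier $H'$) without changing $N_L(X)$, and then $M^2L_2\dots L_{n-1}>0$, i.e. $[M]\in N_L(X)$ has positive self-intersection. By Lemma \ref{HIT} the positive cone is nonempty and, by signature $(1,s-1)$, any two classes of positive self-intersection pair positively (after orienting). Since $H$ is ample, $H+\epsilon M$ is ample and one shows $\langle H, M\rangle_L=HML_2\dots L_{n-1}>0$ by a limiting argument using \cite[Lemma 2.6]{La-Chern} (the Hodge inequality $D_1^2\cdot(\cdots)\cdot D_2^2\cdot(\cdots)\le (D_1.D_2.\cdots)^2$ applied to ample perturbations), exactly as in Lemma \ref{easy-HIT}. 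Then $HL_1L_2\dots L_{n-1}$ is, up to the positive factor coming from expressing $L_1$ in terms of $M$ and $H'$, a sum of such positive intersection numbers, hence positive. A cleaner route: apply the already-established inequality with the ample class, namely $\langle H, M\rangle_L^2 \ge \langle H,H\rangle_L \langle M,M\rangle_L$ is the wrong direction; instead use that in signature $(1,s-1)$, if $\langle M,M\rangle_L>0$ and $\langle H,H\rangle_L\ge 0$ with $H\ne 0$ in $N_L(X)$ then $\langle H,M\rangle_L\ne 0$, and its sign is forced to be positive because $H$ is a limit of positive multiples inside the positive cone. This gives $HL_1L_2\dots L_{n-1}=\langle H, L_1\rangle_L$, and writing $L_1 = M-H'$ one concludes, or one simply repeats the Lemma \ref{easy-HIT} computation with $L_1$ nef and $H$ ample directly.

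For the second assertion, suppose $D.L_1L_2\dots L_{n-1}=0$ for a Weil divisor $D$. If $L_1^2L_2\dots L_{n-1}>0$ this is precisely Lemma \ref{easy-HIT}, which gives $D^2.L_2\dots L_{n-1}\le 0$. In general, as in the proof of Lemma \ref{HIT}, choose an ample Cartier divisor $H$ and replace $L_1$ by $M=L_1+H$, which is ample (hence $M^2L_2\dots L_{n-1}>0$ by the first part) and does not change $N_L(X)$. However $D$ need not satisfy $D.ML_2\dots L_{n-1}=0$. To fix this, I would instead argue on $N_L(X)$: the class $[D]$ is orthogonal (under $\langle\cdot,\cdot\rangle_L$) to $[L_1]$ if $D.L_1L_2\dots L_{n-1}=0$. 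But $[L_1]$ lies in the closure of the positive cone (it is nef and, by the first assertion, $\langle L_1, H\rangle_L=HL_1L_2\dots L_{n-1}>0$ for ample $H$, so $L_1$ is not in the radical and has $\langle L_1,L_1\rangle_L\ge 0$). In a space of signature $(1,s-1)$, the orthogonal complement of a nonzero vector in the closed positive cone is negative semidefinite. Hence $\langle D,D\rangle_L=D^2.L_2\dots L_{n-1}\le 0$. The one subtlety is the boundary case $\langle L_1,L_1\rangle_L=0$: then $L_1^\perp$ contains $L_1$ itself and is still negative semidefinite (it is the sum of the isotropic line and a negative definite subspace), so the conclusion persists.

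The main obstacle is the degenerate case where $L_1^2L_2\dots L_{n-1}=0$, i.e. $[L_1]$ sits on the boundary of the positive cone rather than its interior; there one cannot directly invoke Lemma \ref{easy-HIT} and must instead use the full force of the signature statement from Lemma \ref{HIT} and a careful linear-algebra argument that the orthogonal complement of an isotropic-or-positive nef class is negative semidefinite. Everything else is a routine limiting argument with ample perturbations, already carried out in Lemmas \ref{easy-HIT} and \ref{HIT}.
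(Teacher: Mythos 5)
Your argument for the second assertion is essentially correct: granting Lemma \ref{HIT}, the linear-algebra fact that in signature $(1,s-1)$ the orthogonal complement of a nonzero class $\ell$ with $\langle \ell,\ell\rangle_L\ge 0$ is negative semidefinite (including the isotropic boundary case, which you handle correctly) yields $D^2.L_2\dots L_{n-1}\le 0$ whenever $D.L_1L_2\dots L_{n-1}=0$. Note, however, that the input ``$[L_1]\neq 0$ in $N_L(X)$'' does not need the first assertion at all: it is immediate from the standing hypothesis that $L_1L_2\dots L_{n-1}$ is numerically non-trivial, i.e.\ $\langle D_0,L_1\rangle_L\neq 0$ for some Weil divisor $D_0$; together with $\langle L_1,L_1\rangle_L=L_1^2L_2\dots L_{n-1}\ge 0$ (an ordinary intersection number of nef Cartier classes) this is all your part (2) uses. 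This matters because the weak link of your proposal is precisely the first assertion, on which you make part (2) depend.

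For the first assertion there is a genuine gap: none of the three routes you sketch works. Writing $L_1=M-H'$ expresses $HL_1L_2\dots L_{n-1}$ as the \emph{difference} $\langle H,M\rangle_L-\langle H,H'\rangle_L$, so positivity of the two terms proves nothing (your ``sum of positive intersection numbers'' is not a sum). Repeating the computation of Lemma \ref{easy-HIT} only gives the Hodge-type inequality $(HL_1L_2\dots L_{n-1})^2\ge H^2L_2\dots L_{n-1}\cdot L_1^2L_2\dots L_{n-1}\ge 0$, which is no strict positivity. And the signature argument needs to know beforehand that $[H]\neq 0$ in $N_L(X)$ — the natural witness for this is $\langle H,L_1\rangle_L\neq 0$, i.e.\ the statement being proved — and even granting $[H]\neq 0$ it only excludes $\langle H,L_1\rangle_L=0$ when one of the two classes has strictly positive self-intersection: two nonzero orthogonal isotropic classes in Lorentzian signature are proportional, a configuration pure linear algebra cannot rule out. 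The missing idea (and the one the paper's parenthetical remark points to, since it occurs verbatim in the proof of Lemma \ref{HIT}) is the effectivity argument, which is the only place ampleness of $H$ enters: choose a Weil divisor $D_0$ with $D_0.L_1\dots L_{n-1}>0$ (possible by the standing assumption after replacing $D_0$ by $-D_0$), use $H^0(X,\cO_X(mH-D_0))\neq 0$ for $m\gg 0$, and intersect an effective representative of $mH-D_0$ with the nef collection $L_1,\dots,L_{n-1}$ (nonnegative by Theorem \ref{main1}) to get $mHL_1\dots L_{n-1}\ge D_0.L_1\dots L_{n-1}>0$. With this one line the first assertion is proved, and your argument for the second assertion then goes through.
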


\medskip

\section{Several auxiliary results on Chern classes}

In this section we prove several results on Chern classes of reflexive sheaves that will be needed throughout the paper.
We assume that $X$ is a normal projective variety of dimension $n$ defined over an algebraically closed field $k$ of characteristic $p>0$. We also fix a collection
$(L_1,...,L_{n-1})$ of nef line bundles on $X$. 

\begin{Lemma}\label{short-exact-seq}
Let $$0\to \cE_1\to \cE \to \cE_2\to 0$$
be a left exact sequence of reflexive sheaves on $X$, which is also right exact 
on some big open subset of $X$. Then we have
$$\int _X \ch _2 (\cE)L_1...L_{n-2} \le \int _X \ch _2 (\cE _1) L_1...L_{n-2}  +\int _X \ch _2 (\cE _2)L_1...L_{n-2} .$$
Moreover, if the above sequence is exact on $X$ and  locally split in codimension $2$ then we have equality
$$\int _X \ch _2 (\cE)L_1...L_{n-2} = \int _X \ch _2 (\cE _1)L_1...L_{n-2} +\int _X \ch _2 (\cE _2)L_1...L_{n-2} .$$
\end{Lemma}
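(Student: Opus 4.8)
The plan is to reduce everything to a normal projective surface, where I can use the description $\int_S \ch_2(\cF) = \lim_{m\to\infty}\chi(S, F_S^{[m]}\cF)/p^{2m}$ from part (4) of Theorem~\ref{properties-of-ch_2}. For the reduction I observe that both sides of the inequality are values of symmetric multilinear forms on $N^1(X)^{\times(n-2)}$; since every nef class is a limit of ample $\QQ$-classes and each ample $\QQ$-class is a positive rational multiple of a very ample integral class, multilinearity lets me assume $L_1,\dots,L_{n-2}$ are very ample (after, if necessary, extending $k$ to an uncountable algebraically closed field, which is harmless by part (2) of Theorem~\ref{properties-of-ch_2}). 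Then, iterating part (3) of Theorem~\ref{properties-of-ch_2}, I pass to a general complete intersection surface $S=H_1\cap\cdots\cap H_{n-2}$ with $H_i\in|L_i|$. For general $S$ the sheaves $\cE|_S,\cE_1|_S,\cE_2|_S$ remain reflexive, and the restricted complex $0\to\cE_1|_S\to\cE|_S\to\cE_2|_S$ is left exact and right exact over a big open subset of $S$, because $\coker(\cE\to\cE_2)$ and the non-locally-free loci have codimension $\ge 2$ in $X$ (so $S$ meets them in finitely many points) and general hyperplane sections are nonzerodivisors on the torsion-free image of $\cE\to\cE_2$. In the equality case the sequence is moreover exact on $X$ and locally split off a closed set of codimension $\ge 3$, which a general $S$ avoids, so the restricted sequence on $S$ is exact and locally split everywhere. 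By part (3) of Theorem~\ref{properties-of-ch_2} it then suffices to treat $n=2$.

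So assume $n=2$ and, for each $m\ge 0$, apply $F_S^{[m]}$ to the sequence. Pick a big open $U\subset S$ with finite complement over which the three sheaves are locally free and the sequence is split exact; its pullback to $U$ is then split exact as well. Globally, the functorial map $F_S^{[m]}\cE_1\to F_S^{[m]}\cE$ is injective, since its source is torsion free and its kernel is supported on $S\setminus U$; let $\cG_m$ be its cokernel. Over $U$ we have $\cG_m|_U\cong F_S^{[m]}\cE_2|_U$, so $\cG_m^{**}=F_S^{[m]}\cE_2$ by Lemma~\ref{restriction-to-open}. The key point is that $\cG_m$ is torsion free: writing $\cA\subset F_S^{[m]}\cE$ for the image of $F_S^{[m]}\cE_1$ and $\overline{\cA}$ for its saturation, the torsion subsheaf of $\cG_m$ is $\overline{\cA}/\cA$; from the sequence $0\to\overline{\cA}\to F_S^{[m]}\cE\to (F_S^{[m]}\cE)/\overline{\cA}\to 0$ with torsion-free quotient and $F_S^{[m]}\cE$ reflexive, the depth estimate for a short exact sequence shows $\overline{\cA}$ is $S_2$, hence reflexive; and $\overline{\cA}$ agrees with the reflexive sheaf $\cA\cong F_S^{[m]}\cE_1$ over $U$ (where $\cA|_U$ is a direct summand of $F_S^{[m]}\cE|_U$, hence saturated), so $\overline{\cA}=\cA$ again by Lemma~\ref{restriction-to-open}. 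Being torsion free, $\cG_m$ embeds into $\cG_m^{**}$ with finite-length cokernel, whence $\chi(\cG_m)\le\chi(\cG_m^{**})=\chi(F_S^{[m]}\cE_2)$ and therefore
\[
\chi(F_S^{[m]}\cE)=\chi(F_S^{[m]}\cE_1)+\chi(\cG_m)\le \chi(F_S^{[m]}\cE_1)+\chi(F_S^{[m]}\cE_2).
\]
Dividing by $p^{2m}$ and letting $m\to\infty$ gives the inequality, using part (4) of Theorem~\ref{properties-of-ch_2}.

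For the equality statement, in that case the restricted sequence on $S$ is exact and locally split everywhere, hence so is its $F_S^{[m]}$-pullback (the reflexive hull commutes with finite direct sums), so $\cG_m=F_S^{[m]}\cE_2$ and the displayed inequality becomes an equality for every $m$; passing to the limit yields equality. The step I expect to require the most care is the reduction to surfaces itself — verifying by Bertini-type arguments that left exactness, right exactness over a big open subset, and, in the equality case, exactness together with local splitting off codimension $\ge 3$ all descend to a general complete intersection surface, and that the restricted sheaves stay reflexive; once this is in place the surface argument is the short computation above.
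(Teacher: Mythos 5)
Your proposal is correct and follows essentially the same route as the paper: reduce by continuity and multilinearity to very ample classes, pass (over an uncountable field) to a very general complete intersection surface using parts (2)–(3) of Theorem \ref{properties-of-ch_2}, and then compare Euler characteristics of the reflexivized Frobenius pullbacks, dividing by $p^{2m}$ and using the limit formula of part (4), with the equality case handled via local splitting. Your explicit verification that the cokernel $\cG_m$ is torsion free is just a spelled-out version of the paper's assertion that $0\to F_X^{[m]}\cE_1\to F_X^{[m]}\cE\to F_X^{[m]}\cE_2$ stays exact with cokernel supported in codimension $\ge 2$, so there is no substantive difference (only note that on $U$ the sequence is locally split, which is all you need).
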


\begin{proof}
Since numerical equivalence classes of nef line bundles are limits of classes of ample $\QQ$-line bundles, we can by continuity assume that all $L_i$ are ample $\QQ$-line bundles. Passing to their multiples we can also assume that all $L_i$ are very ample line bundles. By Theorem \ref{properties-of-ch_2}, (2) and (3), we can assume that the base field $k$ is uncountable and then
by restricting to a very general complete intersection surface $S\in |L_1|\cap ...\cap |L_{n-2}|$ we can assume that $X$ is a surface.

Let $U$ be a big open subset on which all $\cE$, $\cE_1$ and $\cE_2$ are locally free and let $j: U\hookrightarrow X$
denote the open embedding. Since $X$ is normal, we can also assume that $U$ is contained in the regular locus $X_{reg}$ of $X$. Since $F_U^*$ is exact, the sequences 
$$0\to (F_U^m)^*\cE_1\to (F_U^m)^* \cE \to (F_U^m)^*\cE_2\to 0$$
are exact. This implies that sequences
$$0\to F_X^{[m]}\cE_1\to F_X^{[m]} \cE \to F_X^{[m]} \cE_2 $$
are exact and the cokernel of the last map is supported on the closed subset $X\backslash U$ of codimension $\ge 2$. So we get inequalities 
$$ \chi (X, F_X^{[m]} \cE ) )\le  \chi (X, F_X^{[m]} \cE_1 )+ \chi (X, F_X^{[m]} \cE _2) .$$
Dividing by $p^{2m}$, passing to the limit and using Theorem \ref{properties-of-ch_2}, (3), we get the required inequality.
Equality follows from the fact that
the above mentioned left exact sequence becomes right exact if the sequence 
 $$0\to \cE_1\to \cE \to \cE_2\to 0$$
is locally split.
\end{proof}

\medskip

\begin{Example}\label{singular-quadric}
If the short exact sequence in the above lemma is not locally split, then it is well known that the  inequality can be strict. For example, if $X\subset \PP ^3$ is the cone over a smooth quadric curve in $\PP ^2$ and $D$ is its generator, then we have a short exact sequence
$$0\to \cO_X (-D)\to  \cO_X\oplus \cO_X \to \cO_X (D)\to 0$$
with
$$\int _X \ch _2 ( \cO_X\oplus \cO_X)=0<\int _X \ch _2 ( \cO_X (-D))+\int _X \ch _2 ( \cO_X (D))= D^2=\frac{1}{2}.$$
This example shows also that Chern classes of reflexive sheaves on a normal projective surface
are not deformation invariant, i.e., they can change in flat families.
\end{Example}

\medskip

\begin{Remark}
Equality in Lemma \ref{short-exact-seq} is one of the results that is  not known for general normal surfaces defined over an algebraically closed field of characteristic $0$ (see \cite{La-Chern}). 
\end{Remark}

\medskip

\begin{Lemma}\label{Chern-classes-filtrations}
Let $\cE$ be a reflexive coherent $\cO_X$-module and let $\cE=N^0\supset
N^1\supset ...\supset N^s=0$ be a filtration such that the associated graded $\Gr
_N(\cE)=\bigoplus _i N^i/N^{i+1}$ is torsion free. Let $\cF$ be the reflexive hull of  $\Gr
_N(\cE)$. Then the following conditions are satisfied:
\begin{enumerate}
\item $c _1 (\cE )=c_1 (\cF)$,
\item $\int _X \ch _2 (\cE)L_1...L_{n-2} \le \int _X \ch _2 (\cF)L_1...L_{n-2} $,
\item $\int _X \Delta  (\cE)L_1...L_{n-2}\ge \int _X \Delta  (\cF)L_1...L_{n-2} $.
\end{enumerate}
\end{Lemma}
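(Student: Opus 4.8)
The plan is to reduce all three parts to Lemma~\ref{short-exact-seq} by an induction on the length $s$ of the filtration $\cE=N^0\supset\dots\supset N^s=0$. First I would dispose of (1) and (3), which become formal once (2) is in hand. For (1): the class $c_1$ of a reflexive sheaf is the class of its determinant, hence is determined by the restriction to any big open subset, and on $X_{\mathrm{reg}}$ the graded pieces $\cG_i:=N^i/N^{i+1}$ are locally free, so $\det\cE|_{X_{\mathrm{reg}}}\cong\bigotimes_i\det\cG_i|_{X_{\mathrm{reg}}}\cong\bigotimes_i\det(\cG_i^{**})|_{X_{\mathrm{reg}}}\cong\det\cF|_{X_{\mathrm{reg}}}$, whence $c_1(\cE)=c_1(\cF)$. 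For (3): expanding the definitions, $\int_X\Delta(\cG)L_1\dots L_{n-2}=\int_Xc_1^2(\cG)L_1\dots L_{n-2}-2r\int_X\ch_2(\cG)L_1\dots L_{n-2}$ for a rank-$r$ reflexive $\cG$; since $\cF$ has rank $r$ and $c_1(\cF)=c_1(\cE)$ by (1), while $\int_X\ch_2(\cE)L_1\dots L_{n-2}\le\int_X\ch_2(\cF)L_1\dots L_{n-2}$ by (2), the negative coefficient $-2r$ turns this into $\int_X\Delta(\cE)L_1\dots L_{n-2}\ge\int_X\Delta(\cF)L_1\dots L_{n-2}$.

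The heart is (2), which I would prove by induction on $s$; the case $s\le 1$ is trivial since then $\cE=\cF$. For $s\ge 2$ set $\cA:=(N^{s-1})^{**}=\cG_{s-1}^{**}$ and $\cB:=(\cE/N^{s-1})^{**}$ (both reflexive; note $\cE/N^{s-1}$ is torsion free, being an iterated extension of the torsion free $\cG_0,\dots,\cG_{s-2}$). The inclusion $N^{s-1}\hookrightarrow\cE$ extends to a map $\cA\to\cE$ ($\cE$ being reflexive), and $\cE\twoheadrightarrow\cE/N^{s-1}\hookrightarrow\cB$ gives $\cE\to\cB$; I claim $0\to\cA\to\cE\to\cB$ is left exact on $X$ and right exact on a big open subset. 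Fixing a big open $U\subseteq X_{\mathrm{reg}}$ over which $\cE$, the $N^i$, the $\cG_i$ and $\cE/N^{s-1}$ are all locally free, the sequence $0\to N^{s-1}\to\cE\to\cE/N^{s-1}\to 0$ is (locally split) exact on $U$ and stays exact after double-dualization, so $0\to\cA\to\cE\to\cB$ is exact on $U$; injectivity of $\cA\to\cE$, vanishing of the composite $\cA\to\cB$, and the equality $\ker(\cE\to\cB)=\im(\cA)$ on all of $X$ then follow from the standard mechanism — the relevant kernels are torsion free (hence vanish once they vanish on a big open), the relevant reflexive subsheaves of reflexive sheaves are detected on a big open by Lemma~\ref{restriction-to-open}, and one uses that for a torsion free sheaf on a normal variety the cokernel of the map to its double dual is supported in codimension $\ge 2$, since torsion free modules over a discrete valuation ring are free.

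Granting the claim, Lemma~\ref{short-exact-seq} gives $\int_X\ch_2(\cE)L_1\dots L_{n-2}\le\int_X\ch_2(\cA)L_1\dots L_{n-2}+\int_X\ch_2(\cB)L_1\dots L_{n-2}$. To invoke the inductive hypothesis on $\cB$, I would equip it with the filtration $M^i\subseteq\cB$, $0\le i\le s-1$, where $M^i$ is the saturation in $\cB$ of the image of $N^i\to\cE/N^{s-1}\hookrightarrow\cB$: then $M^0=\cB$, $M^{s-1}=0$, the $M^i$ are decreasing, each $M^i/M^{i+1}$ is torsion free ($M^{i+1}$ being saturated in $\cB$), and on $U$ one computes $M^i/M^{i+1}|_U\cong\cG_i|_U$, whence $(M^i/M^{i+1})^{**}\cong\cG_i^{**}$ by Lemma~\ref{restriction-to-open}. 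This is a filtration of $\cB$ of length $s-1$ of the type in the lemma, whose associated graded has reflexive hull $\bigoplus_{i=0}^{s-2}\cG_i^{**}$, so induction yields $\int_X\ch_2(\cB)L_1\dots L_{n-2}\le\sum_{i=0}^{s-2}\int_X\ch_2(\cG_i^{**})L_1\dots L_{n-2}$, using that the reflexive hull commutes with finite direct sums and that $\int_X\ch_2(-)L_1\dots L_{n-2}$ is additive over direct sums — the latter from the equality case of Lemma~\ref{short-exact-seq} applied to a globally split sequence. Since $\cA=\cG_{s-1}^{**}$ and, by the same additivity, $\int_X\ch_2(\cF)L_1\dots L_{n-2}=\sum_{i=0}^{s-1}\int_X\ch_2(\cG_i^{**})L_1\dots L_{n-2}$, combining the inequalities closes the induction.

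The hard part will not be any single computation but the reflexive-hull bookkeeping: checking that $0\to\cA\to\cE\to\cB$ genuinely satisfies the hypotheses of Lemma~\ref{short-exact-seq} (left exact on all of $X$, right exact merely on a big open) rather than only on $X_{\mathrm{reg}}$, and that the double dual $\cB$ still carries a filtration of the required type so that the inductive step applies. Both points reduce to the principle that reflexive sheaves on a normal variety are pinned down by any big open restriction, together with the codimension-$2$ support statements for torsion and for cokernels of double-dualization; this is routine but must be assembled with care.
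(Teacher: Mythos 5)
Your argument is correct, and it rests on the same engine as the paper's proof — induction on the length $s$, with Lemma \ref{short-exact-seq} providing the key inequality at each step and its split (equality) case giving additivity of $\int_X\ch_2$ over direct sums — but you run the induction in the opposite direction, and that is where all your extra bookkeeping comes from. You peel off the \emph{bottom} piece, forming $\cA=(N^{s-1})^{**}$ and $\cB=(\cE/N^{s-1})^{**}$, and must then transport the filtration to $\cB$ by saturating the images $M^i$ of the $N^i$, checking that the graded pieces of $M^{\bullet}$ are torsion free and have the right reflexive hulls via restriction to a big open and Lemma \ref{restriction-to-open}. The paper instead peels off the \emph{top} quotient: since $N^0/N^1$ is torsion free, the subsheaf $N^1\subset\cE$ is saturated and hence automatically reflexive, so one applies Lemma \ref{short-exact-seq} to $0\to N^1\to\cE\to (N^0/N^1)^{**}$ and then invokes the inductive hypothesis for $N^1$ with the remaining filtration $N^1\supset\dots\supset N^s=0$ taken verbatim — no saturations, no transported filtration, no comparison of hulls on a big open subset. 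Both routes are valid (your verification that $0\to\cA\to\cE\to\cB$ is left exact on $X$ and right exact on a big open, and that $M^i/M^{i+1}$ restricts to $N^i/N^{i+1}$ on the locally free locus, does go through); the paper's choice simply buys a two-line inductive step, while yours buys nothing extra here, so if you rewrite it, induct on the reflexive subsheaf rather than on the reflexivized quotient. Parts (1) and (3) are handled exactly as in the paper.
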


\begin{proof}
The first condition is clear as  $c _1 (\cE )=c_1(\Gr_N(\cE))=c_1 (\cF)$.
We prove the second condition by induction on the length $s$ of the filtration.
If $s>1$ then $N^1$ is reflexive as $N^0/N^1$ is torsion free. So by Lemma \ref{short-exact-seq}
we have
$$\int _X \ch _2 (\cE)L_1...L_{n-2} \le \int _X \ch _2 (N^1)L_1...L_{n-2}+  \int _X \ch _2 ((N^0/N^1)^{**})L_1...L_{n-2}.$$
Applying the induction assumption to the filtration $N^1\supset N^2\supset ...\supset N^s=0$ of $N^1$,
we get the required inequality. The last condition follows from (1) and (2).
\end{proof}

\begin{Lemma}\label{needed-for-boundedness}
Let $\cE$ be a rank $r$ reflexive coherent $\cO_X$-module and let $\cE=N^0\supset
N^1\supset ...\supset N^s=0$ be a filtration such that all $N^i/N^{i+1}$ are torsion free. 
Let us assume that $L$ is numerically nontrivial and let us
set $\cF_i:=(N^i/N^{i+1})^{**}$, $r_i:=\rk \cF_i$ and $\mu_i:= \mu _L(\cF_i)$. 
\begin{enumerate}
	\item 
	If $d:=L_1^2L_2....L_{n-1}>0$ then
	\begin{align*}
		\frac{\int _X \Delta  (\cE)L_2...L_{n-1}}{r}\ge \sum _i \frac{\int _X \Delta  (\cF_i)L_2...L_{n-1}}{r_i}-
		\frac{1}{rd}\sum _{i<j} r_ir_j\left(\mu_i -  \mu_j\right)^2.
	\end{align*}
	\item 	If $\mu_i=\mu_L(\cE)$ for all $i$ then
	\begin{align*}
		\frac{\int _X \Delta  (\cE)L_2...L_{n-1}}{r}\ge \sum _i \frac{\int _X \Delta  (\cF_i)L_2...L_{n-1}}{r_i}.
	\end{align*}
\end{enumerate}
\end{Lemma}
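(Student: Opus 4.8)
The plan is to reduce the statement to an elementary inequality for the symmetric $\QQ$-bilinear form $b(D,D'):=D.D'.L_2\dots L_{n-1}$ on Weil divisors (extended $\QQ$-bilinearly, which is legitimate by Theorem~\ref{main1}), combined with the super-additivity of $\ch_2$ along a filtration that is already recorded in Lemma~\ref{Chern-classes-filtrations}. Put $q(D):=b(D,D)$, $D:=c_1(\cE)$ and $D_i:=c_1(\cF_i)$. From the short exact sequences $0\to N^{i+1}\to N^i\to N^i/N^{i+1}\to 0$ and the fact that $c_1$ of a torsion-free sheaf agrees with $c_1$ of its reflexive hull, we get $D=\sum_iD_i$. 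Moreover, straight from the definitions in Subsection~\ref{Subsection:intersection-theory} one has, for any reflexive coherent $\cG$,
$$\int_X\Delta(\cG)L_2\dots L_{n-1}=q\bigl(c_1(\cG)\bigr)-2(\rk\cG)\int_X\ch_2(\cG)L_2\dots L_{n-1}.$$
Applying this to $\cE$ and to each $\cF_i$, dividing by ranks, and invoking Lemma~\ref{Chern-classes-filtrations}(2) (for the filtration $N^\bullet$, applied to the nef collection $(L_2,\dots,L_{n-1})$; its graded reflexive hull is $\bigoplus_i\cF_i$, so that $\int_X\ch_2(\cE)L_2\dots L_{n-1}\le\sum_i\int_X\ch_2(\cF_i)L_2\dots L_{n-1}$ by additivity of $\ch_2$ on direct sums), one sees that it suffices to prove
$$\frac{q(D)}{r}\ \ge\ \sum_i\frac{q(D_i)}{r_i}\ -\ \frac{1}{rd}\sum_{i<j}r_ir_j(\mu_i-\mu_j)^2$$
in case (1), and the same inequality with the last summand omitted in case (2).

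For the numerical step I would use the elementary identity, obtained by expanding $q(D)=q\bigl(\sum_iD_i\bigr)=\sum_iq(D_i)+2\sum_{i<j}b(D_i,D_j)$ and regrouping the coefficients,
$$\frac{q(D)}{r}=\sum_i\frac{q(D_i)}{r_i}-\frac{1}{r}\sum_{i<j}r_ir_j\,q(E_{ij}),\qquad E_{ij}:=\frac{D_i}{r_i}-\frac{D_j}{r_j},$$
and then bound each $q(E_{ij})$ from above, noting that $E_{ij}.L_1L_2\dots L_{n-1}=\mu_i-\mu_j$. In case (2) all $\mu_i$ equal $\mu_L(\cE)$, hence this number vanishes and Corollary~\ref{HIT2} applies directly to give $q(E_{ij})\le 0$; substituting into the identity yields $\frac{q(D)}{r}\ge\sum_i\frac{q(D_i)}{r_i}$, as wanted. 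In case (1), where $d=L_1^2L_2\dots L_{n-1}>0$, I would write $E_{ij}=E_{ij}'+\frac{\mu_i-\mu_j}{d}L_1$ with $E_{ij}'.L_1L_2\dots L_{n-1}=0$; then $q(E_{ij})=q(E_{ij}')+\frac{(\mu_i-\mu_j)^2}{d}$, since the cross term $b(E_{ij}',L_1)=E_{ij}'.L_1.L_2\dots L_{n-1}$ vanishes, while $q(E_{ij}')\le 0$ by Lemma~\ref{easy-HIT}. Feeding the resulting bound $q(E_{ij})\le\frac{(\mu_i-\mu_j)^2}{d}$ into the identity produces exactly the asserted inequality, and (1) follows.

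Once the three inputs — Lemma~\ref{Chern-classes-filtrations}(2), Lemma~\ref{easy-HIT}, and Corollary~\ref{HIT2} — are in hand this is essentially a bookkeeping computation, so I do not expect a real obstacle. The only two points that need genuine care are: first, that the discrepancy between $\ch_2(\cE)$ and the $\ch_2$ of the graded pieces has the correct sign, which is exactly what Lemma~\ref{Chern-classes-filtrations}(2) provides (resting, in turn, on the positive-characteristic definition of $\ch_2$ via Frobenius pullbacks); and second, that Lemma~\ref{easy-HIT} and Corollary~\ref{HIT2}, stated for integral Weil divisors, are here applied to the $\QQ$-divisors $E_{ij}$ and $E_{ij}'$ — this is harmless since $q$ is quadratic and $D\mapsto D.L_1L_2\dots L_{n-1}$ is linear, so one simply clears denominators.
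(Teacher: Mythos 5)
Your proposal is correct and takes essentially the same route as the paper: the paper also writes the discrepancy as the identity $\frac{q(c_1\cE)}{r}=\sum_i\frac{q(c_1\cF_i)}{r_i}-\frac1r\sum_{i<j}r_ir_j\,q\bigl(\frac{c_1\cF_i}{r_i}-\frac{c_1\cF_j}{r_j}\bigr)$ (via $\cF=\bigoplus\cF_i$ and Lemma \ref{short-exact-seq}), controls the $\ch_2$-term by Lemma \ref{Chern-classes-filtrations} and the cross terms by the Hodge index statements, citing Lemma \ref{HIT} in case (1) and Corollary \ref{HIT2} in case (2). Your explicit decomposition $E_{ij}=E_{ij}'+\frac{\mu_i-\mu_j}{d}L_1$ together with Lemma \ref{easy-HIT} is exactly the proof of the inequality $(\mu_i-\mu_j)^2\ge d\cdot E_{ij}^2.L_2\dots L_{n-1}$ that the paper invokes directly as the Hodge index theorem, so the two arguments coincide up to bookkeeping.
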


\begin{proof}
Passing to an algebraically closed and uncountable field extension of $k$ we can assume that $k$ is uncountable. If we set $\cF:=\bigoplus \cF_i$ then Lemma \ref{short-exact-seq} gives
$$\int _X \ch _2 (\cF)L_1...L_{n-2} = \sum _i \int _X \ch _2 (\cF _i)L_1...L_{n-2} .$$
After rewriting this gives
$$
\frac{\int _X \Delta  (\cF)L_2...L_{n-1}}{r}= \sum _i \frac{\int _X \Delta  (\cF_i)L_2...L_{n-1}}{r_i}-
\frac{1}{r}\sum _{i<j} r_ir_j\left( \frac{c_1\cF _i}{r_i} -  \frac{c_1\cF _j}{r_j} \right)^2.L_2...L_{n-1}.$$
But by the Hodge index theorem (see Lemma \ref{HIT}) we have
$$\left(\mu_i -  \mu_j\right)^2 =\left( \left( \frac{c_1\cF _i}{r_i} -  \frac{c_1\cF _j}{r_j} \right).L_1...L_{n-1}\right)^2
\ge d\cdot   \left( \frac{c_1\cF _i}{r_i} -  \frac{c_1\cF _j}{r_j} \right)^2.L_2...L_{n-1}.$$
So (1) follows from Lemma \ref{Chern-classes-filtrations}, (3). Under assumption (2), Corollary \ref{HIT2}
implies that 
$$\left( \frac{c_1\cF _i}{r_i} -  \frac{c_1\cF _j}{r_j} \right)^2.L_2...L_{n-1}\le 0,$$
so again the inequality follows from Lemma \ref{Chern-classes-filtrations}, (3).
\end{proof}

\begin{Lemma}\label{Delta-computation}
	Let  $H\in |L_1|$ be a normal variety and let $\cT$ be a rank $\tau$ torsion free $\cO_D$-module  and let $i: H\hookrightarrow X$ be the closed embedding. Let
	$$0\to \cG\to 
	\cE \to i_*\cT\to 0$$
	be a short exact sequence of coherent $\cO_X$-modules, where $\cE$ is reflexive. 
	Then 
	\begin{align*}
		\int_X \Delta (\cG)L_2\dots L_{n-1}
		=&\int_X \Delta (\cE)L_2\dots L_{n-1}-\tau (r-\tau ) L_1^2L_2\dots L_{n-1}\\
		&+2 (r c_1(\cT)-\tau c_1(i^*\cE)). i^*L_2\dots i^*L_{n-1}.
	\end{align*}
\end{Lemma}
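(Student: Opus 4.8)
The plan is to compute each of the three invariants $c_1(\cG)$, $c_1^2(\cG)$, and $\int_X \ch_2(\cG)L_2\cdots L_{n-1}$ in terms of the corresponding data for $\cE$ and $\cT$, and then assemble them into $\Delta(\cG) = 2r\, c_2(\cG) - (r-1)c_1^2(\cG) = (r-1)c_1^2(\cG) - 2r\int_X \ch_2(\cG)$. First I would record the rank: since $i_*\cT$ is torsion, $\rk \cG = r$, and of course $c_1(\cG) = c_1(\cE) - c_1(i_*\cT) = c_1(\cE) - \tau[H]$ in $A^1(X)$, because $c_1(i_*\cT) = \tau\, i_*[H] = \tau L_1$ in $N^1(X)$ (the first Chern class of the pushforward of a rank $\tau$ torsion free sheaf from the divisor $H$). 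Squaring gives $c_1^2(\cG).L_2\cdots L_{n-1} = c_1^2(\cE).L_2\cdots L_{n-1} - 2\tau\, c_1(\cE).L_1.L_2\cdots L_{n-1} + \tau^2 L_1^2 L_2\cdots L_{n-1}$, and by Theorem \ref{main1}(2) the middle term is $2\tau\, c_1(i^*\cE).i^*L_2\cdots i^*L_{n-1}$.

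Next I would handle $\ch_2$. The short exact sequence $0 \to \cG \to \cE \to i_*\cT \to 0$ is right exact on all of $X$ and left exact; $\cG$ need not be reflexive, but it is torsion free (a subsheaf of the reflexive $\cE$), and one can pass to its reflexive hull at the cost of an error supported in codimension $\ge 2$ — here I would be careful and instead argue directly with the Frobenius-pushforward definition of $\ch_2$ as in Lemma \ref{short-exact-seq}, restricting to a very general complete intersection surface (using Theorem \ref{properties-of-ch_2}(2),(3)) so that I may assume $n = 2$, $k$ uncountable, and that $\cE$ is locally free away from finitely many points, with $H$ a smooth curve avoiding those points. On such a surface, $\ch_2(i_*\cT) = i_*\ch_1(\cT) + \tfrac12 i_*(\ch_0(\cT)\cdot c_1(N_{H/X}))$ by Grothendieck--Riemann--Roch for the closed embedding $i$ (or directly from a locally free resolution of $i_*\cT$), which gives $\int_X \ch_2(i_*\cT) = \deg_H c_1(\cT) + \tfrac{\tau}{2} L_1^2$. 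Combined with the additivity $\int_X \ch_2(\cE) = \int_X\ch_2(\cG) + \int_X\ch_2(i_*\cT)$ (the sequence is locally split in codimension $2$, at the generic point of $H$ and everywhere $\cE$ is free, so Lemma \ref{short-exact-seq} applies with equality), this yields $\int_X \ch_2(\cG)L_2\cdots L_{n-1} = \int_X\ch_2(\cE)L_2\cdots L_{n-1} - c_1(\cT).i^*L_2\cdots i^*L_{n-1} - \tfrac{\tau}{2}L_1^2 L_2\cdots L_{n-1}$.

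Finally I would substitute into $\int_X\Delta(\cG)L_2\cdots L_{n-1} = (r-1)\int_X c_1^2(\cG)L_2\cdots L_{n-1} - 2r\int_X\ch_2(\cG)L_2\cdots L_{n-1}$ and collect terms: the $L_1^2$-contributions combine to $(r-1)\tau^2 L_1^2 - \tau(-2r)(-\tfrac12)L_1^2 = (r-1)\tau^2 L_1^2 - r\tau L_1^2 = -\tau(r-\tau)L_1^2 L_2\cdots L_{n-1}$ (wait — recompute: $(r-1)\tau^2 - r\tau = \tau((r-1)\tau - r) = \tau(r\tau - \tau - r)$, which is $-\tau(r-\tau)$ only when $\tau=1$; in general it equals $\tau(r(\tau-1) - \tau)$, so I will need to also feed in the $-2\tau\,c_1(i^*\cE)$ term correctly and the cross term $rc_1(\cT) - \tau c_1(i^*\cE)$, and the bookkeeping must be done with care), and the remaining terms organize into $2(r c_1(\cT) - \tau c_1(i^*\cE)).i^*L_2\cdots i^*L_{n-1}$. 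I expect the main obstacle to be precisely this final algebra together with the clean justification that $\ch_2(i_*\cT)$ behaves as predicted by GRR in the reflexive-Chern-class formalism of \cite{La-Chern} — i.e. that restricting to a general surface is legitimate and that the torsion sheaf $i_*\cT$ on the normal surface has $\int_X\ch_2(i_*\cT) = \deg_H c_1(\cT) + \tfrac{\tau}{2}H^2$ even though $H^2$ is only a rational number (Mumford intersection), which I would verify by choosing $H$ to pass through the smooth locus so that $i$ is a regular embedding near $\Supp i_*\cT$ and classical GRR applies verbatim.
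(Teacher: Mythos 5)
Your overall strategy coincides with the paper's (reduce to a very general complete intersection surface, record $c_1(\cG)=c_1(\cE)-\tau L_1$, prove additivity of $\ch_2$ with the contribution of $i_*\cT$ computed by Riemann--Roch, assemble), but as written the proof does not close, and the mismatch you flag at the end is not a matter of feeding in cross terms more carefully: it is caused by two concrete errors. First, $\Delta(\cG)=2rc_2(\cG)-(r-1)c_1^2(\cG)$ equals $c_1^2(\cG)-2r\int_X\ch_2(\cG)$, not $(r-1)c_1^2(\cG)-2r\int_X\ch_2(\cG)$. Second, your GRR term has the wrong sign: $\ch(i_*\cT)=i_*\bigl(\ch(\cT)\cdot\mathrm{td}(N_{H/X})^{-1}\bigr)$ with $\mathrm{td}(N)^{-1}=1-\tfrac12 c_1(N)+\cdots$, so on the surface $\int_X\ch_2(i_*\cT)=\deg_H\cT-\tfrac{\tau}{2}H^2$ (test with $\cT=\cO_H$ via the Koszul resolution: $\ch_2(i_*\cO_H)=-\tfrac12 H^2$), and hence $\int_X\ch_2(\cG)=\int_X\ch_2(\cE)-\deg_H\cT+\tfrac{\tau}{2}L_1^2$, not $-\tfrac{\tau}{2}L_1^2$. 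With both corrections the algebra closes for every $\tau$: the $L_1^2$-coefficient becomes $\tau^2-r\tau=-\tau(r-\tau)$ and the remaining terms are $2(rc_1(\cT)-\tau c_1(i^*\cE)).i^*L_2\dots i^*L_{n-1}$, which is exactly the claimed identity; since you leave the discrepancy unresolved, the argument as submitted does not establish the lemma.

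There is also a gap in the additivity step itself: you cannot quote Lemma \ref{short-exact-seq} ``with equality''. That lemma and the definition of $\int_X\ch_2$ in Theorem \ref{properties-of-ch_2} concern reflexive sheaves, so $\int_X\ch_2(i_*\cT)$ is not even defined on the (possibly singular) normal surface; moreover the sequence $0\to\cG\to\cE\to i_*\cT\to 0$ is never locally split along $H$, because a nonzero torsion sheaf cannot be a direct summand of the torsion-free sheaf $\cE$, so the local-splitting hypothesis of that lemma fails in any case. Your fallback of arguing directly with the limits $\lim_m\chi(X,F_X^{[m]}(\cdot))/p^{2m}$ is the right move and is what the paper does: after reduction, $H$ is a smooth Cartier divisor on the surface, hence contained in $X_{\mathrm{reg}}$ (smoothness of a Cartier divisor forces regularity of the ambient surface along it), so applying $(F_X^m)^*$ keeps the sequence exact along $H$ and yields $\chi(X,F_X^{[m]}\cE)=\chi(X,F_X^{[m]}\cG)+\chi(X,(F_X^m)^*(i_*\cT))$; the last limit is then evaluated by passing to a resolution $f\colon\tilde X\to X$, where the kernel of $f^{[*]}\cE\to\tilde i_*\cT$ is locally free and the limit is identified with $\int_{\tilde X}\ch_2(\tilde i_*\cT)$, which is finally computed by Riemann--Roch on the smooth surface. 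This replaces your appeal to ``classical GRR verbatim'' on $X$; note also that you cannot choose $H$ to avoid the non-locally-free points of $\cE$, since $H$ is fixed by hypothesis --- it is the containment $H\subset X_{\mathrm{reg}}$, not a genericity choice, that makes the argument work.
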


\begin{proof}
	Note that $\cG$ is a coherent reflexive $\cO_X$-module.
	Since both sides of our inequality depend continuously on $L_2,...,L_{n-1}$ when considered as functions on $N^1(X)_{\QQ}$, and the inequality does not change when we pass to multiples,  we can assume that $L_2,..., L_{n-1}$ are very ample. By Theorems \ref{main1} and \ref{properties-of-ch_2} we can assume that the base field $k$ is uncountable and  then we can restrict to a very general complete intersection surface in $|L_2|\cap ...\cap |L_{n-1}|$ to reduce the assertion to the surface case. 
	An exact sequence
	$$(F_X^m)^*\cG\to 
	(F_X^m)^*	\cE \to (F_X^m)^* (i_*\cT)\to 0$$
	leads to 
	$$0\to F_X^{[m]}\cG\to 
	F_X^{[m]} \cE \to \cT_m\to 0, $$
	where $\cT_m $ is set-theoretically supported on $H$.
	Moreover, we have a canonical map $(F_X^m)^* (i_*\cT)\to \cT_m$, which is an isomorphism 
	on the set where $F_X$ is flat, i.e., on $X_{reg}$. 
	But $X$ is a surface and $H$ is a smooth curve, which is also a Cartier divisor. So $H$ is contained in $X_{reg}$ and hence $\cT_m\simeq (F_X^m)^* (i_*\cT)$. 
	This gives
	\begin{align*}
		\int _X \ch_2 (\cE)	&=\lim _{m\to \infty }\frac{\chi (X, F_X^{[m]} \cE)}{p^{2m}}=\lim _{m\to \infty }\frac{\chi (X,  F_X^{[m]} \cG)}{p^{2m}}+\lim _{m\to \infty }\frac{\chi (X, (F_X^m)^* (i_*\cT))}{p^{2m}}\\
		&=\int _X \ch_2 (\cG) +\lim _{m\to \infty }\frac{\chi (X, (F_X^m)^* (i_*\cT))}{p^{2m}}.
	\end{align*}
	To compute the last limit, let us consider a resolution of singularities $f: \tilde X\to X$, which is an isomorphism on $X_{reg}$. So we have a closed embedding $\tilde i: H\hookrightarrow \tilde X$ such that $f\circ \tilde i=i$.	 Then we have a short exact sequence
	$$0\to \tilde \cG\to  f^{[*]}\cE \to \tilde i_*\cT\to 0,$$
	where $\tilde \cG$ is a rank $r$ vector bundle. 
	Then by the same arguments as above we have
	\begin{align*}
		\lim _{m\to \infty }\frac{\chi (X, (F_X^m)^* (i_*\cT))}{p^{2m}}&=
		\lim _{m\to \infty }\frac{\chi (\tilde X, (F_{\tilde X}^m)^* (\tilde i_* \cT))}{p^{2m}}=
		\int _{\tilde X} \ch_2 (f^{[*]}\cE)- \int _{\tilde X} \ch_2 (\tilde \cG)\\
		& = \int _{\tilde X} \ch_2 (\tilde i_*\cT).
	\end{align*}
	To compute $\int _{\tilde X} \ch_2 (\tilde i_*\cT)$ one can use the Riemann--Roch theorem on $\tilde X$ and on $H$ to get
	$$ \deg _H \cT+\tau \chi (\cO_H)=\chi (H, \cT)=\chi (X,\tilde i_* \cT)= \int _{\tilde X} \ch_2 (\tilde i_*\tilde \cT)- \frac{1}{2}c_1(\tilde i_*\tilde \cT)K_{\tilde X}$$
	Since $c_1(\tilde i_* \cT)= \tau H$ and $ \chi (\cO_H)=-\frac {1}{2}H(K_{\tilde X}+H)$ (e.g, because $K_H=(K_{\tilde X}+H)|_H$ and $\deg K_H=-2 \chi (\cO_H)$), we get 
	$$ \int _{\tilde X} \ch_2 (\tilde i_*\cT)=\deg _H \cT -\frac {\tau}{2}H^2.$$
	Summing up, we get
	\begin{align*}
		\int _X \ch_2 (\cE)	=\int _X \ch_2 (\cG)+\deg _H \cT-\frac {\tau}{2}L_1^2.
	\end{align*}
	After rewriting, using $2r \int _X \ch_2 (\cE)	=  c_1^2(
	\cE)-{\int_X \Delta (\cE)}$, $2r \int _X \ch_2 (\cG)	=  c_1^2(
	\cG)-{\int_X \Delta (\cG)}$ and $c_1 (\cG)= c_1 (\cE)-\tau H$, we get the required formula.
\end{proof}

\section{Boundedness on normal varieties in positive characteristic}

In this section we prove strong restriction theorems for semistable sheaves and we show some boundedness results.  In particular we prove Theorems \ref{Main1} and \ref{Main2}.

Let $X$ be a normal projective variety of dimension $n$ defined over an algebraically closed field $k$ of characteristic $p>0$ and let $L=(L_1,...,L_{n-1})$ be a collection of nef line bundles on $X$. 

\subsection{Slope semistability and its behaviour under pullbacks}\label{Subsection:beta}

For a coherent $\cO_X$-module $\cE$ of positive rank $r$ we define its \emph{slope} with respect to the collection $L$ by
$$\mu_L(\cE):= \frac{c_1(\cE). L_1...L_{n-1}}{r}.$$
Note that by definition $\mu _L(\cE ^{**})= \mu _L(\cE)$.

We say that a coherent $\cO_X$-module $\cE$ is \emph{slope $L$-semistable} if either it is torsion
or for every coherent $\cO_X$-submodule $\cF\subset \cE$
of positive rank we have $\mu_L(\cF)\le \mu_L(\cE)$. We say that $\cE$ is \emph{strongly slope $L$-semistable} if for all $m\ge 0$ the Frobenius pullbacks $(F_X^m)^*\cE$ are slope $L$-semistable.
In this section we usually consider slope semistability with respect to our fixed collection $L$ (unless explicitly stated). So for simplicity of notation we usually ignore dependence of slopes on $L$.

Every coherent $\cO_X$-module $\cE$ of positive rank admits the Harder--Narasimhan filtration 
$0\subset \cE_0\subset \cE_1\subset ...\subset \cE_s=\cE$. It is a unique filtration by coherent 
$\cO_X$-submodules such that $\cE_0$ is torsion, quotients $\cE^i:=\cE_i/\cE_{i-1}$ are torsion free
and slope $L$-semistable  for $i=1,...,s$ and we have
$\mu_1=\mu  (\cE^1)>...>\mu_s=\mu  (\cE^s)$. In the following we write $\mu_{\max} (\cE)$ for $\mu_1$ and  $\mu_{\min} (\cE)$ for $\mu_s$.

The proof of \cite[Theorem 2.7]{La1}  works on normal varieties and it gives the following result:

\begin{Theorem}
	Let $\cE$ be a coherent $\cO_X$-module  of positive rank. Then there exists $m_0$ such that for all $m\ge m_0$ all quotients in the Harder--Narasimhan filtration of $(F_X^{m})^*\cE$ are strongly slope $L$-semistable.
\end{Theorem}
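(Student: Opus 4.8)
The plan is to reproduce, on the normal variety $X$, the proof of \cite[Theorem 2.7]{La1}. The only role of normality is the following observation: slopes with respect to $L$, and the instability estimates below, involve only $c_1$ and the intersection pairing with the $L_i$, so all the relevant quantities are read off a fixed general complete intersection surface $S$, or off the big open $X_{\mathrm{reg}}$, where $F_X$ is flat and restricts to the Frobenius of $X_{\mathrm{reg}}$; since $\Sing X$ has codimension $\ge 2$, nothing happening there is seen. After this reduction the argument is identical to the smooth case, and I would organise it around the rank-normalised Harder--Narasimhan polygons $\bar P_m := p^{-m}P_m$ of $(F_X^m)^*\cE$, regarded as concave piecewise-linear functions on $[0,\rk\cE]$ with breakpoints at integers.

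The crux is the uniform boundedness of the instability produced by a single Frobenius pullback: there is a constant $C_0=C_0(X,L,r)$, with $r=\rk\cE$, such that $\mu_{\max}(F_X^*\cF)\le p\,\mu_{\max}(\cF)+C_0$ and $\mu_{\min}(F_X^*\cF)\ge p\,\mu_{\min}(\cF)-C_0$ for every torsion free $\cF$ of rank $\le r$. Passing to the Harder--Narasimhan filtration reduces this to $\cF$ slope $L$-semistable; then $F_X^*\cF$ carries the canonical connection $\nabla_{\mathrm{can}}$ with horizontal sheaf $\cF$, and no saturated subsheaf $\cG\subset F_X^*\cF$ with $\mu(\cG)>p\,\mu(\cF)$ can be $\nabla_{\mathrm{can}}$-stable, since by Cartier descent it would descend to a subsheaf of $\cF$ of slope $>\mu(\cF)$; hence $\nabla_{\mathrm{can}}$ induces a nonzero $\cO_X$-linear map $\cG\to (F_X^*\cF/\cG)\otimes\Omega^1_{X_{\mathrm{reg}}}$, and telescoping this down the Harder--Narasimhan filtration of $F_X^*\cF$ bounds $\mu_{\max}(F_X^*\cF)-\mu_{\min}(F_X^*\cF)$ in terms of $r$ and $\mu_{\max}(\Omega^1_X)$. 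The one point that is delicate in positive characteristic — that $\mu_{\max}$ is not obviously subadditive under tensor product — is handled exactly as in \cite{La1} by a simultaneous induction on the rank. \textbf{This estimate is the main obstacle}; everything afterwards is bookkeeping.

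Granting the estimate, applying $F_X^*$ to the Harder--Narasimhan filtration of $(F_X^m)^*\cE$ gives a filtration of $(F_X^{m+1})^*\cE$ of polygon $p\,P_m$, so $\bar P_{m+1}\ge \bar P_m$ pointwise, while the estimate forces $\|\bar P_{m+1}-\bar P_m\|_\infty\le r\,C_0\,p^{-m-1}$. Hence, at each integer point $j$, the value $\bar P_m(j)$ — which is $p^{-m}$ times the maximal $L$-degree of a rank-$j$ subsheaf, i.e. $p^{-m}$ times a Mumford intersection number with the Cartier class $L_1|_S$, hence in $p^{-m}\ZZ$ — forms a nondecreasing convergent sequence whose consecutive differences lie in $p^{-m-1}\ZZ_{\ge 0}$; such a sequence is eventually constant, so there is $m_0$ with $\bar P_{m+1}=\bar P_m$, i.e. $P_{m+1}=p\,P_m$, for all $m\ge m_0$. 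For such $m$ the filtration obtained by applying $F_X^*$ to the Harder--Narasimhan filtration of $(F_X^m)^*\cE$ has polygon equal to the Harder--Narasimhan polygon of $(F_X^{m+1})^*\cE$ and the same rank type, hence it \emph{is} the Harder--Narasimhan filtration of $(F_X^{m+1})^*\cE$ and each $F_X^*\cE^i$ is slope $L$-semistable. Iterating from $m_0$, the Harder--Narasimhan factors $\cE^i$ of $(F_X^{m_0})^*\cE$ satisfy that $(F_X^k)^*\cE^i$ is a Harder--Narasimhan factor of $(F_X^{m_0+k})^*\cE$, hence slope $L$-semistable, for all $k\ge 0$; that is, the $\cE^i$ are strongly slope $L$-semistable. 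Finally, for any $m\ge m_0$ the Harder--Narasimhan factors of $(F_X^m)^*\cE$ are the sheaves $(F_X^{m-m_0})^*\cE^i$, i.e. Frobenius pullbacks of strongly semistable sheaves, hence strongly slope $L$-semistable, which is the assertion.
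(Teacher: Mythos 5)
Your reduction to the regular locus and your one--step instability estimate are fine and are indeed how the paper intends the proof of \cite[Theorem 2.7]{La1} to be transported to normal varieties: degrees and slopes only see a big open subset, Frobenius is flat on $X_{\mathrm{reg}}$, Cartier descent and the canonical connection argument work there, and the twist by a divisor $A$ with $T_X(A)$ globally generated avoids the tensor--product subadditivity issue (the paper itself never writes this out; its proof is the one--line assertion that Langer's argument goes through). The genuine gap is in your final stabilization step, which is where the real content of the theorem lies, not in the estimate you flag as the main obstacle. A minor point first: for an integer $j$ that is not a rank occurring in the Harder--Narasimhan filtration, $P_m(j)$ is not the maximal degree of a rank-$j$ subsheaf and need not be an integer (it only lies in $\frac{1}{r!}\ZZ$), so the claim $\bar P_m(j)\in p^{-m}\ZZ$ is incorrect as stated. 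Much more seriously, the concluding inference is false as a statement about sequences: a nondecreasing convergent sequence whose consecutive differences lie in $p^{-m-1}\ZZ_{\ge 0}$, even if they are also bounded by $rC_0p^{-m-1}$, need not be eventually constant --- the sequence $a_m=1-p^{-m}$ satisfies all of these constraints. So discreteness plus the summable increment bound does not exclude the scenario in which $P_{m+1}>pP_m$ for infinitely many $m$ with nonzero increments of bounded unnormalized size.

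This cannot be waved away, because eventual constancy of $\bar P_m$ is \emph{equivalent} to the theorem: as you note, once all Harder--Narasimhan quotients of $(F_X^m)^*\cE$ are strongly semistable, $F_X^*$ of the HN filtration is again the HN filtration and $\bar P_{m+1}=\bar P_m$; conversely your (correct) argument extracts the theorem from stabilization. Hence the step you call bookkeeping is exactly the missing idea: one needs a structural argument --- for instance an analysis of what happens when the maximal destabilizing subsheaf of $(F_X^{m+1})^*\cE$ is not horizontal for the canonical connection, i.e.\ does not descend, which is the kind of input \cite[Theorem 2.7]{La1} actually uses --- showing that a strict increase of the normalized Harder--Narasimhan data can occur only finitely often. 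A purely numerical combination of monotonicity, an $O(p^{-m-1})$ increment bound and integrality, which is all your proposal offers at this point, does not prove it.
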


For a coherent reflexive $\cO_X$-module $\cE$ we set
$$L_{\max} (\cE)
=\lim _{m\to \infty} \frac{\mu _{\max} (F_X^{[m]}\cE)}{p^m}$$
and
$$L _{\min} (\cE) =\lim _{k\to \infty} \frac{\mu _{\min} (F_X^{[m]}\cE)}{ p^m}.$$
By the above theorem these numbers are well defined rational numbers.

\medskip

Now choose a nef Cartier divisor $A$ on $X$ such that 
$T_X(A)$ is globally generated. Then \cite[Corollary 2.5]{La1} still holds and it implies the following
lemma.

\begin{Lemma}\label{bounds-for-L_max}
	For any coherent reflexive $\cO_X$-module $\cE$ of rank $r$ we have
	$$\max (L_{\max}(\cE) -\mu (\cE), \mu (\cE) -L_{\min} (\cE))\le AL_1...L_{n-1}. $$
\end{Lemma}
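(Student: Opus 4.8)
The plan is to reduce the statement to the smooth-variety case treated in \cite[Corollary 2.5]{La1} by passing to a big open subscheme, using that slopes, maximal/minimal slopes in Harder--Narasimhan filtrations, and the quantities $L_{\max}$, $L_{\min}$ are all insensitive to removal of a closed subset of codimension $\ge 2$. First I would note that, since $X$ is normal and $\cE$ is reflexive, by Lemma \ref{restriction-to-open} the restriction functor to any big open subscheme $j\colon U\hookrightarrow X$ is an equivalence onto $\Ref(\cO_U)$; in particular restriction and reflexive pushforward identify subsheaves of positive rank (up to saturation/reflexive hull) on $X$ with those on $U$, and they preserve $c_1$ and hence the slope with respect to $L$ (computed as an intersection number using $c_1$, which only depends on the divisor class of $\det$). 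Consequently $\mu_{\max}$ and $\mu_{\min}$ of $\cE$ and of all the Frobenius pullbacks $F_X^{[m]}\cE$ agree with those of $\cE|_U$ and $F_U^{[m]}(\cE|_U)=(F_X^{[m]}\cE)|_U$, so $L_{\max}(\cE)=L_{\max}(\cE|_U)$ and $L_{\min}(\cE)=L_{\min}(\cE|_U)$.

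Next I would take $U=X_{\mathrm{reg}}$, which is a big open subset because $X$ is normal. The obstruction to applying \cite[Corollary 2.5]{La1} directly is that $U$ is smooth but not projective; however, \cite[Corollary 2.5]{La1} is a statement about the Harder--Narasimhan slopes of iterated Frobenius pullbacks, and its proof (via the bound on how far semistability can fail after a Frobenius twist, controlled by $T_X(A)$) only uses the smoothness of the variety together with the existence of a globally generated twist of the tangent sheaf, not properness of $X$ itself. Since $T_X(A)$ is globally generated on all of $X$, its restriction to $U$ is globally generated on $U$, and $T_U=T_X|_U$; so the cited estimate applies on $U$ and yields
\[
\max\bigl(L_{\max}(\cE|_U)-\mu(\cE|_U),\ \mu(\cE|_U)-L_{\min}(\cE|_U)\bigr)\le A|_U\cdot (L_1|_U)\cdots (L_{n-1}|_U).
\]
Finally, the intersection number on the right is just $AL_1\cdots L_{n-1}$ (intersection numbers of line bundles are computed after restriction to any big open, or directly on $X$ since they are numerical), and combined with the equalities from the first step this gives the asserted inequality on $X$.

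The main obstacle I anticipate is the bookkeeping in the first step: one must check that the Harder--Narasimhan filtration behaves well under the big-open equivalence, i.e.\ that taking the reflexive hull of a saturated subsheaf on $U$ and pushing forward gives the corresponding term of the HN filtration on $X$ with the same slope, and that this is compatible with Frobenius pullback (using $F_U^*$ exact on the smooth locus, as in the proof of Lemma \ref{short-exact-seq}). Once that compatibility is in place, everything else is a direct transcription of \cite[Corollary 2.5]{La1}; the only subtlety is making sure its proof does not secretly use properness of the ambient variety, which it does not.
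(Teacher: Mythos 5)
Your opening dictionary (reflexive sheaves on $X$ versus on a big open subset, and the fact that $c_1$, slopes, Harder--Narasimhan filtrations, and hence $L_{\max}$, $L_{\min}$ only depend on codimension-one data) is correct and is indeed the mechanism behind the paper's argument. The gap is in the reduction itself: you propose to \emph{apply} \cite[Corollary 2.5]{La1} on $U=X_{\mathrm{reg}}$, justified by the claim that its proof ``does not use properness of the ambient variety''. That claim is not correct, and the step does not make sense as stated: $U$ is quasi-projective but not proper, so the notions appearing in that corollary ($\mu$, $\mu_{\max}$, $\mu_{\min}$, Harder--Narasimhan filtrations, $L_{\max}$, $L_{\min}$) have no intrinsic meaning on $U$ --- they are defined only through intersection numbers on the projective $X$ --- and the proof in \cite{La1} uses projectivity throughout (existence and behaviour of HN filtrations with respect to the nef collection, and slope estimates for twists by $\Omega_X$ resting on the tensor-product theorem for strongly semistable sheaves on projective varieties). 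With your identifications in place, ``Corollary 2.5 on $U$'' is literally the assertion that Corollary 2.5 holds on the normal projective $X$ with $F_X^{[m]}$ in place of $(F_X^m)^*$ --- which is exactly what the paper claims and what still has to be checked by rerunning the argument of \cite{La1} on $X$, verifying that each step uses only first Chern classes, slopes, and sheaf maps that can be produced on a big open subset and extended by reflexivity. In short, the correct adaptation keeps projectivity and drops smoothness outside a big open set; your proposal keeps smoothness and drops properness, and the justification offered for that move fails.

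A second, smaller point: the estimate you cite controls $L_{\max}(\cE)-\mu_{\max}(\cE)$ and $\mu_{\min}(\cE)-L_{\min}(\cE)$, not the distance to the average slope $\mu(\cE)$; no bound of the form $L_{\max}(\cE)-\mu(\cE)\le C(X,A,L,r)$ can hold for arbitrary reflexive $\cE$ of fixed rank (take $\cE=\cO_X\oplus\cO_X(mH)$ with $H$ ample and $m\gg 0$). So the displayed inequality cannot be obtained verbatim from the citation for all $\cE$; one must either phrase the conclusion in terms of $\mu_{\max}$ and $\mu_{\min}$ or restrict to the slope semistable case (where $\mu_{\max}=\mu=\mu_{\min}$), which is the case actually needed later (it is how $\beta_r$ enters). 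Your write-up passes from the cited estimate to the stated inequality without addressing this.
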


As in \cite{La1} we also set 
$$\beta _r :=
\left( \frac{r(r-1)}{p-1}AL_1\dots L_{n-1}\right) ^2.
$$

\subsection{Restriction theorem and Bogomolov's inequality}\label{Section:Restriction+Bogomolov}

We define an open cone in $N_L (X)$
$$K^+_L=\{ D\in N_L (X): D^2.L_2\dots L_{n-1}>0 \hbox{ and }
D.L_1L_2\dots L_{n-1}\ge 0\} .$$
As in the smooth case, by Lemma \ref{HIT}
this cone  is ``self-dual'' 
in the following sense:
$$D\in K_L^{+} \hbox{ if and only if } D.D'.L_2\dots L_{n-1}>0\hbox{ for all }
D'\in {\overline{K_L^+}}\backslash \{ 0\} .$$

Let us fix a coherent reflexive $\cO_X$-module $\cE$.
Using Lemma \ref{needed-for-boundedness} one can follow the proofs of \cite[Theorems 3.1, 3.2, 3.3 and 3.4]{La1}
and get the following results:

\begin{Theorem}\label{restriction}
Assume that $L_1$ is very ample and
the restriction of $\cE$ to a general divisor $H\in |L_1|$ is not slope
$(L_2|_H,\dots , L_{n-1}|_H)$-semistable. 
Let $r _i$ and $\mu_i$ denote  ranks and  slopes (with respect to $(L_2|_H,\dots , L_{n-1}|_H)$)
 of the Harder--Narasimhan filtration  of $\cE|_H$. Then
$$\sum _{i<j} r_{i} r_{j}(\mu _{i} -\mu _{j})^2\le
d \cdot \int_X\Delta (\cE)L_2\dots L_{n-1}+2r^2(L_{\max}(\cE) -\mu (\cE)) (\mu (\cE) -L_{\min} (\cE)),
$$ 
where $d=L_1^2L_2....L_{n-1}$.
\end{Theorem}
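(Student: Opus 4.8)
The plan is to mimic the proof of \cite[Theorem 3.1]{La1} in the normal setting, using the Chern class inequalities established in Section 2 as the substitute for the smooth-variety computations. First I would restrict to a general divisor $H\in |L_1|$ and let $0\subset \cG_0\subset \cG_1\subset\dots\subset\cG_s=\cE|_H$ be the Harder--Narasimhan filtration with respect to $(L_2|_H,\dots,L_{n-1}|_H)$, with torsion subsheaf $\cG_0$, torsion-free semistable quotients $\cG^i=\cG_i/\cG_{i-1}$, ranks $r_i$ and slopes $\mu_i$. Since $\cE$ is reflexive and $H$ is general, the restriction $\cE|_H$ is torsion free (indeed reflexive on the normal variety $H$), so $\cG_0=0$. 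The key idea, exactly as in \cite{La1}, is to lift the destabilizing filtration on $\cE|_H$ back to a filtration of $\cE$ itself. Concretely, for each $i$ one produces a coherent subsheaf $\cE_i\subset\cE$ whose restriction to the general $H$ agrees (generically on $H$) with $\cG_i$; this is done by taking the saturation inside $\cE$ of the preimage of $\cG_i$ under $\cE\to\cE|_H$ for a family of divisors $H$, or equivalently by a flattening/valuative argument in the relative Quot scheme over the linear system $|L_1|$. The upshot is a filtration $0=\cE_0\subset\cE_1\subset\dots\subset\cE_s=\cE$ by reflexive subsheaves (after taking reflexive hulls) with $\rk(\cE_i/\cE_{i-1})^{**}=r_i$.

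Next I would run the slope estimate. Set $\cF_i=(\cE_i/\cE_{i-1})^{**}$. One compares $\mu_L(\cF_i)$ with the ``expected'' slope $\mu_i$ of the restricted piece: the discrepancy $\mu_L(\cF_i)-\mu(\cE)$ differs from $\tfrac{1}{m}(\mu_i-\mu(\cE|_H))$ — where $m$ would be $1$ here since $H\in|L_1|$ — only by controlled terms, and the boundedness of slopes under Frobenius pullback (Lemma~\ref{bounds-for-L_max}) is what keeps the $L_{\max},L_{\min}$ contributions under control. Then I apply Lemma~\ref{needed-for-boundedness}(1): since $d=L_1^2L_2\dots L_{n-1}>0$,
$$\frac{\int_X\Delta(\cE)L_2\dots L_{n-1}}{r}\ge \sum_i\frac{\int_X\Delta(\cF_i)L_2\dots L_{n-1}}{r_i}-\frac{1}{rd}\sum_{i<j}r_ir_j(\mu_L(\cF_i)-\mu_L(\cF_j))^2.$$
Each $\cF_i$ restricts to something with the same $c_1$ as $\cG^i$ on $H$, and since $\cG^i$ is semistable, a Bogomolov-type bound (or at least $\int_X\Delta(\cF_i)L_2\dots L_{n-1}\ge$ a correction involving $L_{\max}-L_{\min}$, via \cite[Cor. 2.5]{La1}/Lemma~\ref{bounds-for-L_max} applied to $\cF_i$) lets me drop or lower-bound the $\sum_i\Delta(\cF_i)/r_i$ term. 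Rearranging and translating $\mu_L(\cF_i)-\mu_L(\cF_j)$ back into $\mu_i-\mu_j$ on $H$ (incurring exactly the extra error term $2r^2(L_{\max}(\cE)-\mu(\cE))(\mu(\cE)-L_{\min}(\cE))$ coming from the at-most-$r$-sized slope gap between the two filtrations) yields the stated inequality
$$\sum_{i<j}r_ir_j(\mu_i-\mu_j)^2\le d\cdot\int_X\Delta(\cE)L_2\dots L_{n-1}+2r^2(L_{\max}(\cE)-\mu(\cE))(\mu(\cE)-L_{\min}(\cE)).$$

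The main obstacle I anticipate is the lifting step: on a singular normal $X$ one must ensure that the subsheaves $\cE_i\subset\cE$ obtained from the destabilizing filtration on general $H$ really are well-behaved — reflexive after taking hulls, with the correct ranks, and restricting to $\cG_i$ generically on $H$ — and that the intersection-theoretic comparison between $c_1(\cF_i)$ and $c_1(\cG^i)|_H$ (which a priori lives in the class group $A^1(H)$, not a Cartier class) is valid. This is where one leans on Theorem~\ref{main1}, Theorem~\ref{properties-of-ch_2}, and Lemma~\ref{Delta-computation}: the generic-restriction compatibilities in those statements are exactly what makes the numerical bookkeeping go through in codimension-$2$-closed-subset-up-to-error fashion, reducing everything (after passing to a very general complete intersection surface and an uncountable base field) to the surface case where Mumford intersection theory and $F_X^{[m]}$-asymptotics are available. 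Modulo this, the argument is the formal shadow of \cite[Theorems 3.1--3.4]{La1}.
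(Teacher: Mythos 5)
Your proposal breaks down at its central step: the claim that the Harder--Narasimhan filtration of $\cE|_H$ for a general $H\in |L_1|$ can be lifted to a filtration $0=\cE_0\subset\cE_1\subset\dots\subset\cE_s=\cE$ of $\cE$ on $X$ with $\rk(\cE_i/\cE_{i-1})^{**}=r_i$. Neither of your two recipes produces this. The preimage of $\cG_i$ under $\cE\to\cE|_H$ contains $\cE(-H)$, hence has full rank $r$, and its saturation in $\cE$ is $\cE$ itself; and the Quot-scheme/valuative argument only extends the relative HN filtration over the family of divisors (on the universal divisor in $X\times|L_1|$, or on a blown-up pencil), it does not descend it to $X$ -- such descent is exactly what fails in general and is the whole difficulty behind restriction theorems. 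Concretely, take $\cE=T_{\PP^2}$, $L_1=\cO_{\PP^2}(1)$: for every line $\ell$ the restriction $T_{\PP^2}|_\ell\simeq\cO_\ell(2)\oplus\cO_\ell(1)$ is unstable with maximal destabilizer $T_\ell$, but a rank-one subsheaf $\cE_1\subset T_{\PP^2}$ whose restriction to a general $\ell$ is generically $T_\ell$ would have to single out, at a general point $x$, the tangent direction of every general line through $x$, which is impossible. So the filtration your argument rests on simply does not exist, and everything downstream (applying Lemma \ref{needed-for-boundedness} to it on $X$, translating $\mu_L(\cF_i)$ into $\mu_i$) collapses.

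This is also not the route the paper takes: its proof follows \cite[Theorem 3.1]{La1}, where one works with the relative HN filtration of the pullback of $\cE$ to the family of divisors and argues by induction on dimension intertwined with the Bogomolov-type statements (Theorems \ref{Bog1} and \ref{Bog2}); the terms $L_{\max}(\cE)$ and $L_{\min}(\cE)$ arise from running the argument on Frobenius pullbacks and passing to the limit, which is precisely why the statement involves these asymptotic invariants rather than $\mu_{\max},\mu_{\min}$, and why the paper's only stated modifications are to replace $(F_X^m)^*\cE$ by $F_X^{[m]}\cE$ and to invoke \cite[Corollary 6.6]{La-Chern} in the surface base case, with Lemma \ref{needed-for-boundedness} supplying the Chern-class bookkeeping for reflexive sheaves. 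Your proposal contains no Frobenius limit and no induction on dimension, and the assertion that an ``at-most-$r$-sized slope gap between the two filtrations'' yields exactly the error term $2r^2(L_{\max}(\cE)-\mu(\cE))(\mu(\cE)-L_{\min}(\cE))$ is not derived from anything; note that Lemma \ref{bounds-for-L_max} only bounds $L_{\max}(\cE)-\mu(\cE)$ by $AL_1\dots L_{n-1}$, which is not what is needed here. You also leave the surface case, where the paper explicitly needs \cite[Corollary 6.6]{La-Chern} because the arguments of \cite{La1} fail on normal surfaces, completely unaddressed.
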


\begin{Theorem}\label{Bog1}
If $\cE$ is strongly slope
$(L_1,\dots, L_{n-1})$-semistable  then 
$$ \int_X \Delta (\cE)L_2\dots L_{n-1}\ge 0.$$ 
\end{Theorem}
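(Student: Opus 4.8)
The plan is to transpose the proof of \cite[\S 3]{La1} to reflexive sheaves on normal varieties, the whole argument being reduced to the case of a normal projective surface, where one runs the classical Bogomolov argument on the endomorphism sheaf. First I would make two harmless reductions: since $\int_X\Delta(\cE)L_2\dots L_{n-1}$ is continuous on $N^1(X)_{\QQ}^{\times(n-2)}$ and is unchanged if the $L_i$ are replaced by positive multiples, I may assume all $L_i$ are very ample; and by Theorem \ref{properties-of-ch_2}(2) I may assume $k$ is uncountable, so that a single ``very general'' member of a linear system can be chosen to work simultaneously for all of the (countably many) Frobenius pullbacks $F_X^{[m]}\cE$, $m\ge 0$.

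\emph{The surface case.} Here one must show $\int_X\Delta(\cE)\ge 0$ for a strongly semistable reflexive sheaf $\cE$ on a normal projective surface $X$; suppose not. Since each $F_X^{[m]}\cE$ is semistable, so is its dual, hence the reflexive sheaf $\End(F_X^{[m]}\cE)=F_X^{[m]}\End(\cE)=(F_X^{[m]}\cE)^{\vee}\hat\otimes(F_X^{[m]}\cE)$ is semistable of slope $0$ (a tensor product of strongly semistable sheaves is semistable); in particular $\mu_{\max}(\End(F_X^{[m]}\cE))\le 0$, and $\mu_{\max}$ of its twist by $\omega_X$ is bounded by $K_X\cdot L_1$ independently of $m$. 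By the effective bounds on $h^0$ of a reflexive sheaf of bounded $\mu_{\max}$ on a normal surface (the reflexive analogue of the estimates of \cite{La1}, resting on Lemma \ref{bounds-for-L_max}) together with Serre duality, both $h^0$ and $h^2$ of $\End(F_X^{[m]}\cE)$ remain bounded, so $\chi(X,\End(F_X^{[m]}\cE))$ is bounded in $m$. On the other hand, by Theorem \ref{properties-of-ch_2}(4) and the identity $\int_X\ch_2(\End\cE)=-\int_X\Delta(\cE)$ (valid for reflexive sheaves by the results of \cite{La-Chern}), one has $\chi(X,\End(F_X^{[m]}\cE))/p^{2m}\to-\int_X\Delta(\cE)>0$, so $\chi(X,\End(F_X^{[m]}\cE))\to+\infty$ --- a contradiction.

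\emph{Reduction to surfaces.} For $n\ge 3$ I would descend to the surface case by restricting $\cE$ to a general complete intersection surface $S=H_2\cap\cdots\cap H_{n-1}$, with $H_i$ a general (hence normal) member of $|L_i^{\otimes m_i}|$ for suitable $m_i$; then $\int_S\Delta(\cE|_S)=\bigl(\prod m_i\bigr)\int_X\Delta(\cE)L_2\cdots L_{n-1}$ by Theorems \ref{main1} and \ref{properties-of-ch_2}, so it suffices to know that $\cE|_S$ is strongly $(L_1|_S)$-semistable. This is supplied by the effective restriction theorem for strongly semistable reflexive sheaves (the analogue, for reflexive sheaves on normal varieties, of \cite[Theorem 3.4]{La1}), whose proof is the usual Bogomolov--Mehta--Ramanathan descent based on Theorem \ref{restriction} and Lemma \ref{needed-for-boundedness}: the crucial point is that strong semistability makes the term $2r^2\bigl(L_{\max}(\cE)-\mu(\cE)\bigr)\bigl(\mu(\cE)-L_{\min}(\cE)\bigr)$ of Theorem \ref{restriction} vanish, so that non-semistability of a restriction would force the relevant discriminant to be positive; under the (contradiction) hypothesis that it is negative the restrictions therefore stay strongly semistable, and the very general choices work uniformly for all Frobenius pullbacks over an uncountable field. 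One has to chain these restrictions carefully, exactly as in \cite[\S 3]{La1}, so that the surface $S$ picks out precisely the Chern number $\int_X\Delta(\cE)L_2\cdots L_{n-1}$.

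The step I expect to be the main obstacle is the surface case together with the passage from vector bundles to reflexive sheaves on singular normal varieties: one needs $\int_X\ch_2$ to be subadditive on left-exact sequences and additive on locally split ones, to be computed by a Frobenius limit, and to be compatible with restriction to general hyperplanes, and one needs the effective $h^0$-bounds of \cite{La1} to persist for reflexive sheaves on normal surfaces --- which is exactly the content of Theorem \ref{properties-of-ch_2} and of Lemmas \ref{short-exact-seq}--\ref{needed-for-boundedness} and \ref{bounds-for-L_max}. Granting these, the arguments of \cite{La1} carry over with only notational changes.
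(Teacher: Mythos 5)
Your higher-dimensional strategy matches the paper's: Theorem \ref{Bog1} is proved there by following \cite[Theorems 3.1--3.4]{La1}, with Lemma \ref{needed-for-boundedness} replacing the smooth-case lemma and $F_X^{[m]}\cE$ replacing $(F_X^m)^*\cE$. But the surface case is exactly where the paper does \emph{not} rerun the classical argument: it explicitly says that ``in the surface case one needs to use \cite[Corollary 6.6]{La-Chern} instead of using the arguments of \cite{La1} that do not work for normal surfaces'', and your surface argument is precisely such an argument. Its crucial ingredient is the identity $\int_X\ch_2(\End\cE)=-\int_X\Delta(\cE)$ for a \emph{reflexive} sheaf on a singular normal surface, i.e.\ compatibility of the Frobenius-limit $\ch_2$ with the reflexivized tensor/endomorphism operation $\cE^{\vee}\hat\otimes\cE$. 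Nothing in the paper gives this: Theorem \ref{properties-of-ch_2} and Lemmas \ref{short-exact-seq}--\ref{needed-for-boundedness} provide (sub)additivity along (locally split) exact sequences, restriction to very general hypersurfaces and the Frobenius scaling $\ch_2(F_X^{[*]}\cE)=p^2\ch_2(\cE)$, but say nothing about tensor products; indeed the behaviour of these Chern classes under tensor operations on normal surfaces is singled out by the author as the key open obstacle. Writing ``valid for reflexive sheaves by the results of \cite{La-Chern}'' transfers the whole difficulty to an unverified citation, whereas the paper's own route is to quote the surface-case Bogomolov inequality itself (\cite[Corollary 6.6]{La-Chern}), not to rebuild it.

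Two further inputs of your surface argument are also not available from the paper as cited: semistability of $\End(F_X^{[m]}\cE)=F_X^{[m]}\End(\cE)$ requires a tensor-product theorem for strongly slope-semistable reflexive sheaves on a normal surface with respect to a nef polarization, and the effective bounds on $h^0$ (and $h^2$ via Serre duality with the non-invertible dualizing sheaf $\omega_X$) for reflexive sheaves of bounded $\mu_{\max}$ on a normal surface are not consequences of Lemma \ref{bounds-for-L_max}, which only controls $L_{\max}$ and $L_{\min}$. Finally, in the reduction to surfaces, ``chaining the restrictions carefully'' hides a real point: to land on a surface computing $\int_X\Delta(\cE)L_2\cdots L_{n-1}$ you must restrict along $L_2,\dots,L_{n-1}$, and for those directions Theorem \ref{restriction} (with the collection reordered) bounds the instability of the restriction by the \emph{other} products $\int_X\Delta(\cE)L_1L_3\cdots L_{n-1}$, etc., about which your contradiction hypothesis says nothing; so the descent is not the one-line dichotomy you describe and must be organized as in \cite{La1}. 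In summary: the skeleton is right and coincides with the paper's, but the surface case as you present it is a genuine gap relative to what this paper establishes, and it is precisely the step the author outsources to \cite[Corollary 6.6]{La-Chern}.
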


\begin{Theorem}\label{Bog2}
If $\cE$ is  slope $(L_1,\dots, L_{n-1})$-semistable then 
$$L_1^2L_2\dots L_{n-1}\cdot
\int_X \Delta (\cE)L_2\dots L_{n-1}+ \beta _r\ge 0.
$$
\end{Theorem}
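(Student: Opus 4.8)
The plan is to imitate the proof of \cite[Theorem 3.4]{La1}, all of whose ingredients are now available on normal varieties: Bogomolov's inequality for strongly semistable reflexive sheaves (Theorem \ref{Bog1}), the filtration estimate for $\int_X\Delta$ (Lemma \ref{needed-for-boundedness}(1)), the Frobenius scaling of $\ch_2$ (Theorem \ref{properties-of-ch_2}(5)), and the slope bound for Frobenius pullbacks underlying Lemma \ref{bounds-for-L_max} (i.e.\ \cite[Corollary 2.5]{La1}). We may assume $d:=L_1^2L_2\cdots L_{n-1}>0$, since otherwise the asserted inequality reduces to $\beta_r\ge 0$.

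First I would dispose of the case that $\cE$ is strongly slope $L$-semistable: then Theorem \ref{Bog1} gives $\int_X\Delta(\cE)L_2\cdots L_{n-1}\ge 0$ and we are done. Otherwise some Frobenius pullback of $\cE$ fails to be slope semistable, so by the theorem stated just before Lemma \ref{bounds-for-L_max} there is an $m$ for which the Harder--Narasimhan filtration $0=N^0\subset\cdots\subset N^s=F_X^{[m]}\cE$ has strongly slope $L$-semistable quotients; set $\cF_i:=(N^i/N^{i-1})^{**}$, $r_i:=\rk\cF_i$, $\mu_i:=\mu_L(\cF_i)$, so that $\mu_1>\cdots>\mu_s$. (If this filtration turns out to be trivial, apply Theorem \ref{Bog1} directly to $F_X^{[m]}\cE$ and use the scaling identity below.)

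Next comes the Chern-class computation. Applying Lemma \ref{needed-for-boundedness}(1) to this filtration and then Theorem \ref{Bog1} to each strongly semistable $\cF_i$ to discard the nonnegative terms $\sum_i r_i^{-1}\int_X\Delta(\cF_i)L_2\cdots L_{n-1}$ yields
$$d\cdot\int_X\Delta(F_X^{[m]}\cE)L_2\cdots L_{n-1}\ \ge\ -\sum_{i<j}r_ir_j(\mu_i-\mu_j)^2.$$
Since $c_1(F_X^{[m]}\cE)=p^mc_1(\cE)$ and, iterating Theorem \ref{properties-of-ch_2}(5), $\int_X\ch_2(F_X^{[m]}\cE)L_2\cdots L_{n-1}=p^{2m}\int_X\ch_2(\cE)L_2\cdots L_{n-1}$, the left-hand side equals $p^{2m}d\int_X\Delta(\cE)L_2\cdots L_{n-1}$; dividing by $p^{2m}$, it remains to show $\sum_{i<j}r_ir_j(\mu_i-\mu_j)^2\le p^{2m}\beta_r$.

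For the final slope estimate I would first observe that, since the $\cF_i$ are strongly semistable, for every $m'\ge m$ the Harder--Narasimhan filtration of $F_X^{[m']}\cE$ is the $F_X^{[m'-m]}$-pullback of the one above, so $L_{\max}(\cE)=p^{-m}\mu_1$ and $L_{\min}(\cE)=p^{-m}\mu_s$. An elementary identity (using $\sum_i r_i\mu_i=rp^m\mu(\cE)$) gives $\sum_{i<j}r_ir_j(\mu_i-\mu_j)^2=r\sum_i r_i\bigl(\mu_i-p^m\mu(\cE)\bigr)^2\le r^2(\mu_1-\mu_s)^2=r^2p^{2m}\bigl(L_{\max}(\cE)-L_{\min}(\cE)\bigr)^2$, so it suffices to bound $L_{\max}(\cE)-L_{\min}(\cE)$ by $\frac{r-1}{p-1}\,AL_1\cdots L_{n-1}$. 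This is precisely where \cite[Corollary 2.5]{La1} is needed in full strength, being sharper than the bound $\le AL_1\cdots L_{n-1}$ recorded in Lemma \ref{bounds-for-L_max}: the factor $\frac1{p-1}$ arises from summing $\sum_{k\ge 1}p^{-k}$ over successive Frobenius pullbacks, each step being controlled by the canonical connection on $F_X^{[*]}(-)$ together with global generation of $T_X(A)$. I expect this slope bookkeeping --- and checking that it produces exactly the constant $\beta_r$ --- to be the only real obstacle; once it is supplied, substituting back gives $d\int_X\Delta(\cE)L_2\cdots L_{n-1}\ge-\beta_r$, which is the theorem.
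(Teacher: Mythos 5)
Your skeleton is the same as the paper's, which simply adapts the argument of \cite{La1} (pass to $F_X^{[m]}\cE$ whose Harder--Narasimhan quotients are strongly semistable, combine Lemma \ref{needed-for-boundedness}(1) with Theorem \ref{Bog1} applied to the reflexive hulls $\cF_i$, use $c_1(F_X^{[m]}\cE)=p^mc_1(\cE)$ and the scaling of $\ch_2$ to rewrite the left-hand side, and finish with a slope estimate), and everything up to the reduction to $\sum_{i<j}r_ir_j(\mu_i-\mu_j)^2\le p^{2m}\beta_r$ is fine. The gap is in how you propose to prove that last estimate. You use $\sum_{i<j}r_ir_j(\mu_i-\mu_j)^2=r\sum_i r_i(\mu_i-\bar\mu)^2\le r^2(\mu_1-\mu_s)^2$ and then need $L_{\max}(\cE)-L_{\min}(\cE)\le\frac{r-1}{p-1}AL_1\cdots L_{n-1}$, which you attribute to \cite[Corollary 2.5]{La1}. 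But the bounds available there (and in Lemma \ref{bounds-for-L_max}) control $L_{\max}(\cE)-\mu(\cE)$ and $\mu(\cE)-L_{\min}(\cE)$ \emph{separately}; for a semistable $\cE$ they only give $L_{\max}(\cE)-L_{\min}(\cE)\le\frac{2(r-1)}{p-1}AL_1\cdots L_{n-1}$, and your route then produces $d\int_X\Delta(\cE)L_2\cdots L_{n-1}+4\beta_r\ge0$, not the stated inequality. The sharper difference bound you want is in fact true, but it requires an extra argument (applying the corollary to the HN pieces of the first destabilized Frobenius pullback, not to $\cE$ itself), which you have not supplied.

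The standard repair, and what the model proof in \cite{La1} does, is to avoid the square altogether: since $\mu_s\le\mu_i\le\mu_1$ gives $(\mu_i-\mu_1)(\mu_i-\mu_s)\le0$, one has the Chebyshev-type bound $\sum_i r_i(\mu_i-\bar\mu)^2\le r(\mu_1-\bar\mu)(\bar\mu-\mu_s)$, i.e. $\sum_{i<j}r_ir_j(\mu_i-\mu_j)^2\le r^2(\mu_1-\bar\mu)(\bar\mu-\mu_s)$ (this is \cite[Lemma 1.4]{La1}, already invoked in the proof of Theorem \ref{boundedness}). Because $\mu_{\max}(F_X^{[m']}\cE)/p^{m'}$ is non-decreasing and $\mu_{\min}(F_X^{[m']}\cE)/p^{m'}$ is non-increasing, $\mu_1-\bar\mu\le p^m\bigl(L_{\max}(\cE)-\mu(\cE)\bigr)$ and $\bar\mu-\mu_s\le p^m\bigl(\mu(\cE)-L_{\min}(\cE)\bigr)$, so the two separate bounds $L_{\max}(\cE)-\mu(\cE)\le\frac{r-1}{p-1}AL_1\cdots L_{n-1}$ and $\mu(\cE)-L_{\min}(\cE)\le\frac{r-1}{p-1}AL_1\cdots L_{n-1}$ for the semistable $\cE$ now give exactly $\sum_{i<j}r_ir_j(\mu_i-\mu_j)^2\le r^2p^{2m}\bigl(\frac{r-1}{p-1}AL_1\cdots L_{n-1}\bigr)^2=p^{2m}\beta_r$. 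With this substitution your argument closes and coincides with the paper's intended proof.
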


\begin{Theorem}
If
$L_1^2L_2\dots L_{n-1}\cdot\int_X  \Delta (\cE) L_2\dots L_{n-1}+  \beta _r<0$ 
then there exists a rank $1\le r'<r$ saturated reflexive subsheaf $\cE '\subset \cE$ such that
$\left( \frac{c_1(\cE')}{r'}- \frac{c_1(\cE)}{r}\right)$ lies in $ K^+_L.$ 
\end{Theorem}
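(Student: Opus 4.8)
The plan is to argue by contradiction with slope semistability and then to extract the desired subsheaf from the Harder--Narasimhan filtration of $\cE$. Throughout I use $d=L_1^2L_2\dots L_{n-1}>0$ (since $\beta_r\ge 0$, the hypothesis forces $d\cdot\int_X\Delta(\cE)L_2\dots L_{n-1}<0$, so $d\ne 0$, and $d<0$ is excluded by the standing positivity assumptions on the collection $L$). By Theorem \ref{Bog2} the hypothesis implies that $\cE$ is \emph{not} slope $(L_1,\dots,L_{n-1})$-semistable, so its Harder--Narasimhan filtration $0=\cE_0\subset\cE_1\subset\dots\subset\cE_s=\cE$ has $s\ge 2$ steps. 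Set $\cF_i:=(\cE_i/\cE_{i-1})^{**}$ (slope $L$-semistable, of rank $r_i$ and slope $\mu_i$, with $\mu_1>\dots>\mu_s$ and $\sum_i r_i=r$), let $\rho_j=r_1+\dots+r_j$, and put $\delta_j:=\frac{c_1(\cE_j)}{\rho_j}-\frac{c_1(\cE)}{r}\in N_L(X)_\QQ$ for $1\le j\le s-1$. Since $\mu_L(\cE_j)>\mu_L(\cE)$ we have $\delta_j\cdot L_1L_2\dots L_{n-1}>0$, hence $\delta_j\in K^+_L$ if and only if $\delta_j^2L_2\dots L_{n-1}>0$. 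It therefore suffices to find one $j$ with $\delta_j^2L_2\dots L_{n-1}>0$: for that $j$ the saturated reflexive subsheaf $\cE':=\cE_j$, of rank $\rho_j\in[1,r-1]$, does the job.

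For the main estimate I would apply the refined form of Lemma \ref{needed-for-boundedness}(1) (the one appearing in its proof, together with Lemma \ref{Chern-classes-filtrations}(3)) to the Harder--Narasimhan filtration, rewrite $\frac1r\sum_{i<j}r_ir_j\bigl(\frac{c_1\cF_i}{r_i}-\frac{c_1\cF_j}{r_j}\bigr)^2$ by the elementary variance identity $\sum_{i<j}r_ir_j(m_i-m_j)^2=r\sum_i r_i(m_i-\bar m)^2$ with $m_i=c_1\cF_i/r_i$ and $\bar m=c_1\cE/r$, and bound the semistable pieces $\cF_i$ by Theorem \ref{Bog2}, i.e. $\int_X\Delta(\cF_i)L_2\dots L_{n-1}\ge-\beta_{r_i}/d$. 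Using the elementary inequality $\sum_i\beta_{r_i}/r_i\le\beta_r/r$ (valid because $x\mapsto x(x-1)^2$ is increasing and $\sum_i r_i=r$), this chain yields, after multiplying through by $rd>0$,
$$d\cdot\int_X\Delta(\cE)L_2\dots L_{n-1}+\beta_r\ \ge\ -\,rd\cdot Q,\qquad Q:=\sum_i\frac{\langle w_i,w_i\rangle}{r_i},$$
where $\langle D,D'\rangle:=D\cdot D'\cdot L_2\dots L_{n-1}$ is the pairing on $N_L(X)$, $w_i:=c_1(\cF_i)-r_i\bar m$, so that the partial sums $S_j:=w_1+\dots+w_j=\rho_j\delta_j$ satisfy $S_0=S_s=0$ and $\langle S_j,S_j\rangle=\rho_j^2\cdot\delta_j^2L_2\dots L_{n-1}$. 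Consequently, if \emph{no} $\delta_j$ lay in $K^+_L$, i.e. if $\langle S_j,S_j\rangle\le 0$ for all $j$, it would remain only to prove $Q\le 0$ — which contradicts the hypothesis. So the whole theorem reduces to this last point.

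The last point is a linear-algebra statement about the form $\langle\cdot,\cdot\rangle$ on $N_L(X)$, and here the Harder--Narasimhan ordering is essential — for arbitrary vectors $S_j$ with $\langle S_j,S_j\rangle\le 0$ it fails. I would use the Hodge-index structure from Lemmas \ref{HIT} and \ref{easy-HIT}: fixing $\ell:=[L_1]$ with $\langle\ell,\ell\rangle=d>0$, write $S_j=\tau_j\ell+R_j$ with $R_j$ orthogonal to $\ell$, where the form is negative definite; put $|R|_-^2:=-\langle R,R\rangle\ge 0$. Then $\langle S_j,S_j\rangle\le 0$ reads $|R_j|_-^2\ge d\,\tau_j^2$, while $\langle w_i,\ell\rangle=r_i\bigl(\mu_i-\mu_L(\cE)\bigr)$ and the strict decrease $\mu_1>\dots>\mu_s$ translate into $(\tau_i-\tau_{i-1})/r_i$ being strictly decreasing; that is, $j\mapsto\tau_j$ is concave with respect to the ``resistance metric'' $\rho_0=0<\rho_1<\dots<\rho_s$, and $\tau_0=\tau_s=0$ then forces $\tau_j\ge 0$. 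Since $Q=d\sum_i\frac{(\tau_i-\tau_{i-1})^2}{r_i}-\sum_i\frac{|R_i-R_{i-1}|_-^2}{r_i}$, the bound $Q\le 0$ follows from the weighted Dirichlet-energy comparison
$$\sum_i\frac{|R_i-R_{i-1}|_-^2}{r_i}\ \ge\ \sum_i\frac{(f_i-f_{i-1})^2}{r_i}\ \ge\ d\sum_i\frac{(\tau_i-\tau_{i-1})^2}{r_i},\qquad f_j:=|R_j|_-\ \ge\ \sqrt d\,\tau_j,$$
where the first inequality is the reverse triangle inequality for $|\cdot|_-$ and the second holds because $\sqrt d\,\tau$, being concave in the resistance metric with vanishing boundary values, minimizes the weighted Dirichlet energy among all real sequences lying above it with the same boundary values: a one-line summation by parts shows the relevant cross term $\sum_{j}(a_j-a_{j+1})(f_j-\sqrt d\,\tau_j)$, with $a_j=\sqrt d\,(\tau_j-\tau_{j-1})/r_j$ non-increasing and $f_j-\sqrt d\,\tau_j\ge 0$, is a sum of nonnegative terms. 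I expect this obstacle-type comparison — and in particular the observation that the Harder--Narasimhan inequalities are precisely the concavity needed to run it — to be the only genuinely delicate step; the rest is bookkeeping with the results recalled above.
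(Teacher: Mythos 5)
Your proposal is correct, and it is worth saying how it sits relative to the paper: the paper gives no written argument for this statement beyond the instruction to follow the proof of \cite[Theorem 3.4]{La1} (with $F_X^{[m]}$ in place of $(F_X^m)^*$ and the surface-case input from the Chern-class paper), so your text is a genuinely self-contained alternative built only from results stated here. Your skeleton agrees with the strategy the paper imports — Harder--Narasimhan filtration, Theorem \ref{Bog2} applied to the reflexive hulls $\cF_i$ of the HN quotients (these are semistable of the same slope, and the $\cE_j$ are saturated hence reflexive, as the paper itself uses), the exact identity from the proof of Lemma \ref{needed-for-boundedness} together with Lemma \ref{Chern-classes-filtrations}(3), and the signature statement of Lemmas \ref{easy-HIT} and \ref{HIT} — but your endgame is different: rather than reproducing the inductive/recombination argument of \cite{La1}, you show directly that if no $\delta_j=c_1(\cE_j)/\rho_j-c_1(\cE)/r$ lies in $K^+_L$ then $Q=\sum_i\langle w_i,w_i\rangle/r_i\le 0$, via the Lorentzian splitting $S_j=\tau_j\ell+R_j$, the observation that the HN inequalities say exactly that $a_i=\sqrt{d}(\tau_i-\tau_{i-1})/r_i=(\mu_i-\mu(\cE))/\sqrt{d}$ is decreasing, and the summation-by-parts comparison $\sum_i a_i(h_i-h_{i-1})=\sum_{i=1}^{s-1}(a_i-a_{i+1})h_i\ge 0$ with $h_i=|R_i|_{-}-\sqrt{d}\,\tau_i\ge 0$; I checked this computation and the surrounding steps (the variance identity, the reverse triangle inequality for the negative-definite form on $\ell^{\perp}$, the fact that $d>0$ follows from nefness and the hypothesis, and that you do not even need $\tau_j\ge 0$ separately, since for $\tau_j\le 0$ the constraint $h_j\ge0$ is automatic), and they are sound. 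Two cosmetic points: the inequality $r\sum_i\beta_{r_i}/r_i\le\beta_r$ is equivalent to $\sum_i r_i(r_i-1)^2\le r(r-1)^2$, which follows simply from $r_i\le r$ (your appeal to monotonicity of $x(x-1)^2$ is slightly loose but harmless), and the membership check in $K^+_L$ should be read in $N_L(X)\otimes\QQ$, which is how the statement is meant. What your route buys: no induction on rank, no perturbation or variation of the polarization (welcome here, since the $L_i$ are only nef and $X$ is normal), and a transparent identification of the single place where the HN ordering is used; what the paper's route buys is brevity, since it reuses \cite{La1} essentially verbatim.
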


The only difference in proofs with respect to \cite{La1} is that one should consider $F_X^{[m]}\cE$ instead of $(F_X^m)^*\cE$. Also in the surface case one needs to  use \cite[Corollary 6.6]{La-Chern} instead of using the arguments of \cite{La1} that do not work for normal surfaces.

\medskip

\subsection{Strong restriction theorems}

In this subsection  we assume that  $d=L_1^2L_2...L_{n-1}>0$. As in \cite[Theorem 5.1]{La1}, Lemma \ref{needed-for-boundedness}  and Theorems \ref{Bog1} and \ref{Bog2} imply the following
Bogomolov's inequality for all reflexive sheaves.

\begin{Theorem}\label{Bogomolov-unstable-inequality}
		If $\cE$ is a coherent reflexive $\cO_X$-module then 
	$$L_1^2L_2\dots L_{n-1}\cdot
\int_X	\Delta (\cE)L_2\dots L_{n-1}+ r^2(L_{\max} (\cE) -\mu  (\cE)) (\mu (\cE) -L_{\min} (\cE) )\ge 0
	$$
	and
	$$L_1^2L_2\dots L_{n-1}\cdot
\int_X	\Delta (\cE)L_2\dots L_{n-1}+r^2(\mu_{\max} (\cE) -\mu (\cE)) (\mu (\cE) -\mu_{\min} (\cE) ) +
	\beta _r \ge 0.$$
\end{Theorem}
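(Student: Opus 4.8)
The plan is to reduce the statement for an arbitrary reflexive sheaf $\cE$ to the (strongly) semistable case, which is already handled by Theorems \ref{Bog1} and \ref{Bog2}, by using the Harder--Narasimhan filtration together with the estimate of Lemma \ref{needed-for-boundedness}(1). Concretely, I would first consider a Frobenius pullback $F_X^{[m]}\cE$ for $m$ large enough that all Harder--Narasimhan quotients of it are strongly slope $L$-semistable; such $m$ exists by the theorem of \cite[Theorem 2.7]{La1} quoted above. Write $0\subset \cE_0\subset \dots \subset \cE_s=F_X^{[m]}\cE$ for the Harder--Narasimhan filtration, and let $\cF_i=(\cE_i/\cE_{i-1})^{**}$, $r_i=\rk\cF_i$, $\mu_i=\mu_L(\cF_i)$, so that $\mu_1>\mu_2>\dots>\mu_s$.

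Next I would apply Lemma \ref{needed-for-boundedness}(1) to this filtration, which gives
\begin{align*}
\frac{\int_X \Delta(F_X^{[m]}\cE)L_2\dots L_{n-1}}{r}\ge \sum_i \frac{\int_X \Delta(\cF_i)L_2\dots L_{n-1}}{r_i}-\frac{1}{rd}\sum_{i<j} r_ir_j(\mu_i-\mu_j)^2.
\end{align*}
Each $\cF_i$ is strongly slope $L$-semistable, so Theorem \ref{Bog1} (or Theorem \ref{Bog2}) makes the sum over $i$ on the right nonnegative, leaving
\begin{align*}
d\cdot\int_X \Delta(F_X^{[m]}\cE)L_2\dots L_{n-1}+\sum_{i<j} r_ir_j(\mu_i-\mu_j)^2\ge 0.
\end{align*}
Now I would control the cross terms $\sum_{i<j} r_ir_j(\mu_i-\mu_j)^2$: since $\mu_{\min}(F_X^{[m]}\cE)=\mu_s\le \mu_i,\mu_j\le \mu_1=\mu_{\max}(F_X^{[m]}\cE)$, each difference $(\mu_i-\mu_j)^2$ is at most $(\mu_{\max}-\mu_i)(\mu_j-\mu_{\min})+(\text{similar})$; more cleanly, the standard convexity bound gives $\sum_{i<j}r_ir_j(\mu_i-\mu_j)^2\le r^2(\mu_{\max}(F_X^{[m]}\cE)-\mu(F_X^{[m]}\cE))(\mu(F_X^{[m]}\cE)-\mu_{\min}(F_X^{[m]}\cE))$, exactly as in the proof of \cite[Theorem 5.1]{La1}. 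Finally I would use Theorem \ref{properties-of-ch_2}(5) (which scales $\ch_2$, hence $\Delta$, by $p^{2m}$), the fact that $\mu_{\max}$, $\mu_{\min}$, $\mu$ of $F_X^{[m]}\cE$ are $p^m$ times the corresponding quantities for $\cE$ in the limit — or directly the definitions of $L_{\max},L_{\min}$ — and divide by $p^{2m}$, letting $m\to\infty$. For the first inequality one passes to the limit so that $\mu_{\max}(F_X^{[m]}\cE)/p^m\to L_{\max}(\cE)$ and $\mu_{\min}(F_X^{[m]}\cE)/p^m\to L_{\min}(\cE)$; for the second inequality one instead applies the argument already at a fixed finite stage using Theorem \ref{Bog2} (which introduces the $\beta_r$ term) together with the elementary bound $\beta_{r_i}\le \beta_r$ and the bookkeeping from \cite{La1}, keeping $\mu_{\max}(\cE),\mu_{\min}(\cE)$ rather than $L_{\max},L_{\min}$.

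The step I expect to require the most care is the passage to the limit for the first inequality: one must check that $L_{\max}(\cE)$ and $L_{\min}(\cE)$ are genuinely realized as limits of $\mu_{\max}/p^m$ and $\mu_{\min}/p^m$ along the filtration of $F_X^{[m]}\cE$ (this is where Lemma \ref{bounds-for-L_max} guarantees the sequences do not run off to infinity), and that the cross-term bound survives the limit. Everything else is a transcription of the smooth-case arguments of \cite[Theorem 5.1]{La1}, with $F_X^{[m]}\cE$ in place of $(F_X^m)^*\cE$ and with Lemma \ref{needed-for-boundedness} and \cite[Corollary 6.6]{La-Chern} replacing the smooth-surface inputs used there.
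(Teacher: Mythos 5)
Your proposal is correct and takes essentially the same route as the paper, whose proof is exactly the argument of \cite[Theorem 5.1]{La1} transplanted via Lemma \ref{needed-for-boundedness} and Theorems \ref{Bog1} and \ref{Bog2}: the Harder--Narasimhan filtration (of $F_X^{[m]}\cE$ with strongly semistable quotients for the first inequality, of $\cE$ itself for the second), the convexity bound $\sum_{i<j}r_ir_j(\mu_i-\mu_j)^2\le r^2(\mu_{\max}-\mu)(\mu-\mu_{\min})$, and division by $p^{2m}$ with $m\to\infty$. The only point to tighten is the $\beta$-bookkeeping in the second inequality: what is actually needed is $\sum_i \tfrac{r}{r_i}\beta_{r_i}\le \beta_r$, which holds because $\sum_i r_i(r_i-1)^2\le r(r-1)^2$, rather than just $\beta_{r_i}\le\beta_r$.
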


This immediately implies the following corollary.

\begin{Corollary}\label{Bogomolov's-inequality}
	Let us fix some positive integer $r$ and some non-negative rational number $\alpha$.
	There exists some constant $C=C(X,L,r,\alpha)$ depending only on $X$, $L$, $r$ and $\alpha$ such that for every  coherent reflexive $\cO_X$-module $\cE$ of rank $r$ with
	$\mu _{\max ,L } (\cE)-\mu _L (\cE)\le \alpha$ we have
	$$\int _X \Delta (\cE) L_2...L_{n-1}\ge C.$$
\end{Corollary}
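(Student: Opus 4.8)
The plan is to deduce this as an immediate consequence of Theorem \ref{Bogomolov-unstable-inequality} together with the uniform bounds on $L_{\max}$ and $L_{\min}$ provided by Lemma \ref{bounds-for-L_max}. The point is that the ``error term'' in the second inequality of Theorem \ref{Bogomolov-unstable-inequality} is a product of two nonnegative quantities, both of which we can control once $r$ is fixed and $\mu_{\max}(\cE) - \mu(\cE)$ is bounded.

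First I would recall that for every coherent reflexive $\cO_X$-module $\cE$ of rank $r$ we have, by Lemma \ref{bounds-for-L_max},
$$\mu_{\max}(\cE) - \mu(\cE) \le L_{\max}(\cE) - \mu(\cE) \le A L_1\dots L_{n-1}$$
(here $\mu_{\max} \le L_{\max}$ holds since $\mu_{\max}(\cE) \le \mu_{\max}(F_X^{[m]}\cE)/p^m$ for all $m$, using that Frobenius pullback of a destabilizing subsheaf destabilizes at least as badly). By the hypothesis $\mu_{\max,L}(\cE) - \mu_L(\cE) \le \alpha$, the factor $\mu_{\max}(\cE) - \mu(\cE)$ is at most $\alpha$. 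Next I would observe that the Harder--Narasimhan filtration controls $\mu(\cE) - \mu_{\min}(\cE)$ as well: writing $\mu_1 > \dots > \mu_s$ for the HN slopes with ranks $r_1, \dots, r_s$, one has $\mu(\cE)$ a convex combination of the $\mu_i$, so $\mu(\cE) - \mu_{\min}(\cE) \le \mu_{\max}(\cE) - \mu_{\min}(\cE)$. To bound the latter by a constant depending only on $X, L, r, \alpha$, I would use Lemma \ref{bounds-for-L_max} again on the last HN quotient together with the bound on the first: $\mu_{\max}(\cE) - \mu_{\min}(\cE) \le (L_{\max}(\cE) - \mu(\cE)) + (\mu(\cE) - L_{\min}(\cE)) \le 2 A L_1\dots L_{n-1}$, which is a constant independent of $\cE$. (Alternatively one can simply feed both bounds directly into the inequality.)

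Putting this together, set $B := A L_1\dots L_{n-1}$, a constant depending only on $X$ and $L$. Then
$$r^2(\mu_{\max}(\cE) - \mu(\cE))(\mu(\cE) - \mu_{\min}(\cE)) \le r^2 \cdot \alpha \cdot 2B,$$
so the second inequality of Theorem \ref{Bogomolov-unstable-inequality} gives
$$d \cdot \int_X \Delta(\cE) L_2\dots L_{n-1} \ge -2 r^2 \alpha B - \beta_r,$$
where $d = L_1^2 L_2 \dots L_{n-1} > 0$ by assumption (this positivity is what is used throughout Subsection \ref{Subsection:strong} — note $d>0$ is standing in this subsection). Dividing by $d$ yields the claim with
$$C = C(X, L, r, \alpha) := \frac{-2r^2 \alpha B - \beta_r}{d}.$$

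I do not expect any genuine obstacle here: the corollary is a bookkeeping consequence of results already established. The only mild subtlety is ensuring that the bound on $\mu_{\max} - \mu$ really does control $\mu - \mu_{\min}$ uniformly — but this is exactly what Lemma \ref{bounds-for-L_max} delivers, since it bounds $\mu(\cE) - L_{\min}(\cE)$, and hence $\mu(\cE) - \mu_{\min}(\cE)$, by a constant with no dependence on $\cE$ at all. So if anything, the hypothesis $\mu_{\max} - \mu \le \alpha$ is only needed to make the constant explicit in the stated form; a bound depending only on $X, L, r$ is already available, but stating it with $\alpha$ matches how the corollary will be applied later.
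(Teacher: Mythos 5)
Your overall strategy is the intended one: the corollary is meant to follow at once from the second inequality of Theorem \ref{Bogomolov-unstable-inequality} (with $d=L_1^2L_2\dots L_{n-1}>0$), by bounding the term $r^2(\mu_{\max}(\cE)-\mu(\cE))(\mu(\cE)-\mu_{\min}(\cE))$. The hypothesis gives $\mu_{\max}(\cE)-\mu(\cE)\le\alpha$ directly, so the only issue is how you bound $\mu(\cE)-\mu_{\min}(\cE)$, and there your argument has a genuine gap. You bound it by $2AL_1\dots L_{n-1}$ by applying Lemma \ref{bounds-for-L_max} to an arbitrary reflexive sheaf, and you then conclude that the hypothesis on $\mu_{\max}-\mu$ is not really needed. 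That conclusion is false, which shows the step cannot be used in this way: for $\cE=\cO_X(-mL_1)\oplus\cO_X(mL_1)$ one has $\int_X\Delta(\cE)L_2\dots L_{n-1}=-4m^2d\to-\infty$, so no constant depending only on $X,L,r$ can exist; correspondingly $L_{\max}(\cE)-\mu(\cE)=md$ and $\mu_{\max}(\cE)-\mu_{\min}(\cE)=2md$ are unbounded. The content of Lemma \ref{bounds-for-L_max} comes from \cite[Corollary 2.5]{La1}, which compares the asymptotic slopes $L_{\max},L_{\min}$ with $\mu_{\max},\mu_{\min}$ (the statement is only meaningful with a semistability-type hypothesis when $\mu$ replaces $\mu_{\max}$, resp.\ $\mu_{\min}$); it cannot produce a bound on $\mu_{\max}(\cE)-\mu_{\min}(\cE)$ that is uniform over all unstable sheaves of rank $r$, and it is not what controls the error term here.

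The repair is the intended ``immediate'' argument, which uses the hypothesis twice: since $\mu(\cE)$ is a weighted average of the Harder--Narasimhan slopes, one has $\mu(\cE)-\mu_{\min}(\cE)\le(r-1)\bigl(\mu_{\max}(\cE)-\mu(\cE)\bigr)\le(r-1)\alpha$ (this is \cite[Lemma 1.4]{La1}, invoked again in the proof of Theorem \ref{boundedness}). Plugging both bounds into the second inequality of Theorem \ref{Bogomolov-unstable-inequality} gives
$$\int_X\Delta(\cE)L_2\dots L_{n-1}\ \ge\ -\frac{r^2(r-1)\alpha^2+\beta_r}{d}\,=:\,C(X,L,r,\alpha).$$
With this replacement (and deleting the final claim that $\alpha$ is dispensable, as well as the use of Lemma \ref{bounds-for-L_max} to control $\mu_{\max}-\mu$, which the hypothesis already does) your write-up becomes correct and coincides with the paper's argument.
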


\medskip

As in \cite{La1} we can use Theorem \ref{Bogomolov-unstable-inequality} to prove strong restriction theorems for reflexive sheaves on normal varieties. We take this opportunity to 
show proof of a stronger result that combines \cite[Theorem 5.2]{La1} with \cite[Theorem 3.7]{La7}.

\begin{Theorem}\label{strong-restriction-stable}
	Let  $\cE$ be a coherent reflexive $\cO_X$-module of rank $r\ge 2$. Assume that $\cE$ is slope $(L_1,...,L_{n-1})$-stable.  Let $H\in |L_1^{\otimes m}|$  be an irreducible normal divisor.  
	\begin{enumerate}
		\item 	 If 
		$$m>\left\lfloor \frac{r-1}{ r}\int_X\Delta (\cE)L_2\dots L_{n-1} +\frac{1}{dr(r-1)}
		+\frac{(r-1)\beta _r}{dr}\right\rfloor$$
		then $\cE|_H$ is  slope $(L_2|_H,...,L_{n-1}|_H)$-stable.
		\item 	If
		$$m\le \left\lfloor \frac{r-1}{ r}\int_X\Delta (\cE)L_2\dots L_{n-1} +\frac{1}{dr(r-1)}
		+\frac{(r-1)\beta _r}{dr}\right\rfloor$$
		then
		\begin{align*}
			\mu _{\max, L_2|_H,...,L_{n-1}|_H }& (\cE|_H)-\mu _{ L_2|_H,...,L_{n-1}|_H } (\cE|_H)\le\\ 
			&\frac{1}{2r}\left( d  \left(\int_X \Delta (\cE )L_2\dots L_{n-1}-\frac{r}{r-1}m
			\right)+\frac{1}{(r-1)^2}+{\beta_r}\right).
		\end{align*}
	\end{enumerate}
\end{Theorem}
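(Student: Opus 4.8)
The plan is to follow the strategy of \cite[Theorem 5.2]{La1}, adapted to the normal setting using the Chern-class formalism developed in Section 2 of this paper. First I would set up the standard argument by contradiction: suppose $\cE|_H$ is not slope $(L_2|_H,\dots,L_{n-1}|_H)$-stable, so there is a saturated subsheaf $\cF\subset \cE|_H$ of rank $r'$ with $1\le r'<r$ and $\mu_{L_2|_H,\dots}(\cF)\ge \mu_{L_2|_H,\dots}(\cE|_H)$. Taking the kernel $\cG$ of the composite $\cE\to \cE|_H\to (\cE|_H)/\cF$, we obtain a reflexive sheaf $\cG$ sitting in a short exact sequence $0\to \cG\to \cE\to i_*\cT\to 0$ on $X$, where $\cT$ is a torsion-free rank $\tau=r-r'$ quotient on $H$; this is exactly the setup of Lemma \ref{Delta-computation}. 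Note here $H\in|L_1^{\otimes m}|$ plays the role of the divisor in $|L_1'|$ with $L_1'=L_1^{\otimes m}$, so $d'=(L_1')^2L_2\dots L_{n-1}=m^2 d$ and the relevant "$\Delta$" bound scales accordingly.

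Next I would extract two numerical inequalities. On one hand, applying Lemma \ref{Delta-computation} (with $L_1$ replaced by $L_1^{\otimes m}$) gives an exact formula for $\int_X\Delta(\cG)L_2\dots L_{n-1}$ in terms of $\int_X\Delta(\cE)L_2\dots L_{n-1}$, the term $-\tau(r-\tau)m^2 d$, and a boundary term $2(rc_1(\cT)-\tau c_1(i^*\cE)).i^*L_2\dots i^*L_{n-1}$. The hypothesis that $\cF$ does not destabilize $\cE|_H$ translates into a sign condition on this boundary term — roughly $\mu_{L_2|_H,\dots}(\cT)\le \mu_{L_2|_H,\dots}(\cE|_H)$ forces $rc_1(\cT).i^*L_2\dots \le \tau c_1(i^*\cE).i^*L_2\dots$ (after clearing denominators), which bounds the boundary term from above. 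On the other hand, Theorem \ref{Bogomolov-unstable-inequality} applied to the reflexive sheaf $\cG$ gives a lower bound for $\int_X\Delta(\cG)L_2\dots L_{n-1}$ involving $r^2(\mu_{\max}(\cG)-\mu(\cG))(\mu(\cG)-\mu_{\min}(\cG))+\beta_r$. Since $\cG$ is a subsheaf of the stable sheaf $\cE$ and differs from $\cE$ only along $H$, one controls $\mu(\cG)$ precisely ($\mu(\cG)=\mu(\cE)-\tau m d/r$ up to the appropriate normalization) and, crucially, one shows $\mu_{\max}(\cG)-\mu_{\min}(\cG)$ is small: stability of $\cE$ means every saturated subsheaf of $\cG\subset\cE$ has slope strictly less than $\mu(\cE)$, and since $\cG$ agrees with $\cE$ in codimension one away from $H$, the slopes of HN-factors of $\cG$ are squeezed into an interval of length roughly $md$ times an integrality gap. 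Combining the upper bound from Lemma \ref{Delta-computation} with the lower bound from Theorem \ref{Bogomolov-unstable-inequality}, and optimizing the resulting quadratic in $r'$ (the function $r'\mapsto r'(r-r')$ or $\frac{1}{r'}+\frac{1}{r-r'}$, maximized at the endpoints to give the $\frac{1}{r(r-1)}$ and $\beta_r$ contributions), yields an inequality on $m$: if $m$ exceeds the stated floor, the inequalities are incompatible, proving (1); if $m$ is below it, the same inequalities rearrange into the explicit bound in (2) on $\mu_{\max}(\cE|_H)-\mu(\cE|_H)$.

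For part (2), the argument is essentially the same accounting but without deriving a contradiction: one keeps track of $\mu_{\max,L_2|_H,\dots}(\cE|_H)-\mu_{L_2|_H,\dots}(\cE|_H)$ as the quantity to be bounded, takes $\cF$ to be the maximal destabilizing subsheaf, and reads off the inequality directly from the combination of Lemma \ref{Delta-computation} and Theorem \ref{Bogomolov-unstable-inequality}, then optimizes over $r'=\rk\cF\in\{1,\dots,r-1\}$ to get the worst-case constant $\frac{1}{(r-1)^2}$; the factor $\frac{1}{2r}$ and the $d(\dots-\frac{r}{r-1}m)$ shape come from solving the quadratic inequality for the slope gap rather than for $m$.

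The main obstacle I anticipate is bookkeeping the intersection-theoretic terms correctly on the normal variety: one must ensure that Lemma \ref{Delta-computation} applies (it requires $H$ normal and $i^*$-restrictions of $L_j$ to behave well), that $\cG$ is genuinely reflexive so that Theorem \ref{Bogomolov-unstable-inequality} is applicable, and that the slope comparisons use the Hodge-index-type inequalities (Corollary \ref{HIT2}, Lemma \ref{HIT}) correctly when passing between $X$ and $H$ — in particular that $c_1(\cT).i^*L_2\dots i^*L_{n-1}$ and $c_1(\cG).L_1\dots L_{n-1}$ are related by the projection formula even though $\cT$ is only torsion-free, not locally free, on the possibly-singular $H$. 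A secondary subtlety is that the bound on $\mu_{\max}(\cG)-\mu_{\min}(\cG)$ must account for the fact that $\cE|_H$ might have several HN-factors, so one should run the argument with $\cF$ a single well-chosen subsheaf (the maximal destabilizer or a minimal one) and then iterate or use convexity to get the clean constants; this is where following \cite{La1} closely pays off, since the optimization over ranks is already carried out there.
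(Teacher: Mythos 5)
Your proposal follows essentially the same route as the paper's proof: form the elementary modification $\cE'=\ker(\cE\to i_*\cT)$ along $H$ for a destabilizing saturated subsheaf of $i^*\cE$, compute $\int_X\Delta(\cE')L_2\dots L_{n-1}$ via Lemma \ref{Delta-computation} with $L_1^{\otimes m}$ in place of $L_1$, bound $\mu_{\max}(\cE')-\mu(\cE')$ and $\mu(\cE')-\mu_{\min}(\cE')$ using stability of $\cE$, the inclusion $\cE(-H)\subset\cE'$ and the integrality gap $\tfrac{1}{r(r-1)}$, and then apply the second inequality of Theorem \ref{Bogomolov-unstable-inequality} (with the $\beta_r$ term) to $\cE'$ and rearrange, which yields both (1) and (2). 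Apart from the harmless slip where you write that $\cF$ ``does not destabilize'' (the sign condition you then use is exactly the destabilizing hypothesis $\mu(\cF)\ge\mu(\cE|_H)$), the plan matches the paper's argument.
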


\begin{proof}
	Let $i: H\hookrightarrow X$ denote the closed embedding.
	Since $\cE$ is reflexive, $i^*\cE$ is a torsion free $\cO_H$-module.
	Let  $\cS$ be a saturated subsheaf of $i^*\cE$ 
	of rank $\rho$. Set $\cT:=(i^*\cE)/\cS$ and let $\cE '$ be the kernel of the composition
	$\cE\to i_*(i^*\cE)\to i_*\cT$. Since $\cT$ is a torsion free $\cO_H$-module, $\cE '$ is a coherent reflexive $\cO_X$-module and we have two short exact sequences:
	$$0\to \cE '\to \cE \to i_*\cT\to 0$$
	and
	$$0\to \cE(-H)\to \cE ' \to i_* \cS\to 0.$$
	Lemma \ref{Delta-computation} implies that
	\begin{align*}
		\int_X \Delta (\cE ')L_2\dots L_{n-1}
		=&\int_X \Delta (\cE)L_2\dots L_{n-1}-
		\rho  (r-\rho ) H^2L_2\dots L_{n-1}\\
		&+2 (r c_1(\cT)-(r-\rho ) c_1(i^*\cE)). L_2|_H\dots L_{n-1}|_H.
	\end{align*}
	Since $\cE'\subset \cE$ and $\cE$ is slope $(L_1,...,L_{n-1})$-stable
	we have
	$$
	\mu _{\max} (\cE')-\mu (\cE')=\frac{r-\rho}{r}HL_1...L_{n-1}+\mu _{\max} (\cE')-\mu ( \cE)
	\le \frac{r-\rho}{r}md
	-\frac{1}{r(r-1)} .
	$$ Similarly, since $\cE(-H)\subset \cE'$ we have
	$$
	\mu (\cE')-\mu _{\min} (\cE')=\frac{\rho}{r}HL_1...L_{n-1}+\mu (\cE(-H))- \mu _{\min} (\cE')
	\le \frac{\rho}{ r}md-\frac{1}{r(r-1)} .$$ 
	So Theorem \ref{Bogomolov-unstable-inequality} gives
	\begin{align*}
		0\le& 	d\cdot  \int_X \Delta (\cE ')L_2\dots L_{n-1} + r^2(\mu_{\max }(\cE') -\mu  (\cE'))(\mu  (\cE') -\mu_{\min } (\cE') )+\beta_r \\
		\le & 	d\cdot \int_X \Delta (\cE )L_2\dots L_{n-1} -\rho (r-\rho) m^2d^2- 2r\rho (\mu (\cS)-\mu(i^*\cE))\\ 
		&+
		\left(\rho md -\frac{1}{r-1}\right)
		\left ((r-\rho)md -\frac{1}{r-1}\right) +\beta_r.\\
	\end{align*}
	If $\mu (\cS)\ge \mu( i^*\cE)$ then we get
	\begin{align*}
		\frac{2(r-1)}{d} (\mu (\cS)-\mu(i^*\cE))+m\le 	\frac{ r-1}{r} \int_X \Delta (\cE )L_2\dots L_{n-1}+\frac{1}{dr(r-1)}+\frac{(r-1)\beta_r}{dr},
	\end{align*}
	which implies the required assertions.	
\end{proof}

\begin{Remark}
	The above proof works also for an arbitrary irreducible normal divisor $D\subset X$, which is nef and Cartier.  In this way one gets restriction theorems taking into account
the difference of directions of lines given by classes of $D$ and $L_1$ in $N_L(X)$. We leave the details of proof to the interested reader.
\end{Remark}

\medskip

As in \cite[Corollary  5.4]{La1}  the above theorem together with Lemma \ref{needed-for-boundedness} implies the following result:

\begin{Corollary}\label{Bogomolov-restriction}
	Let  $\cE$ be a coherent reflexive $\cO_X$-module of rank $r\ge 2$. Assume that $\cE$ is slope $(L_1,...,L_{n-1})$-semistable 	and let $H\in |L_1^{\otimes m}|$  be an irreducible normal divisor. 
Let $0= \cE_0\subset \cE_1\subset ...\subset \cE_s=\cE$ be a
	Jordan--H\"older filtration of $\cE$ and let us assume that all $(\cE_i/\cE_{i-1})|_H$ are torsion free.
	\begin{enumerate}
	\item 	If 
	$$m>\left\lfloor \frac{r-1}{ r}\int_X\Delta (\cE)L_2\dots L_{n-1} +\frac{1}{dr(r-1)}
	+\frac{(r-1)\beta _r}{dr}\right\rfloor$$
	then $\cE|_H$ is slope $(L_2|_H,...,L_{n-1}|_H)$-semistable.
	\item 	If
		$$m\le \left\lfloor \frac{r-1}{ r}\int_X\Delta (\cE)L_2\dots L_{n-1} +\frac{1}{dr(r-1)}
		+\frac{(r-1)\beta _r}{dr}\right\rfloor$$
		then
		\begin{align*}
			\mu _{\max} (\cE|_H)\le \mu (\cE)+
			\frac{1}{2r}\left( d  \left(\int_X \Delta (\cE )L_2\dots L_{n-1}-\frac{r}{r-1}m
			\right)+\frac{1}{(r-1)^2}+{\beta_r}\right).
		\end{align*}
	\end{enumerate}
\end{Corollary}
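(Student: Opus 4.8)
The plan is to follow the scheme of \cite[Corollary 5.4]{La1}, reducing the semistable case to the stable-case Theorem \ref{strong-restriction-stable} by working with a Jordan--H\"older filtration. First I would set $\cF_i := (\cE_i/\cE_{i-1})^{**}$ for the reflexive hulls of the Jordan--H\"older quotients, so that each $\cF_i$ is slope $(L_1,\dots,L_{n-1})$-stable of slope $\mu(\cF_i)=\mu(\cE)$ and $\sum_i \rk \cF_i = r$. By Lemma \ref{needed-for-boundedness}(2), since all the slopes agree, we get
\begin{align*}
\frac{\int_X \Delta(\cE)L_2\dots L_{n-1}}{r} \ge \sum_i \frac{\int_X \Delta(\cF_i)L_2\dots L_{n-1}}{\rk \cF_i},
\end{align*}
which lets us bound each individual $\int_X \Delta(\cF_i)L_2\dots L_{n-1}$ from above in terms of $\int_X \Delta(\cE)L_2\dots L_{n-1}$ (using that each $\int_X\Delta(\cF_i)L_2\dots L_{n-1}\ge 0$ by Theorem \ref{Bog2}, up to the $\beta$-correction, or more simply that the left side dominates the sum of nonnegative-ish terms). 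The hypothesis that $(\cE_i/\cE_{i-1})|_H$ is torsion free ensures that restricting the filtration to $H$ behaves well: $\cE|_H$ acquires a filtration whose graded pieces are the $\cF_i|_H$ (away from a set of codimension $\ge 2$ in $H$, which does not affect slopes).

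Next, for part (1), I would observe that because $m$ exceeds the displayed floor for $\cE$, and because $\int_X\Delta(\cF_i)L_2\dots L_{n-1}\le \int_X\Delta(\cE)L_2\dots L_{n-1}$ while $\rk\cF_i \le r$, the same numerical inequality $m > \lfloor \tfrac{\rk\cF_i -1}{\rk\cF_i}\int_X\Delta(\cF_i)L_2\dots L_{n-1} + \tfrac{1}{d\,\rk\cF_i(\rk\cF_i-1)} + \tfrac{(\rk\cF_i-1)\beta_{\rk\cF_i}}{d\,\rk\cF_i}\rfloor$ holds for each $i$ (this requires checking monotonicity of the bound in the rank and in $\Delta$, which is the kind of routine estimate one does as in \cite{La1}; one also needs $\beta_{r'}$ to be monotone in $r'$, which is immediate from its definition). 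Then Theorem \ref{strong-restriction-stable}(1) applies to each stable sheaf $\cF_i$ and shows $\cF_i|_H$ is slope $(L_2|_H,\dots,L_{n-1}|_H)$-stable. Since $\cE|_H$ has a filtration with these stable, equal-slope graded pieces, it is slope $(L_2|_H,\dots,L_{n-1}|_H)$-semistable.

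For part (2), the argument is parallel: apply Theorem \ref{strong-restriction-stable}(2) to whichever $\cF_i$ is relevant to bound $\mu_{\max}(\cF_i|_H) - \mu(\cF_i|_H)$, and note $\mu(\cF_i|_H)=\mu(\cF_i)=\mu(\cE)$ (up to the normalization factor coming from $H\in|L_1^{\otimes m}|$, i.e.\ $HL_1\dots L_{n-1}=md$, which must be tracked carefully). Since $\mu_{\max}(\cE|_H)$ is the largest of the $\mu_{\max}(\cF_i|_H)$, the bound for $\cE$ follows, again using the monotonicity of the right-hand side in $\Delta$ and rank to pass from the $\cF_i$-bound to the claimed $\cE$-bound. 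The main obstacle I anticipate is the bookkeeping in this monotonicity step --- verifying that the function $r' \mapsto \tfrac{r'-1}{r'}\int_X\Delta(\cF)L_2\dots L_{n-1} + \tfrac{1}{dr'(r'-1)} + \tfrac{(r'-1)\beta_{r'}}{dr'}$ is dominated by its value at $r'=r$ with $\Delta(\cE)$ in place of $\Delta(\cF)$, given $\rk\cF\le r$ and the Lemma \ref{needed-for-boundedness} comparison --- together with correctly handling the case distinction $\mu(\cS)\ge\mu(i^*\cE)$ versus the opposite that appears inside the proof of Theorem \ref{strong-restriction-stable}; but none of this goes beyond the elementary estimates already present in \cite{La1}.
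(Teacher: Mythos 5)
Your proposal follows essentially the same route as the paper: pass to the reflexive hulls $\cF_i=(\cE_i/\cE_{i-1})^{**}$ of the Jordan--H\"older quotients, use Lemma \ref{needed-for-boundedness} to transfer the bound on $\int_X\Delta(\cE)L_2\dots L_{n-1}$ to each $\cF_i$, apply Theorem \ref{strong-restriction-stable} to each stable, equal-slope $\cF_i$, and reassemble $\cE|_H$ via the exact sequences $0\to\cE_{i-1}|_H\to\cE_i|_H\to\cF_i|_H$ furnished by the torsion-freeness hypothesis, finishing part (2) with $\mu_{\max}(\cE|_H)\le\max_i\mu_{\max}(\cF_i|_H)$ --- exactly the paper's argument. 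One caution on the step you defer as routine monotonicity: the threshold is \emph{not} monotone in the rank in the term $\frac{1}{dr'(r'-1)}$, which is larger for $r_i<r$ than for $r$; the paper handles this by noting that the discrepancy is compensated by the other terms unless they vanish, in which case the floor absorbs $\frac{1}{dr_i(r_i-1)}$. Likewise, bounding the individual $\int_X\Delta(\cF_i)L_2\dots L_{n-1}$ needs the $\beta$-corrections from Theorem \ref{Bog2} rather than plain nonnegativity, as you half-acknowledge; with these two points made precise your argument coincides with the one in the paper.
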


\begin{proof} Note that all $\cE_i$ are reflexive as they are saturated in 
	$\cE$.  Let us set $\cF_i:=(\cE_i/\cE_{i-1})^{**}$ and $r_i=\rk \cF_i$.
By	Lemma \ref{needed-for-boundedness} we have
	\begin{align*}
		\frac{\int _X \Delta  (\cE)L_2...L_{n-1}}{r}\ge \sum _i \frac{\int _X \Delta  (\cF_i)L_2...L_{n-1}}{r_i}.
	\end{align*}
In the first case either $r_i=1$ or $r_i\ge 2$ and
$$m>\left\lfloor \frac{r_i-1}{r_i}\int_X\Delta (\cF_i)L_2\dots L_{n-1} +\frac{1}{dr_i(r_i-1)}
+\frac{(r_i-1)\beta _{r_i}}{dr_i}\right\rfloor .$$
Note that in the above inequality we need to worry about the term 
$\frac{1}{dr_i(r_i-1)}$, which can be larger than $\frac{1}{dr(r-1)}$. However, the difference is compensated by the other terms unless both of them are 0 in which case $\lfloor \frac{1}{dr_i(r_i-1)}\rfloor =0$.
Applying Theorem \ref{strong-restriction-stable} we see that $\cF_i|_H$ is stable
with the same slope as $\cE|_H$.
Since $(\cE_i/\cE_{i-1})|_H$ are torsion free, the sequences
$$0\to \cE_{i-1}|_H\to \cE_i|_H\to \cF_i|_H$$
are exact. Now a simple induction show that all $\cE_i|_H$ are  slope $(L_2|_H,...,L_{n-1}|_H)$-semistable.

The second case is completely analogous. We just need to use the fact that
$$	\mu _{\max} (\cE|_H)\le	\max _i\mu _{\max} (\cF _i|_H).$$
\end{proof}

\begin{Remark}
	The above results give also restriction theorems for torsion free sheaves. More precisely,
	let $\cE$ be a coherent torsion free $\cO_X$-module, which is slope $(L_1,...,L_{n-1})$-(semi)stable. Then its reflexive hull $\cE^{**}$ is also slope $(L_1,...,L_{n-1})$-(semi)stable, so we can apply Theorem \ref{strong-restriction-stable}
	and Corollary \ref{Bogomolov-restriction} to  $\cE^{**}$. If $\cE|_H$ is torsion free then it is slope $(L_2|_H,...,L_{n-1}|_H)$-(semi)stable if (and only if) $\cE^{**}|_H$ is slope $(L_2|_H,...,L_{n-1}|_H)$-(semi)stable.	
\end{Remark}

\subsection{Boundedness}

In this subsection we assume that $n\ge 2$ and  all line bundles $L_1,...,L_{n-1}$ are ample.  
For $\alpha\in B^1(X)$ we write $\alpha\sim 0$ if $\alpha.L_1...L_{n-1}=0$ and
$$\alpha^2.L_1...\widehat{L_i}...L_{n-1}=0$$
for $i=1,...,n-1$. Note that the subset $S:=\{\alpha\in B^1(X) : \alpha\sim 0\}$ forms a $\ZZ$-submodule of $B^1(X)$.
Indeed, Lemma \ref{easy-HIT} implies that $\alpha \in S$ if and only if  the class of $\alpha$ in $N_{(L_1...\widehat{L_i}...L_{n-1})}(X)$ vanishes for
$i=1,...,n-1$. So $S$ is the intersection of kernels of quotient maps
$$B^1(X)\to N_{(L_1...\widehat{L_i}...L_{n-1})}(X).$$
In the following we  set $C^1(X; L_1,...,L_{n-1}):=B^1(X)/S$. 

\begin{Lemma}\label{injection}
$C^1(X;L_1,...,L_{n-1})$ is a free $\ZZ$-module of finite rank. Moreover, for
a general divisor $H\in |L_1|$ we have a well-defined Gysin homomorphism 
$$C^1(X;L_1,...,L_{n-1})\to C^1(H;L_2|_H,...,L_{n-1}|_H).$$
\end{Lemma}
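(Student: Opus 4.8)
The plan is to prove the two assertions in turn, both by reducing to the Hodge-index-type results already established. For the first assertion, recall that $C^1(X;L_1,\dots,L_{n-1})=B^1(X)/S$, where $S$ is the intersection of the kernels of the quotient maps $B^1(X)\to N_{(L_1\dots\widehat{L_i}\dots L_{n-1})}(X)$ for $i=1,\dots,n-1$. By Lemma \ref{HIT} each $N_{(L_1\dots\widehat{L_i}\dots L_{n-1})}(X)$ is a free $\ZZ$-module of finite rank, so the product $\prod_{i=1}^{n-1} N_{(L_1\dots\widehat{L_i}\dots L_{n-1})}(X)$ is free of finite rank. The diagonal map $B^1(X)\to \prod_i N_{(L_1\dots\widehat{L_i}\dots L_{n-1})}(X)$ has kernel exactly $S$ by construction, so $C^1(X;L_1,\dots,L_{n-1})=B^1(X)/S$ injects into this free module of finite rank; a submodule of a free $\ZZ$-module of finite rank is again free of finite rank, which gives the first claim.

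For the second assertion, fix a general divisor $H\in|L_1|$, chosen normal (possible since $L_1$ is ample and $X$ is normal) and general enough that all the Gysin maps on cycles and all the intersection-number identities of Theorem \ref{main1} and Theorem \ref{properties-of-ch_2} hold for restriction to $H$; in particular, the Gysin homomorphism $i^*\colon B^1(X)\to B^1(H)$ on algebraic equivalence classes of Weil divisors is defined (using that cycles algebraically equivalent to zero are preserved by Gysin maps, \cite[Proposition 10.3]{Fu}). I claim this descends to a map $C^1(X;L_1,\dots,L_{n-1})\to C^1(H;L_2|_H,\dots,L_{n-1}|_H)$, i.e.\ that $i^*$ carries $S=S_X$ into $S_H$, where $S_H\subset B^1(H)$ is the analogous submodule defined using the collection $(L_2|_H,\dots,L_{n-1}|_H)$ on the $(n-1)$-dimensional variety $H$. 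Let $\alpha\in S_X$. For each $i\in\{2,\dots,n-1\}$ the projection formula for the Cartier divisor $H\in|L_1|$ gives
$$
(i^*\alpha).(i^*\alpha).L_2|_H\cdots\widehat{L_i|_H}\cdots L_{n-1}|_H
=\alpha.\alpha.L_1.L_2\cdots\widehat{L_i}\cdots L_{n-1}=0,
$$
the last equality because $\alpha\in S_X$; likewise $(i^*\alpha).L_2|_H\cdots L_{n-1}|_H=\alpha.L_1\cdots L_{n-1}=0$. (The intersection numbers on $H$ and the projection identities are exactly those of Theorem \ref{main1}, valid since the $L_j$ may be assumed very ample after passing to multiples.) Hence $i^*\alpha\in S_H$, so $i^*$ induces a well-defined homomorphism $C^1(X;L_1,\dots,L_{n-1})\to C^1(H;L_2|_H,\dots,L_{n-1}|_H)$, as required.

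The main obstacle I anticipate is not any single hard estimate but the bookkeeping needed to make the Gysin restriction of Weil-divisor classes well behaved: one must check that for $H$ general the restriction map on $B^1$ is defined (using Gysin homomorphisms as in \cite[10.3]{Fu}), that it is compatible with the intersection pairings against the restricted collections in all $n-2$ relevant polarizations simultaneously, and that "generality" can be arranged uniformly — this is where one leans on the general-position statements in Theorems \ref{main1} and \ref{properties-of-ch_2}, and on the fact that $H$ can be taken normal. Once those compatibilities are in place, both statements follow formally from Lemma \ref{HIT} and the projection formula.
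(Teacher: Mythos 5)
Your argument is essentially the paper's: the first assertion comes from finite generation of $\bar B^1(X)$ (Lemma \ref{Neron-Severi}, via Lemma \ref{HIT}), and the second from Fulton's Gysin map on $B^1$ together with the restriction compatibilities of the intersection numbers for a general $H\in|L_1|$. The one point you flag but do not close — that ``general'' must work for \emph{all} classes in $S$ simultaneously — is exactly where the paper adds a step you are missing: since $S$ is a finitely generated $\ZZ$-submodule of $B^1(X)$ (in fact it suffices to work modulo torsion, where finite generation follows from Lemma \ref{Neron-Severi}), one only has to verify the identities $\alpha|_H^2.L_2|_H\dots\widehat{L_i|_H}\dots L_{n-1}|_H=\alpha^2.L_1\dots\widehat{L_i}\dots L_{n-1}$ for finitely many generators (and, if you argue generator by generator, their pairwise products), so a single general $H$ serves for all of $S$; without this reduction the set of admissible $H$ is an intersection of possibly infinitely many dense open conditions and need not be nonempty. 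With that one sentence added, your proof matches the paper's.
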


\begin{proof} The first assertion follows from the definition and Lemma \ref{Neron-Severi}.
By \cite[Proposition 10.3]{Fu} we have a Gysin homomorphism $B^1(X)\to B^1(H)$. Let us denote the image of  
$\alpha \in B^1(X)$ in $B^1(H)$ by $\alpha |_H$. It is sufficient to show that if $\alpha\sim 0$ in $B^1(X)$ then $\alpha|_H\sim 0$ in $B^1(H)$. Since $S\subset B^1(X)$ is a finitely generated $\ZZ$-submodule, it is sufficient to check this condition for finitely finitely many generators of $S$. But for a general divisor  $H\in |L_1|$ 
we have 
$$\alpha|_H^2.L_2|_H...\widehat{L_i|_H}...L_{n-1}|_H=\alpha^2.L_1...\widehat{L_i}...L_{n-1}=0$$
for $i=1,...,n-1$. We also have $\alpha|_H.L_1|_H...L_{n-1}|_H=\alpha.L_1...L_{n-1}=0$, which proves that
the class of  $\alpha|_H$ in $C^1(H;L_2|_H,...,L_{n-1}|_H)$ vanishes.
\end{proof}

\medskip

Corollary \ref{Bogomolov-restriction}  can be used to prove the following boundedness result, which
does not immediately follow from Kleiman's criterion. Part of the proof follows the idea of proof of \cite[Theorem 3.4]{La-PSPUM}.

\begin{Theorem} \label{boundedness}
Let us fix some classes $c_{1}\in C^1(X;L_1,...,L_{n-1})$, a positive integer $r$  and some real numbers $c_2$ and $\mu _{\max}$.  
Let $\cA$ be the set of reflexive coherent $\cO_X$-modules $\cE$ such that 
\begin{enumerate}
\item $\cE$ has  rank $r$, 
\item the class of $c_1(\cE)$ in $C^1(X;L_1,...,L_{n-1})$ is equal to $c_{1}$,
\item $\int _X c_2 (\cE) L_1...\widehat{L_i}...L_{n-1} \le c_2$ for $i=1,...,n-1$,
\item $\mu _{\max } (\cE)\le \mu _{\max}$.
\end{enumerate}
Then the set $\cA$ is bounded.
\end{Theorem}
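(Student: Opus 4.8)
The plan is to prove the theorem by induction on $n=\dim X$. The base case $n=2$ is the boundedness statement \cite[Theorem 4.4]{La1} for a single polarization: on a normal surface the submodule $S$ consists exactly of the numerically trivial classes (by Lemma~\ref{easy-HIT}), so fixing the class of $c_1(\cE)$ in $C^1(X;L_1)$ pins down its numerical class, and one is in the situation of \cite{La1}. So assume $n\ge 3$ and that the theorem holds on all normal projective varieties of dimension $n-1$.

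First I would extract the numerical consequences of the hypotheses. Fix $\cE\in\cA$. By Lemma~\ref{easy-HIT}, every class in $S$ lies in the radical of $\langle\cdot,\cdot\rangle_{L_1\cdots\widehat{L_i}\cdots L_{n-1}}$ for each $i$; hence $\mu(\cE)=\tfrac1r c_1(\cE).L_1\cdots L_{n-1}$ and all the numbers $c_1(\cE)^2.L_1\cdots\widehat{L_i}\cdots L_{n-1}$ depend only on the fixed class $c_1\in C^1(X;L_1,\dots,L_{n-1})$; write $\mu_0:=\mu(\cE)$. Using hypothesis (3) and $\Delta=2rc_2-(r-1)c_1^2$ gives, for each $i$, an upper bound $\int_X\Delta(\cE)L_1\cdots\widehat{L_i}\cdots L_{n-1}\le C_i^+$ depending only on the given data, while $\mu_{\max}(\cE)-\mu(\cE)\le\mu_{\max}-\mu_0$ together with Corollary~\ref{Bogomolov's-inequality} (applied with the $i$-th polarization playing the role of ``$L_1$'' there) yields a matching lower bound. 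Thus each $\int_X\Delta(\cE)L_1\cdots\widehat{L_i}\cdots L_{n-1}$, and hence each $\int_X c_2(\cE)L_1\cdots\widehat{L_i}\cdots L_{n-1}$, lies in a fixed bounded interval; also $L_{\max}(\cE)-\mu(\cE)$ and $\mu(\cE)-L_{\min}(\cE)$ are bounded by Lemma~\ref{bounds-for-L_max}.

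Next I would restrict to a hyperplane. Choose $m_1$ with $L_1^{\otimes m_1}$ very ample and, for each $\cE\in\cA$, a general normal divisor $H\in|L_1^{\otimes m_1}|$; let $i\colon H\hookrightarrow X$ be the inclusion and set $\cF:=(i^*\cE)^{**}$, a reflexive rank-$r$ sheaf on the $(n-1)$-dimensional normal projective variety $H$ with ample polarizations $L_2|_H,\dots,L_{n-1}|_H$. I claim the $\cF$'s satisfy the hypotheses of the theorem on $H$ for finitely many values of the relevant invariants. Indeed, $c_1(\cF)\bmod S_H$ is the image of the fixed class $c_1\bmod S_X$ under the Gysin homomorphism of Lemma~\ref{injection}, hence takes finitely many values; by the restriction formulae of Theorems~\ref{main1} and~\ref{properties-of-ch_2}(3) and the inequality $c_2(\cF)\le c_2(i^*\cE)$ (which follows as in Lemma~\ref{short-exact-seq}), the numbers $\int_H c_2(\cF)$ against products of $n-3$ of the $L_j|_H$ are controlled by the bounded numbers $\int_X c_2(\cE)L_1\cdots\widehat{L_i}\cdots L_{n-1}$; and applying Theorem~\ref{restriction} to $\cE$ (whose hypotheses hold for general $H$), together with the Step-1 bounds on $\int_X\Delta(\cE)L_2\cdots L_{n-1}$ and on $L_{\max}(\cE)-\mu(\cE)$, $\mu(\cE)-L_{\min}(\cE)$, bounds the spread of the Harder--Narasimhan slopes of $i^*\cE$; since $\mu(i^*\cE)$ is fixed this gives $\mu_{\max}(\cF)=\mu_{\max}(i^*\cE)\le B$ for a fixed $B$. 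By the inductive hypothesis, applied fibrewise over $|L_1^{\otimes m_1}|$, the sheaves $\cF$ form a bounded family, so there is a polynomial $Q$, independent of $\cE$, with $h^0\big(H,i^*\cE\otimes\cO_H(mL_1|_H)\big)\le h^0\big(H,\cF\otimes\cO_H(mL_1|_H)\big)\le Q(m)$ for all $m$. Then the exact sequence $0\to\cE((m-m_1)L_1)\to\cE(mL_1)\to i^*\cE\otimes\cO_H(mL_1|_H)\to 0$ gives $h^0(\cE(mL_1))\le h^0(\cE((m-m_1)L_1))+Q(m)$; since $\mu_{\max}(\cE(mL_1))=\mu_{\max}(\cE)+md$ with $d=L_1^2L_2\cdots L_{n-1}>0$, a nonzero section forces $md\ge-\mu_{\max}$, so $h^0(\cE(mL_1))=0$ for $m\ll 0$ uniformly, and induction on $m$ produces a fixed polynomial $P$ with $h^0(X,\cE(mL_1))\le P(m)$ for all $m$ and all $\cE\in\cA$. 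To conclude I would then iterate the restriction theorems of Section~\ref{Section:Restriction+Bogomolov} down to a general complete intersection curve to bound $\mu_{\max}(\cE|_C)$ uniformly, deduce as in \cite[Theorem 3.4]{La-PSPUM} a uniform Castelnuovo--Mumford regularity bound $m_0$ (the effective regularity estimates of \cite{La1}, like the results of Section~\ref{Section:Restriction+Bogomolov}, carrying over to normal varieties), whence $h^0(\cE(mL_1))=\chi(\cE(mL_1))$ for $m\ge m_0$, so the integers $\chi(\cE(mL_1))$ for $m_0\le m\le m_0+n$ each lie in $[0,P(m)]$ and therefore take finitely many values; this gives finitely many Hilbert polynomials, and for each the $m_0$-regular members of $\cA$ are quotients of a fixed $\cO_X(-m_0L_1)^{\oplus N}$, hence bounded by Grothendieck's lemma, so $\cA$ is a finite union of bounded families.

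The main obstacle is precisely the final step: converting the uniform cohomological and numerical estimates into genuine boundedness. As emphasized in the paper, it is not a priori clear that the Hilbert polynomials of the sheaves in $\cA$ form a finite set — the constraint on the class of $c_1$ in $C^1(X;L_1,\dots,L_{n-1})$ does \emph{not} bound $c_1(\cE).L_1^{n-1}$ — so one cannot simply fix the Hilbert polynomial and quote a Quot-scheme argument, and the effective regularity input obtained via iterated restriction theorems is what breaks the circle. A secondary technical point is the bookkeeping in the restriction step, where $i^*\cE$ is only torsion free: one must track reflexive hulls, the Chern-class restriction formulae, and the Gysin map on the groups $C^1$ through the induction, and one must arrange the estimates on $H$ to be uniform as $H$ varies in the linear system.
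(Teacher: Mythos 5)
Your skeleton coincides with the paper's: induction on $\dim X$ with \cite[Theorem 4.4]{La1} as the surface case, the observation that the class in $C^1(X;L_1,\dots,L_{n-1})$ pins down $c_1(\cE).L_1\cdots L_{n-1}$ and $c_1(\cE)^2.L_1\cdots\widehat{L_i}\cdots L_{n-1}$, two-sided control of the discriminants via Corollary~\ref{Bogomolov's-inequality}, restriction to a hypersurface in a multiple of $|L_1|$, a uniform bound on $\mu_{\max}$ of the restriction via the effective restriction theorems (the paper uses Corollary~\ref{Bogomolov-restriction} together with Lemma~\ref{needed-for-boundedness}, you use Theorem~\ref{restriction}; either works), and the induction hypothesis applied to the restricted sheaves. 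The genuine gap is your final step, the passage from the uniform bound $h^0(X,\cE(mL_1))\le P(m)$ to a uniform Castelnuovo--Mumford regularity bound $m_0$. You propose to obtain it by iterating restriction down to a general complete intersection curve and invoking ``the effective regularity estimates of \cite{La1} \dots carrying over to normal varieties'' as in \cite[Theorem 3.4]{La-PSPUM}. This is precisely what is not available in the present setting: as the Remark following Theorem~\ref{boundedness} points out, the boundedness/regularity machinery of \cite{La1} on singular varieties rests on the projection method, which works only for a single polarization, and no Le~Potier--Simpson type regularity estimate is known for reflexive sheaves on a normal variety with a multipolarization. So the decisive step of your argument is an unproved assertion, and without it the finiteness of Hilbert polynomials and the Quot-scheme conclusion never get started.

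The paper closes exactly this gap with a self-contained cohomological argument that uses only the boundedness of the restricted family: after passing to an uncountable field it fixes one normal $H\in|L_1|$ and works with the subfamily $\cA_H$ of sheaves for which a very general $H$ behaves well (reflexive restriction, restriction formulae for $c_1^2$ and $\ch_2$, torsion-free restrictions of HN and JH quotients); boundedness of $\{\cE|_H\}_{\cE\in\cA_H}$ on the fixed $H$ then yields constants $a,b,c$ controlling $h^i(H,\cE|_H(m))$, and the recursion along $0\to\cE(m-1)\to\cE(m)\to\cE(m)|_H\to0$, combined with Enriques--Severi--Zariski vanishing at the negative end and the observation that $h^1(X,\cE(m))=h^1(X,\cE(m-1))$ for $m\ge a+n-1$ forces $h^1(X,\cE(m))=0$ (via the multiplication diagram over $\PP^N$ and the Castelnuovo--Mumford theorem on $H$), produces an explicit $m_0$ depending only on $a,b,c$; boundedness then follows from \cite[Lemma 1.7.6]{HL}. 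Two secondary issues in your write-up: you never pass to an uncountable field, yet you use the $\ch_2$-restriction formula of Theorem~\ref{properties-of-ch_2}, which is only stated for very general hypersurfaces; and your ``fibrewise'' application of the induction hypothesis with $H$ varying with $\cE$ requires the resulting bounds to be uniform in $H$, which does not follow formally from the statement on each individual fiber --- the paper minimizes this by running the whole argument on a single fixed $H$ and only letting $H$ vary in the bounded linear system at the very end.
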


\begin{proof} For $n=2$ the assertion is well-known (see, e.g.,  \cite[Theorem 4.4]{La1}), so we can assume that $n\ge 3$. Without loss of generality we can also  assume that  all $L_i$ are very ample.
Note that if $k\subset K$ is an algebraically closed field extension then the set $\cA$ is bounded if and only if the set $\cA_K:= \{ \cE_K: \cE \in \cA\}$ of sheaves on $X_K$ is bounded. This follows from 
\cite[Lemma 1.7.6]{HL} and the fact that the Castelnuovo--Mumford regularity of $\cE$ (with respect to some fixed very ample line bundle $\cO_X(1)$) coincides with the Castelnuovo--Mumford regularity of $\cE$ (here we use the fact that $H^i(X_K, \cE(j)_K)= H^i(X, \cE (j))\otimes _kK$).
So by  Theorem \ref{properties-of-ch_2}, (2) and an analogue of \cite[Theorem 1.3.7]{HL} for slope semistability, we can assume that the base field $k$ is uncountable.

For fixed $\cE\in \cA$  and  for a very general divisor $H\in |L_1|$,
the following conditions are satisfied:
\begin{enumerate}
	\item $\cE|_{H}$ is reflexive as an $\cO_H$-module (by  \cite[Corollary 1.1.14]{HL}),
	\item for $i=2,...,n-1$ we have $$\int _H c_1 (\cE |_{H})^2 L_2|_H...\widehat{L_i|_H}...L_{n-1}|_H=\int _X c_1 (\cE )^2 L_1...\widehat{L_i}...L_{n-1}=c_1^2 .L_1...\widehat{L_i}...L_{n-1}$$
	(by Theorem \ref{main1}),
	\item for $i=2,...,n-1$ we have
	$$\int _H c_2 (\cE |_{H}) L_2|_H...\widehat{L_i|_H}...L_{n-1}|_H=\int _X c_2 (\cE ) L_1...\widehat{L_i}...L_{n-1}\le c_2$$
	(by Theorem \ref{properties-of-ch_2}),
	\item  if $0= \cE_0\subset \cE_1\subset ...\subset \cE_s=\cE$ be the Harder--Narasimhan filtration of 
	of $\cE$ then all  $(\cE_i/\cE _{i-1})|_H$ are torsion free and
	restriction of any quotient of a Jordan--H\"older filtration of $\cF_i:= (\cE_i/\cE _{i-1})^{**}$ to $H$ is torsion free  as an $\cO_H$-module (by  \cite[Corollary 1.1.14]{HL}).
\end{enumerate}
Let us fix a normal hypersurface $H\in |L_1|$ such that the Gysin homomorphism 
$C^1(X;L_1,...,L_{n-1})\to C^1(H;L_2|_H,...,L_{n-1}|_H)$ is well defined (see Lemma \ref{injection}). 
Let us consider the set $\cA _H$ of all sheaves $\cE\in \cA $  that satisfy the above conditions (1)--(4).
By Corollary \ref{Bogomolov's-inequality} there exists $C$ such that for every slope $(L_1,...,L_{n-1})$-semistable reflexive $\cO_X$-module $\cF$ of rank $\le r$ we have
	$$\int _X \Delta (\cF) L_2...L_{n-1}\ge C.$$
Let us set $r_i=\rk \cF_i$ and $\mu_i:= \mu _L(\cF_i)$. 
By	Lemma \ref{needed-for-boundedness} and \cite[Lemma 1.4]{La1} we have
\begin{align*}
& \frac{\int _X \Delta  (\cF_i)L_2...L_{n-1}}{r_i} \le \sum _j \frac{\int _X \Delta  (\cF_j)L_2...L_{n-1}}{r_j}-(s-1)C\\
 &\le 	\frac{\int _X \Delta  (\cE)L_2...L_{n-1}}{r}+	\frac{1}{rd}\sum _{i<j} r_ir_j\left(\mu_i -  \mu_j\right)^2-(s-1)C\\
&\le 	\frac{ 2r c_2-(r-1)c_{1}^2.L_2...L_{n-1}}{r}+	\frac{r}{d} \left(\mu_{\max}(\cE) -  \mu (\cE)\right) \left(\mu(\cE) -  \mu _{\min} (\cE)\right)-(s-1)C
\end{align*}
Since 
$$\mu(\cE) -  \mu _{\min} (\cE) \le (s-1) \left(\mu_{\max}(\cE) -  \mu (\cE)\right)\le (s-1) \left(\mu_{\max}-\frac{1}{r}c_{1}.L_1...L_{n-1}\right) ,$$
Corollary \ref{Bogomolov-restriction} and condition (4) imply that for all $i$ we have
$\mu _{\max} (\cF_i|_{H})  \le C_1$
for some $C_1$ that depends only on $r$, $c_1$, $c_2$ and $\mu_{\max}$.
Condition (4) implies also that the sequences
$$0\to \cE_{i-1}|_H\to \cE_i|_H\to \cF_i|_H$$
are exact, so
$$	\mu _{\max} (\cE|_{H})\le \max _i	\mu _{\max} (\cF_i|_{H}) \le C_1.$$
For any $\cE\in \cA$ the class of $c_1(\cE|_H)$ in $C^1(H;L_2|_H,...,L_{n-1}|_H)$ coincides with $c_1|_H$.
This follows from the fact that $\cE|_H$ is torsion free, so also locally free on a big open subset of $H$.
So already the class of $c_1(\cE|_H)$ coincides with $c_1(\cE)|_H\in A^1(H)$.
By the induction assumption this implies that the set of sheaves $\{\cE|_H\}_{\cE\in \cA_H}$ is bounded.
To simplify notation we write $\cO_X(1)=L_1$.
There exists some integers $a$, $b$ and $c$ such that for all $\cE\in \cA_H$  the following conditions are satisfied:
\begin{enumerate}
	\item $H^i (H, \cE |_H(m))=0$ for all $m\ge a$ and all $i>0$,
	\item $H^1 (H, \cE |_H(-m))=0$ for $m\ge b$,
	\item  $h^1 (H, \cE |_H(m))\le c$ for all $m$.
\end{enumerate}
The second condition above follows from the well-known Enriques--Severi--Zariski lemma for reflexive sheaves on normal varieties (see \cite[Lemma 0FD8]{SP}). 
Let us fix $\cE\in \cA_H$.
For all $m\in \ZZ$ we have short exact sequences
$$0\to \cE({m-1})\to \cE (m)\to \cE(m)|_{H} \to 0.$$
Let us take $i\ge 2$. Then for all $m\ge a$ we have $H^i (X, \cE (m-1))=H^i (X, \cE (m))$. So by Serre's vanishing theorem we get $H^i (X, \cE (m-1))=0$ for $m\ge a$.
We also know that $h^1 (X, \cE (m))\le h^1 (X, \cE (m-1))$ for all $m\ge a$. 
Let us consider the embedding $X\hookrightarrow \PP^N$ given by the linear system $|\cO_{X}(1)|$
and let $\tilde H\subset \PP^N$ be the hyperplane defining $H$. For any $m\in \ZZ$ we have a commutative diagram
$$\xymatrix{
	H^0(\PP^N, \cE (m))\otimes 	H^0(\PP^N, \cO_{\PP^N} (1))\ar[d]^{\beta_1}\ar[r]^-{\alpha_1}& 	H^0(\PP^N, \cE (m+1))\ar[d]^{\beta_2}\\
	H^0(\tilde H, \cE|_H (m))\otimes 	H^0(\tilde H, \cO_{\tilde H} (1))\ar[r]^-{\alpha_2}& 	H^0(\tilde H, \cE|_H (m+1)).\\
}
$$
Assume that $h^1 (X, \cE (m))= h^1 (X, \cE (m-1))$ for some $m\ge a+n-1$. Then the map $\beta_1$ in the above diagram is surjective. Since $m\ge a+n-1$, we also know that $H^i (H, \cE|_H(m-i))=0$ for $i>0$ so by the Castelnuovo--Mumford theorem the map $\alpha_2$ is also surjective. It follows that $\beta _2$ is surjective. This implies that $h^1 (X, \cE (m+1))= h^1 (X, \cE (m))$. So by Serre's vanishing theorem we see that  $h^1 (X, \cE (m))=0$.
This shows that for  $m\ge a+n-2$ the sequence $\{h^1 (X, \cE (m)) \}$ is strictly decreasing until it reaches $0$. So $h^1 (X, \cE (l))=0$ for $l\ge h^1 (X, \cE (a+n-2))+a+n-2$. 
Since $\cE$ is reflexive and $X$ is normal we know that $h^1 (X, \cE (-l))=0$ for $l\gg 0$ (here we again use \cite[Lemma 0FD8]{SP}). So for all $m\in \ZZ$ we have
\begin{align*}
h^1 (X, \cE (m))&\le h^1 (X, \cE (m-1))+ h^1 (H, \cE |_H(m))\le h^1 (X, \cE (m-2))+ h^1 (H, \cE |_H(m))\\
&+ h^1 (H, \cE |_H(m-1))\le ...\le \sum _{l\ge 0} h^1 (H, \cE |_H(m-l))\le 	(m+b)c.
\end{align*}
In particular,  $h^1 (X, \cE (a+n-2))\le (a+n-2+b)c$. Therefore $h^1 (X, \cE (m))=0$ for 
$m\ge (a+n-2)(c+1)+bc$. This shows that there exists a constant $m_0$ such that all $\cE\in \cA_H$ are $m$-regular for all $m\ge m_0$ and hence $\cA_H$ is a bounded family (see \cite[Lemma 1.7.6]{HL}). Since the family of divisors $H\in |L_1|$ is bounded, this also gives boundedness of the family $\cA$.
\end{proof}

\begin{Remark}
If $L_1=...=L_{n-1}$ then the above theorem follows from \cite[Theorem 4.4]{La1}  (see \cite[Theorem 6.4]{La-Chern}).  In general, the problem is that we need restriction theorems for multipolarizations on normal varieties and the method of proof of   \cite[Theorem 4.4]{La1} for singular varieties depends on the projection method that works only if we have one polarization. In case of characteristic zero, restriction theorems needed for multipolarizations follow easily from the results of \cite{La1} by passing to the resolution of singularities and using Bertini's theorem. Unfortunately, this method also does not work for varieties defined over a field of positive characteristic. However, even in this case Theorem \ref{boundedness} is new.
\end{Remark}

As in \cite[Corollary 6.7]{La-Chern}, Corollary \ref{Bogomolov's-inequality} and the above theorem imply the following result. 

\begin{Corollary} \label{boundedness2}
Let us fix some positive integer $r$, integer $\ch_1$ and some real
numbers $\ch _2$ and $\mu _{\max}$. 
Let $\cB$ be the set of reflexive coherent $\cO_X$-modules $\cE$ such that 
\begin{enumerate}
\item $\cE$ has  rank $r$, 
\item  $\int _X \ch_1 (\cE) L_1...L_{n-1}=\ch_1$,
\item $\int _X \ch_2 (\cE) L_1...\widehat{L_i}...L_{n-1} \ge \ch_2$ for $i=1,...,n-1$,
\item $\mu _{\max ,L } (\cE)\le \mu _{\max}$.
\end{enumerate}
Then the set $\cB$ is bounded.
\end{Corollary}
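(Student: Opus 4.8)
The plan is to deduce this corollary from Theorem \ref{boundedness} by translating the hypotheses on $\ch_1$ and $\ch_2$ into the hypotheses on $c_1$ (as a class in $C^1(X;L_1,\dots,L_{n-1})$) and $c_2$ appearing there. First I would observe that the conditions $\int_X \ch_1(\cE)L_1\dots L_{n-1}=\ch_1$ and $\mu_{\max,L}(\cE)\le \mu_{\max}$ already pin down the numerical invariant $c_1(\cE).L_1\dots L_{n-1}=\ch_1$ and give the bound $\mu_{\max}(\cE)\le \mu_{\max}$ required by Theorem \ref{boundedness}, condition (4). Since $\ch_1(\cE)=c_1(\cE)$ as a class in $A^1(X)$, condition (2) above is just $c_1(\cE).L_1\dots L_{n-1}=\ch_1$.

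The point where real work is needed is to bound the class of $c_1(\cE)$ in $C^1(X;L_1,\dots,L_{n-1})$ to a finite set, and simultaneously to bound $\int_X c_2(\cE)L_1\dots\widehat{L_i}\dots L_{n-1}$ from above. For the latter: by definition $\int_X c_2(\cE)L_1\dots\widehat{L_i}\dots L_{n-1}=\tfrac12\int_X c_1^2(\cE)L_1\dots\widehat{L_i}\dots L_{n-1}-\int_X\ch_2(\cE)L_1\dots\widehat{L_i}\dots L_{n-1}$, and the hypothesis $\int_X\ch_2(\cE)L_1\dots\widehat{L_i}\dots L_{n-1}\ge\ch_2$ turns this into an upper bound of the form $\int_X c_2(\cE)L_1\dots\widehat{L_i}\dots L_{n-1}\le \tfrac12\int_X c_1^2(\cE)L_1\dots\widehat{L_i}\dots L_{n-1}-\ch_2$. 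So it suffices to bound $\int_X c_1^2(\cE)L_1\dots\widehat{L_i}\dots L_{n-1}$ from above. I would do this using the Hodge index theorem (Lemma \ref{HIT} and Corollary \ref{HIT2}) together with Corollary \ref{Bogomolov's-inequality}: writing $D=c_1(\cE)$, the class $D'=(L_1^2L_2\dots L_{n-1})D-(D.L_1\dots L_{n-1})L_1$ satisfies $D'.L_1\dots L_{n-1}=0$, hence by Corollary \ref{HIT2} we get $D'^2.L_1\dots\widehat{L_i}\dots L_{n-1}\le 0$ for each $i$; expanding, this gives $\int_X c_1^2(\cE)L_1\dots\widehat{L_i}\dots L_{n-1}\le C'$ for a constant $C'=C'(X,L,r,\ch_1)$ depending only on $X$, $L$, $r$ and $\ch_1$ (the cross terms involve only $\ch_1=D.L_1\dots L_{n-1}$ and fixed intersection numbers of the $L_j$). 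Here I am also using Corollary \ref{Bogomolov's-inequality}: combined with $\mu_{\max}(\cE)-\mu(\cE)\le r(\mu_{\max}-\ch_1/r)$ it bounds $\int_X\Delta(\cE)L_2\dots L_{n-1}$ from below, which together with the Hodge-type inequalities controls the ``spread'' of $c_1(\cE)$ and feeds back to bound its class in $C^1$.

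Next I would check that the $c_1$-classes that arise lie in a finite subset of $C^1(X;L_1,\dots,L_{n-1})$: since $C^1(X;L_1,\dots,L_{n-1})$ is a free $\ZZ$-module of finite rank (Lemma \ref{injection}) equipped, on each factor $N_{(L_1\dots\widehat{L_i}\dots L_{n-1})}(X)$, with a nondegenerate intersection pairing of signature $(1,s_i-1)$, the bounds $D.L_1\dots L_{n-1}=\ch_1$ and $\ch_1/r\cdot\text{(something)}\le D^2.L_1\dots\widehat{L_i}\dots L_{n-1}\le C'$ confine the image of $D$ in each $N_{(L_1\dots\widehat{L_i}\dots L_{n-1})}(X)_{\RR}$ to a bounded region, hence to finitely many lattice points; therefore $c_1(\cE)$ takes only finitely many values in $C^1(X;L_1,\dots,L_{n-1})$. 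For each such value $c_1$ and each choice of the (now bounded, hence finitely many relevant) upper bounds $c_2$ for $\int_X c_2(\cE)L_1\dots\widehat{L_i}\dots L_{n-1}$, Theorem \ref{boundedness} gives a bounded family; a finite union of bounded families is bounded, so $\cB$ is bounded. The main obstacle is the second step — extracting a uniform upper bound on $\int_X c_1^2(\cE)L_1\dots\widehat{L_i}\dots L_{n-1}$ and finiteness of the $c_1$-classes purely from the one scalar datum $\ch_1$ and the $\mu_{\max}$-bound; this is exactly where the Hodge index theorem for multipolarizations (Lemma \ref{HIT}, Corollary \ref{HIT2}) and the Bogomolov inequality of Corollary \ref{Bogomolov's-inequality} must be combined carefully, and it is the reason the statement does not follow directly from Kleiman's criterion.
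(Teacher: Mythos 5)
Your overall strategy coincides with the paper's: combine Corollary \ref{Bogomolov's-inequality} with the hypothesis $\int_X\ch_2(\cE)L_1\dots\widehat{L_i}\dots L_{n-1}\ge\ch_2$ and the Hodge index theorem to show that $[c_1(\cE)]$ takes only finitely many values in $C^1(X;L_1,\dots,L_{n-1})$ and that $\int_X c_2(\cE)L_1\dots\widehat{L_i}\dots L_{n-1}$ is bounded above, then invoke Theorem \ref{boundedness}. However, the key computational step is carried out incorrectly. You set $D'=(L_1^2L_2\dots L_{n-1})D-(D.L_1\dots L_{n-1})L_1$ and claim that expanding $D'^2.L_1\dots\widehat{L_i}\dots L_{n-1}\le 0$ bounds $\int_Xc_1^2(\cE)L_1\dots\widehat{L_i}\dots L_{n-1}$ by a constant depending only on $X,L,r,\ch_1$, asserting that the cross terms involve only $\ch_1$. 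This is true only for $i=1$: for $i\ne 1$ the cross term is $2(L_1^2L_2\dots L_{n-1})\,\ch_1\cdot\bigl(D.L_1^2L_2\dots\widehat{L_i}\dots L_{n-1}\bigr)$, and the number $D.L_1^2L_2\dots\widehat{L_i}\dots L_{n-1}$ is neither determined by $\ch_1$ nor a priori bounded on the family $\cB$; bounding it is essentially equivalent to what you are trying to prove, so at this point the argument is circular and the claimed constant $C'$ does not follow.

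The repair is the decomposition adapted to the omitted bundle $L_i$ rather than to $L_1$: for each $i$ write $[c_1(\cE)]=\alpha_i[L_i]+D_i$ with $\alpha_i=\ch_1/(L_1\dots L_i^2\dots L_{n-1})$, so that $D_i.L_1\dots L_{n-1}=0$. Then $c_1(\cE)^2.L_1\dots\widehat{L_i}\dots L_{n-1}=\alpha_i^2\,L_1\dots L_i^2\dots L_{n-1}+2\alpha_i\,D_i.L_1\dots L_{n-1}+D_i^2.L_1\dots\widehat{L_i}\dots L_{n-1}$ and the cross term vanishes identically. Lemma \ref{easy-HIT} (applied with $L_i$ as the auxiliary ample bundle for the collection $(L_1,\dots,\widehat{L_i},\dots,L_{n-1})$) gives $D_i^2.L_1\dots\widehat{L_i}\dots L_{n-1}\le 0$, which yields the upper bound on $c_1^2$, hence on $c_2$, that you want. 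For the finiteness of the $c_1$-classes you also need a lower bound, and it is not the vague estimate ``$\ch_1/r\cdot(\text{something})\le D^2$'' you wrote: Corollary \ref{Bogomolov's-inequality} (applied to the reordered collection $(L_i,L_1,\dots,\widehat{L_i},\dots,L_{n-1})$, for which $\mu_{\max}$ is unchanged) together with $\int_X\ch_2(\cE)L_1\dots\widehat{L_i}\dots L_{n-1}\ge\ch_2$ gives $c_1(\cE)^2.L_1\dots\widehat{L_i}\dots L_{n-1}=\int_X\Delta(\cE)L_1\dots\widehat{L_i}\dots L_{n-1}+2r\int_X\ch_2(\cE)L_1\dots\widehat{L_i}\dots L_{n-1}\ge C+2r\ch_2$, whence $D_i^2.L_1\dots\widehat{L_i}\dots L_{n-1}\ge C+2r\ch_2-\alpha_i^2\,L_1\dots L_i^2\dots L_{n-1}$. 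Negative definiteness of the pairing on $L_i^{\perp}$ then leaves only finitely many possibilities for $D_i$, and injectivity of $C^1(X;L_1,\dots,L_{n-1})\to\bigoplus_i N^1_{(L_1,\dots,\widehat{L_i},\dots,L_{n-1})}(X)$ gives finitely many classes $[c_1(\cE)]$, after which Theorem \ref{boundedness} applies as you describe.
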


\begin{proof}
By Corollary \ref{Bogomolov's-inequality} there exists a constant $C$ 
such that for all  $\cE \in \cB$ we have
$$C\le \int _X \Delta (\cE) L_1...\widehat{L_i}...L_{n-1}=  c_1 (\cE)^2.L_1...\widehat{L_i}...L_{n-1} - 2r \int _X \ch_2 (\cE). L_1...\widehat{L_i}...L_{n-1} $$
for $i=1,...,n-1$.
Therefore $c_1 (\cE)^2.L_1...\widehat{L_i}...L_{n-1}\ge C+2r \,\ch _2$. Let us write $[c_1(\cE)]=\alpha _i [L_i]+D_i\in N^1_{(L_1,...,\widehat{L_i},...,L_{n-1})}(X)$, where $\alpha _i= \frac{\ch _1}{L_1...L_i^2...L_{n-1}}$.
Then $D_i.L_1...L_{n-1}=0$ and 
$$c_1 (\cE)^2.L_1...\widehat{L_i}...L_{n-1}=\alpha _i^2 +D_i^2.L_1...\widehat{L_i}...L_{n-1}.$$ 
Therefore
$$D^2_i.L_1...\widehat{L_i}...L_{n-1}\ge C+2r \,\ch _2 - \alpha _i ^2\cdot L_1...L_i^2...L_{n-1}.$$
But by the Hodge index theorem (see Lemma \ref{easy-HIT}) the intersection form is negative definite on $L_i^{\perp}\subset N^1_{(L_1,...,\widehat{L_i},...,L_{n-1})}(X)$, so there are only finitely many possibilities for $D_i\in N^1_{(L_1,...,\widehat{L_i},...,L_{n-1})}(X)$.
Since the canonical map 
$$C^1(X;L_1,...,L_{n-1})\to \bigoplus _{i=1}^{n-1} N^1_{(L_1,...,\widehat{L_i},...,L_{n-1})}(X)$$
is injective, there are also only finitely many possibilities for the classes $[c_1(\cE)]\in C^1(X;L_1,...,L_{n-1})$. Now the assertion follows from Theorem \ref{boundedness}.
\end{proof}

The above corollary has some nontrivial implications even in the rank one case:

\begin{Corollary}
The canonical map $\bar B^1(X)\to C^1(X;L_1,...,L_{n-1})$ is an isomorphism.
\end{Corollary}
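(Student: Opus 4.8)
The goal is to show that the canonical surjection $\bar B^1(X)\to C^1(X;L_1,\dots,L_{n-1})=B^1(X)/S$ (which is surjective essentially by definition, since $S$ contains the torsion of $B^1(X)$ — note every torsion class is numerically trivial after intersecting with any ample collection) is also injective. Equivalently, one must show that $S$ consists exactly of the torsion classes of $B^1(X)$, i.e. that a Weil divisor $\alpha$ with $\alpha\sim 0$ (meaning $\alpha.L_1\dots L_{n-1}=0$ and $\alpha^2.L_1\dots\widehat{L_i}\dots L_{n-1}=0$ for all $i$) must be torsion in $B^1(X)$.

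The plan is to deduce this from Corollary \ref{boundedness2} applied in rank one. Suppose $\alpha\in B^1(X)$ is a Weil divisor class with $\alpha\sim 0$. Consider the reflexive rank one sheaves $\cE_N:=\cO_X(N\alpha)^{**}$ for $N\in\ZZ$. For a rank one reflexive sheaf we have $\ch_1(\cE_N)=N\alpha$, $\ch_2(\cE_N)=\tfrac12 c_1^2(\cE_N)=\tfrac12 N^2\alpha^2$, and $\mu_{\max}=\mu=N\alpha.L_1\dots L_{n-1}$. The condition $\alpha\sim 0$ gives $\int_X\ch_1(\cE_N)L_1\dots L_{n-1}=N\alpha.L_1\dots L_{n-1}=0$, hence the first invariant is $0$ for all $N$; it gives $\int_X\ch_2(\cE_N)L_1\dots\widehat{L_i}\dots L_{n-1}=\tfrac12 N^2\alpha^2.L_1\dots\widehat{L_i}\dots L_{n-1}=0$, so the second invariant is $0\ge 0$ for all $N$ with any fixed bound $\ch_2=0$; and $\mu_{\max}(\cE_N)=N\alpha.L_1\dots L_{n-1}=0\le 0$. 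Thus the whole family $\{\cE_N\}_{N\in\ZZ}$ lies in the set $\cB$ of Corollary \ref{boundedness2} with $r=1$, $\ch_1=0$, $\ch_2=0$, $\mu_{\max}=0$, and therefore is bounded.

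The conclusion now follows from boundedness: a bounded family of coherent sheaves realizes only finitely many isomorphism classes of reflexive rank one sheaves up to the relevant equivalence — more precisely, boundedness implies the set $\{c_1(\cE_N)\}=\{N\alpha\}\subset B^1(X)$ (or at least its image in $C^1(X;L_1,\dots,L_{n-1})$, but since we want injectivity we instead argue directly in $\bar B^1(X)$) is finite, because the sheaves $\cO_X(N\alpha)^{**}$ for $N\in\ZZ$ would otherwise form an infinite family of non-isomorphic reflexive sheaves of rank $1$. Actually the cleanest route: boundedness of $\{\cE_N\}$ forces only finitely many Hilbert polynomials, hence only finitely many values of $\int_X c_1(\cE_N)\cdot H^{n-1}$ for any ample $H$; but more to the point, boundedness means the $\cE_N$ lie in finitely many connected components of a scheme, so in particular $c_1(\cE_N)=N[\alpha]\in \bar B^1(X)$ takes only finitely many values as $N$ ranges over $\ZZ$. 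Since $\bar B^1(X)$ is a finitely generated free $\ZZ$-module (Lemma \ref{Neron-Severi}) and the infinite set $\{N[\alpha]:N\in\ZZ\}$ is finite, we must have $[\alpha]=0$ in $\bar B^1(X)$. This is exactly the injectivity of $\bar B^1(X)\to C^1(X;L_1,\dots,L_{n-1})$, completing the proof.

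The main obstacle — and the point that needs to be stated carefully rather than hand-waved — is the passage from "the family $\{\cE_N\}$ is bounded" to "$\{N[\alpha]\}$ is finite in $\bar B^1(X)$". One must check that $\ch_1$ and $c_1$ record the same data up to the quotient we care about (here they agree since the rank is $1$), that the numerical first Chern class is a discrete invariant constant on connected families (true: $\bar B^1(X)$ is discrete, and algebraic equivalence classes are locally constant in flat families, which is built into the definition of $B^1(X)$ via the variety $T$), and that boundedness indeed confines the family to finitely many algebraic equivalence classes of determinants. Granting these standard facts, the argument is short; the substance is entirely carried by Corollary \ref{boundedness2}.
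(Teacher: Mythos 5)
Your proof follows the paper's own argument: the paper likewise applies Corollary \ref{boundedness2} (in rank one, with $\ch_1=0$, $\ch_2=0$, $\mu_{\max}=0$) to the family $\{\cO_X(mD)\}_{m\in\ZZ}$ for $D$ representing a class in the kernel, concludes that the set of classes $[mD]$ in $B^1(X)$ is finite, and hence that $[D]=0$ in the finitely generated free module $\bar B^1(X)$. Your additional verifications (checking the hypotheses of Corollary \ref{boundedness2} for rank-one reflexive sheaves and the passage from boundedness to finiteness of the algebraic equivalence classes of the determinants) are precisely the steps the paper leaves implicit, so the two arguments are essentially identical.
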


\begin{proof}
Let $D$ be a Weil divisor representing the class in the kernel of $B^1(X)\to C^1(X;L_1,...,L_{n-1})$.  Corollary \ref{boundedness2} implies that the set $\{ \cO_X(mD)\}_{m\in \ZZ}$ is bounded. So
the set $\{ [mD] \} _{m\in \ZZ}$ of the corresponding classes in $B^1(X)$ is finite. Therefore  $[D]=0\in \bar B^1(X)$. 
\end{proof}

\begin{Remark}
The above corollary implies that some multiple of a Weil divisor $D$ on $X$ is algebraically equivalent to $0$
if and only if $D.L^{n-1}=D^2.L^{n-2}=0$ for some ample line bundle $L$. This allows to
give generalization of \cite[Theorem 9.6.3]{Kl} to rank $1$ reflexive sheaves  on normal projective varieties.	
\end{Remark}

\section{Modules over Lie algebroids and Higgs sheaves}\label{Section:Higgs-sheaves}

In this section we show various definitions and simple results on modules over Lie algebroids and on generalized Higgs sheaves. We also compare our notion with the one used in \cite{GKPT2}. We finish the section with definition of semistability and with a restriction theorem for generalized Higgs sheaves. In the whole section $X$ is a scheme over some fixed field $k$.

\subsection{Basic definitions}

Let us recall that a tangent sheaf $T_{X/k}$ is defined as $\cHom_{\cO_X} (\Omega _{X/k}, \cO_X)$. 
For every open subset $U\subset X$ we have a canonical isomorphism 
$$T_{X/k} (U)= \Hom _{\cO_U} (\Omega _{X/k}|_U, \cO_U)\to \mathop{\rm Der}\, _k(\cO_U, \cO_U), \quad \delta \to \delta \circ d_{X/k}.$$
So in the following we will identify sections of $T_X/k$ with $k$-derivations of the structure sheaf
without mentioning it. Now let us  recall the following definitions (see \cite[Sections 2 and 3]{La2}).

\begin{Definition}
	A \emph{Lie algebroid on $X/k$} is a triple $(L, [\cdot,
	\cdot ]_L, \alpha )$ consisting of 
\begin{enumerate}
	\item a quasi-coherent $\cO_X$-module $L$,
	\item a morphism of sheaves of $k$-vector spaces $ [\cdot,
	\cdot ]_L: L\otimes _kL\to L$,
	\item an $\cO_X$-linear map $\alpha: L\to
	T_{X/k}$, $x\to \alpha_x$, called \emph{anchor map},
\end{enumerate}	
	such that the following conditions are satisfied:
	\begin{enumerate}
		\item 	$(L, [\cdot, \cdot ]_L)$ is a sheaf of $k$-Lie algebras,
		\item $\alpha$ is a homomorphism  of sheaves of $k$-Lie algebras,
		\item We have 
		$$[x,fy]_L=\alpha _x (f)y+f\,[x,y]_L$$
	for all local sections $f\in \cO _X$ and $x,y\in L$.	
	\end{enumerate}
\end{Definition}

\begin{Definition}
Let $L$ be a Lie algebroid on $X/k$. An $L$-module is a pair $(\cE , \nabla)$ consisting of 
a quasi-coherent $\cO_X$-module $\cE$ and an $\cO_X$-linear map of left $\cO_X$-modules
	 $\nabla: L\to \End _{k}\cE$, which is also a map of sheaves of
	$k$-Lie algebras and which satisfies Leibniz's rule
	$$\nabla (x) (fe)=\alpha _x(f)e+\nabla(fx)(e)$$
	for all local sections $f\in \cO_X$, $x\in L$ and $e\in \cE$.
\end{Definition} 

In the above definition (and also below) we use the following identifications. For every open subset $U\subset X$  we have a map $\nabla (U): L(U)\to (\End _k \cE) (U)= \Hom _k (\cE |_U, \cE|_U)$. So every $x\in L (U)$ gives a $k$-linear map $\cE |_U\to \cE|_U$ that is denoted by $\nabla (x)$.
Now the above Leibniz's rule should be interpreted as equality for all open  $V\subset U\subset X$
for $x\in L(U)$, $f\in \cO_X(V)$ and $e\in \cE(V)$, where $fx$ denotes $f(x|_V)$.

\begin{Definition}
	Let $(\cE _1, \nabla _1)$ and $(\cE _2, \nabla _2)$ be $L$-modules for some Lie algebroid $L$.
	A \emph{morphism of $L$-modules} $\varphi : (\cE _1, \nabla _1)\to  (\cE _2, \nabla_2) $ is 
	an $\cO_X$-linear map $\varphi: \cE_1\to \cE_2$ such that for every open subset $U\subset X$ and
	 every $x\in L (U)$	  the diagram
	$$\xymatrix{\cE_1|_U\ar[r]^-{\nabla_1(x)}\ar[d]^{\varphi |_U}& \cE_1|_U\ar[d]^{\varphi |_U}\\
	 \cE_2|_U\ar[r]^-{\nabla_2 (x)}&\cE_2|_U\\
	}$$
	is commutative. 
\end{Definition}

The above definitions allow us to talk about the category $ \Mod{L} (X)$ of $L$-modules.

\subsection{Extensions of modules over Lie algebroids}\label{extensions-of-L-modules}

Let $X$ be an integral normal locally Noetherian scheme over some field $k$ and let $L$ be a Lie algebroid on $X/k$, whose underlying $\cO_X$-module is  coherent and reflexive. By abuse we will call such Lie algebroid \emph{reflexive}.

Let $j: U\hookrightarrow X $ be a big open subscheme $X$.  Let $(\cE, \nabla : L_U\to \End _{k} \cE)$ be an $L_U$-module. By assumption $L=j_*L_U$, so we can set $j_*(\cE, \nabla):=(j_*\cE, j_*\nabla)$, where $j_*\nabla$
acts as $\nabla$ on the sections of $j_*\cE$ (which are the same as sections of $\cE$). 
In this way we can define the functor $j_*:  \Mod{L_U} (U)\to  \Mod{L} (X)$. 

We say that an $L$-module  $(\cE , \nabla)$ is \emph{reflexive} if $\cE$ is coherent and reflexive
as an $\cO_X$-module. By 
$\refMod{L} (X)$ we denote the full subcategory of $ \Mod{L} (X)$, whose objects are 
reflexive $L$-modules. Note that after restricting to reflexive modules  
$j_*$ and $j^*$ define adjoint equivalences of categories $\refMod{L _U} (U)$ and $\refMod{L} (X)$.

\subsection{Tensor operations on modules over Lie algebroids}\label{tensors}

If $\cE_1$ and $\cE_2$ are $L$-modules then we can define natural $L$-module structures on $\cE_1\otimes _{\cO_X} \cE_2$ and on $\cHom_{\cO_X} (\cE_1, \cE_2)$. In particular, if $\cE$ is an $L$-module then
$\cE^{*}$ has a canonical $L$-module structure.

If
$\nabla _1: L\to \End _{k} \cE _1$ and $\nabla _2: L\to \End _{k} \cE _2$ 
are $L$-module structures on $\cE_1$ and $\cE_2$ then we define an $L$-module structure 
$\nabla: L\to \End _{k} (\cE_1\otimes _{\cO_X} \cE_2)$ on $ \cE_1\otimes _{\cO_X} \cE_2$ by the formula
$$\nabla (x)=\nabla_1 (x) \otimes \id + \id \otimes \nabla _2(x).$$

Similarly, we define  the $L$-module structure $\nabla: L\to \End _{k} (\cHom_{\cO_X} (\cE _1, \cE_2))$ on $\cHom_{\cO_X} (\cE _1, \cE_2)$
by the formula $$(\nabla (x)) (\psi)=(\nabla_2 (x))\circ \psi -\psi \circ  (\nabla_1(x)).$$

This shows that for any $L$-module $\cE$ we can define a natural $L$-module structure on $\cE ^*=\cHom_{\cO_X} (\cE , \cO_X)$. So we can also define a reflexive hull $\cE ^{**}$ of an $L$-module.
As in the case of $\cO_X$-modules (see Subsection \ref{reflexive-sheaves})
the inclusion functor $\refMod{L} (X)\to \Mod{L} (X) $ comes with a left adjoint $(\cdot )^{**}:  \Mod{L} (X) \to \refMod{L} (X)$ given by the reflexive hull. 
In particular, we have a natural map $\cE\to \cE ^{**}$ of $L$-modules coming from the adjoint map to the identity on 
$ \cE ^{**}$.

\medskip

\begin{Remark}
	Note that if we have two Higgs sheaves
	$(\cE_1, \theta _1: \cE _1\to \cE _1{\otimes} \Omega_X^{[1]})$ and $(\cE_2, \theta _2: \cE _2\to \cE _2{\otimes} \Omega_X^{[1]})$
	in the sense of \cite{GKPT2} then we can define the Higgs sheaf structure on 
	$\cE_1\otimes _{\cO_X} \cE_2$ but not on $\cHom_{\cO_X} (\cE_1, \cE_2)$. In particular, if $\Omega_X^{[1]}$ is not locally free one cannot 
	define the dual of a  Higgs sheaf as a Higgs sheaf. So we cannot also take a reflexivization of a (torsion free) Higgs sheaf in the sense of \cite{GKPT2}. This is one of the main reasons why we need to use a different definition of a Higgs sheaf.
\end{Remark}

\subsection{Generalized Higgs sheaves}

Let $X$ be a scheme and let $L$ be a quasi-coherent $\cO_X$-module. We can equip $L$ with a trival Lie algebroid structure with zero Lie bracket and zero anchor map.

\begin{Definition}
	An \emph{$L$-Higgs sheaf} is an $L$-module for the trivial Lie algebroid structure on $L$. In other words, it is a pair $(\cE, \theta)$ consisting of a quasi-coherent sheaf $\cE$ of $\cO_X$-modules and an $\cO_X$-linear  map $\theta : L\to \End _{\cO_X} (\cE) $ of sheaves of Lie rings, where $L$ is equipped with the trivial  Lie bracket. 
An $L$-Higgs sheaf $(\cE, \theta)$ is \emph{reflexive} if 
$\cE$ is a coherent reflexive $\cO_X$-module.
\end{Definition}

\medskip

On any scheme $X$, if $\cE$, $\cF$, $\cG$ are sheaves of $\cO_X$-modules then we have a functorial isomorphism of $\Gamma (X, \cO_X)$-modules
$$\Hom _{\cO_X} (\cE\otimes _{\cO_X} \cF, \cG)\mathop{\longrightarrow}^{\simeq}\Hom _{\cO_X}
(\cE, \cHom_{\cO_X}(\cF, \cG)).$$
In particular, we have an isomorphism
$$\alpha: \Hom _{\cO_X} (L\otimes _{\cO_X} \cE, \cE)\mathop{\longrightarrow}^{\simeq}\Hom _{\cO_X}
(L, \End _{\cO_X} (\cE)).$$
This shows that we can replace $\theta$ by an $\cO_X$-linear map $L\otimes _{\cO_X}\cE\to \cE$ that by abuse of notation will also be denoted by $\theta$.

\medskip

Note that we have a map $ L\otimes _{\cO_X}L \to L\otimes _{\cO_X}L$ given by sending $x\otimes y$ to $x\otimes y-y\otimes x$. Since it maps $x\otimes x$ to $0$, this map factors through the canonical projection  $ L\otimes _{\cO_X}L\to {\bigwedge}^2L$. Hence we get the map $\iota: {\bigwedge}^2L\to  L\otimes _{\cO_X}L$ fitting into an exact sequence
$${\bigwedge}^2L\to  L\otimes _{\cO_X}L \to \Sym ^2 L\to 0,$$ 
where the second map is the canonical projection.

The following lemma explains how to check when an $\cO_X$-linear map $L\otimes _{\cO_X}\cE\to \cE$ gives rise to an $L$-Higgs sheaf.

\begin{Lemma}\label{equivalent-Higgs}
	Let us fix an $\cO_X$-linear  map $\theta : L\otimes _{\cO_X} \cE\to \cE$.
	Then the following conditions are equivalent:
	\begin{enumerate}
		\item The composition
		$$\xymatrixcolsep{3pc}\xymatrix{
			{\bigwedge}^2L\otimes _{\cO_X}	\cE\ar[r]^-{\iota \otimes \id }& L\otimes _{\cO_X}L\otimes _{\cO_X} \cE \ar[r]^-{\id{\otimes }\theta}&
			L\otimes _{\cO_X}\cE\ar[r]^-{\theta }&
			\cE\\}$$
		vanishes.	
		\item
		The map $\bar\theta:=\alpha (\theta):L\to \End _{\cO_X} (\cE) $ is a homomorphism of sheaves of Lie rings.
		\item The map $\theta$ extends to a $\Sym ^{\bullet}L$-module structure on $\cE$, i.e., there exists an $\cO_X$-linear  map $\tilde\theta : \Sym ^{\bullet} L\otimes _{\cO_X} \cE\to \cE$ such that 
		$\tilde \theta |_{\cO_X}: \cO_X\otimes _{\cO_X} \cE\to \cE$ is the identity, 	$\tilde \theta |_L=\theta$  
		and the diagram
		$$
		\xymatrix{
			\Sym ^{\bullet} L\otimes _{\cO_X} \Sym ^{\bullet} L\otimes _{\cO_X} \cE\ar[r]^-{\id\otimes \tilde\theta}
			\ar[d]^{\mu\otimes \id } &\Sym ^{\bullet} L\otimes _{\cO_X} \cE	\ar[d]^{\tilde\theta} \\
			\Sym ^{\bullet} L\otimes _{\cO_X} \cE\ar[r]^-{\tilde\theta }&\cE,\\
}
		$$
where $\mu$ is the multiplication in $\Sym ^{\bullet} L$, is commutative.
	\end{enumerate}
\end{Lemma}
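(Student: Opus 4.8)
The plan is to prove the chain of equivalences $(1)\Leftrightarrow (2)\Leftrightarrow (3)$ by elementary manipulations, working locally so that we may treat $\theta$, $\bar\theta$, and $\tilde\theta$ as honest $\cO_X(U)$-multilinear maps on sections over an affine open $U$. First I would unwind the adjunction $\alpha$: for local sections $x\in L$ and $e\in \cE$ one has $\bar\theta(x)(e)=\theta(x\otimes e)$, so that $\bar\theta(x)\in\End_{\cO_X}(\cE)$ acts by $e\mapsto\theta(x\otimes e)$. The key computation is then: for local sections $x,y\in L$ and $e\in\cE$,
$$
(\bar\theta(x)\circ\bar\theta(y))(e)-(\bar\theta(y)\circ\bar\theta(x))(e)
=\theta\bigl(x\otimes\theta(y\otimes e)\bigr)-\theta\bigl(y\otimes\theta(x\otimes e)\bigr),
$$
which is precisely $\theta\circ(\id\otimes\theta)\circ(\iota\otimes\id)$ evaluated on $(x\wedge y)\otimes e$, since $\iota(x\wedge y)=x\otimes y-y\otimes x$. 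Since $L$ is equipped with the \emph{trivial} Lie bracket, the bracket of $\bar\theta(x)$ and $\bar\theta(y)$ in $\End_{\cO_X}(\cE)$ must equal $\bar\theta([x,y]_L)=\bar\theta(0)=0$ for $\bar\theta$ to be a homomorphism of sheaves of Lie rings; so saying $\bar\theta$ is a Lie-ring homomorphism is exactly saying the displayed commutator vanishes for all $x,y,e$, i.e.\ the composition in (1) vanishes. Because both the vanishing condition (1) and the homomorphism condition (2) are local and are checked on local sections, $(1)\Leftrightarrow(2)$ follows. I should be a little careful that $\bar\theta$ is automatically additive and $\cO_X$-linear (so it is a map of sheaves of $\cO_X$-modules), and that the only remaining axiom for being a map of sheaves of Lie rings is compatibility with brackets — this is where the trivial bracket on $L$ is used.

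Next I would handle $(1)\Leftrightarrow(3)$. For the implication $(3)\Rightarrow(1)$: given the $\Sym^\bullet L$-module structure $\tilde\theta$ with $\tilde\theta|_L=\theta$, the associativity diagram applied with the two factors being degree-one elements $x,y\in L$ gives $\theta(x\otimes\theta(y\otimes e))=\tilde\theta\bigl((x\cdot y)\otimes e\bigr)$, where $x\cdot y\in\Sym^2 L$ is the symmetric product. Since $x\cdot y=y\cdot x$ in $\Sym^2 L$, this is symmetric in $x$ and $y$, hence the antisymmetrization vanishes, which is (1). For $(1)\Rightarrow(3)$, the plan is to build $\tilde\theta$ by hand: set $\tilde\theta|_{\cO_X}=\id_\cE$, $\tilde\theta|_L=\theta$, and for a degree-$d$ monomial $x_1\cdots x_d\in\Sym^d L$ define $\tilde\theta\bigl((x_1\cdots x_d)\otimes e\bigr)=\theta(x_1\otimes\theta(x_2\otimes\cdots\theta(x_d\otimes e)\cdots))$. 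Condition (1) is precisely what guarantees this is well-defined, i.e.\ independent of the order of the $x_i$ — it suffices to check this for adjacent transpositions, and swapping $x_i$ with $x_{i+1}$ amounts to applying the vanishing of (1) to the pair $(x_i,x_{i+1})$ with the element $\theta(x_{i+2}\otimes\cdots)\in\cE$ in place of $e$. One then checks $\cO_X$-linearity (here one must verify it descends from $T^\bullet L$ through the symmetrization, using that $\theta$ is $\cO_X$-linear in both slots) and the associativity diagram, which is immediate from the recursive definition since concatenating the monomial actions matches composing the iterated $\theta$'s.

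I expect the main obstacle to be bookkeeping rather than conceptual difficulty: making the well-definedness of $\tilde\theta$ in $(1)\Rightarrow(3)$ genuinely rigorous requires presenting $\Sym^\bullet L$ as the quotient of the tensor algebra $T^\bullet L$ by the two-sided ideal generated by $x\otimes y-y\otimes x$, defining a map out of $T^\bullet L\otimes\cE$ by the obvious iteration, and showing it kills the ideal — and the relation $x\otimes y-y\otimes x$ sits in arbitrary positions inside a tensor, so one reduces to the two-variable identity via the iterated structure. A second, milder subtlety is sheafification: all three conditions are stated for sheaf maps, so after establishing the equivalences on sections over each affine open I would remark that the constructions are compatible with restriction and hence glue, giving the statement at the level of sheaves; since the conditions can all be checked stalk-locally (or on an affine cover), this is routine. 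None of these steps involves the geometry of $X$ — $X$ is just any scheme over $k$ here — so the whole lemma is formal.
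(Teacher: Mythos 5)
Your proposal is correct and follows essentially the same route as the paper: the equivalence $(1)\Leftrightarrow(2)$ is the identical local computation with the trivial bracket on $L$, and for $(3)$ you use the same key fact, namely the presentation of $\Sym^{\bullet}L$ as the tensor algebra modulo the ideal generated by $x\otimes y-y\otimes x$. The only cosmetic difference is that the paper packages $(1)\Rightarrow(3)$ as extending $\bar\theta$ to an $\cO_X$-algebra homomorphism $\Sym^{\bullet}L\to\End_{\cO_X}(\cE)$ via this universal property, whereas you construct $\tilde\theta$ explicitly on monomials and verify well-definedness by adjacent transpositions — the same argument, unpacked.
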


\begin{proof}
	Let $x,y$ be local sections of $L$ and $e$ a local section of $\cE$.
	Then the first conditions means that
	$$\theta (x\otimes \theta (y\otimes e))=\theta ( y\otimes \theta (x\otimes e)),$$
	which can be rewritten as $\bar\theta (x) \bar\theta (y)= \bar\theta (y)\bar\theta (x)$, i.e.,
	$[\bar\theta (x) ,\bar\theta (y)]=0=\bar \theta ([x,y])$. This shows equivalence of the first two conditions.
	
	If these conditions are satisfied then there exists a homomorphism of sheaves of $\cO_X$-algebras
	$\Sym ^{\bullet}L\to \End _{\cO_X} (\cE)$ extending $\alpha (\theta)$. This follows from the definition of $\Sym ^{\bullet}L$ as the quotient of the tensor algebra of $L$ by the two-sided ideal generated by local sections of the form $x\otimes y-y\otimes x$.
	
	This map provides $\cE$ with  a $\Sym ^{\bullet}L$-module structure. Clearly, if we have such a structure then also the second condition is satisfied.
\end{proof}

\medskip

Interpretation of a Higgs field as an $\cO_X$-linear map $\theta : L\otimes _{\cO_X} \cE\to \cE$ is sometimes more convenient. For example, we can use it to introduce the following definition that will play an important role in the paper.

\begin{Definition}
	A \emph{system of $L$-Hodge sheaves} is an $L$-Higgs
	sheaf $(\cE, \theta : L\otimes _{\cO_X} \cE \to \cE )$ for which $\cE$ splits into a direct sum $\bigoplus
	\cE^i$ so that $\theta$ maps $L\otimes _{\cO_X} \cE^i$ to $\cE^{i-1}$.
\end{Definition}

\subsection{Morphisms of generalized Higgs sheaves}

Let $X$ be a scheme over a field $k$ and let $L$ be a quasi-coherent $\cO_X$-module. Lemma \ref{equivalent-Higgs}
shows that we can treat an $L$-Higgs sheaf as a pair $(\cE , \theta)$, where $\theta : L\otimes _{\cO_X} \cE\to \cE$ is an $\cO_X$-linear map satisfying certain additional conditions (e.g., condition 1 from Lemma \ref{equivalent-Higgs}).
This point of view is convenient when one wants to consider morphisms between $L$-Higgs sheaves, because 
giving a {morphism of $L$-Higgs sheaves} $\varphi : (\cE _1, \theta _1)\to  (\cE _2, \theta _2) $ is 
equivalent to giving an $\cO_X$-linear map $\varphi: \cE_1\to \cE_2$ such that  the diagram
	$$\xymatrix{L\otimes _{\cO_X} \cE_1\ar[r]^-{\theta_1}\ar[d]^{\id \otimes \varphi}& \cE_1\ar[d]^{\varphi}\\
	L\otimes _{\cO_X} \cE_2\ar[r]^-{\theta_2}&\cE_2\\
}$$
is commutative.
In the following we denote the category of $L$-Higgs sheaves on $X$ by $\Hig _L(\cO_X)$.
By $\Hig^{ref}_L (\cO_X)$ we denote the category of reflexive  $L$-Higgs sheaves on $X$.

\subsection{Reflexive Higgs sheaves}

In this subsection we assume that $X$ is  integral locally Noetherian and  $L$ is a  coherent $\cO_X$-module.
We set $\Omega_L^{[m]}=({\bigwedge}^mL)^*$ for $m\ge 1$. So in particular we have $\Omega_L^{[1]}:=L^*$. We also fix a reflexive coherent $\cO_X$-module $\cE$. 
  
By \cite[Lemma 0AY4]{SP} the sheaf $\cHom_{\cO_X}(L, \cE)$ is also reflexive. In particular, if $X$ is  normal  
then $\cHom_{\cO_X}(L, \cE)\simeq \cE\hat{\otimes} \Omega_L^{[1]}$. Since
$$\Hom _{\cO_X} (L\otimes _{\cO_X} \cE, \cE)=\Hom _{\cO_X} (\cE\otimes _{\cO_X} L, \cE)\mathop{\longrightarrow}^{\simeq}\Hom _{\cO_X}
(\cE, \cHom_{\cO_X}(L, \cE)),$$
we have a canonical isomorphism
$$\Hom _{\cO_X} (L\otimes _{\cO_X} \cE, \cE)\mathop{\longrightarrow}^{\simeq}\Hom _{\cO_X}
(\cE, \cE\hat{\otimes} \Omega_L^{[1]}).$$

\begin{Lemma}\label{comparison-of-dual-definitions}
Assume that $X$ is normal.
If $\theta : L\otimes _{\cO_X} \cE\to \cE$ and $\bar\theta: \cE\to \cE\hat{\otimes} \Omega_L^{[1]}$ are $\cO_X$-linear maps corresponding to each other under the above isomorphism then
	$(\cE, \theta)$ is an $L$-Higgs sheaf if and only if the composition
	$$\xymatrixcolsep{3pc}\xymatrix{
		\cE\ar[r]^-{\bar\theta}& \cE\hat{\otimes} \Omega_L^{[1]}\ar[r]^-{\bar\theta\hat{\otimes }\id}&
		\cE\hat{\otimes} \Omega_L^{[1]}\hat{\otimes} \Omega_L^{[1]}\ar[r]^-{\id \hat{\otimes}\wedge}&
		\cE\hat{\otimes} \Omega_L^{[2]}.\\
	}
	$$
	vanishes. 
\end{Lemma}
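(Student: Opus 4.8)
## Proof proposal for Lemma~\ref{comparison-of-dual-definitions}

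The plan is to reduce the statement to the already-established equivalence in Lemma~\ref{equivalent-Higgs} (condition~(1): the composition ${\bigwedge}^2 L\otimes_{\cO_X}\cE\to\cE$ vanishes) by showing that the two compositions in question are Hom-adjoint to one another under the functorial isomorphism $\Hom_{\cO_X}(L\otimes_{\cO_X}\cE,\cE)\simeq\Hom_{\cO_X}(\cE,\cE\hat\otimes\Omega_L^{[1]})$. First I would note that, since $X$ is normal, all the sheaves appearing are reflexive (using \cite[Lemma 0AY4]{SP} as quoted before the lemma), so it suffices to check the vanishing on the regular locus $X_{\mathrm{reg}}$, which is a big open subset by Lemma~\ref{restriction-to-open}; by shrinking further I may assume $L$ and $\cE$ are both finite locally free and all $\hat\otimes$'s are honest tensor products. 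On such an open set the argument becomes a purely formal manipulation with locally free sheaves, where $\Omega_L^{[2]}=({\bigwedge}^2 L)^*$ and the map $\id\hat\otimes\wedge$ is the dual of $\iota\colon{\bigwedge}^2 L\to L\otimes_{\cO_X}L$ tensored with $\cE$.

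The key computation is then the following adjointness dictionary. Write $\bar\theta\colon\cE\to\cE\otimes\Omega_L^{[1]}$ for the map corresponding to $\theta\colon L\otimes_{\cO_X}\cE\to\cE$. The iterated map $\cE\to\cE\otimes\Omega_L^{[1]}\otimes\Omega_L^{[1]}$ given by $(\bar\theta\otimes\id)\circ\bar\theta$ is exactly the map adjoint to the composition
$$
\xymatrix{
L\otimes_{\cO_X}L\otimes_{\cO_X}\cE\ar[r]^-{\id\otimes\theta}&L\otimes_{\cO_X}\cE\ar[r]^-{\theta}&\cE,
}
$$
where here one uses the symmetry isomorphism $L\otimes_{\cO_X}\cE\simeq\cE\otimes_{\cO_X}L$ that is built into the definition of $\alpha$ in Subsection~\ref{tensors} (this is the only slightly fussy bookkeeping point — matching the order of the two tensor factors of $L$ with the two factors of $\Omega_L^{[1]}$). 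Composing with $\id\otimes\wedge\colon\cE\otimes\Omega_L^{[1]}\otimes\Omega_L^{[1]}\to\cE\otimes\Omega_L^{[2]}$ on the target side corresponds, under the adjunction, to precomposing with $\iota\otimes\id\colon{\bigwedge}^2 L\otimes_{\cO_X}\cE\to L\otimes_{\cO_X}L\otimes_{\cO_X}\cE$ on the source side, because $\wedge$ and $\iota$ are mutually dual maps between ${\bigwedge}^2 L$ and $L\otimes_{\cO_X}L$. Hence the composition $(\id\hat\otimes\wedge)\circ(\bar\theta\hat\otimes\id)\circ\bar\theta$ is adjoint to exactly the composition $\theta\circ(\id\otimes\theta)\circ(\iota\otimes\id)$ appearing in condition~(1) of Lemma~\ref{equivalent-Higgs}. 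Since the Hom-adjunction is an isomorphism, one vanishes if and only if the other does, and by Lemma~\ref{equivalent-Higgs} the latter vanishing is equivalent to $(\cE,\theta)$ being an $L$-Higgs sheaf.

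The main obstacle, such as it is, is not conceptual but notational: one has to be careful that the sign/symmetry conventions in the definition of $\iota$ (sending $x\otimes y$ to $x\otimes y-y\otimes x$, then factoring through ${\bigwedge}^2 L$) are precisely dual to the wedge map $\Omega_L^{[1]}\hat\otimes\Omega_L^{[1]}\to\Omega_L^{[2]}$, and that the isomorphism $\alpha$ of Subsection~\ref{tensors} is used consistently on both the source and target sides so that no stray transposition of tensor factors is introduced. Once the locally free, regular-locus reduction is in place, writing everything out on local sections $x,y\in L$, $e\in\cE$ — as is already done in the proof of Lemma~\ref{equivalent-Higgs} — makes the two conditions literally the statements $\theta(x\otimes\theta(y\otimes e))=\theta(y\otimes\theta(x\otimes e))$, so the equivalence is immediate. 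I would close by invoking Lemma~\ref{restriction-to-open} once more to propagate the vanishing from $X_{\mathrm{reg}}$ back to all of $X$, using that $\cE\hat\otimes\Omega_L^{[2]}$ is reflexive and a morphism of reflexive sheaves on a normal scheme vanishes iff it vanishes on a big open subset.
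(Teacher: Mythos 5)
Your argument is correct and is essentially the paper's own proof: the paper simply records the canonical isomorphism $\Hom_{\cO_X}({\bigwedge}^2L\otimes_{\cO_X}\cE,\cE)\simeq\Hom_{\cO_X}(\cE,\cE\hat{\otimes}\Omega_L^{[2]})$, observes that the two compositions correspond to each other under it, and invokes Lemma~\ref{equivalent-Higgs} --- exactly the adjunction you spell out via a local computation on a nice open set. One cosmetic caveat: since $L$ is only assumed coherent, its locally free locus need not be a big open subset, but your reduction is unaffected because a dense open subset already suffices --- both compositions have reflexive, hence torsion-free, targets, so their vanishing may be checked on any dense open set and then propagated to all of $X$.
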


\begin{proof}
Note that we have a  canonical isomorphism
$$\Hom _{\cO_X} ({\bigwedge}^2L\otimes _{\cO_X} \cE, \cE)\mathop{\longrightarrow}^{\simeq}\Hom _{\cO_X}
(\cE, \cE\hat{\otimes} \Omega_L^{[2]}).$$
So the required assertion follows from Lemma \ref{equivalent-Higgs} and the fact that the above defined composition $\cE\to	\cE\hat{\otimes} \Omega_L^{[2]}$ corresponds to the composition
	$$\xymatrixcolsep{3pc}\xymatrix{
	{\bigwedge}^2L\otimes _{\cO_X}	\cE\ar[r]^-{\iota \otimes \id }& L\otimes _{\cO_X}L\otimes _{\cO_X} \cE \ar[r]^-{\id{\otimes }\theta}&
	L\otimes _{\cO_X}\cE\ar[r]^-{\theta }&
	\cE .\\} $$
\end{proof}

We need to change the sign in the last map to make it compatible with de Rham sequences for modules over Lie algebroids.

\medskip

The above lemma and Lemma \ref{restriction-to-open}  give a different proof of the following corollary (cf. Subsection \ref{extensions-of-L-modules}).

\begin{Corollary}
Assume that $X$ is normal and  $L$ is reflexive. 
If $j: U\hookrightarrow X $ is a big open subscheme $X$ then
 $j_*$ and $j^*$ define adjoint equivalences of categories $\Hig^{ref} _L(\cO_X)$ and $\Hig^{ref}_{L_U}(\cO_U)$. 
\end{Corollary}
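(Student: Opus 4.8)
The plan is to deduce the corollary formally from Lemma~\ref{comparison-of-dual-definitions} together with Lemma~\ref{restriction-to-open}, by transporting the ``integrability data'' of a reflexive Higgs sheaf along the adjoint equivalence of reflexive sheaf categories. First I would invoke Lemma~\ref{comparison-of-dual-definitions} to identify $\Hig^{ref}_L(\cO_X)$ with the category whose objects are pairs $(\cE,\bar\theta)$ with $\cE\in\Ref(\cO_X)$ and $\bar\theta\colon\cE\to\cE\hat\otimes\Omega_L^{[1]}$ an $\cO_X$-linear map whose ``square'' composition $\cE\to\cE\hat\otimes\Omega_L^{[1]}\to\cE\hat\otimes\Omega_L^{[1]}\hat\otimes\Omega_L^{[1]}\to\cE\hat\otimes\Omega_L^{[2]}$ vanishes, the morphisms being the $\cO_X$-linear maps commuting with the $\bar\theta$'s; likewise $\Hig^{ref}_{L_U}(\cO_U)$ is described by the same data with $\Omega_{L_U}^{[m]}$ in place of $\Omega_L^{[m]}$. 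Thus it suffices to show that the pair $(j_*,j^*)$ of Lemma~\ref{restriction-to-open} carries all of this structure between $X$ and $U$.

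The key compatibility is that $j^*$ (restriction to $U$) is monoidal for $\hat\otimes$ and matches the sheaves $\Omega^{[m]}$ on the two sides. Since $L$ is reflexive, $L=j_*L_U$ and $L_U=j^*L$ by Lemma~\ref{restriction-to-open}; since $j$ is a flat (open) immersion, $j^*$ commutes with $\cHom_{(-)}(-,\cO_{(-)})$ on coherent sheaves, hence with $\bigwedge^m$ and with reflexive hulls, so $j^*\Omega_L^{[m]}=\Omega_{L_U}^{[m]}$ and, for reflexive $\cE,\cF$, $j^*(\cE\hat\otimes\cF)=(j^*(\cE\otimes\cF))^{**}=j^*\cE\hat\otimes j^*\cF$, all naturally; moreover $j^*$ intertwines the wedge maps $\Omega_L^{[1]}\hat\otimes\Omega_L^{[1]}\to\Omega_L^{[2]}$ and $\Omega_{L_U}^{[1]}\hat\otimes\Omega_{L_U}^{[1]}\to\Omega_{L_U}^{[2]}$. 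Because $j_*$ and $j^*$ are mutually quasi-inverse on reflexive sheaves, $j_*$ inherits the dual compatibilities ($j_*\Omega_{L_U}^{[m]}=\Omega_L^{[m]}$ and $j_*(\cF_1\hat\otimes_U\cF_2)=j_*\cF_1\hat\otimes_X j_*\cF_2$).

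With these in hand, $j^*(\cE,\bar\theta):=(j^*\cE,j^*\bar\theta)$ (reading $j^*\bar\theta$ through the identification $j^*(\cE\hat\otimes\Omega_L^{[1]})=j^*\cE\hat\otimes\Omega_{L_U}^{[1]}$) defines a functor $\Hig^{ref}_L(\cO_X)\to\Hig^{ref}_{L_U}(\cO_U)$: the integrability condition is preserved because the square composition for $j^*\bar\theta$ is precisely the restriction to $U$ of the one for $\bar\theta$, and a morphism of reflexive $\cO_X$-modules is zero iff its restriction to $U$ is (Lemma~\ref{restriction-to-open}). The same remark, applied to the commuting squares that define Higgs morphisms, shows $j^*$ is full and faithful as a functor between the Higgs categories. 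The quasi-inverse is built in the same way from $j_*$ using the dual compatibilities, and the unit and counit of Lemma~\ref{restriction-to-open} upgrade to natural isomorphisms $j_*j^*\cong\id$ and $j^*j_*\cong\id$ on the Higgs categories; the adjunction is inherited as well. I expect the only step requiring genuine (if brief) verification to be the monoidal compatibility $j^*(\cE\hat\otimes\cF)\cong j^*\cE\hat\otimes j^*\cF$, i.e.\ that restriction to the big open $U$ commutes with taking reflexive hulls; this rests on flatness of $j$, and once it is granted the rest is a routine diagram chase.
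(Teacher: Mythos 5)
Your proposal is correct and follows essentially the route the paper itself indicates: the corollary is deduced from Lemma \ref{comparison-of-dual-definitions} (reformulating a reflexive $L$-Higgs sheaf as a pair $(\cE,\bar\theta\colon\cE\to\cE\hat\otimes\Omega_L^{[1]})$ with vanishing square) combined with the adjoint equivalence of reflexive categories from Lemma \ref{restriction-to-open}, using that restriction to a big open commutes with duals, exterior powers and reflexive hulls and that a map between reflexive sheaves vanishes iff it vanishes on $U$. The paper's only other remark is the alternative proof via Subsection \ref{extensions-of-L-modules} (treating Higgs sheaves as modules over the trivial Lie algebroid), which you do not need.
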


\medskip

\begin{Corollary}\label{passing-to-reflexivization}
If $X$ is normal  and $\theta :  L\otimes _{\cO_X} \cE\to \cE$ is an  $L$-Higgs field on $\cE$
 then we have a canonical structure of an $L^{**}$-Higgs sheaf on $\cE$.
\end{Corollary}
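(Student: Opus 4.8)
The plan is to build the $L^{**}$-Higgs structure on $\cE$ by restricting to a big open subset, using it there, and pushing forward. First I would pass to the regular locus, or more precisely to a big open subscheme $j: U\hookrightarrow X$ over which $L|_U$ is locally free; since $X$ is normal, $X_{\mathrm{reg}}$ is big and on a big open subscheme the reflexive hull $L^{**}$ agrees with $L$ up to the equivalence of Lemma \ref{restriction-to-open}, so I can shrink $U$ further so that $L^{**}|_U = L|_U$ (both being the locally free sheaf $L|_U$ there). On $U$ the given Higgs field $\theta$ restricts to $\theta|_U: L|_U\otimes_{\cO_U}\cE|_U\to \cE|_U$, which is automatically an $L^{**}|_U$-Higgs field since $L|_U=L^{**}|_U$ as $\cO_U$-modules and the trivial Lie algebroid structures match. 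Thus $(\cE|_U,\theta|_U)$ is an object of $\Hig^{ref}_{L^{**}_U}(\cO_U)$ (here I am using that $\cE$ reflexive implies $\cE|_U$ reflexive on $U$).

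Next I would apply the adjoint equivalence $j_*$ from the Corollary just proved (the equivalence between $\Hig^{ref}_{L^{**}}(\cO_X)$ and $\Hig^{ref}_{L^{**}_U}(\cO_U)$, valid because $L^{**}$ is reflexive and $U$ is big). This produces a reflexive $L^{**}$-Higgs sheaf $j_*(\cE|_U,\theta|_U) = (j_*(\cE|_U), j_*(\theta|_U))$ on $X$. Since $\cE$ is reflexive and $U$ is big, $j_*(\cE|_U)\simeq \cE$ canonically by Lemma \ref{restriction-to-open}, so the underlying sheaf is indeed $\cE$, and the resulting $L^{**}$-Higgs field $\theta^{[*]}:= j_*(\theta|_U)$ is the desired structure. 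It is canonical because the whole construction — restriction to $U$, then $j_*$ — is functorial and, by the equivalence of categories, independent of the choice of big open $U$ (any two such choices can be compared on their intersection, which is again big).

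Finally I should check the compatibility: that under the natural map $L^{**}\otimes_{\cO_X}\cE\to ?$ the field $\theta^{[*]}$ genuinely extends $\theta$, in the sense that pulling $\theta^{[*]}$ back along $L\to L^{**}$ recovers $\theta$ after reflexivization. This is immediate on $U$ where $L=L^{**}$, and then holds on all of $X$ because both $\theta^{[*]}\circ(\text{can}\otimes\id)$ and $\theta$ are maps out of $L\otimes_{\cO_X}\cE$ into the reflexive sheaf $\cE$ agreeing on the big open $U$ (two morphisms into a reflexive sheaf that agree on a big open subset agree everywhere). The main obstacle, such as it is, is purely bookkeeping: one must be slightly careful that $L^{**}|_U = L|_U$ as $\cO_U$-modules on the chosen $U$ (so that an $L|_U$-Higgs field literally is an $L^{**}|_U$-Higgs field, with no hidden identification needed) and that the trivial Lie algebroid structure is preserved under $j_*$ — both of which are formal once $U$ is chosen inside the locus where $L$ is already reflexive, i.e. where $\text{can}: L|_U\to L^{**}|_U$ is an isomorphism. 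No serious difficulty arises because the trivial Lie bracket and zero anchor are automatically compatible with all the functors involved.
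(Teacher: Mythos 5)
Your route — restrict to a big open subset, identify the $L$- and $L^{**}$-Higgs structures there, and extend back by $j_*$ using Lemma \ref{restriction-to-open} and the equivalence for reflexive $L^{**}$-Higgs sheaves — is genuinely different from the paper's, which works on the dual side: by Lemma \ref{comparison-of-dual-definitions} an $L$-Higgs field on the reflexive sheaf $\cE$ is the same data as a map $\bar\theta\colon \cE\to\cE\hat\otimes\Omega^{[1]}_L$ whose composition into $\cE\hat\otimes\Omega^{[2]}_L$ vanishes, and since $\Omega^{[1]}_{L^{**}}=\Omega^{[1]}_L$ and $\Omega^{[2]}_{L^{**}}=\Omega^{[2]}_L$ this is literally the same data as an $L^{**}$-Higgs field. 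However, your argument has a genuine gap at its first step: you assert that there is a big open subset $U$ on which $L|_U$ is locally free, equivalently on which the canonical map $L\to L^{**}$ is an isomorphism. The corollary only assumes $L$ coherent, and such a $U$ need not exist: if $L$ has torsion (or otherwise fails to be reflexive) along a divisor, then $L\to L^{**}$ is not an isomorphism at the generic points of that divisor, so the agreement locus is not big. This is exactly the generality in which the corollary is used: for instance in Subsection \ref{pullback-char-0} it is applied with $L=f^*T_{X/k}$ for a possibly non-finite (e.g.\ birational) morphism $f$, where the torsion of $f^*T_{X/k}$ can be supported on divisors. Your proof does cover the case of torsion-free $L$, since a torsion-free coherent sheaf on a normal scheme is locally free, hence reflexive, in codimension one — but not the general case.

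The gap is repairable within your approach by one preliminary reduction you did not make: since $\cE$ is reflexive, hence torsion-free, any $\cO_X$-linear map $\theta\colon L\otimes_{\cO_X}\cE\to\cE$ annihilates $\mathrm{Tors}(L)\otimes_{\cO_X}\cE$ (if $ft=0$ with $f\neq 0$, then $f\,\theta(t\otimes e)=0$, so $\theta(t\otimes e)=0$), hence $\theta$ factors through $\bigl(L/\mathrm{Tors}(L)\bigr)\otimes_{\cO_X}\cE$. Replacing $L$ by $L/\mathrm{Tors}(L)$, which is torsion-free and has the same reflexive hull, the big open subset you need does exist, and the rest of your argument — integrability checked on $U$, extension by $j_*$ using reflexivity of $\cE$, and the uniqueness/compatibility statements via the fact that two maps into a torsion-free sheaf agreeing on a dense open subset agree — goes through. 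Without this reduction the construction does not get started in the stated generality, whereas the paper's proof via $\Omega^{[i]}_{L^{**}}=\Omega^{[i]}_L$ sidesteps the issue entirely because dualizing already kills the torsion of $L$.
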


\begin{proof}
	The assertion follows immediately from the previous lemma and the remark that
$\Omega_{L^{**}}^{[1]}= \Omega_L^{[1]}$ and $\Omega_{L^{**}}^{[2]}= \Omega_L^{[2]}$.
\end{proof}

\medskip

Lemma \ref{comparison-of-dual-definitions}
allows us to compare our definition of a Higgs sheaf to that provided in \cite[Definition 5.1]{GKPT2}.

\begin{Corollary}\label{comparison-to-GKPT}
Let $\cE$ be a reflexive coherent $\cO_X$-module on a normal $k$-variety $X$.
Let  $\bar\theta: \cE\to \cE{\otimes} \Omega_X^{[1]}$ be an $\cO_X$-linear map such that 
the composition
$$\xymatrixcolsep{3pc}\xymatrix{
	\cE\ar[r]^-{\bar\theta}& \cE{\otimes} \Omega_X^{[1]}\ar[r]^-{\bar\theta{\otimes }\id}&
	\cE{\otimes} \Omega_X^{[1]}{\otimes} \Omega_X^{[1]}\ar[r]^-{\id {\otimes}[\wedge]}&
	\cE{\otimes} \Omega_X^{[2]}\\
}
$$
vanishes. Then after composing $\bar\theta$ with the reflexivization map $\cE{\otimes} \Omega_X^{[1]}\to \cE \hat{\otimes} \Omega_X^{[1]}$ we can consider the corresponding $\cO_X$-linear map 
$\theta: T_{X/k}\otimes _{\cO_X} \cE\to \cE$. Then $(\cE, \theta)$ is a Higgs sheaf.
\end{Corollary}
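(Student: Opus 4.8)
The plan is to verify the integrability condition of Lemma~\ref{comparison-of-dual-definitions} for $L=T_{X/k}$ by reducing it to the smooth locus, where all the reflexivizations become isomorphisms and the condition turns into the stated hypothesis. So first I would set $L=T_{X/k}$ and $j\colon U=X_{\mathrm{reg}}\hookrightarrow X$; since $X$ is normal, $U$ is a big open subset. Because $T_{X/k}=(\Omega_X^1)^{*}$ is reflexive one has $\Omega_L^{[1]}=L^{*}=(\Omega_X^1)^{**}=\Omega_X^{[1]}$, and, comparing on $U$ where $\Omega_U^1$ is locally free (so $\bigwedge^2 T_U=(\bigwedge^2\Omega_U^1)^{*}$), one gets $\Omega_L^{[2]}=(\bigwedge^2 L)^{*}=\Omega_X^{[2]}$ as well, both sides being reflexive and agreeing on the big open $U$. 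Under the natural isomorphism $\Hom_{\cO_X}(T_{X/k}\otimes_{\cO_X}\cE,\cE)\simeq\Hom_{\cO_X}(\cE,\cE\hat\otimes\Omega_X^{[1]})$ the map $\theta$ of the statement is, by construction, the one corresponding to $\bar\theta$ followed by the reflexivization $\cE\otimes\Omega_X^{[1]}\to\cE\hat\otimes\Omega_X^{[1]}$; call this composite $\bar\theta'$. By Lemma~\ref{comparison-of-dual-definitions} it then suffices to show that
$$
\cE\xrightarrow{\bar\theta'}\cE\hat\otimes\Omega_X^{[1]}\xrightarrow{\bar\theta'\hat\otimes\id}\cE\hat\otimes\Omega_X^{[1]}\hat\otimes\Omega_X^{[1]}\xrightarrow{\id\hat\otimes\wedge}\cE\hat\otimes\Omega_X^{[2]}
$$
is the zero map.

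The key observation is that this composite is a global section of $\cHom_{\cO_X}(\cE,\cE\hat\otimes\Omega_X^{[2]})$, which is reflexive by \cite[Lemma 0AY4]{SP} (a sheaf of homomorphisms from a coherent sheaf into a reflexive one is reflexive), hence torsion free; so it vanishes as soon as it vanishes after restriction to the dense open subset $U$. Now reflexive hull commutes with restriction to open subsets, and over $U$ the sheaf $\Omega_U^1$ is locally free, so $\bar\theta'|_U=\bar\theta|_U$, the sheaves $\cE\hat\otimes\Omega_X^{[i]}$ restrict to $\cE|_U\otimes\Omega_U^{i}$, and $\id\hat\otimes\wedge$ restricts to $\pm(\id\otimes[\wedge])$ (the sign being the one flagged in the remark after Lemma~\ref{comparison-of-dual-definitions}). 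Hence the restriction to $U$ of the composite above equals, up to sign, the restriction to $U$ of the composite $\cE\to\cE\otimes\Omega_X^{[1]}\to\cE\otimes\Omega_X^{[1]}\otimes\Omega_X^{[1]}\to\cE\otimes\Omega_X^{[2]}$ occurring in the hypothesis, which vanishes on all of $X$ by assumption. Therefore our composite vanishes on $U$, hence on $X$, and $(\cE,\theta)$ is a $T_{X/k}$-Higgs sheaf, i.e.\ a Higgs sheaf.

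Equivalently, one may phrase this with equivalences of categories: $\bar\theta|_U$ makes $\cE|_U$ into a $T_U$-Higgs sheaf by Lemma~\ref{comparison-of-dual-definitions} (everything over $U$ being locally free), and then one pushes it forward along the equivalence $\Hig^{ref}_{L_U}(\cO_U)\simeq\Hig^{ref}_L(\cO_X)$ of the corollary following Lemma~\ref{comparison-of-dual-definitions}, checking that $j_*(\theta|_U)$ agrees with the $\theta$ of the statement because both restrict to $\bar\theta|_U$ on $U$ and $\cHom_{\cO_X}(\cE,\cE\hat\otimes\Omega_X^{[1]})$ is reflexive. I do not expect any genuine difficulty here; the one thing that needs care is the bookkeeping of the identifications $\Omega_L^{[i]}=\Omega_X^{[i]}$ and the compatibility of restriction to the big open $U$ (equivalently, of $j_*$) with the adjunction isomorphism and with passing to reflexive hulls.
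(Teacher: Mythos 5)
Your argument is correct and is essentially the paper's own proof: the paper likewise restricts to a big open subset of the regular locus (there taken where $\cE$ is in addition locally free, though as you note reflexive\,$\otimes$\,locally free already suffices), observes that the hypothesis forces the vanishing there, and recovers the condition of Lemma~\ref{comparison-of-dual-definitions} on all of $X$ by pushing forward along $j$ --- which is exactly your alternative phrasing, while your primary phrasing via torsion-freeness of the reflexive sheaf $\cHom_{\cO_X}(\cE,\cE\hat{\otimes}\Omega_X^{[2]})$ is just another bookkeeping of the same extension principle.
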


\begin{proof}
Let $U\subset X$ be the maximal open subset of the regular locus of $X$ on which $\cE$ is locally free. Note that this open subset is big. By assumption the composition
$$\xymatrixcolsep{3pc}\xymatrix{
	\cE\ar[r]^-{\bar\theta}& \cE{\otimes} \Omega_X^{[1]}\ar[r]^-{\bar\theta{\otimes }\id}&
	\cE{\otimes} \Omega_X^{[1]}{\otimes} \Omega_X^{[1]}\ar[r]^-{\id {\otimes}[\wedge]}&
	\cE{\otimes} \Omega_X^{[2]}\\
}
$$
vanishes on $U$. Taking the pushforward of the restriction of this sequence under the open embedding
$j: U\hookrightarrow X$ gives the sequence from the previous lemma.
\end{proof}

\medskip

\begin{Remark}
Note that not all reflexive Higgs sheaves in our sense come from reflexive Higgs sheaves as defined in \cite{GKPT2}. More precisely, if $(\cE, \theta)$ is a reflexive  Higgs sheaf then we get the corresponding map $\bar\theta: \cE\to \cE\hat{\otimes} \Omega_X^{[1]}$. However, this map does not need to factor through  $\cE\to \cE{\otimes} \Omega_X^{[1]}$. Even if the above $\bar\theta$ factors through
$\cE\to \cE{\otimes} \Omega_X^{[1]}$ then the composition 
$$\xymatrixcolsep{3pc}\xymatrix{
	\cE\ar[r]^-{\bar\theta}& \cE{\otimes} \Omega_X^{[1]}\ar[r]^-{\bar\theta{\otimes }\id}&
	\cE{\otimes} \Omega_X^{[1]}{\otimes} \Omega_X^{[1]}\ar[r]^-{\id {\otimes}[\wedge]}&
	\cE{\otimes} \Omega_X^{[2]}\\
}
$$
vanishes only after composing with the reflexivization $\cE{\otimes} \Omega_X^{[2]}\to \cE \hat{\otimes} \Omega_X^{[2]}$. In this case the induced map $\cE\to {\mathrm{Torsion}}\,(\cE{\otimes} \Omega_X^{[2]})$
can be non-zero, so even in this case we do not get  a Higgs sheaf in the sense of 
\cite[Definition 5.1]{GKPT2}.
\end{Remark}

\subsection{Pullback of generalized Higgs sheaves}\label{pullback-general}

If $f:X\to Y$ is a morphism of schemes and $L$ is a quasi-coherent $\cO_Y$-module then we can easily define the pullback of $L$-Higgs sheaves. Namely, if  $(\cE , \theta : L\otimes _{\cO_Y} \cE\to \cE)$ is an $L$-Higgs sheaf then it is easy to see that
 $$(f^*\cE , f^*\theta : f^*L\otimes _{\cO_X} f^*\cE= f^*(L\otimes _{\cO_Y} \cE)\to f^* \cE)$$
is an $f^*L$-Higgs sheaf (for example one can check condition 1 from Lemma \ref{equivalent-Higgs}). This defines the pullback functor on the corresponding categories of generalized Higgs sheaves, which is functorial with respect to morphisms between schemes.

\subsection{Pullback of Higgs sheaves in characteristic zero}\label{pullback-char-0}

Let $(X, D)$ be a klt pair in the characteristic zero case. Then \cite[Theorems 1.3 and 5.2]{Ke} constructs pullback functor for reflexive differentials on klt pairs that is compatible with the usual pullback of K\"ahler differentials. More precisely, if $f: Y\to X$ is a morphism from a normal variety $Y$ then we get an $\cO_Y$-linear map $d_{ref} f: f^*\Omega_X^{[1]}\to \Omega_Y^{[1]}$. This gives rise to the dual map $T_{Y/k}\to ( f^*\Omega_X^{[1]})^*$. 

If $(\cE,\theta)$ is a Higgs sheaf then the above construction gives a structure of 
$f^{*}T_{X/k}$-Higgs sheaf on $f^*\cE$.  By Corollary \ref{passing-to-reflexivization} this induces a $f^{[*]}T_{X/k}$-Higgs sheaf structure  on $f^*\cE$. Unfortunately, the canonical map $f^{[*]}T_{X/k}\to ( f^*\Omega_X^{[1]})^*$ is not an isomorphism in general and we do not have any canonical map  $T_Y\to f^{[*]}T_{X/k}$. So we cannot pullback general Higgs sheaves on $X$ to Higgs sheaves on $Y$.
However, if  $(\cE,\theta)$  is a  Higgs bundle then we can define its pullback by taking
the composition
$$f^{*}\cE=f^{[*]}\cE\to f^{[*]}(\cE\hat \otimes \Omega_X)=  f^{*}\cE\otimes f^{[*]}\Omega_X\to f^{*}\cE\hat\otimes \Omega_Y^{[1]}.$$
This construction should be compared to \cite[5.3]{GKPT2} and the last sentence in \cite[5.2]{GKPT2}.

\begin{Remark}
 $T_{X/k}$ has a canonical Lie algebroid structure with the standard Lie bracket and identity anchor map.
A \emph{module with an integrable connection} is a $T_{X/k}$-module for the above Lie algebroid structure.
Note that one cannot define reflexive pullback for reflexive modules with an integrable connection even if $Y$ is smooth. The problem is that the pullback would give a reflexive module with an integrable connection.
Such modules are locally free and have vanishing Chern classes. However, one can show explicit examples where the reflexive pullback of a reflexive sheaf underlying a module with an integrable connection does not have vanishing Chern classes. 
\end{Remark}

\subsection{Reflexive pullback for Higgs sheaves under finite morphisms}

Let $f: X\to Y$ be a finite dominant morphism of integral normal schemes, locally of finite type over $k$.

\medskip

\subsubsection{Pullback in the separable case}

Lemma \ref{pull-back-of-dual}
implies that $f^{[*]}T_{Y/k}=(f^*\Omega_{Y/k})^*$. So
we have a canonical map $T_{X/k}\to f^{[*]}T_{Y/k}$ dual to $df: f^*\Omega_{Y/k}\to \Omega_{X/k}.$
Since this map is non-interesting for purely inseparable morphisms, from now on 
we assume that the induced field extension $ k(Y)\hookrightarrow k(X)$ is separable.
In this case the map $T_{X/k}\to f^{[*]}T_{Y/k}$ is injective and it uniquely extends to a homomorphism of sheaves of $\cO_X$-algebras   $\Sym^{\bullet}T_{X/k}\to \Sym^{\bullet} f^{[*]}T_{Y/k}$.

If $(\cE, \theta)$ be a Higgs sheaf on $Y$ then 
$(f^*\cE , f^*\theta)$ is an $f^*T_{X/k}$-Higgs sheaf. Assume that $\cE$ is reflexive.
Taking reflexivization we get  an $f^*T_{X/k}$-Higgs module structure on $f^{[*]}\cE$. 
By Corollary \ref{passing-to-reflexivization} we also have an induced  $f^{[*]}T_{X/k}$-Higgs module
structure on $f^{[*]}\cE$. Then the homomorphism $\Sym^{\bullet}T_{X/k}\to \Sym^{\bullet} f^{[*]}T_{Y/k}$ provides $f^{[*]}\cE$ with a canonical Higgs module structure. This Higgs module will be denoted by 
$f^{[*]}(\cE, \theta)=(f^{[*]}\cE , f^{[*]}\theta)$.

\medskip

One can also describe the above construction explicitly in the following way (this will be useful in the next construction). Namely, let $(\cE, \theta: \cE\to \cE\hat {\otimes} \Omega_Y^{[1]})$ be a reflexive Higgs sheaf on $Y$ (see Lemma \ref{comparison-of-dual-definitions}).
Then there exists a big open subset $V\subset Y$ such that $(\cE, \theta)$ is (log) smooth on $V$.
Since $f$ is finite, $U=f^{-1}(V)$ is a big open subset of $X$. Let $j: U\hookrightarrow X$ be the corresponding open embedding. Then we can define the map
$$(f^*\cE)_U=f^*(\cE_V)\mathop{\longrightarrow}^{f^*\theta}f^*(\cE_V\otimes _{\cO_V} \Omega_V)= f^*(\cE_V)\otimes _{\cO_U} f^*\Omega_V\mathop{\longrightarrow}^{\id \otimes df} (f^*\cE)_U\otimes _{\cO_U} \Omega_U.$$
This gives the map $T_U\otimes (f^*\cE)_U\to (f^*\cE)_U$, which leads to
$$T_X\otimes _{\cO_X} f^{[*]}\cE=j_*(T_U)\otimes _{\cO_X}j_*((f^*\cE)_U)\to j_*(T_U\otimes_{\cO_U} (f^*\cE)_U)\to j_*((f^*\cE)_U)=f^{[*]}\cE.$$
So we get the induced map $f^{[*]}\theta : f^{[*]}\cE\to f^{[*]}\cE\hat{\otimes} _{\cO_{X}}\Omega_X ^{[1]}$.
One can easily check that $ (f^{[*]}\cE, f^{[*]}\theta)$ is  a reflexive Higgs sheaf on $X$.

\subsubsection{Pullback in the inseparable case}

Unfortunately, the above construction is rather useless in case $f$ is purely inseparable as then $df=0$
and $f^{[*]}\theta$ always vanishes. However, if $f=F_X$  and the big open subset $U$ is $F$-liftable
(see Definition \ref{definition-F-liftable}) then we have an induced map $\xi: F^*_U\Omega _U\to \Omega_U $. Now if in the above construction we replace $df$ by $\xi$, then for any reflexive Higgs sheaf  $(\cE, \theta)$ on $X$
we can define 
$$F_X^{[*]}\theta : F_X^{[*]}\cE\to F_X^{[*]}\cE\hat \otimes _{\cO_{X}}\Omega_X ^{[1]}.$$
This construction is used in the proof of Lemma \ref{classes-Cartier-transform}.
Note that this map depends on the choice of the lifting.

\medskip

Similar constructions as above work also for log pairs.

\subsection{Semistability for $L$-modules}

Let $X$ be a normal projective variety of dimension $n$ defined over an algebraically closed field $k$.
Let $(L_1,...,L_{n-1})$ be a collection of nef line bundles on $X$ and let $L$ be a Lie algebroid on $X/k$ such that $L$ is coherent as an $\cO_X$-module.

\begin{Definition}
	Let $(\cE, \nabla)$ be an $L$-module such that $\cE$ is coherent and torsion free as an $\cO_X$-module. We say that $(\cE, \nabla)$ is \emph{slope  $(L_1,...,L_{n-1})$-semistable} 
	if for any $L$-submodule  $(\cF, \nabla _{\cF})\subset (\cE, \nabla)$ we have
	$\mu (\cF)\le \mu (\cE)$. We say that  $(\cE, \nabla)$ is \emph{slope  $(L_1,...,L_{n-1})$-stable} 
	if for any $L$-submodule  $(\cF, \nabla _{\cF})\subset (\cE, \nabla)$ such that $1\le \rk \cF<\rk\cE$ we have	$\mu (\cF)< \mu (\cE)$. 
\end{Definition}

 In further part of this subsection we consider slope semistability with respect to a fixed collection of nef line bundles and we omit it from the notation.
 
If $(\cE, \theta: L\otimes _{\cO_X}\cE \to \cE )$ is an $L$-Higgs sheaf then it is slope semistable
if and only if for every $\cO_X$-submodule $\cF\subset \cE$ such that the image of 
$L\otimes  _{\cO_X}\cF \to L\otimes _{\cO_X}\cE \to \cE$ is contained in $\cF$, we have 
$\mu (\cF)\le \mu (\cE)$ (and similarly for slope stability).
This should be compared with \cite[Definition 4.13]{GKPT2} that considers semistability using 
so called generically $\theta$-invariant subsheaves. It is easy to see that for reflexive Higgs sheaves in the sense of \cite{GKPT2} the obtained notions of semistability coincide.

Let us also remark that if $(\cE, \theta)$ is a system of $L$-Hodge sheaves then we can define notion of semistability using only subsystems of $L$-Hodge sheaves. It is easy to see that this is equivalent to semistability of $(\cE, \theta)$ as an $L$-Higgs sheaf. We will use this fact in Section \ref{Section:Miyaoka-Yau}.

\medskip

We have the following general lemma allowing to bound instability of slope semistable $L$-modules. It is a weak form of \cite[Lemma 5]{La3} but it works also for nef polarizations

\begin{Lemma}\label{L-module-ss-bound}
	Let $A$ be  an ample Cartier divisor $A$ such that $L(A)$ is globally generated.
Let $(\cE, \nabla)$ be an $L$-module such that $\cE$ is coherent and torsion free of rank $r$ as an $\cO_X$-module. If $(\cE, \nabla)$ is slope semistable then
$$\mu _{\max} (\cE)-\mu _{\min} (\cE) \le (r-1) AL_1...L_{n-1} .$$
\end{Lemma}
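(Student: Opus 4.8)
The plan is to reduce the bound on $\mu_{\max}(\cE) - \mu_{\min}(\cE)$ to a statement about the Harder--Narasimhan filtration of $\cE$ as an $\cO_X$-module, exploiting that the Higgs/algebroid field $\nabla$ cannot move ``too far'' between HN pieces because $L(A)$ is globally generated. First I would take the Harder--Narasimhan filtration $0 = \cE_0 \subset \cE_1 \subset \dots \subset \cE_s = \cE$ of $\cE$ with respect to the fixed collection $(L_1,\dots,L_{n-1})$, with semistable quotients $\cE^i = \cE_i/\cE_{i-1}$ of slopes $\mu_1 > \mu_2 > \dots > \mu_s$, so that $\mu_{\max}(\cE) = \mu_1$ and $\mu_{\min}(\cE) = \mu_s$. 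Writing $r_i = \rk \cE^i$, the key is to control $\mu_i - \mu_{i+1}$ for each $i$, and then sum: $\mu_{\max} - \mu_{\min} = \sum_{i=1}^{s-1}(\mu_i - \mu_{i+1})$.

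The heart of the argument is the following observation: for each $i$, the composite map
$$
L \otimes_{\cO_X} \cE_i \xrightarrow{\ \nabla\ } \End_k(\cE)|_{\text{appropriate sense}} \ \longrightarrow\ \cE/\cE_i \hat\otimes \Omega_L^{[1]}
$$
(more precisely, the $\cO_X$-linear map $\cE_i \to (\cE/\cE_i)\hat\otimes\Omega_L^{[1]}$ induced by $\theta$ composed with the quotient, using the Leibniz rule to see it is $\cO_X$-linear) is, if nonzero, a nonzero map out of a sheaf whose HN slopes are $\ge \mu_i$ into a sheaf twisted by $\Omega_L^{[1]} = L^*$. Since $L(A)$ is globally generated, $L^* \hookrightarrow \cO_X(A)^{\oplus N}$ for some $N$, hence $\mu_{\min}((\cE/\cE_i)\hat\otimes \Omega_L^{[1]}) \ge \mu_{\min}(\cE/\cE_i) - AL_1\dots L_{n-1} = \mu_{i+1} - AL_1\dots L_{n-1}$. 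A nonzero $\cO_X$-module map from a sheaf with minimal slope $\ge \mu_i$ to one with maximal slope... here one has to be slightly careful: a nonzero map $\cF \to \cG$ forces $\mu_{\min}(\cF) \le \mu_{\max}(\cG)$. Applying this with $\cF = \cE_i$ (so $\mu_{\min}(\cE_i) = \mu_i$) and $\cG = (\cE/\cE_i)\hat\otimes\Omega_L^{[1]}$ (so $\mu_{\max}(\cG) \le \mu_{\max}(\cE/\cE_i) + AL_1\dots L_{n-1} = \mu_{i+1} + AL_1\dots L_{n-1}$) gives $\mu_i \le \mu_{i+1} + AL_1\dots L_{n-1}$, i.e. $\mu_i - \mu_{i+1} \le AL_1\dots L_{n-1}$. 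If the composite is zero, then $\cE_i$ is an $L$-submodule, hence $\mu_i = \mu(\cE_i) \le \mu(\cE)$ (slope semistability), which is even better for the telescoping but must be combined consistently; in fact the cleanest route is: either $\cE_i$ is $L$-invariant for all $i$ (then $\cE$ is an iterated extension of $L$-modules and semistability forces $s=1$, so the bound is trivial), or one argues piece by piece as above.

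The telescoping then gives $\mu_{\max}(\cE) - \mu_{\min}(\cE) = \sum_{i=1}^{s-1}(\mu_i - \mu_{i+1}) \le (s-1)\,AL_1\dots L_{n-1} \le (r-1)\,AL_1\dots L_{n-1}$, using $s \le r$. The main obstacle I anticipate is the bookkeeping around which HN steps are $L$-invariant versus which carry a genuine nonzero piece of the field: one needs the dichotomy to be organized so that every consecutive gap $\mu_i - \mu_{i+1}$ is bounded by $AL_1\dots L_{n-1}$, which requires noting that if $\cE_i$ happens to be $L$-invariant then by semistability of $(\cE,\nabla)$ one has $\mu_i = \mu(\cE_i) \le \mu(\cE) = \mu(\cE/\cE_i)$ combined with $\mu(\cE_i) \ge \mu_{\min}(\cE_i) = \mu_i$ and $\mu(\cE/\cE_i) \le \mu_{\max}(\cE/\cE_i) = \mu_{i+1}$, forcing $\mu_i \le \mu_{i+1}$, contradicting $\mu_i > \mu_{i+1}$ unless $s=1$. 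Hence for $s \ge 2$ no proper $\cE_i$ is $L$-invariant, the composite is nonzero for every $i$, and the slope estimate applies uniformly. This matches the structure of \cite[Lemma 5]{La3} but avoids any appeal to positive characteristic or to $W_2(k)$-liftings, working for nef (not just ample) polarizations as claimed.
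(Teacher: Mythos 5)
Your proposal is correct and follows essentially the same route as the paper's proof: take the Harder--Narasimhan filtration, observe that for $i<s$ the induced $\cO_X$-linear maps $L\otimes_{\cO_X}\cE_i\to\cE/\cE_i$ must be non-zero (an $L$-invariant $\cE_i$ would violate slope semistability of $(\cE,\nabla)$), use the surjection $\cO_X^{\oplus N}(-A)\to L$ coming from global generation of $L(A)$ (equivalently your $L^*\hookrightarrow\cO_X(A)^{\oplus N}$) to get $\mu_i-\mu_{i+1}\le AL_1\cdots L_{n-1}$, and telescope. The only blemish is that the equalities $\mu_i=\mu(\cE_i)$ and $\mu(\cE)=\mu(\cE/\cE_i)$ in your dichotomy discussion are false in general; the contradiction you want follows instead from the correct inequalities you also list, since $\mu(\cE_i)\le\mu(\cE)$ forces $\mu(\cE_i)\le\mu(\cE/\cE_i)$ while $\mu(\cE_i)\ge\mu_i>\mu_{i+1}\ge\mu(\cE/\cE_i)$.
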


\begin{proof}
	If $\cF \subset \cE$ is an $\cO_X$-submodule then an $L$-module structure on  $(\cE, \nabla)$
	induces an $\cO_X$-linear map $L\otimes _{\cO_X}\cF\to \cE/\cF$. If this map vanishes then 
	$\cF$ has a natural structure of an $L$-submodule of  $(\cE, \nabla)$.
	
	Let $\cE_0=0\subset \cE_1\subset ...\subset \cE_s=\cE$ be the  Harder--Narasimhan filtration of $\cE$ and let us set $\cE ^{i}:=\cE_i/\cE_{i-1}$ for $i=1,...,s$. Then we have non-zero $\cO_X$-linear maps
	$$L\otimes _{\cO_X}\cE_i\to \cE /\cE_i$$
for $i=1,...,s-1$.  Since for some $N>0$ we have a surjective map $\cO _X ^{\oplus N}(-A)\to L$
 there exists  for every $i=1,...,s$ a non-zero map $\cE_i(-A)\to \cE /\cE_i.$
	So we have $\mu (\cE ^{i})- AL_1...L_{n-1}\le \mu (\cE ^{i+1})$.
	Summing these inequalities we get
	$$\mu _{\max} (\cE)-\mu _{\min} (\cE) \le (s-1) AL_1...L_{n-1}\le (r-1) AL_1...L_{n-1}. $$
\end{proof}

\subsection{Strong restriction theorem  for generalized Higgs sheaves}

We keep the notation from the previous subsection but we assume that $L$ has trivial Lie algebroid structure.  The same proofs as that of Theorem \ref{strong-restriction-stable} and Corollary \ref{Bogomolov-restriction} give the following theorem
(cf. \cite[Theorem 9]{La3} in the smooth case). See Subsection \ref{pullback-general} for the definition of pullback used in the statement. 
 
\begin{Theorem}	\label{restriction-for-operators}
Let  $(\cE , \theta )$ be a reflexive $L$-Higgs sheaf of rank $r\ge 2$. Let us assume that  
$d=L_1^2L_2...L_{n-1}>0$ and let  $m$ be an integer such that
	$$m>\left\lfloor \frac{r-1}{ r}\int_X\Delta (\cE)L_2\dots L_{n-1} +\frac{1}{dr(r-1)}
	+\frac{(r-1)\beta _r}{dr}\right\rfloor .$$
Let $H\in |L_1^{\otimes m}|$  be an irreducible normal divisor and let $i: H\hookrightarrow X$ denote the corresponding embedding.
	\begin{enumerate}
		\item If  $(\cE , \theta )$ is slope $(L_1,...,L_{n-1})$-stable 	then 
		the  $i^*L$-Higgs sheaf $(i^*\cE , i^*\theta )$ is  slope $(i^*L_2,\dots ,i^*L_{n-1})$-stable.
		\item If  $(\cE , \theta )$ is slope $(L_1,...,L_{n-1})$-semistable 
	and restrictions of all quotients of a Jordan--H\"older filtration of $(\cE , \theta )$ to $H$ are torsion free then  	the  $i^*L$-Higgs sheaf $(i^*\cE , i^*\theta )$ is  slope $(i^*L_2,\dots ,i^*L_{n-1})$-semistable. 
	\end{enumerate}
\end{Theorem}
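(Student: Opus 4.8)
The plan is to run the proof of Theorem~\ref{restriction-for-operators} in exact parallel to that of Theorem~\ref{strong-restriction-stable} and Corollary~\ref{Bogomolov-restriction}, the only extra ingredient being that the filtrations one must test are $L$-Higgs subsheaves rather than arbitrary subsheaves. Let me describe the two cases.

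\textbf{Proof of (1) (stable case).} Suppose $(i^*\cE,i^*\theta)$ were not slope $(i^*L_2,\dots,i^*L_{n-1})$-stable. Then there is a saturated $i^*L$-Higgs subsheaf $(\cS,\theta_\cS)\subset (i^*\cE,i^*\theta)$ with $1\le\rho:=\rk\cS<r$ and $\mu(\cS)\ge\mu(i^*\cE)$. As in the proof of Theorem~\ref{strong-restriction-stable}, set $\cT:=(i^*\cE)/\cS$ (torsion free), let $\cE'$ be the kernel of $\cE\to i_*(i^*\cE)\to i_*\cT$, so that one gets the two short exact sequences $0\to\cE'\to\cE\to i_*\cT\to0$ and $0\to\cE(-H)\to\cE'\to i_*\cS\to0$. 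The key additional point is that $\cE'$ carries a natural $L$-Higgs structure: the $L$-invariance of $\cS$ in $i^*\cE$ means precisely that the composite $L\otimes\cE'\to L\otimes\cE\xrightarrow{\theta}\cE\to i_*\cT$ vanishes on a big open subset of $X$ (where $i^*$ and restriction behave well), hence vanishes by reflexivity, so $\theta$ restricts to $\theta':L\otimes\cE'\to\cE'$; condition~1 of Lemma~\ref{equivalent-Higgs} is inherited. Then one applies Lemma~\ref{Delta-computation} to compute $\int_X\Delta(\cE')L_2\dots L_{n-1}$ exactly as before, estimates $\mu_{\max}(\cE')-\mu(\cE')\le\frac{r-\rho}{r}md-\frac{1}{r(r-1)}$ and $\mu(\cE')-\mu_{\min}(\cE')\le\frac{\rho}{r}md-\frac{1}{r(r-1)}$ using stability of $(\cE,\theta)$ (the Harder--Narasimhan filtration of $\cE'$ as an $\cO_X$-module, combined with $\cE(-H)\subset\cE'\subset\cE$, gives these bounds, which do not see the Higgs structure at all), and feeds everything into Theorem~\ref{Bogomolov-unstable-inequality}. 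The resulting inequality contradicts the hypothesis $m>\lfloor\frac{r-1}{r}\int_X\Delta(\cE)L_2\dots L_{n-1}+\frac{1}{dr(r-1)}+\frac{(r-1)\beta_r}{dr}\rfloor$, exactly as in Theorem~\ref{strong-restriction-stable}.

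\textbf{Proof of (2) (semistable case).} Here one mimics Corollary~\ref{Bogomolov-restriction}. Take a Jordan--Hölder filtration $0=\cE_0\subset\cE_1\subset\dots\subset\cE_s=\cE$ of $(\cE,\theta)$ in the category of $L$-Higgs sheaves; each $\cE_j$ is saturated hence reflexive, and each $\cF_j:=(\cE_j/\cE_{j-1})^{**}$ is a slope-stable $L$-Higgs sheaf of slope $\mu(\cE)$. By Lemma~\ref{needed-for-boundedness}(2) (applicable since all $\mu_j=\mu(\cE)$) one gets $\frac{1}{r}\int_X\Delta(\cE)L_2\dots L_{n-1}\ge\sum_j\frac{1}{r_j}\int_X\Delta(\cF_j)L_2\dots L_{n-1}$, so for each $j$ with $r_j\ge2$ the bound $m>\lfloor\frac{r_j-1}{r_j}\int_X\Delta(\cF_j)L_2\dots L_{n-1}+\frac{1}{dr_j(r_j-1)}+\frac{(r_j-1)\beta_{r_j}}{dr_j}\rfloor$ holds (with the same remark as in the proof of Corollary~\ref{Bogomolov-restriction} about the harmless term $\frac{1}{dr_j(r_j-1)}$: the difference is absorbed unless both sides are zero, in which case its floor is zero). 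Applying part~(1) to each $\cF_j$ shows $(\cF_j|_H,\theta|_H)$ is a stable $i^*L$-Higgs sheaf of slope $\mu(\cE|_H)$; since the hypothesis guarantees all $(\cE_j/\cE_{j-1})|_H$ are torsion free, the sequences $0\to\cE_{j-1}|_H\to\cE_j|_H\to\cF_j|_H$ are exact, and an induction on $j$ (using that an extension of semistable $i^*L$-Higgs sheaves of equal slope is semistable) shows $(\cE|_H,i^*\theta)$ is slope $(i^*L_2,\dots,i^*L_{n-1})$-semistable.

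\textbf{Main obstacle.} The only genuinely new point compared to Theorem~\ref{strong-restriction-stable} is verifying that the subsheaves $\cE'$, $\cE_j$ constructed on $X$ really inherit $L$-Higgs structures and that restriction to $H$ commutes with these structures; this is where one uses Lemma~\ref{equivalent-Higgs}, reflexivity, and the fact (Subsection~\ref{pullback-general}) that $i^*$ of an $L$-Higgs sheaf is an $i^*L$-Higgs sheaf, checking the relevant maps vanish on a big open set and extending by reflexivity. All the numerical estimates are formally identical to the torsion-free case, since the Harder--Narasimhan bounds used only involve $\cE$ as an $\cO_X$-module. I expect this bookkeeping to be routine, so the proof can reasonably be compressed to ``the same proofs as Theorem~\ref{strong-restriction-stable} and Corollary~\ref{Bogomolov-restriction} apply, replacing ordinary subsheaves by $L$-Higgs subsheaves throughout,'' which is essentially what the paper does.
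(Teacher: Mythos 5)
The construction of the $\theta$-invariant elementary modification $\cE'$ and the reduction of part (2) to part (1) are essentially fine (for the invariance of $\cE'$ it is cleaner to note that the image of $i^*\cE'\to i^*\cE$ equals $\cS$, which is $i^*\theta$-invariant, than to argue ``by reflexivity'' with the non-reflexive target $i_*\cT$). The genuine gap is the numerical step of part (1). You claim the estimates $\mu_{\max}(\cE')-\mu(\cE')\le\frac{r-\rho}{r}md-\frac{1}{r(r-1)}$ and $\mu(\cE')-\mu_{\min}(\cE')\le\frac{\rho}{r}md-\frac{1}{r(r-1)}$ follow from stability of $(\cE,\theta)$ and ``do not see the Higgs structure at all''. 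In the proof of Theorem \ref{strong-restriction-stable} these bounds use slope stability of $\cE$ \emph{as a sheaf}: the maximal destabilizing subsheaf of $\cE'$ in its ordinary Harder--Narasimhan filtration is an arbitrary $\cO_X$-submodule of $\cE$, in general not $\theta$-invariant, so $L$-Higgs stability of $(\cE,\theta)$ gives no inequality $\mu_{\max}(\cE')\le\mu(\cE)-\frac{1}{r(r-1)}$, and similarly for $\mu_{\min}$. An $L$-Higgs stable sheaf can be arbitrarily unstable as a sheaf: on $X=\PP^2$ with $L_1=\cO_{\PP^2}(1)$ take $\cE=\cO_{\PP^2}(-a)\oplus\cO_{\PP^2}(a)$, $L=\cO_{\PP^2}(-2a-e)$ and $\theta$ given on $L\otimes\cO_{\PP^2}(a)\to\cO_{\PP^2}(-a)$ by a section $s\in H^0(\cO_{\PP^2}(e))$ and by zero on $L\otimes\cO_{\PP^2}(-a)$; this pair is $L$-Higgs stable, yet $\mu_{\max}(\cE)-\mu(\cE)=a$ is unbounded, $\int_X\Delta(\cE)=-4a^2$ and $\beta_2=0$. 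Hence feeding the sheaf-theoretic instability of $\cE'$ into Theorem \ref{Bogomolov-unstable-inequality} does not produce the desired contradiction.

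To repair this one must measure the instability of $\cE'$ by $\theta$-invariant subsheaves and quotients, where Higgs stability of $(\cE,\theta)$ does give your displayed bounds; but then one needs a version of Theorem \ref{Bogomolov-unstable-inequality} in which $\mu_{\max}$ and $\mu_{\min}$ are the extreme slopes of the Higgs Harder--Narasimhan filtration. No such statement is available here: it would amount to an analogue of Theorem \ref{Bog2} for $L$-Higgs semistable sheaves, and the example above (Higgs stable, $\int_X\Delta=-4a^2$, $\beta_2=0$) shows that this fails for a general operator sheaf $L$ with only the $\beta_r$ correction. The alternative is to bound the ordinary instability of the Higgs stable sheaf through the operators via Lemma \ref{L-module-ss-bound}, but that necessarily introduces into the bound on $m$ a term depending on $L$ (through $A$ with $L(A)$ globally generated) together with some condition on $H$: indeed, restricting the example to the smooth curve $H=Z(s)\in|\cO_{\PP^2}(e)|$ kills $i^*\theta$ and the restriction is Higgs unstable even though $e$ exceeds the stated bound. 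This is exactly the role played by the extra term $2(r-1)m_0^2$ and the goodness assumption in Theorem \ref{restriction-for-Higgs}; your argument produces no such term, so part (1) does not go through as written, and closing this gap (a Higgs analogue of the unstable Bogomolov inequality, or an $L$-dependent correction to the bound and a genericity assumption on $H$) is the real content of the statement.
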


Note that the above theorem should be thought of as a restriction theorem for sheaves with operators and not a genuine restriction theorem for Higgs sheaves (cf. Theorem \ref{restriction-for-Higgs}).

\section{Bogomolov's inequality for logarithmic Higgs sheaves on singular varieties}

This section contains proofs of Theorems  \ref{Main5} and \ref{Main3}. The main idea is to use Ogus--Vologodsky's correspondence and suitably generalized Higgs--de Rham sequences.

\subsection{Ogus--Vologodsky's correspondence on normal varieties}

Let $X$ be a normal variety defined over an algebraically closed field $k$ of positive characteristic $p$.
Let $D$ be an effective reduced Weil divisor on $X$. 

\medskip

Let us consider a (reflexive) Lie algebroid $L$, whose underlying $\cO_X$-module is $T_X(\log D)$ with the
anchor map $T_X (\log D)\hookrightarrow T_{X/k}$ and the Lie bracket induced from $T_{X/k}$.
An $L$-module for this Lie algebroid is called \emph{an $\cO_X$-module with an integrable logarithmic connection } on $(X,D)$. In fact, $L$ carries a restricted Lie algebroid structure 
(see \cite[Section 4]{La2}) given by raising logarithmic derivations to the $p$-th power.
This allows us to talk about \emph{logarithmic $p$-curvature $F_X^*L\to \End _{\cO_X} \cE$}
of an $\cO_X$-module with an integrable logarithmic connection.

If $(\cE , \nabla : T_{X} (\log D)\to \End _k \cE)$ is a reflexive $\cO_X$-module with an integrable logarithmic connection then we can also define its residues in the following way.
For every open subset $U\subset X$ an element $\delta \in  (T_{X} (\log D) )(U)$ can be considered as a logarithmic $k$-derivation of $\cO_U$. Let $J$ be the ideal subsheaf of $\cO_U$ generated by the image of $\delta$. Then 
$$ \cE |_U\mathop{\longrightarrow}^{\nabla (\delta)} \cE |_U\to (\cE |_U)/( J\cE |_U)$$  
induces an endomorphism $\rho _{\delta} $ of $(\cE |_U)/( J\cE |_U)$ called the residue associated to $\delta$. 
We say that the residues of $(\cE , \nabla)$ are \emph{nilpotent of order $\le p$} if for every $U\subset X$ and $\delta  \in  (T_{X} (\log D) )(U)$ we have $\rho _{\delta}^p=0$.

\medskip

Similarly, one can consider $T_X(\log D)$ with the trivial Lie bracket and zero anchor map. 
Modules over this reflexive Lie algebroid are called \emph{logarithmic Higgs sheaves} on $(X,D)$.
We say that a logarithmic Higgs sheaf $(\cE, \theta : T_X(\log D)\to \End_{\cO_X} \cE)$ has \emph{ a nilpotent Higgs field of level $\le (p-1)$} if for every open subset $U\subset X$ and every 
$\delta  \in  (T_{X} (\log D) )(U)$ we have $\theta (\delta) ^p=0$, where $\theta (\delta)$ is considered as an $\cO_U$-linear endomorphism of $\cE |_U$.
 
\medskip

The following theorem generalizes Ogus--Vologodsky's correspondence to normal varieties:

\begin{Theorem}\label{log-smooth-OV-correspondence}
  Let us assume that there exists a big open subset $U\subset X$ such that the pair
$(U, D_U=D\cap U)$ is log smooth and liftable to $W_2(k)$. Let us fix
 a lifting $(\tilde U, \tilde D_U)$ of $(U, D_U)$. Then there exists a Cartier
  transform $C_{(\tilde U,\tilde D_U)}$, which defines an equivalence of
  categories of reflexive $\cO_X$-modules with an integrable
  logarithmic connection whose logarithmic $p$-curvature is nilpotent
  of level less or equal to $p-1$ and the residues are nilpotent of
  order less than or equal to $p$ on $U$, and reflexive logarithmic Higgs
  $\cO_X$-modules with a nilpotent Higgs field of level less or equal
  to $p-1$.
\end{Theorem}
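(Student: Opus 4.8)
The plan is to reduce the assertion to the log smooth, $W_2(k)$-liftable pair $(U,D_U)$, where it is the logarithmic version of Ogus--Vologodsky's correspondence (due to Schepler), whose construction via the inverse Cartier transform is recalled in the appendix, and then to transport the resulting equivalence from the big open subset $U$ to all of $X$ by means of the equivalences between reflexive objects on $X$ and on $U$ furnished by Lemma \ref{restriction-to-open} and its analogues for modules over reflexive Lie algebroids.

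Concretely, I would set $L=T_X(\log D)$, viewed as a reflexive Lie algebroid as in Subsection \ref{extensions-of-L-modules}. Since $D_U=D\cap U$ and $L$ is reflexive, $L|_U=T_U(\log D_U)$ and $L=j_*(L|_U)$ for the open immersion $j\colon U\hookrightarrow X$. By the $L$-module version of Lemma \ref{restriction-to-open} recorded in Subsection \ref{extensions-of-L-modules}, $j_*$ and $j^*$ define adjoint equivalences between reflexive $\cO_X$-modules with an integrable logarithmic connection on $(X,D)$ and reflexive $\cO_U$-modules with an integrable logarithmic connection on $(U,D_U)$; by the reflexive $L$-Higgs version they give an adjoint equivalence between reflexive logarithmic Higgs $\cO_X$-modules on $(X,D)$ and reflexive logarithmic Higgs $\cO_U$-modules on $(U,D_U)$. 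Under these equivalences the conditions of the theorem match up: nilpotency of the residues and, by hypothesis, nilpotency of the logarithmic $p$-curvature of level $\le p-1$ are conditions on $U$ to begin with, while nilpotency of the Higgs field of level $\le p-1$, a priori imposed over $X$, is already detected on $U$ because $\cE$ reflexive forces $\End_{\cO_X}\cE$ to be torsion free, so that one of its sections vanishes as soon as it vanishes on the big open subset $U$; the same remark applies to the logarithmic $p$-curvature, regarded as a section of $\cHom_{\cO_X}(F_X^*L,\End_{\cO_X}\cE)$, which is torsion free for the same reason.

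It then remains to treat the log smooth pair $(U,D_U)$ with its fixed lifting $(\tilde U,\tilde D_U)$. Here the logarithmic Cartier transform $C_{(\tilde U,\tilde D_U)}$ of Ogus--Vologodsky and Schepler, quasi-inverse to the inverse Cartier transform recalled in the appendix, gives an equivalence between $\cO_U$-modules with an integrable logarithmic connection whose logarithmic $p$-curvature is nilpotent of level $\le p-1$ and whose residues are nilpotent of order $\le p$, and logarithmic Higgs $\cO_U$-modules with a nilpotent Higgs field of level $\le p-1$. One has to observe that this equivalence restricts to the full subcategories of objects with reflexive underlying $\cO_U$-module: since $U$ is smooth, the absolute Frobenius of $U$ is finite and flat, and the (inverse) Cartier transform is assembled from Frobenius push-forwards and pull-backs and from tensor products with locally free sheaves, each of which preserves reflexivity; alternatively one restricts further to the big open locus of $U$ on which the module is locally free, applies the transform there to get a locally free module, and pushes it forward. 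Defining $C_{(\tilde U,\tilde D_U)}$ on $X$ to be the composite of $j^*$, this transform on $U$, and $j_*$ then yields the required equivalence.

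The step I expect to be the main obstacle is the bookkeeping in the second paragraph: one must make precise what ``residues'' and ``nilpotency of the logarithmic $p$-curvature'' mean for a reflexive module with logarithmic connection on the possibly badly singular pair $(X,D)$ --- essentially they are defined, and imposed, only on a big log smooth open subset --- and check that these conditions, together with the Higgs integrability condition $\theta\wedge\theta=0$, are faithfully reflected by restriction to $U$ and extension back by $j_*$. Once the dictionary between objects over $X$ and over $U$ is set up carefully, the remaining verifications --- that $j_*$, $j^*$, and the log Cartier transform on $U$ all preserve reflexivity of the underlying sheaf --- are routine.
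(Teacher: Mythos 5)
Your proposal is correct and follows essentially the same route as the paper: transport the problem to the big log smooth open subset $U$ via the equivalence between reflexive $L$-modules (resp.\ reflexive logarithmic Higgs sheaves) on $X$ and on $U$ from Subsection \ref{extensions-of-L-modules}, apply the logarithmic Ogus--Vologodsky/Schepler correspondence on the liftable pair $(U,D_U)$, and check that the nilpotency conditions are detected on $U$ and preserved under extension by $j_*$. Your extra remarks on reflexivity being preserved by the Cartier transform on the smooth locus and on torsion-freeness detecting vanishing on a big open subset just make explicit points the paper leaves implicit.
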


\begin{proof}
As remarked in Subsection \ref{extensions-of-L-modules}, for any reflexive Lie algebroid $L$ and any big open subset $U\subset X$, we have an equivalence of categories of reflexive $L$-modules on $X$  and reflexive $L_U$-modules on $U$.
So  the results of Ogus and Vologodsky in the usual case (see \cite{OV}) and Schepler in the logarithmic one (see \cite{Sc}; see also \cite[Theorem 2.5]{La3} and \cite[Appendix]{LSYZ}) 
give the above correspondence on $U$. One needs only to check that extension to $X$ preserves the remaining conditions. For Higgs modules it is clear that having a nilpotent Higgs field of level $\le (p-1)$ on $U$ gives the same condition on $X$. Similarly, for modules with logarithmic connections checking nilpotency of the logarithmic $p$-curvature on $U$ implies the one on $X$.
\end{proof}

\medskip

A quasi-inverse to  $C_{(\tilde U,\tilde D_U)}$ is denoted by $C_{(\tilde U,\tilde D_U)}^{[-1]} $  (or simply $C^{[-1]}$) and it is called the \emph{reflexivized inverse Cartier transform}.

\subsection{Strong restriction theorem  for logarithmic Higgs sheaves}

We keep the notation from the previous subsection. 

\begin{Definition}
	Let  $j:H\hookrightarrow X$ be a locally principal closed subscheme of $X$ (i.e., a scheme associated to 
	an effective Cartier divisor). We say that $H$ is \emph{good} for the pair $(X, D)$ if the following conditions are satisfied:
	\begin{enumerate}
		\item $H$ is irreducible and normal, 
		\item  $H$ is not contained in any irreducible component of $D$,
		\item  If $U\subset X$ is the maximal open subset on which $(U, D\cap U)$ is log smooth then
		$H\cap U$ is big in $H$, 
		\item The pair $(H\cap U, D_H\cap U)$, where $D_H=H\cap D$, is log smooth.
	\end{enumerate}
\end{Definition}

\medskip

If $H$ is good for $(X, D)$ then we have a canonical map
$$T_H(\log D_H)\to (j^*T_X(\log D))^{**}$$
obtained by extension of the canonical map $T_{H\cap U}(\log D_H\cap U)\to j^*T_X(\log D)|_{H\cap U}$.
In particular, if $H$ is good for $(X, D)$ then Corollary \ref{passing-to-reflexivization} shows that any
logarithmic Higgs sheaf $(\cE , \theta )$  on $(X,D)$ gives rise to a reflexive logarithmic Higgs sheaf structure 
$$j^{[*]}(\cE , \theta ):= (j^{[*]} \cE, T_H(\log D_H)\otimes _{\cO_H} j^{[*]} \cE \to (j^*T_X(\log D))^{**}\otimes _{\cO_H} j^{[*]} \cE\to j^{[*]} \cE) $$
on $j^{[*]} \cE:= (j^*\cE )^{**}$ over $(H, D_H)$. 

\medskip

\begin{Remark}
	If $L$ is a very ample line bundle then  Bertini's theorem implies that for all $m\ge 1$ a general hypersurface $H\in |L^{\otimes m}|$ is good for $(X,D)$.
\end{Remark}

\medskip

\begin{Theorem}	\label{restriction-for-Higgs}
	Let  $(\cE , \theta )$ be a reflexive logarithmic Higgs sheaf of rank $r\ge 2$ on $(X,D)$. 
	Let us assume that $L_1$ is ample and let $m_0$ be a non-negative integer such that $T_X (\log \, D)\otimes L_1^{\otimes m_0}$ is globally generated. Assume also that  $d=L_1^2L_2...L_{n-1}>0$ and let   $m$ be an integer such that
	$$m>\max \left(\left\lfloor \frac{r-1}{ r}\int_X\Delta (\cE)L_2\dots L_{n-1} +\frac{1}{dr(r-1)}
	+\frac{(r-1)\beta _r}{dr}\right\rfloor , 2(r-1)m_0^2 \right).$$
	Let $H\in |L_1^{\otimes m}|$ be good for $(X,D)$ with closed embedding $j:H\hookrightarrow X$.
	\begin{enumerate}
		\item If  $(\cE , \theta )$ is slope $(L_1,...,L_{n-1})$-stable then  $j^{[*]}(\cE , \theta )$ on $(H,D_H)$ is  slope $(j^*L_2,\dots ,j^*L_{n-1})$-stable.
		\item If  $(\cE , \theta )$ is slope $(L_1,...,L_{n-1})$-semistable and restrictions of all quotients of a Jordan--H\"older filtration of $(\cE , \theta )$ to $H$ are torsion free
		then  $j^{[*]}(\cE , \theta )$ is  slope $(j^*L_2,\dots ,j^*L_{n-1})$-semistable.  
	\end{enumerate}
\end{Theorem}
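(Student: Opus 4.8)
The plan is to reduce Theorem \ref{restriction-for-Higgs} to the operator-level restriction theorem \ref{restriction-for-operators} by absorbing the integrability (Higgs) condition. The key point is that a logarithmic Higgs sheaf $(\cE,\theta)$ on $(X,D)$ is in particular an $L$-Higgs sheaf for $L=T_X(\log D)$ with its trivial Lie algebroid structure, so Theorem \ref{restriction-for-operators} already tells us that, for $m$ above the first bound, $(i^*\cE, i^*\theta)$ is slope-(semi)stable as an $i^*L$-Higgs sheaf, where $i^*L = j^*T_X(\log D)$. What remains is to pass from $j^*T_X(\log D)$ to $(j^*T_X(\log D))^{**}$ and then to identify the latter with something receiving a map from $T_H(\log D_H)$; since $H$ is good for $(X,D)$, the reflexivization argument of Corollary \ref{passing-to-reflexivization} applies, and $j^{[*]}(\cE,\theta)$ is exactly the $T_H(\log D_H)$-Higgs sheaf obtained by precomposing $(j^*T_X(\log D))^{**}$-action with the canonical map $T_H(\log D_H)\to (j^*T_X(\log D))^{**}$. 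A submodule of $j^{[*]}(\cE,\theta)$ as a $T_H(\log D_H)$-Higgs sheaf pulls back to a submodule of the $i^*L$-Higgs sheaf on $H$, at least on the big open locus $H\cap U$, so no destabilizing subsheaf can appear. This handles stability directly; for semistability one also needs the Jordan--Hölder quotients to restrict to torsion-free sheaves, which is the hypothesis in (2), so that exactness of the restricted Jordan--Hölder filtration is preserved exactly as in the proof of Corollary \ref{Bogomolov-restriction}.

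More concretely, first I would observe that $j^{[*]}\cE = (i^*\cE)^{**}$ is the reflexive hull (over $\cO_H$) of the torsion-free sheaf $i^*\cE$, so by Lemma \ref{restriction-to-open} applied on $H$ the $T_H(\log D_H)$-submodules of $j^{[*]}(\cE,\theta)$ are in bijection with the $T_{H\cap U}(\log D_{H\cap U})$-submodules of $(i^*\cE)|_{H\cap U}$; and these in turn correspond to $i^*L$-Higgs subsheaves of $(i^*\cE,i^*\theta)$ restricted to the big open set $H\cap U$ where $j^*T_X(\log D)$ and its reflexive hull agree. Slopes and ranks are unchanged under these operations (restriction to a big open subset, reflexive hull). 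So a destabilizing $T_H(\log D_H)$-Higgs subsheaf of $j^{[*]}(\cE,\theta)$ would produce a destabilizing $i^*L$-Higgs subsheaf of $(i^*\cE,i^*\theta)$, contradicting Theorem \ref{restriction-for-operators}. The second bound $m > 2(r-1)m_0^2$ on $m$ is the one that makes $H$ be good for $(X,D)$ in a strong enough way — more precisely it guarantees, via the chosen $m_0$ with $T_X(\log D)\otimes L_1^{\otimes m_0}$ globally generated, that restriction of the Higgs structure behaves well and that a general member of $|L_1^{\otimes m}|$ indeed gives a good hypersurface for which the restricted pair $(H, D_H)$ has the required log-smoothness on a big open subset; one checks this by a Bertini argument combined with counting the dimension of the bad locus.

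The main obstacle, I expect, is precisely the bookkeeping around the difference between $j^*T_X(\log D)$, its reflexivization $(j^*T_X(\log D))^{**}$, and $T_H(\log D_H)$. On the open set $H\cap U$ where everything is log smooth these all agree with $j^*T_X(\log D)|_{H\cap U}$, and that is where one does the actual comparison of Higgs submodules; but one must be careful that (i) $H\cap U$ is big in $H$ (part of goodness), (ii) the reflexive hull functor on $H$ really does give an equivalence of the relevant categories of reflexive $L_H$-Higgs modules (this is the content of the Corollary following Lemma \ref{comparison-of-dual-definitions}, combined with Corollary \ref{passing-to-reflexivization}), and (iii) the extra hypothesis on $m$ involving $m_0$ is genuinely needed and is used to ensure that the restricted Higgs field $j^{[*]}\theta$ lands in the right reflexive sheaf — i.e. that restriction of a Higgs sheaf is again a Higgs sheaf on $(H, D_H)$ and not merely an operator. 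Once these points are in place, the argument is a formal transfer of the operator-level theorem, exactly parallel to how Theorem \ref{strong-restriction-stable} and Corollary \ref{Bogomolov-restriction} are used; I would keep the writeup short and refer the reader to those proofs for the numerical estimates, since nothing new happens there — the bound on $m$ is identical to the first bound in Theorem \ref{restriction-for-operators}, and the only new ingredient is the $2(r-1)m_0^2$ term controlling goodness and restriction of the Higgs structure.
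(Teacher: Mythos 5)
There is a genuine gap, and it sits exactly at the step you describe as a ``formal transfer.'' You claim that a destabilizing $T_H(\log D_H)$-Higgs subsheaf of $j^{[*]}(\cE,\theta)$ pulls back to a destabilizing $i^*L$-Higgs subsheaf of $(i^*\cE,i^*\theta)$, where $L=T_X(\log D)$; but the comparison map goes $T_H(\log D_H)\to (j^*T_X(\log D))^{**}$, i.e.\ the algebroid acting on the genuine logarithmic Higgs restriction is the \emph{smaller} one. A subsheaf of $j^{[*]}\cE$ preserved by the Higgs field in the directions tangent to $H$ need not be preserved by the components of $\theta$ in the normal direction, so it need not be an $i^*L$-Higgs subsheaf at all. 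In other words, $(L_2|_H,\dots,L_{n-1}|_H)$-stability as a $T_H(\log D_H)$-Higgs sheaf is a strictly stronger statement than the $i^*L$-stability provided by Theorem \ref{restriction-for-operators}; this is precisely the content of the Remark following that theorem, which warns that it is a restriction theorem for sheaves with operators and not a genuine restriction theorem for Higgs sheaves. Your implication runs in the wrong direction, so no destabilizing subsheaf is ruled out by the argument as written.

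Closing this gap is the actual work of part (1), and it is where the hypothesis $m>2(r-1)m_0^2$ and the global generation of $T_X(\log D)\otimes L_1^{\otimes m_0}$ enter: following the proof of \cite[Theorem 10]{La3}, one shows that a tangentially invariant destabilizing subsheaf of the restriction would either be invariant under the full restricted Higgs field (contradicting Theorem \ref{restriction-for-operators}) or give rise to a nonzero map whose existence violates a slope estimate once $m>2(r-1)m_0^2$. Your reading of that bound as a Bertini-type condition ensuring goodness of $H$ is therefore also off the mark: goodness of $H$ is a hypothesis of the theorem, not something derived from the size of $m$. Your treatment of part (2) (restricting a Jordan--H\"older filtration, using Lemma \ref{needed-for-boundedness} so that each quotient satisfies the numerical bound, and working on the big open subset $H\cap U$) is in the right spirit and close to what the paper does, but it relies on part (1), so the proposal does not stand without the missing invariance argument.
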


\begin{proof}
Using Theorem \ref{restriction-for-operators}, one can follow the proof of
\cite[Theorem 10]{La3} to obtain the first part of the theorem.

Now let us remark that if  $0= (\cE_0, \theta_0)\subset (\cE_1, \theta_1)\subset ...\subset (\cE_s, \theta_s)=(\cE , \theta )$ is a  Jordan--H\"older filtration of $\cE$ and $(\cF_i, \tilde \theta_{i}):=((\cE_i, \theta_i)/(\cE_{i-1}, \theta_{i-1}))^{**}$ then by Lemma \ref{needed-for-boundedness} we have 
\begin{align*}
		\frac{\int _X \Delta  (\cE)L_2...L_{n-1}}{r}\ge \sum _i \frac{\int _X \Delta  (\cF_i)L_2...L_{n-1}}{r_i},
\end{align*}
where $r_i=\rk \cF_i$. So by the first part all $j^{[*]}(\cF_i, \tilde \theta_{i})$ are slope $(j^*L_2,\dots ,j^*L_{n-1})$-semistable. Since $H$ is good for $(X,D)$, there exists a big open subset $U\subset X$ such that $(U, D\cap U)$ is log smooth and $H\cap U$ is big in $H$.  Now a logarithmic Higgs subsheaf destabilizing
$j^{[*]}(\cE , \theta )$ would destabilize it on $H\cap U$. This would show that one of the restrictions
$j^*(\cE_i, \theta_i)|_{H\cap U}$ is not slope $(j^*L_2,\dots ,j^*L_{n-1})$-semistable. But this contradicts the fact that the reflexivization of its extension to $H$ (which is equal to $j^{[*]}(\cF_i, \tilde \theta_{i})$)
is slope $(j^*L_2,\dots ,j^*L_{n-1})$-stable.
\end{proof}

\medskip

For $r=2$ the assumptions of this theorem can be slightly relaxed (cf. \cite[Theorem 10]{La3}).
Note that unlike in \cite{La3} we do not have any assumptions on lifting on $X$. These assumptions were added in \cite{La3} only to avoid the term containing $\beta _r$ so that the results could hold uniformly in all characteristics (including $0$). The above result is restricted to the positive characteristic and
it was not known in the characteristic zero case even if one assumes that $D=0$ and $X$ has klt singularities (cf. \cite[Theorem 5.22]{GKPT2} for a non-effective restriction theorem for general hypersurfaces).  The above theorem will be used to obtain a strong restriction theorem for Higgs sheaves in characteristic zero in Section \ref{Section:characteristic-0}.

\subsection{Deformations to systems of Hodge sheaves}

Let $X$ be a normal projective variety defined over an algebraically closed field $k$
and let $L$ be a Lie algebroid on $X$, which is coherent as an $\cO_X$-module.

 It is convenient to consider $L$-modules  as modules over the universal enveloping algebra $\Lambda _L$
 of differential operators associated to $L$ (see \cite[Section 2.2]{La2}).
 So we consider an $L$-module as a pair  $(\cE, \nabla)$, where $\cE$ is a quasi-coherent $\cO_X$-module
and $\nabla: \Lambda _L\otimes_{ \cO_X} \cE \to \cE$ is a $\Lambda _L$-module structure. 
If the underlying sheaf of an $L$-module is coherent as
an $\cO_X$-module, we say (at the risk of  abusing the notation) that $(\cE, \nabla)$ is a \emph{coherent $L$-module}. 
If the underlying sheaf of an $L$-module is coherent and torsion free as
an $\cO_X$-module, we say (again abusing the notation) that $(\cE, \nabla)$ is a \emph{torsion free $L$-module}. 
 
 If  $(\cE, \nabla)$ is a coherent $L$-module then we say that a filtration $\cE=N^0\supset
N^1\supset ...\supset N^m=0$ satisfies \emph{Griffiths
transversality} if it is a filtration of $\cE$ by coherent $\cO_X$-submodules and
$\nabla (\Lambda _L\otimes_{ \cO_X} N^i)\subset N^{i-1}$. For every such filtration the associated graded object $\Gr
_N(\cE):=\bigoplus _i N^i/N^{i+1}$ carries a canonical coherent
$L$-Higgs module structure $\theta: L\otimes_{ \cO_X}\Gr
_N(\cE)\to \Gr _N(\cE)$ defined by $\nabla$.
This can be seen by considering the following commutative diagram:
$$
\xymatrix{
\Lambda_0\otimes_{ \cO_X} N^{i+1}\ar[r]\ar[d]&\Lambda_0\otimes_{ \cO_X} N^{i}\ar[r]\ar[d]&N^i\ar[d]\\
\Lambda_1\otimes_{ \cO_X} N^{i+1}\ar[r]\ar[d]\ar[rru]&\Lambda_1\otimes_{ \cO_X} N^{i}\ar[r]\ar[d]&N^{i-1}\ar[d]\\
\Lambda_1/\Lambda_0\otimes_{ \cO_X} N^{i+1}\ar[r]\ar[d]&\Lambda_1/\Lambda_0\otimes_{ \cO_X} N^{i}\ar[r]\ar[d]&N^{i-1}/N^i\ar[d]\\
0&0&0\\
}
$$
where $\Lambda_0\subset \Lambda _1\subset ... \subset \Lambda _L$ is the standard filtration on $\Lambda _L$.
It follows from the above diagram that the map $\Lambda_1\otimes_{ \cO_X} N^{i+1}\to N^{i-1}/N^i$ is zero and hence
the map $\Lambda_1/\Lambda_0\otimes_{ \cO_X} N^{i+1}\to N^{i-1}/N^i$ is also zero. So we have an induced map $\Lambda_1/\Lambda_0\otimes_{ \cO_X} N^{i}/N^{i+1}\to N^{i-1}/N^i$. But $\Lambda_1/\Lambda_0=L$ and one can easily check that the obtained map gives an $L$-Higgs module structure on  $\Gr_N(\cE)$.
 Note also that by construction the obtained pair $(\Gr_N(\cE), \theta)$ is a system of $L$-Hodge sheaves on $X$. 

\medskip

In the remainder of this section to define semistability we fix a collection
 $(L_1,...,L_{n-1})$ of nef line bundles on $X$ such that $L_1L_2....L_{n-1}$ is numerically non-trivial.

We say that a Griffiths transverse
filtration $N^{\bullet}$ on $(\cE, \nabla)$ is \emph{slope
gr-semistable} if the associated $L$-Higgs sheaf
$(\Gr_N(\cE), \theta)$ is (torsion free and) slope semi\-sta\-ble. A \emph{partial
$L$-oper} is a triple $(\cE, \nabla, N^{\bullet})$ consisting of a
torsion free coherent $\cO _X$-module $\cE$ with a
${\Lambda_L}$-module structure $\nabla$ and a Griffiths transverse
filtration $N^{\bullet}$, which is slope gr-semistable.

\medskip

\begin{Remark}
Note that analogous definitions in \cite[Section 5.2]{La2} work only for smooth Lie algebroids. 
The above definitions allow us to deal with general Lie algebroids and  they are equivalent to those in 
 \cite[Section 5.2]{La2} in case of smooth Lie algebroids. 
\end{Remark}

\medskip

The following theorem can be proven in the same way as \cite[Theorem 5.5]{La4}.
The only difference is that in the proof one needs to consider $L$-modules as 
$\Lambda_L$-modules.

\begin{Theorem} \label{existence-of-gr-ss-Griffiths-transverse-filtration}
If $(\cE, \nabla)$ is slope semistable then there exists a
canonically defined slope gr-semistable Griffiths transverse
filtration $S^{\bullet}$ on $(\cE, \nabla)$ providing it with a
partial $L$-oper structure. This filtration is preserved by the
automorphisms of $(\cE,\nabla)$.
\end{Theorem}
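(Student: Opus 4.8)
The plan is to follow the strategy of \cite[Theorem 5.5]{La4} closely, with the single modification that modules over the Lie algebroid $L$ are treated throughout as modules over the universal enveloping algebra $\Lambda_L$ of differential operators (as introduced in \cite[Section 2.2]{La2}), so that the constructions make sense for a general, possibly non-smooth, Lie algebroid. The key point is that the Griffiths transversality condition $\nabla(\Lambda_L\otimes_{\cO_X} N^i)\subset N^{i-1}$ is formulated purely in terms of the standard filtration $\Lambda_0\subset\Lambda_1\subset\dots\subset\Lambda_L$, and the induced $L$-Higgs structure on the associated graded $\Gr_N(\cE)$ via the identification $\Lambda_1/\Lambda_0=L$ is exactly the one already described in the excerpt; nothing in that construction requires $L$ to be locally free.

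First I would construct the filtration $S^\bullet$ recursively. Start with $S^0=\cE$. Suppose $S^0\supset\dots\supset S^i$ have been constructed with $\nabla(\Lambda_L\otimes S^j)\subset S^{j-1}$; then one looks at the $\cO_X$-linear Higgs-type map $\bar\theta_i\colon L\otimes_{\cO_X}(S^i)\to \cE/S^i$ induced by $\nabla$ (the composition $\Lambda_1\otimes S^i\to\cE\to\cE/S^i$ factors through $\Lambda_1/\Lambda_0\otimes S^i=L\otimes S^i$ because $\nabla(\Lambda_0\otimes S^i)=S^i$). Then $S^{i+1}$ is defined to be the minimal element of the (generically) maximal destabilizing subsheaf with respect to the induced structure, chosen so that $\Gr$ becomes semistable; concretely, one takes $S^{i+1}$ so that $(S^i/S^{i+1}, \text{induced }\theta)$ is the maximal slope-semistable quotient $L$-Higgs subsheaf data of the appropriate piece, in complete analogy with \cite{La4}. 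One must check that this is a coherent $\cO_X$-submodule, that it is preserved under the process so the recursion terminates, and that the resulting $\Gr_{S}(\cE)$ is torsion free and slope $(L_1,\dots,L_{n-1})$-semistable as an $L$-Higgs sheaf — this is where the uniqueness and canonicity comes from, since the maximal destabilizing subobject of an $L$-Higgs sheaf is unique.

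Next I would verify the two remaining assertions. Griffiths transversality of $S^\bullet$: by construction the map $\Lambda_1\otimes S^{i+1}\to \cE/S^i$ is forced to land in $S^{i-1}/S^i$ because $S^{i+1}$ was chosen inside the $\theta$-invariant part, and induction on the filtration degree together with the diagram in the excerpt propagates this to all of $\Lambda_L$; here one uses that $\Lambda_L$ is generated in filtration degree $\le 1$ over $\cO_X$. Invariance under $\mathrm{Aut}(\cE,\nabla)$: any automorphism carries $S^\bullet$ to another slope gr-semistable Griffiths transverse filtration with the same associated graded slopes, and by the uniqueness built into the recursive construction (the maximal destabilizing $L$-Higgs subsheaf is canonical) it must fix $S^\bullet$ termwise.

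The main obstacle I expect is the bookkeeping needed to show that the recursive choice actually yields a \emph{semistable} associated graded rather than merely one whose first graded piece is semistable — i.e., that the process of ``peeling off maximal destabilizing $L$-Higgs subsheaves'' stabilizes after finitely many steps and produces a genuinely semistable $\Gr_S(\cE)$. In the smooth case this is handled in \cite{La4} by a careful induction on rank and slope; the only genuinely new input here is to confirm that every step of that argument — the existence and uniqueness of maximal destabilizing sub-$L$-Higgs-sheaves, boundedness of slopes, the fact that extensions of semistable $L$-Higgs sheaves of equal slope are semistable — goes through verbatim once one works with $\Lambda_L$-modules and uses $\Lambda_1/\Lambda_0=L$, none of which uses smoothness of $L$. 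Everything else is routine diagram chasing identical to \cite{La4}, so I would simply indicate these checks and refer the reader there for the details.
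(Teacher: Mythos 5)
Your proposal follows essentially the same route as the paper: the paper's proof consists precisely of the observation that the argument of \cite[Theorem 5.5]{La4} goes through verbatim once $L$-modules are regarded as $\Lambda_L$-modules, with Griffiths transversality phrased via the standard filtration $\Lambda_0\subset\Lambda_1\subset\dots$ and the identification $\Lambda_1/\Lambda_0=L$, which is exactly your key point. Your term-by-term sketch of the construction is looser than the actual iterated-destabilizing-modification argument in \cite{La4}, but since you defer those details to that reference, as the paper does, the approach is the same.
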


The above filtration $S^{\bullet}$ is called \emph{Simpson's filtration}.
Even in the case of a trivial Lie algebroid structure on $L$ the above theorem gives 
a non-trivial corollary:

\begin{Corollary}\label{deformation-to-system}
  Let $(\cE, \theta : L \otimes _{\cO_X} \cE\to \cE)$ be a slope
  semistable  $L$-Higgs sheaf. Then there exists a decreasing
  filtration $\cE=N^0\supset N^1\supset ...\supset N^m=0$ such that
  $\theta (L \otimes _{\cO_X}N^i)\subset N^{i-1}$ and the associated
  graded is a slope semistable system of $L$-Hodge sheaves.
  \end{Corollary}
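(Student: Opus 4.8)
The plan is to deduce Corollary \ref{deformation-to-system} directly from Theorem \ref{existence-of-gr-ss-Griffiths-transverse-filtration} applied to the special case where $L$ carries the trivial Lie algebroid structure (zero bracket, zero anchor), so that $L$-modules are precisely $L$-Higgs sheaves and $\Lambda_L = \Sym^{\bullet} L$.

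First I would observe that an $L$-Higgs sheaf $(\cE, \theta : L\otimes_{\cO_X}\cE\to\cE)$, by Lemma \ref{equivalent-Higgs}, is the same thing as a $\Sym^{\bullet}L$-module $(\cE,\nabla)$ whose underlying $\cO_X$-module is $\cE$. Moreover, slope semistability of $(\cE,\theta)$ as an $L$-Higgs sheaf is by definition the same as slope semistability of $(\cE,\nabla)$ as an $L$-module (a submodule for the trivial Lie algebroid is exactly an $\cO_X$-submodule $\cF$ with $\theta(L\otimes\cF)\subset\cF$). Hence $(\cE,\nabla)$ is a slope semistable $L$-module in the sense required by Theorem \ref{existence-of-gr-ss-Griffiths-transverse-filtration}.

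Next I would apply Theorem \ref{existence-of-gr-ss-Griffiths-transverse-filtration} to get Simpson's filtration $S^{\bullet}$: a decreasing filtration $\cE=N^0\supset N^1\supset\cdots\supset N^m=0$ by coherent $\cO_X$-submodules, satisfying Griffiths transversality $\nabla(\Lambda_L\otimes_{\cO_X}N^i)\subset N^{i-1}$, which is slope gr-semistable, i.e.\ the associated $L$-Higgs sheaf $(\Gr_N(\cE),\theta)$ is torsion free and slope semistable. I would then note that in the trivial-algebroid case $\Lambda_1/\Lambda_0 = L$ and the Higgs field produced on $\Gr_N(\cE)$ by the construction preceding the theorem is exactly the graded piece of $\theta$; in particular Griffiths transversality unwinds to $\theta(L\otimes_{\cO_X}N^i)\subset N^{i-1}$, and $(\Gr_N(\cE),\theta)$ is by construction a system of $L$-Hodge sheaves. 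Combining these facts gives precisely the statement of the corollary.

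The main point to be careful about — rather than a genuine obstacle — is checking that the Griffiths-transversality condition stated with $\Lambda_L$ really does reduce, for the trivial algebroid, to the condition $\theta(L\otimes_{\cO_X}N^i)\subset N^{i-1}$ stated in the corollary, and that the $L$-Higgs structure on the associated graded coincides with the one induced by $\theta$; this is immediate from the diagram chase preceding Theorem \ref{existence-of-gr-ss-Griffiths-transverse-filtration} once one identifies $\Lambda_1/\Lambda_0$ with $L$ and the fact that $\nabla$ restricted to $\Lambda_1$ is determined by $\theta$ together with the $\cO_X$-action. No further argument is needed, so the proof is essentially a one-line specialization.
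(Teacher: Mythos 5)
Your proposal is correct and is exactly the paper's (implicit) argument: the corollary is stated as the specialization of Theorem \ref{existence-of-gr-ss-Griffiths-transverse-filtration} to the trivial Lie algebroid structure on $L$, where $\Lambda_L=\Sym^{\bullet}L$, $L$-modules are $L$-Higgs sheaves with matching semistability, Griffiths transversality becomes $\theta(L\otimes_{\cO_X}N^i)\subset N^{i-1}$, and the associated graded is by construction a system of $L$-Hodge sheaves. Your careful unwinding of these identifications is precisely what the paper leaves to the reader.
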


\subsection{Higgs--de Rham sequences on normal varieties}

Let $X$ be a normal projective variety defined over an algebraically closed field $k$ of positive characteristic $p$.
Let $D$ be an effective reduced Weil divisor on $X$.

 Let us assume that $(X, D)$ is almost liftable to $W_2(k)$. Then we can
 find a big open subset $U\subset X$ such that the pair
$(U, D_U=D\cap U)$ is log smooth and liftable to $W_2(k)$. Let us fix
 a lifting $(\tilde U, \tilde D_U)$ of $(U, D_U)$. 

\medskip

Let $(\cE, \theta : T_X (\log D)\otimes \cE\to \cE)$ be a  reflexive logarithmic Higgs
 $\cO_X$-module of rank $r\le p$. Let us assume that $(\cE, \theta)$ is slope semistable. Then
 by Corollary \ref{deformation-to-system} there exists a canonical filtration
 $N^{\bullet}$ on $\cE$ such that the associated graded $(\bar \cE_0, \bar \theta _0) $
  is a slope semistable system of logarithmic Hodge sheaves. 
  Let $(\cE_0, \theta_0)$ be the reflexive hull of $(\bar \cE_0, \bar \theta _0) $. By construction, 
 it is a slope semistable reflexive logarithmic system of Hodge sheaves. In particular, since its rank $r$ is $\le p$,
 it is also  a reflexive logarithmic Higgs $\cO_X$-module with a nilpotent Higgs field of level less or equal
 to $p-1$. So we can define 
$(V_{0}, \nabla _0):= C_{(\tilde U,\tilde D_U)}^{[-1]} (\cE_0, \theta _0) $.  
Let $S_0^{\bullet}$  be (decreasing) Simpson's filtration on $(V _0, \nabla _0) $ and let 
$(\bar \cE_1=\Gr_{S_0}(V _0),\bar \theta _1 ) $ be the associated system of Hodge sheaves. Then we set $(\cE_1,\theta_1):=((\bar \cE_1)^{**},
{\bar \theta_1}^{**} )$ and repeat the procedure. In this way we get the following sequence
$${  \xymatrix{
		(\cE, \theta) \ar[rd]^{(\Gr _{N})^{**}}	&& (V_0, \nabla _0)\ar[rd]^{(\Gr _{S_0})^{**}}&& (V_1, \nabla _1)\ar[rd]^{(\Gr _{S_1})^{**}}&\\
	&	(\cE_0, \theta _0)\ar[ru]^{C^{[-1]}}&&	(\cE_1, \theta_1)\ar[ru]^{C^{[-1]}}&&...\\
}}$$
in which  each logarithmic Higgs sheaf  $(\cE_j, \theta_j)$ is reflexive rank $r\le p$  and slope semistable.
We call this sequence a \emph{canonical Higgs--de Rham sequence} of $(\cE, \theta)$.

\medskip

\begin{Remark}
Higgs de Rham sequences were invented by G. Lan. M. Sheng and K. Zuo in \cite{LSZ} and their existence was proven in \cite{LSZ} and \cite{La2}. Canonical Higgs--de Rham sequences in the above sense first appeared in the proof of \cite[Lemma 3.10]{La5}. They are better suited to dealing with normal varieties as one cannot define suitable Chern classes for torsion free sheaves on normal varieties.
\end{Remark}

\begin{Remark}
Although the above construction is very general, it does not seem easy to compare numerical invariants of 
the sheaves $\cE_i$ without some further assumptions on the singularities of the pair $(X,D)$.
\end{Remark}

\subsection{Inverse Cartier transform on log varieties with  locally $F$-liftable singularities.}

Let $X$ be a normal variety defined over an algebraically closed field $k$ of positive characteristic $p$.
We define the \emph{Grothendieck group $K^{\refl }(X)$  of reflexive sheaves on $X$} as 
the free abelian group on the isomorphism classes $[\cE]$ of coherent reflexive $\cO_X$-modules modulo
the relations $[\cE_2]=[\cE_1]+[\cE_3]$ for each locally split short exact sequence
$$0\to \cE_1\to \cE_2\to \cE_3\to 0$$
of coherent reflexive $\cO_X$-modules.

For a coherent $\cO_X$-module $\cF$ we denote by
$\nabla_{can}^{\cF}$ the canonical connection on $F_X^*\cF$ given by differentiating along the fibers of the Frobenius morphism.

\begin{Lemma}\label{classes-Cartier-transform}
Let $D$ be an effective reduced Weil divisor on $X$ such that $(X, D)$ is liftable to $W_2(k)$ and it is locally $F$-liftable.
If $(\cE, \theta)= (\bigoplus \cE^i, \theta)$ is a reflexive system of  logarithmic Hodge sheaves on $(X, D)$
and we set $\cE_j =\bigoplus _{j\le i}\cE^i$ with induced $\theta _j$ then for every $j$ we have  a short exact sequence
$$0\to C^{[-1]} (\cE _j, \theta _j)\to C^{[-1]} (\cE _{j+1}, \theta _{j+1})\to (F_X^{[*]}\cE^{j+1},\nabla_{\can}^{\cE_{j+1}})\to 0$$
of reflexive $\cO_X$-modules with a logarithmic connection, 
which is locally split as a sequence of $\cO_X$-modules. In particular, we have
$[C^{[-1]}\cE]=[F_X^{[*]}\cE]$
in $K^{\refl }(X)$.
\end{Lemma}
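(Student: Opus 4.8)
The strategy is to work over the big open subset $U$ where $(X,D)$ is log smooth, apply the known structure of the inverse Cartier transform for $p$-torsion-free Hodge modules there, and then push everything forward. First I would recall from the appendix (or the cited references) the standard fact for log-smooth liftable pairs: for a system of logarithmic Hodge sheaves the inverse Cartier transform $C^{[-1]}$ is built by successively extending, so that if $(\cE^{\bullet},\theta)$ is a Hodge module with the filtration $\cE_j=\bigoplus_{j\le i}\cE^i$ then on $U$ one has a short exact sequence
$$0\to C^{[-1]}(\cE_j,\theta_j)\to C^{[-1]}(\cE_{j+1},\theta_{j+1})\to (F_U^{*}\cE^{j+1},\nabla_{\can})\to 0$$
of $\cO_U$-modules with logarithmic connection, and that this sequence is \emph{locally split} as a sequence of $\cO_U$-modules (the inverse Cartier transform, being locally the exponential twist, does not change the underlying sheaf locally up to the Frobenius pullback). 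The local $F$-liftability of $(X,D)$ is precisely what is needed to make the construction of $C^{[-1]}$ on $U$ available Zariski-locally and to glue, via the uniqueness of $F$-liftings up to canonical isomorphism.

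Next I would reflexivize. Since $\cE^i$ are reflexive on $X$ and $U$ is big, $F_X^{[*]}\cE^{j+1}=j_*(F_U^*\cE^{j+1})$ and likewise $C^{[-1]}(\cE_j,\theta_j)$ on $X$ is by definition the reflexive extension of the object constructed on $U$ (this is how the reflexivized inverse Cartier transform $C^{[-1]}_{(\tilde U,\tilde D_U)}$ is defined). Applying $j_*$ to the locally split short exact sequence on $U$ yields the claimed short exact sequence
$$0\to C^{[-1]}(\cE_j,\theta_j)\to C^{[-1]}(\cE_{j+1},\theta_{j+1})\to (F_X^{[*]}\cE^{j+1},\nabla_{\can}^{\cE_{j+1}})\to 0$$
on $X$: left-exactness of $j_*$ gives exactness on the left and in the middle, while surjectivity on the right is checked on $U$ and the cokernel, being supported on $X\setminus U$ of codimension $\ge 2$ yet a quotient of reflexive sheaves that are already reflexive after the map, must vanish — more precisely, the sequence is locally split on $U$, so its $j_*$ is locally split on $U$, and since all three terms are reflexive on $X$ (using Lemma~\ref{restriction-to-open}) the map $C^{[-1]}(\cE_{j+1},\theta_{j+1})\to F_X^{[*]}\cE^{j+1}$ is surjective on the big open $U$ and both source and target are reflexive, so its cokernel is a reflexive sheaf supported in codimension $\ge 2$, hence zero. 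The resulting sequence is then by construction locally split as a sequence of $\cO_X$-modules, because local splittings over $U$ extend: a local splitting on $U\cap V$ for $V$ affine is a section of a map of reflexive sheaves, which extends uniquely across the codimension-$\ge 2$ locus by Hartogs.

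Finally, the statement $[C^{[-1]}\cE]=[F_X^{[*]}\cE]$ in $K^{\refl}(X)$ follows by induction on the length of the Hodge filtration: for the top stage $\cE_0=\cE^0$ one has $C^{[-1]}(\cE^0,0)=(F_X^{[*]}\cE^0,\nabla_{\can})$, and each short exact sequence above, being locally split with all terms reflexive, gives the relation $[C^{[-1]}(\cE_{j+1},\theta_{j+1})]=[C^{[-1]}(\cE_j,\theta_j)]+[F_X^{[*]}\cE^{j+1}]$ in $K^{\refl}(X)$; summing telescopes to $[C^{[-1]}\cE]=\sum_i[F_X^{[*]}\cE^i]=[F_X^{[*]}(\bigoplus_i\cE^i)]=[F_X^{[*]}\cE]$, where the middle equality uses that $F_X^{[*]}$ commutes with finite direct sums. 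The main obstacle is the first step — verifying carefully that the inverse Cartier transform construction on a locally $F$-liftable (but possibly not globally $F$-liftable) log-smooth $U$ does produce these locally split extensions and glues; this is the content that the appendix is meant to supply, and here one only needs to invoke it together with the uniqueness of local $F$-liftings. Everything after that is the routine reflexive-sheaf bookkeeping (Hartogs extension across codimension $\ge 2$) sketched above.
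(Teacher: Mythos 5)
Your overall plan (compute on the log-smooth locus, then extend by reflexivity) points in the right direction, but there is a genuine gap exactly at the step where the hard content sits. Knowing that the sequence of underlying $\cO$-modules is \emph{locally} split on the log-smooth big open subset $U$ does not give you a splitting on $U\cap V$ for $V$ a small neighbourhood of a \emph{singular} point $x\notin U$: $U\cap V$ is a punctured neighbourhood (complement of a codimension $\ge 2$ set), and "locally split" does not imply "split" there — the obstruction is a class in $\Ext^1_{U\cap V}\bigl(F^{[*]}\cE^{j+1}, C^{[-1]}(\cE_j,\theta_j)\bigr)$ which can be nonzero even when all its restrictions to small opens vanish. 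So your Hartogs argument has nothing to extend: it needs a splitting defined on all of $U\cap V$, which you never produce. The auxiliary surjectivity argument is also incorrect as stated: the cokernel of a map of reflexive sheaves need not be reflexive, and a map of reflexive sheaves that is surjective on a big open subset need not be surjective (e.g.\ $\cO^{\oplus 2}\xrightarrow{(x,y)}\cO$ on the affine plane), so "cokernel reflexive and supported in codimension $\ge 2$, hence zero" does not work; both right exactness and splitness have to come from a genuinely local computation at the singular points.

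This is precisely where the hypothesis of local $F$-liftability enters, and you have misplaced its role: it is not needed to construct $C^{[-1]}$ on the log-smooth locus (Ogus--Vologodsky/Schepler only require the $W_2(k)$-lifting of $(U,D_U)$, and local Frobenius liftings always exist on smooth affines). What the paper does is: fix a singular point $x$, take an $F$-liftable neighbourhood $U\ni x$ and a big open $V\subset U$ inside the log-smooth locus; the single Frobenius lifting $\tilde F_V$ (compatible with the induced $W_2$-lifting) makes the inverse Cartier transform on $V$ \emph{globally} equal to $\bigl(F_V^*\cE_j,\ \nabla_{\can}+\zeta_V(F_V^*\theta_j)\bigr)$ — no exponential-twist gluing cocycle at all — so the underlying $\cO_V$-module sequence is literally $0\to F_V^*\cE_j\to F_V^*\cE_{j+1}\to F_V^*\cE^{j+1}\to 0$, split because the Hodge grading splits $\cE_{j+1}$ and $F^*$ preserves direct sums; extending from $V$ to $U$ by reflexivity then gives the split sequence $0\to F_U^{[*]}\cE_j\to F_U^{[*]}\cE_{j+1}\to F_U^{[*]}\cE^{j+1}\to 0$ on a neighbourhood of $x$, which yields both exactness and local splitness on $X$. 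Your final $K^{\refl}(X)$ telescoping step is fine once the locally split exact sequences are in place, but without the argument just described the key claims of the lemma remain unproved.
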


\begin{proof}
By  construction we have a short exact sequence of Higgs sheaves
$$0\to (\cE _j, \theta _j)\to (\cE _{j+1}, \theta _{j+1})\to (\cE^{j+1},0)\to 0,$$
which is split as a sequence of $\cO_X$-modules.
Applying $ C^{[-1]}$ to this sequence we get
	$$0\to C^{[-1]} (\cE _j, \theta _j)\to C^{[-1]} (\cE _{j+1}, \theta _{j+1})\to (F_X^{[*]}\cE^{j+1},\nabla_{\can}^{\cE_{j+1}})\to 0,$$
	because $ C^{[-1]}  (\cE^{j+1},0)=(F_X^{[*]}\cE^{j+1},\nabla_{\can}^{\cE_{j+1}})$. So it is sufficient to show that this sequence is locally split. 	
To do so we fix a point $x\in X$ and an open neighbourhood $x\in U\subset X$, which is $F$-liftable. 
Let $V$ be a big open subset of $U$, which is contained in the log smooth locus of $(X,D)$.
The pair $(V, D\cap V)$ has an $F$-lifting $\tilde F_V: \tilde V\to \tilde V$ compatible with the $W_2(k)$-lifting $(\tilde V, \tilde D)$ induced from the given $W_2(k)$-lifting of  $(X, D)$.
On $V$ we have a  short exact sequence of modules with integrable connections
$$0\to (F_V^*\cE _j, \nabla_{\can}^{\cE_j})\to (F_V^*\cE _{j+1}, \nabla_{\can}^{\cE_{j+1}})\to (F_V^*\cE^{j+1},\nabla_{\can}^{\cE^{j+1}})\to 0,$$
which is split as a sequence of $\cO_V$-modules.
Extending the above sequence  to $U$, we get a  short exact sequence of reflexive $T_U$-modules
$$0\to F_U^{[*]}\cE _j\to F_U^{[*]}\cE _{j+1}\to  F_U^{[*]}\cE^{j+1}\to 0,$$
which is split as a sequence of $\cO_U$-modules.
By construction (see Section \ref{Appendix}) $C^{[-1]} (\cE _j)|_U\simeq F_U^{[*]}\cE _{j}$ and  $C^{[-1]} (\theta _j)|_U$ is obtained by extension of the logarithmic connection $\nabla _{\can}^{\cE _j}+ \zeta_V (F^*_V\theta _j)$, where 
$\zeta_V: =p^{-1}\tilde F_V: F_V^*\Omega _V\to \Omega _V$ (see Subsection \ref{Deligne-Illusie}).
Since the above isomorphisms are compatible with restrictions to $V$, we see that the sequence 
$$0\to C^{[-1]} (\cE _j, \theta _j)\to C^{[-1]} (\cE _{j+1}, \theta _{j+1})\to (F_X^{[*]}\cE^{j+1},\nabla_{\can}^{\cE_{j+1}})\to 0$$
is split as a sequence of $\cO_U$-modules.
\end{proof}

\begin{Corollary} \label{Chern-classes-Cartier-transform}
Let $X$ be a normal projective variety with a collection $(L_1,...,L_{n-2})$ of nef line bundles.
Let $D$ be an effective reduced Weil divisor on $X$ such that $(X, D)$ is almost liftable 
to $W_2(k)$ and it has  $F$-liftable singularities in codimension $2$.
Then we have 
$$\int _X \ch _2  (C^{[-1]}\cE)L_1...L_{n-2}=p^2\int _X\ch _2   (\cE)L_1...L_{n-2}.$$
\end{Corollary}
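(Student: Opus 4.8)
The plan is to deduce the statement from Lemma~\ref{classes-Cartier-transform}, which gives the identity $[C^{[-1]}\cE]=[F_X^{[*]}\cE]$ in the Grothendieck group $K^{\refl}(X)$ of reflexive sheaves, together with the observation that $\cF\mapsto\int_X\ch _2(\cF)L_1\dots L_{n-2}$ descends to a homomorphism $K^{\refl}(X)\to\RR$. The latter is exactly the equality case of Lemma~\ref{short-exact-seq}: a locally split short exact sequence $0\to\cF_1\to\cF_2\to\cF_3\to 0$ of reflexive $\cO_X$-modules is in particular exact on $X$ and locally split in codimension $2$, so $\int_X\ch _2(\cF_2)L_1\dots L_{n-2}=\int_X\ch _2(\cF_1)L_1\dots L_{n-2}+\int_X\ch _2(\cF_3)L_1\dots L_{n-2}$, i.e.\ the functional kills every defining relation of $K^{\refl}(X)$. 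Granting the hypotheses of Lemma~\ref{classes-Cartier-transform}, one then gets
$$\int_X\ch _2(C^{[-1]}\cE)L_1\dots L_{n-2}=\int_X\ch _2(F_X^{[*]}\cE)L_1\dots L_{n-2}=p^2\int_X\ch _2(\cE)L_1\dots L_{n-2},$$
the second equality being Theorem~\ref{properties-of-ch_2}, (5).

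It remains to arrange that Lemma~\ref{classes-Cartier-transform}, stated for pairs liftable to $W_2(k)$ and locally $F$-liftable, can be applied when $(X,D)$ is only almost liftable with $F$-liftable singularities in codimension $2$. By the corollary following Theorem~\ref{equivalences-for-almost-liftings} there is a closed $Z\subset X$ with $\dim Z\le n-3$ such that $(X\setminus Z,D\setminus Z)$ is liftable to $W_2(k)$ and locally $F$-liftable, and I claim removing $Z$ changes nothing relevant. First, both $C^{[-1]}\cE$ and $F_X^{[*]}\cE$ are reflexive, hence determined by their restrictions to any big open subset (Lemma~\ref{restriction-to-open}), and the inverse Cartier transform depends only on the chosen $W_2(k)$-lifting of a big open subset, which we may take inside $X\setminus Z$; so $(C^{[-1]}\cE)|_{X\setminus Z}=C^{[-1]}(\cE|_{X\setminus Z})$ and $C^{[-1]}\cE$ is its reflexive hull on $X$, and similarly for $F_X^{[*]}\cE$. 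Second, by multilinearity and continuity in $N^1(X)_{\QQ}$ we may assume the $L_i$ very ample, and then Theorem~\ref{properties-of-ch_2}, (3), applied $n-2$ times, identifies each number $\int_X\ch _2(\cF)L_1\dots L_{n-2}$ with $\int_S\ch _2(\cF|_S)$ for a very general complete intersection surface $S\in|L_1|\cap\dots\cap|L_{n-2}|$, and such an $S$ is disjoint from $Z$; in particular these numbers are unchanged if $\cF$ is modified only in codimension $\ge 3$. Applying Lemma~\ref{classes-Cartier-transform} to $(X\setminus Z,D\setminus Z)$ and $\cE|_{X\setminus Z}$ produces the chain of locally split short exact sequences; taking reflexive hulls over $X$ (and replacing each cokernel term by the image of the last map, which differs from it only along $Z$ and so carries the same Chern number) yields short exact sequences of reflexive $\cO_X$-modules that are exact on $X$ and locally split outside $Z$, i.e.\ locally split in codimension $2$. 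Feeding these, together with $F_X^{[*]}\cE=\bigoplus_iF_X^{[*]}\cE^i$, into the equality case of Lemma~\ref{short-exact-seq} gives $\int_X\ch _2(C^{[-1]}\cE)L_1\dots L_{n-2}=\int_X\ch _2(F_X^{[*]}\cE)L_1\dots L_{n-2}$, and the computation of the first paragraph concludes.

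The main obstacle is the equality case of Lemma~\ref{short-exact-seq} itself, namely that $\int_X\ch _2(-)L_1\dots L_{n-2}$ is genuinely additive — not merely sub-additive — on \emph{locally split} sequences of reflexive sheaves; this is where positive characteristic is essential (the statement fails on general normal surfaces in characteristic zero, cf.\ Example~\ref{singular-quadric}). It is also why the comparison must go through the locally split sequences of Lemma~\ref{classes-Cartier-transform} rather than through a naive identification of underlying sheaves: in the Ogus--Vologodsky picture the underlying module of $C^{[-1]}\cE$ is $F_X^{[*]}\cE$ twisted by a cocycle built from the local Frobenius liftings, which need not be trivial, so one cannot simply invoke Theorem~\ref{properties-of-ch_2}, (5), directly. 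Once this additivity and the codimension-$\ge 3$ reduction are in place, the remaining steps are bookkeeping.
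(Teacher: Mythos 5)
Your proposal is correct and follows essentially the same route as the paper: Lemma~\ref{classes-Cartier-transform} combined with the equality case of Lemma~\ref{short-exact-seq} gives additivity of $\int_X\ch_2(\cdot)L_1\dots L_{n-2}$ along the locally split sequences relating $C^{[-1]}\cE$ to the $F_X^{[*]}\cE^j$, and Theorem~\ref{properties-of-ch_2}, (5), finishes the computation. The only difference is organizational: the paper first reduces to the surface case via Theorem~\ref{properties-of-ch_2}, where the codimension-$\ge 3$ locus supplied by Theorem~\ref{equivalences-for-almost-liftings} disappears so that Lemma~\ref{classes-Cartier-transform} applies directly, whereas you remove that locus on $X$ first and then pass to very general complete intersection surfaces; both amount to the same bookkeeping.
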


\begin{proof}
By Theorem \ref{properties-of-ch_2} we can reduce the assertion to the surface case. Then Theorem \ref{equivalences-for-almost-liftings} says that $(X,D)$ satisfies assumptions of 
Lemma \ref{short-exact-seq} and hence we get
\begin{align*}
\int _X \ch _2  (C^{[-1]}\cE)L_1...L_{n-2}&=\sum _j \int _X \ch _2  (F_X^{[*]}  \cE ^j)L_1...L_{n-2}=p^2 \sum _j \int _X \ch _2  ( \cE ^j)L_1...L_{n-2}\\
&=p^2\int _X\ch _2   (\cE)L_1...L_{n-2}.
\end{align*}
\end{proof}

\subsection{Bogomolov's inequality for Higgs sheaves}

In this subsection we give the first version of Bogomolov's inequality for 
logarithmic Higgs sheaves on singular varieties. 
The following theorem generalizes Bogomolov's inequality for logarithmic Higgs
sheaves to singular varieties (see \cite[Theorem 8]{La3} in case $X$ is smooth
and \cite[Theorem 3.3]{La4} for the log smooth case).

\begin{Theorem} \label{log-Bogomolov-with-lifting} 
	Let  $(L_1,...,L_{n-1})$ be a collection of nef line bundles on $X$ such that $L_1L_2....L_{n-1}$ is numerically non-trivial. Assume that the pair $(X,D)$ is  almost liftable to $W_2(k)$ and and it has  $F$-liftable singularities in codimension $2$.  Then for any
	slope $(L_1,...,L_{n-1})$-semistable logarithmic reflexive Higgs sheaf $(\cE, \theta)$ of rank
	$r\le p$ we have
	$$\int_X \Delta (\cE) L_2...L_{n-1}\ge 0.$$
\end{Theorem}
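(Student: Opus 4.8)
The plan is to run the canonical Higgs--de Rham sequence of $(\cE,\theta)$ and extract a Bogomolov inequality from the boundedness of the sheaves appearing in it, exactly as in the smooth case (\cite[Theorem 8]{La3}, \cite[Theorem 3.3]{La4}), but using the Chern class formalism of Section~2 to handle singularities. First I would reduce to the case $L_1^2L_2\dots L_{n-1}>0$: since $L_1L_2\dots L_{n-1}$ is numerically non-trivial, after adding a small multiple of an ample divisor to $L_1$ we may assume $d:=L_1^2L_2\dots L_{n-1}>0$, and the inequality $\int_X\Delta(\cE)L_2\dots L_{n-1}\ge 0$ is unchanged by continuity. I may also assume $k$ is uncountable by Theorem~\ref{properties-of-ch_2}(2). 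Then I fix a big open $U\subset X$ on which $(U,D_U)$ is log smooth and liftable, fix a lifting $(\tilde U,\tilde D_U)$, and form the canonical Higgs--de Rham sequence $(\cE_j,\theta_j)_{j\ge 0}$ with $(\cE_0,\theta_0)$ the reflexive hull of the graded of Simpson's filtration (using Corollary~\ref{deformation-to-system}), so that every $(\cE_j,\theta_j)$ is a reflexive, rank $r\le p$, slope semistable logarithmic system of Hodge sheaves.

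The key numerical input is that the Chern classes are controlled along the sequence. By Corollary~\ref{Chern-classes-Cartier-transform}, $\int_X\ch_2(C^{[-1]}\cE_j)L_2\dots L_{n-1}=p^2\int_X\ch_2(\cE_j)L_2\dots L_{n-1}$, and $c_1$ is simply multiplied by $p$; passing to the graded of Simpson's filtration does not increase $\int_X\ch_2$ by Lemma~\ref{Chern-classes-filtrations}(2) (equivalently $\int_X\Delta$ does not decrease). Since $c_1(\cE_{j+1})=p\,c_1(\cE_j)$ up to the reflexivization bookkeeping, the quantity $\int_X\Delta(\cE_j)L_2\dots L_{n-1}$ scales: writing $\delta_j:=\int_X\Delta(\cE_j)L_2\dots L_{n-1}$ one gets $\delta_{j+1}\le p^2\delta_j$, hence $\delta_j\le p^{2j}\delta_0$. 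On the other hand all $(\cE_j,\theta_j)$ are slope semistable logarithmic Higgs sheaves of bounded rank with $\mu_{\max}-\mu$ bounded independently of $j$ by Lemma~\ref{L-module-ss-bound} (with $A$ chosen so $T_X(\log D)(A)$ is globally generated), so the underlying sheaves $\cE_j$ lie in a bounded family by Corollary~\ref{boundedness2}, which forces $\{\delta_j\}$ to be bounded below by a fixed constant $C$. Combining $C\le\delta_j\le p^{2j}\delta_0$ and letting $j\to\infty$ gives $\delta_0\ge 0$, and then $\int_X\Delta(\cE)L_2\dots L_{n-1}\ge\delta_0\ge 0$ by Lemma~\ref{Chern-classes-filtrations}(3) applied to Simpson's filtration.

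I expect the main obstacle to be making the $\ch_2$-scaling step rigorous on the singular variety: one needs that Corollary~\ref{Chern-classes-Cartier-transform} genuinely applies to each $(\cE_j,\theta_j)$ (i.e. that the hypotheses ``almost liftable to $W_2(k)$ with $F$-liftable singularities in codimension $2$'' propagate, and that the inverse Cartier transform $C^{[-1]}$ and the reflexivized gradings commute with restriction to a general surface as in the proof of Lemma~\ref{short-exact-seq}), and that the reflexive-hull operations at each stage only help the inequality, never hurt it, by Lemma~\ref{Chern-classes-filtrations}. The bookkeeping of $c_1$ under the non-split reflexive short exact sequences of Lemma~\ref{classes-Cartier-transform} — which is where $[C^{[-1]}\cE]=[F_X^{[*]}\cE]$ in $K^{\refl}(X)$ enters — also needs care, but once one knows $\ch_2$ scales by $p^2$ and $c_1$ by $p$ the rest is the standard iteration argument from \cite{La3,La4}. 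A secondary point is checking the boundedness hypothesis of Corollary~\ref{boundedness2}: one must verify that $\mu_{\max,L}(\cE_j)$ stays bounded, which follows from Lemma~\ref{L-module-ss-bound} together with the fact that $\mu_L(\cE_j)$ is pinned down (up to scaling by $p^j$, which is normalized away) by $c_1$.
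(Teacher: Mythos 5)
Your overall strategy coincides with the paper's: form the canonical Higgs--de Rham sequence, control the graded/reflexivization steps by Lemma \ref{Chern-classes-filtrations}, get the $p^2$-scaling from Corollary \ref{Chern-classes-Cartier-transform} (via Lemma \ref{classes-Cartier-transform}), and iterate, dividing by $p^{2m}$. The genuine gap is in how you produce the uniform lower bound on $\delta_j=\int_X\Delta(\cE_j)L_2\dots L_{n-1}$. You invoke Corollary \ref{boundedness2} to claim the $\cE_j$ lie in a bounded family, but they do not: along the flow $c_1(\cE_{j+1})=p\,c_1(\cE_j)$ (up to the graded bookkeeping), so $\int_X\ch_1(\cE_j)L_1\dots L_{n-1}$ and $\mu_{\max}(\cE_j)$ grow like $p^j$, violating hypotheses (2) and (4) of that corollary; hypothesis (3) is an a priori lower bound on $\ch_2$, which is essentially what you are trying to prove (circular); and the corollary is established only for ample $L_i$, whereas here the $L_i$ are merely nef. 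Your remark that the scaling is ``normalized away'' does not repair this: there is no twist by an actual line bundle achieving such a normalization, and even granting it, \ref{boundedness2} would still not apply. What actually yields $\delta_j\ge C$ uniformly --- and what the paper uses --- is Corollary \ref{Bogomolov's-inequality} (the effective Bogomolov inequality for reflexive sheaves with $\mu_{\max,L}-\mu_L\le\alpha$) combined with Lemma \ref{L-module-ss-bound}: the bound $\mu_{\max}(\cE_j)-\mu(\cE_j)\le (r-1)AL_1\dots L_{n-1}$ is scale-invariant, hence uniform in $j$, and no boundedness of the family is needed. With that substitution your iteration $C\le\delta_j\le p^{2j}\delta_0\le p^{2j}\int_X\Delta(\cE)L_2\dots L_{n-1}$ closes exactly as in the paper.

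Two smaller points. You quote Lemma \ref{Chern-classes-filtrations} backwards (``passing to the graded does not increase $\ch_2$, equivalently $\Delta$ does not decrease''): the lemma says the reflexivized graded has $\ch_2$ at least as large, hence $\Delta$ at most as large, which is in fact the direction you use when you write $\delta_{j+1}\le p^2\delta_j$ and $\int_X\Delta(\cE)L_2\dots L_{n-1}\ge\delta_0$, so this is only a slip of wording. Also, your reduction to $d=L_1^2L_2\dots L_{n-1}>0$ ``by continuity'' is not automatic, since semistability need not persist when $L_1$ is perturbed; the way such degenerate polarizations are handled elsewhere in the paper is to take the Harder--Narasimhan filtration with respect to the perturbed collection, note it is independent of small $t$, apply the inequality to its quotients and pass to the limit using Lemma \ref{needed-for-boundedness}. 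Some reduction of this kind is indeed needed to apply Corollary \ref{Bogomolov's-inequality}, whose proof assumes $d>0$, so it is right to flag it, but it requires this extra argument rather than continuity alone.
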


\begin{proof}
Let $(\cE, \theta : T_X (\log D)\otimes _{\cO_X}\cE\to \cE)$ be a  reflexive logarithmic Higgs
 $\cO_X$-module of rank $r\le p$. Let us assume that $(\cE, \theta)$ is slope semistable. 
Let 
$${  \xymatrix{
		(\cE, \theta) \ar[rd]^{(\Gr _{N})^{**}}	&& (V_0, \nabla _0)\ar[rd]^{(\Gr _{S_0})^{**}}&& (V_1, \nabla _1)\ar[rd]^{(\Gr _{S_1})^{**}}&\\
		&	(\cE_0, \theta _0)\ar[ru]^{C^{[-1]}}&&	(\cE_1, \theta_1)\ar[ru]^{C^{[-1]}}&&...\\
}}$$
be  the {canonical Higgs--de Rham sequence} of $(\cE, \theta)$.

By Lemma \ref{L-module-ss-bound}  there exists $\alpha$ such that $\mu _{\max ,L } (\cE _m)-\mu _L (\cE _m)\le \alpha$  for all $m\ge 0$. So by Corollary \ref{Bogomolov's-inequality} there exists some 
constant $C$ such that for every non-negative integer $m$ we have
$$\int _X \Delta (\cE _m) L_2...L_{n-1}\ge C.$$
Lemma \ref{Chern-classes-filtrations} implies that
$$\int _X \Delta (\cE) L_2...L_{n-1} \ge \int _X \Delta (\cE _{0})L_2...L_{n-1}$$
and
$$\int _X \Delta (V_m) L_2...L_{n-1} \ge \int _X \Delta (\cE _{m+1})L_2...L_{n-1}.$$
By Corollary \ref{Chern-classes-Cartier-transform} we have
$$\int _X \Delta (V_m) L_2...L_{n-1}=p^2\int _X \Delta (\cE_m) L_2...L_{n-1}.$$
Therefore 
$$C\le \int _X \Delta (\cE _m) L_2...L_{n-1}\le p^{2m}\int _X \Delta (\cE)L_2...L_{n-1}.$$
Dividing by $p^{2m}$ and passing with $m$ to infinity, we get $\int _X \Delta (\cE) L_2...L_{n-1}\ge 0$.
\end{proof}

\begin{Remark} \label{Bogomolov-inequality-for-connections}
The above theorem holds also for reflexive sheaves with an integrable logarithmic connection.
Indeed, if $(\cE, \nabla)$ is a rank $r\le p$ slope $L$-semistable reflexive sheaf with an integrable 
logarithmic connection and $S^{\bullet}$ is its  Simpson's filtration then 
by the above theorem and Lemma \ref{Chern-classes-filtrations} we have
$$\int_X\Delta (\cE)L_2...L_{n-1}\ge \int_X \Delta ((\Gr _S \cE)^{**})L_2...L_{n-1}\ge 0.$$
\end{Remark}

\section{The Miyaoka--Yau inequality on singular varieties in positive characteristic}\label{Section:Miyaoka-Yau}

In this section we prove the Miyaoka--Yau inequality on some mildly singular varieties in positive characteristic. The ideas are similar to that from \cite{Si} and \cite{La4} but we show full proofs to show where they need additional facts related  to the use of our Chern classes.

\medskip

We fix a log pair $(X,D)$ defined over an algebraically closed field of characteristic $p>0$. We assume that $(X,D)$ is  almost liftable to $W_2(k)$ and it has $F$-liftable singularities in codimension $2$. Let $n=\dim X$ and let us fix a collection  $L=(L_1,...,L_{n-1})$ of nef line bundles on $X$ such that $L_1^2L_2....L_{n-1}>0$. As in  Subsection \ref{Section:Restriction+Bogomolov} we consider a positive open cone $K^+_L\subset N_L (X)$.

The proof of the following proposition is essentially the same as that of \cite[Proposition 4.1]{La4}.

\begin{Proposition}\label{log-Bogomolov's lemma}
	Let $\cL$ be a rank $1$ reflexive sheaf contained in
 $\Omega _X ^{[1]}(\log D)$. Then $c_1(\cL)\not \in K^{+}_L$.
\end{Proposition}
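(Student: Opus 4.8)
The plan is to argue by contradiction, using the inclusion $j\colon\cL\hookrightarrow\Omega_X^{[1]}(\log D)$ to build a rank $2$ slope semistable logarithmic Higgs sheaf to which Bogomolov's inequality (Theorem \ref{log-Bogomolov-with-lifting}) applies; this is the Bogomolov-lemma argument of \cite[Proposition 4.1]{La4}. Assume $c_1(\cL)\in K^+_L$. Put $\cE:=\cO_X\oplus\cL$, set $\cE^{-1}:=\cO_X$ and $\cE^{0}:=\cL$, and let $\theta\colon\cE\to\cE\hat\otimes\Omega_X^{[1]}(\log D)$ be the map which on $\cE^{0}$ is the composite $\cL\xrightarrow{j}\cO_X\hat\otimes\Omega_X^{[1]}(\log D)=\cE^{-1}\hat\otimes\Omega_X^{[1]}(\log D)\hookrightarrow\cE\hat\otimes\Omega_X^{[1]}(\log D)$ and which vanishes on $\cE^{-1}$. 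In block form $\theta$ is strictly upper triangular, hence $\theta\wedge\theta=0$, so by Lemma \ref{equivalent-Higgs} the pair $(\cE,\theta)$ is a reflexive logarithmic Higgs sheaf on $(X,D)$, in fact a system of logarithmic Hodge sheaves; its rank is $2\le p$.

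Next I would check that $(\cE,\theta)$ is slope $(L_1,\dots,L_{n-1})$-semistable. Since $(\cE,\theta)$ is a system of Hodge sheaves, semistability can be tested on sub-systems of Hodge sheaves (as recalled in Section \ref{Section:Higgs-sheaves}). A rank $1$ such sub-system is either contained in $\cE^{-1}=\cO_X$, hence equal to $\cO_X$ after saturation, or contained in $\cE^{0}=\cL$ with $\theta$ vanishing on it; the latter is impossible because $\theta|_{\cE^{0}}=j$ is injective. Thus the only rank $1$ sub-system is $\cO_X$, and $\mu(\cO_X)=0$. On the other hand $c_1(\cE)=c_1(\cL)$, so $\mu(\cE)=\tfrac12\,c_1(\cL)\cdot L_1\dots L_{n-1}\ge 0$, the inequality being precisely the second defining condition of $K^+_L$; hence $\mu(\cO_X)\le\mu(\cE)$ and $(\cE,\theta)$ is semistable.

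Finally I would compute the discriminant. As $\cE=\cO_X\oplus\cL$ is a (locally) split extension, the equality case of Lemma \ref{short-exact-seq} gives $\int_X\ch_2(\cE)L_2\dots L_{n-1}=\int_X\ch_2(\cL)L_2\dots L_{n-1}=\tfrac12\,c_1(\cL)^2\cdot L_2\dots L_{n-1}$, using the identity $\int_X\ch_2(\cM)=\tfrac12\,c_1(\cM)^2$ for a rank $1$ reflexive sheaf $\cM$ (reduce to a surface by Theorem \ref{properties-of-ch_2} and apply Riemann--Roch for Weil divisors). Since $c_1(\cE)=c_1(\cL)$ and $\rk\cE=2$, the definition of $\Delta$ then yields
$$\int_X\Delta(\cE)L_2\dots L_{n-1}=\int_X c_1(\cE)^2\,L_2\dots L_{n-1}-4\int_X\ch_2(\cE)L_2\dots L_{n-1}=-\,c_1(\cL)^2\cdot L_2\dots L_{n-1}<0,$$
the strict inequality holding because $c_1(\cL)\in K^+_L$ forces $c_1(\cL)^2\cdot L_2\dots L_{n-1}>0$. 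But the standing assumptions of Section \ref{Section:Miyaoka-Yau} say that $(X,D)$ is almost liftable to $W_2(k)$ with $F$-liftable singularities in codimension $2$ and that $L_1^2L_2\dots L_{n-1}>0$, so Theorem \ref{log-Bogomolov-with-lifting} applies to the slope semistable reflexive logarithmic Higgs sheaf $(\cE,\theta)$ of rank $\le p$ and gives $\int_X\Delta(\cE)L_2\dots L_{n-1}\ge 0$. This contradiction shows $c_1(\cL)\notin K^+_L$.

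I expect the semistability verification to be the only genuinely nontrivial step: once one knows that a system of Hodge sheaves may be tested for semistability on its sub-systems, the injectivity of $j$ forces the only competitor to be $\cO_X$, and the cone condition $c_1(\cL)\cdot L_1\dots L_{n-1}\ge 0$ is exactly what is needed. The Chern-class bookkeeping is routine given Lemma \ref{short-exact-seq}.
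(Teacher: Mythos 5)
Your argument is essentially the paper's own proof: the same rank-$2$ system of logarithmic Hodge sheaves $\cO_X\oplus\cL$ with Higgs field given by the inclusion $\cL\hookrightarrow\Omega_X^{[1]}(\log D)$, the same identification of $(\cO_X,0)$ as the only rank-$1$ subsystem, and the same combination of the split case of Lemma \ref{short-exact-seq} with Theorem \ref{log-Bogomolov-with-lifting} to contradict $c_1(\cL)^2.L_2\dots L_{n-1}>0$. If anything you are slightly more careful than the paper, which asserts stability where (when $c_1(\cL).L_1\dots L_{n-1}=0$) only semistability is guaranteed — but semistability is all that Theorem \ref{log-Bogomolov-with-lifting} requires, so both versions are fine.
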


\begin{proof}
	Assume that $c_1(\cL)\in K^{+}_L$ and consider a system of logarithmic Hodge sheaves
	$(\cE:=\cE^1\oplus \cE^0, \theta )$ with $\cE^1=\cL$, $\cE^0=\cO _X$ and
	$\theta : \cE^1\to \cE^0\hat \otimes \Omega_X^{[1]}(\log D)= \Omega_X^{[1]}(\log D)$ given by the
	inclusion. Then $(\cE, \theta)$ is slope $L$-stable since the
	only rank $1$ logarithmic subsystem of Hodge sheaves of $(\cE, \theta)$ is of the form
	$(\cO _X, 0)$. Therefore by Lemma \ref{short-exact-seq} and Theorem \ref{log-Bogomolov-with-lifting}
	we have $$0=4\int_Xc_2(\cE)L_2...L_{n-1}\ge \int _X c_1^2(\cE)L_2...L_{n-1}=c_1(\cL)^2. L_2...L_{n-1},$$
	a contradiction. 
\end{proof}

Similarly as \cite[Theorem 4.4]{La4} one can also get the following theorem generalizing the Miyaoka--Yau inequality in the surface case:

\begin{Theorem}\label{logarithmic-Miyaoka-Yau-with-lift}
Let us assume that $p\ge 3$ and let $\cF\subset \Omega_X^{[1]}(\log D)$ be a rank $2$ reflexive subsheaf with $c_1(\cF)\in \overline{K^+_L}$.  Then 
$$ 3\int_Xc_2(\cF)L_2...L_{n-1}\ge \int_Xc_1(\cF)^2 L_2...L_{n-1} .$$
\end{Theorem}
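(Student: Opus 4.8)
The plan is to reduce the rank-2 statement to the Bogomolov-type inequality of Theorem~\ref{log-Bogomolov-with-lifting} applied to a suitable system of logarithmic Hodge sheaves built from the inclusion $\cF\hookrightarrow \Omega_X^{[1]}(\log D)$, exactly in the spirit of Proposition~\ref{log-Bogomolov's lemma} but one rank higher. First I would set $\cE^1:=\cF$, $\cE^0:=\cO_X$, and form $\cE:=\cE^1\oplus\cE^0$ with Higgs field $\theta:\cE^1\to\cE^0\hat\otimes\Omega_X^{[1]}(\log D)=\Omega_X^{[1]}(\log D)$ given by the inclusion (and $\theta|_{\cE^0}=0$). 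This is a reflexive system of logarithmic Hodge sheaves of rank $3\le p$ (using $p\ge 3$; in fact we need $r=3\le p$, so $p\ge 3$ is exactly what is required). The Higgs field has nilpotent level $\le 2\le p-1$ automatically since it lowers the grading by one and there are only two steps.

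The key point is \emph{semistability} of $(\cE,\theta)$ with respect to $L$. A $\theta$-invariant saturated reflexive subsheaf $\cG\subset\cE$ is, on a big open subset, compatible with the grading, so its graded pieces are a subsheaf of $\cF$ in degree $1$ and a subsheaf of $\cO_X$ in degree $0$, with the degree-$1$ piece mapped by $\theta$ into $\Omega_X^{[1]}(\log D)$ and the condition that this composition lands in the degree-$0$ piece — but the degree-$0$ piece of $\cG$ must then be nonzero whenever the degree-$1$ piece is, forcing $\cG\supseteq\cO_X$ once it meets degree $1$ nontrivially. The possible $\cG$ are therefore: rank-$1$ subsheaves of $\cO_X$ (slope $\le 0=\mu_L(\cE)$ after normalizing $c_1(\cF)$ appropriately), rank-$1$ reflexive subsheaves $\cL\subset\cF\subset\Omega_X^{[1]}(\log D)$ together with $\cO_X$ (rank $2$), for which Proposition~\ref{log-Bogomolov's lemma} gives $c_1(\cL)\notin K^+_L$ hence $c_1(\cL).L_1\cdots L_{n-1}\le 0$ by self-duality of the cone, and the full $\cF$ together with $\cO_X$ (rank $3$, the whole thing). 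Here one uses $c_1(\cE)=c_1(\cF)\in\overline{K^+_L}$ to see $\mu_L(\cE)\ge 0$, and then each candidate subsheaf has slope $\le\mu_L(\cE)$; the rank-$2$ case $\cL\oplus\cO_X$ has slope $\tfrac12 c_1(\cL).L_1\cdots L_{n-1}\le 0\le\mu_L(\cE)$, and the rank-$1$ subsheaves of $\cO_X$ have nonpositive slope. The rank-$2$ subsheaves $\cF'\subset\cF$ plus $\cO_X$ where $\cF'$ has rank $1$ are covered by the $\cL\oplus\cO_X$ case; a rank-$2$ $\theta$-invariant $\cG$ whose degree-$1$ part is all of $\cF$ but whose degree-$0$ part is $0$ cannot be $\theta$-invariant since $\theta(\cF)\ne 0$. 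So $(\cE,\theta)$ is slope $L$-semistable. I expect verifying this case analysis of $\theta$-invariant subsheaves — in particular handling saturation and the passage to the big open set where everything respects the grading — to be the main obstacle, though it is routine given Proposition~\ref{log-Bogomolov's lemma}.

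Once semistability is established, Theorem~\ref{log-Bogomolov-with-lifting} (applicable since $(X,D)$ is almost liftable to $W_2(k)$ with $F$-liftable singularities in codimension $2$, and $\mathrm{rk}(\cE)=3\le p$) gives $\int_X\Delta(\cE)L_2\cdots L_{n-1}\ge 0$, i.e.
\begin{align*}
6\int_X c_2(\cE)L_2\cdots L_{n-1}\ge 2\int_X c_1^2(\cE)L_2\cdots L_{n-1}.
\end{align*}
Then I would compute the Chern classes of $\cE=\cF\oplus\cO_X$ using Lemma~\ref{short-exact-seq} in its equality form (the sequence is split, hence locally split in codimension $2$): $c_1(\cE)=c_1(\cF)$ and $\int_X\ch_2(\cE)L_2\cdots L_{n-1}=\int_X\ch_2(\cF)L_2\cdots L_{n-1}$, whence $\int_X c_2(\cE)L_2\cdots L_{n-1}=\int_X c_2(\cF)L_2\cdots L_{n-1}$ and $\int_X c_1^2(\cE)L_2\cdots L_{n-1}=\int_X c_1(\cF)^2 L_2\cdots L_{n-1}$. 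Substituting, $\Delta(\cE)=2\cdot 3\,c_2(\cE)-2\,c_1^2(\cE)=6c_2(\cF)-2c_1(\cF)^2$, so $\int_X\Delta(\cE)L_2\cdots L_{n-1}\ge 0$ reads $6\int_X c_2(\cF)L_2\cdots L_{n-1}\ge 2\int_X c_1(\cF)^2 L_2\cdots L_{n-1}$, and dividing by $2$ yields exactly
\begin{align*}
3\int_X c_2(\cF)L_2\cdots L_{n-1}\ge \int_X c_1(\cF)^2 L_2\cdots L_{n-1},
\end{align*}
as claimed. The whole argument is the rank-$3$ analogue of the rank-$2$ argument used for Proposition~\ref{log-Bogomolov's lemma}, with the assumption $p\ge 3$ entering precisely to allow $\mathrm{rk}=3$ in Theorem~\ref{log-Bogomolov-with-lifting}.
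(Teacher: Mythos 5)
Your reduction to Theorem \ref{log-Bogomolov-with-lifting} via the rank $3$ system of Hodge sheaves $(\cE=\cF\oplus\cO_X,\theta)$ is exactly the paper's starting point, and your computation of $\Delta(\cE)$ in the semistable case is correct. The gap is in your claim that $(\cE,\theta)$ is \emph{always} slope $L$-semistable. You argue that a rank $1$ reflexive $\cL\subset\cF\subset\Omega_X^{[1]}(\log D)$ satisfies $c_1(\cL).L_1\cdots L_{n-1}\le 0$ "by self-duality of the cone" from $c_1(\cL)\notin K_L^+$. This inference is false: $K_L^+$ is defined by the two conditions $c_1(\cL)^2.L_2\cdots L_{n-1}>0$ \emph{and} $c_1(\cL).L_1\cdots L_{n-1}\ge 0$, so $c_1(\cL)\notin K_L^+$ only says that if the degree is nonnegative then the square is $\le 0$; it puts no upper bound on the degree itself (self-duality gives the existence of \emph{some} class in $\overline{K_L^+}$ pairing nonpositively with $c_1(\cL)$, not nonpositivity against $L_1$). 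Sub-line-bundles of $\Omega_X^{[1]}(\log D)$ of large positive degree do occur (e.g. pullbacks of positive line bundles from the base of a fibration), so the subsystem $\cO_X\oplus\cM$ with $\cM\subset\cF$ can genuinely destabilize $(\cE,\theta)$, and Proposition \ref{log-Bogomolov's lemma} alone cannot rule this out.

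This unstable case is precisely where the paper's proof does extra work, and it is the part missing from your argument. There one takes the maximal destabilizing subsystem, shows it must be of the form $(\cO_X\oplus\cM,\theta')$ with $\cM\subset\cF$ saturated of rank $1$ and $c_1(\cM).L_1\cdots L_{n-1}>0$, and deduces from the destabilizing inequality that $(3c_1(\cM)-2c_1(\cF)).L_1\cdots L_{n-1}>0$; if $3c_1(\cM)-2c_1(\cF)$ lay in $K_L^+$, then adding $2c_1(\cF)\in\overline{K_L^+}$ would force $3c_1(\cM)\in K_L^+$, contradicting Proposition \ref{log-Bogomolov's lemma}, whence $(3c_1(\cM)-2c_1(\cF))^2.L_2\cdots L_{n-1}\le 0$. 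Combining this with the inequality of Lemma \ref{short-exact-seq} applied to $0\to\cM\to\cF\to(\cF/\cM)^{**}$, together with $c_1(\cM)^2.L_2\cdots L_{n-1}\le 0$ (again from Proposition \ref{log-Bogomolov's lemma}, now using that the degree of $\cM$ is positive), yields the Miyaoka--Yau inequality in this case as well. Without this second branch your proof only covers the situation where the auxiliary Hodge system happens to be semistable, which is not guaranteed by the hypotheses.
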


\begin{proof}
	Let us consider the system of logarithmic Hodge sheaves $(\cE:=\cE^1\oplus \cE^0, \theta )$ given
	by $\cE^1=\cF$, $\cE^0=\cO _X$ and $\theta : \cE^1=\cE \hookrightarrow  \Omega_X^{[1]}(\log D)=\cE^0\hat \otimes  \Omega_X^{[1]}(\log D)$.
	If $(\cE, \theta)$ is slope $L$-semistable, then by Lemma \ref{short-exact-seq} and
	Theorem \ref{log-Bogomolov-with-lifting} we have
	$$
	 3\int_Xc_2(\cF)L_2...L_{n-1}= 3\int_Xc_2(\cE)L_2...L_{n-1}\ge \int_Xc_1(\cE)^2 L_2...L_{n-1}= \int_Xc_1(\cF)^2 L_2...L_{n-1}.$$
	So we can assume that $(\cE, \theta)$ is not slope $L$-semistable. Let
	$(\cE', \theta ')$ be its maximal destabilizing subsystem of logarithmic Hodge subsheaves.
	$(\cE, \theta)$ contains only one saturated rank $1$ system of logarithmic Hodge subsheaves,
	namely $(\cO _X, 0)$. Since this subsystem   does not destabilize $(\cE, \theta)$, the sheaf
	$\cE'$ has rank $2$. Note that $(\cE', \theta ')$ is slope $L$-stable so $c_1(\cM) .L_1...L_{n-1}>0$.
		We can decompose  $\cE'$ into a direct sum $\cO _X\oplus \cM$, where $\cM$ is a saturated
	rank $1$ reflexive sheaf contained in $\cF$. By assumption $(\cE',\theta')$ destabilizes $(\cE, \theta)$ so
	$$\mu _L(\cE')=\frac{c_1(\cM). L_1...L_{n-1}}{2}> \mu _ L(\cE)=\frac{c_1(\cF). L_1...L_{n-1}}{3}.$$
	Therefore $(3c_1(\cM)-2c_1(\cF)).L_1...L_{n-1}>0$. If $(3c_1(\cM)-2c_1(\cF))\in K_L^{+}$ then
	$3c_1(\cM)=(3c_1(\cM)-2c_1(\cF))+2c_1(\cF)\in K_L^{+}$, which contradicts Proposition
	\ref{log-Bogomolov's lemma}. This shows that $$(3c_1(\cM)-2c_1(\cF))^2.L_2...L_{n-1}\le 0.$$
	Let us set $\cL:= (\cF/\cM)^{**}$. Then the sequence 
	$$0\to \cM \to \cF \to \cL$$
satisfies assumptions of Lemma \ref{short-exact-seq} and hence 
	$$\int _X \ch _2 (\cF)L_2...L_{n-1} \le \frac{1}{2} c_1(\cM)^2. L_2...L_{n-1}  +
	\frac{1}{2} c_1(\cL)^2. L_2...L_{n-1} ,$$
	which after rewriting gives
	\begin{align*}
		\int_X(3 c_2(\cF)-c_1(\cF)^2) L_2...L_{n-1}&+\frac{3}{4} c_1(\cM)^2. L_2...L_{n-1}\\
		&\ge -\frac{1}{4}(3c_1(\cM)-2c_1(\cF))^2.L_2...L_{n-1}\ge 0.	
	\end{align*}
Since by Proposition \ref{log-Bogomolov's lemma} we  have $ c_1(\cM)^2. L_2...L_{n-1}\le 0$, this implies the required inequality.
\end{proof}

\section{Applications to  characteristic zero} \label{Section:characteristic-0}

Here we show a few applications of our results to study varieties defined in characteristic zero. In particular, we prove Theorems \ref{Main7}, \ref{Main6} and \ref{Main4}.

\medskip

First we recall the following lemma that follows from Lemma 3.19 in the preprint version of  \cite{GKPT2} (note that any normal surface with quotient singularities is klt).

\begin{Lemma}\label{surface-cover}
	Let $X$ be a normal projective surface with at most quotient singularities defined over an algebraically closed field $k$ of characteristic $0$.
	Let $\cE$ be a coherent reflexive $\cO_X$-module. 
	Then there exists a normal projective surface $Y$ and a finite morphism $\pi: Y\to X$ 
	such that  $\pi ^{[*]}\cE$ is locally free. In this case we have
	$$\int_{X}\Delta (\cE)= \frac {1}{\deg \pi} \int_Y\Delta (\pi^{[*]}\cE)$$
	and 
	$$\int_{X}\ch _2 (\cE)= \frac {1}{\deg \pi} \int_Y\ch_2 (\pi^{[*]}\cE).$$
\end{Lemma}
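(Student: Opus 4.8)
The plan is to reduce to the known statement about $\Delta$ and Chern classes under finite covers, using the existence of a cover trivializing $\cE$. First I would invoke the cited result (Lemma 3.19 of the preprint version of \cite{GKPT2}): since $X$ is a normal projective surface with at most quotient singularities, it is klt, and there is a finite morphism $\pi\colon Y\to X$ from a normal projective surface $Y$ such that the reflexive pullback $\pi^{[*]}\cE$ is locally free. The bulk of the work is then to verify the two displayed equalities. By the very construction, $\pi^{[*]}\cE = (\pi^*\cE)^{**}$ is a vector bundle, so its Chern classes are the honest Chern classes of a bundle on a normal surface and one can compute with the projection formula.

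The key steps, in order. First, I would establish the $\ch_2$ identity. On the big open locus $U\subseteq X$ where $\cE$ is locally free and over which $\pi$ is flat (such a $U$ exists because $\cE$ is reflexive and $\pi$ is finite, so $\pi^{-1}(U)$ is big in $Y$), the maps $\pi^{[*]}\cE$ and $\pi^*(\cE|_U)$ agree, and $\ch_2$ of a reflexive sheaf on a surface is computed via the asymptotic Euler characteristic formula $\int_X\ch_2(\cE)=\lim_{m\to\infty}\chi(X,F_X^{[m]}\cE)/p^{2m}$ in positive characteristic --- but here we are in characteristic $0$, so instead I would use that $\int_X\ch_2(\cE)$ (as defined via \cite{La-Chern}, or here more simply Mumford's intersection theory on the normal surface) satisfies $\int_Y\ch_2(\pi^{[*]}\cE)=(\deg\pi)\int_X\ch_2(\cE)$: this is the standard compatibility of $\ch_2$ with finite pullback, which holds because both sides only see the bundle on the big flat locus and there $\ch_2(\pi^*\cF)=\pi^*\ch_2(\cF)$, followed by $\pi_*\pi^*=\deg\pi$. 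Second, the $c_1^2$ identity: $c_1(\pi^{[*]}\cE)=\pi^*c_1(\cE)$ in $A^1(Y)$ (again checked on the big locus, then extended since $A^1$ sees only codimension one), whence $\int_Y c_1(\pi^{[*]}\cE)^2 = (\deg\pi)\int_X c_1(\cE)^2$ by the projection formula for Mumford's intersection pairing on normal surfaces. Third, combining these with the definitions $\int\Delta(\cE)=2rc_2-(r-1)c_1^2$ and $c_2=\tfrac12 c_1^2-\ch_2$ gives the $\Delta$ equality as a formal consequence, so it suffices to prove the $\ch_2$ and $c_1^2$ statements separately, and in fact the $\Delta$ equality follows once we have them.

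The main obstacle I expect is the careful justification that $\int_X\ch_2$ and Mumford's intersection product are compatible with finite pullback in a way that picks up exactly the factor $\deg\pi$ and no correction term --- in particular that no discrepancy arises from the (codimension $2$, i.e. finitely many points) locus where $\pi$ fails to be flat or where $\cE$ fails to be locally free. The clean way to handle this is: restrict everything to the big open $U$ over which the situation is nice, use that $\ch_2$ (resp. $c_1$) of a reflexive sheaf on a normal surface is determined by its restriction to any big open subset (a statement that follows from Lemma \ref{restriction-to-open} together with the relevant properties in Theorem \ref{properties-of-ch_2}, or from the construction in \cite{La-Chern}), and then apply the projection formula and $\pi_*\pi^*=\deg\pi\cdot\mathrm{id}$ on $U$. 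Since the lemma is stated without proof in the excerpt (``follows from Lemma 3.19 in the preprint version of \cite{GKPT2}''), I would present the existence of $\pi$ as a citation and give only the short verification of the two numerical identities along the lines above.
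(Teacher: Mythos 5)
The paper does not actually prove this lemma: it is ``recalled'' as a consequence of Lemma 3.19 of the preprint version of \cite{GKPT2}, together with the fact (stated at the start of Section 7) that in characteristic zero on a surface with quotient singularities the classes $\int\ch_2$ and $\int\Delta$ being used are exactly Mumford's $\QQ$-Chern classes of \cite[Ch.~10]{Ko} and \cite{GKPT2}; with that identification, both the existence of $\pi$ and the two displayed formulas are part of the cited $\QQ$-Chern class formalism. Your proposal cites the same source for the existence of $\pi$, which is fine, and your treatment of the $c_1^2$ term (reflexive determinants agree on a big open set, Mumford pullback of Weil divisor classes, $\pi^*D_1.\pi^*D_2=\deg\pi\,(D_1.D_2)$) is correct.

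The genuine gap is in your verification of the $\ch_2$ identity. You argue that $\int_Y\ch_2(\pi^{[*]}\cE)=\deg\pi\int_X\ch_2(\cE)$ ``because both sides only see the bundle on the big flat locus'' and one can then ``apply the projection formula and $\pi_*\pi^*=\deg\pi$ on $U$.'' This does not work: $\int\ch_2$ on a surface is the degree of a $0$-cycle, and degrees of $0$-cycles cannot be computed after restricting to a non-proper big open subset; the potential discrepancy between the two sides is supported precisely on the finitely many points you have removed (the non-flat locus of $\pi$ and the singular points), which is exactly where $\ch_2$-corrections live. The paper's own Example \ref{singular-quadric} (a sequence exact on a big open set with a strict $\ch_2$ inequality) shows that $\ch_2$ of reflexive sheaves on normal surfaces is sensitive to such codimension-$2$ phenomena, and even the Frobenius analogue $\int\ch_2(F_X^{[*]}\cE)=p^2\int\ch_2(\cE)$ is a nontrivial cited theorem, not a formal projection-formula consequence. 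Note also that Theorem \ref{properties-of-ch_2}, which you invoke, is stated only in positive characteristic, while here the definition in force is \cite[5.3]{La-Chern}. The honest route is the one the paper implicitly takes: identify $\int\ch_2$ with the orbifold ($\QQ$-)Chern character, for which the formula $\int_X\ch_2(\cE)=\frac{1}{\deg\pi}\int_Y\ch_2(\pi^{[*]}\cE)$ for a cover making $\pi^{[*]}\cE$ locally free is built into the construction (independence of the chosen cover, proved by passing to a common normal cover and using Riemann--Roch for locally free sheaves), i.e.\ quote this from \cite{GKPT2}/\cite{La-Chern} rather than attempt to rederive it from restriction to big open subsets.
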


\medskip
From now on we fix the following notation in this section.
Let $X$ be a normal projective variety of dimension $n$ defined over an algebraically closed field $k$ of characteristic $0$. We assume that $X$ has quotient singularities in codimension $2$ and we fix a reduced divisor $D\subset X$ such that the pair $(X,D)$ is log canonical in codimension $2$. For sheaves on such a variety we use Chern classes defined in \cite[5.3]{La-Chern}. They coincide with classical Mumford's $\QQ$-Chern classes considered in \cite[Chapter 10]{Ko} and in \cite[Theorem  3.13]{GKPT2} (see \cite[Remark 5.9]{La-Chern}). 

\subsection{Strong restriction theorems}

Let us fix a collection  $(L_1, ...,L_{n-1})$ of ample line bundles and let
us set  $d=L_1^2L_2...L_{n-1}$.
The proof of the following theorem is based on  a standard spreading out argument.

\begin{Theorem}	\label{restriction-for-Higgs-char-0}
	Let  $(\cE , \theta )$ be a reflexive logarithmic Higgs sheaf of rank $r\ge 2$ on $(X,D)$. 
	Let $m_0$ be a non-negative integer such that $T_X (\log \, D)\otimes L_1^{\otimes m_0}$  is globally generated. Let   $m$ be an integer such that
	$$m>\max \left(\left\lfloor \frac{r-1}{ r}\int_X\Delta (\cE)L_2\dots L_{n-1} +\frac{1}{r(r-1)d}
	\right\rfloor , 2(r-1)m_0^2 \right).$$
	Let $H\in |L_1^{\otimes m}|$  be good for $(X,D)$.
	\begin{enumerate}
		\item If  $(\cE , \theta )$ is slope $(L_1,...,L_{n-1})$-stable 	then the logarithmic Higgs sheaf $(\cE , \theta )|_H$ on $(H,D\cap H)$ is  slope $(L_2|_H,\dots ,L_{n-1}|_H)$-stable.
		\item If  $(\cE , \theta )$ is slope $(L_1,...,L_{n-1})$-semistable 	and restrictions of all quotients of a Jordan--H\"older filtration of $(\cE , \theta )$ to $H$ are torsion free then  $(\cE , \theta )|_H$ is  slope $(L_2|_H,\dots ,L_{n-1}|_H)$-semistable. 
	\end{enumerate}
\end{Theorem}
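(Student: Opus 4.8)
The plan is to deduce Theorem~\ref{restriction-for-Higgs-char-0} from its positive-characteristic counterpart Theorem~\ref{restriction-for-Higgs} via a standard spreading-out argument, exactly as announced in the text before the statement. First I would choose a finitely generated $\ZZ$-subalgebra $R\subset k$ over which all the data are defined: the variety $X$, the divisor $D$, the ample line bundles $L_1,\dots,L_{n-1}$, the reflexive sheaf $\cE$ together with the Higgs field $\theta$, the integer $m_0$ (i.e. a choice of global generators of $T_X(\log D)\otimes L_1^{\otimes m_0}$), and the chosen good hypersurface $H\in|L_1^{\otimes m}|$. After possibly enlarging $R$ by inverting finitely many elements, I may assume that $\mathcal{X}\to\Spec R$ is flat with normal geometric fibres, that $\mathcal{D}$ is a relative effective reduced Weil divisor, that $\mathcal{H}$ is a relative good hypersurface for $(\mathcal{X},\mathcal{D})$ in the sense of the definition preceding Theorem~\ref{restriction-for-Higgs}, that $\mathcal{E}$ is $R$-flat with reflexive fibres and $\theta$ fibrewise a logarithmic Higgs field, and that the relevant Chern numbers $\int_X\Delta(\cE)L_2\cdots L_{n-1}$ and $d=L_1^2L_2\cdots L_{n-1}$ are constant along $\Spec R$ (using the deformation invariance of $\QQ$-Chern classes in the quotient-singularity setting, cf. \cite[5.3]{La-Chern} and Lemma~\ref{surface-cover}, which is one of the two genuinely characteristic-zero inputs here).

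Second, I would use openness of (semi)stability: the locus in $\Spec R$ where the closed geometric fibre of $(\cE,\theta)$ is slope $(L_1,\dots,L_{n-1})$-(semi)stable is constructible and contains the generic point, hence contains a dense open subset, and the same for the auxiliary hypotheses in part~(2) about torsion-freeness of the restrictions of Jordan--Hölder quotients. Likewise the conclusion we are after --- slope $(L_2|_H,\dots,L_{n-1}|_H)$-(semi)stability of $(\cE,\theta)|_H$ --- is an open condition on $\Spec R$, so it suffices to verify it after base change to a single closed point $s\in\Spec R$ with residue field $\kappa(s)$ of large characteristic $p$. At such a point the numerical bound on $m$ in Theorem~\ref{restriction-for-Higgs} involves the extra term $\frac{(r-1)\beta_r}{dr}$ with $\beta_r=\left(\tfrac{r(r-1)}{p-1}AL_1\cdots L_{n-1}\right)^2$; but $\beta_r\to 0$ as $p\to\infty$ once $A$ is fixed, so for all sufficiently large $p$ the floor appearing in Theorem~\ref{restriction-for-Higgs} equals the floor appearing in Theorem~\ref{restriction-for-Higgs-char-0}, and the hypothesis $m>\max(\dots,2(r-1)m_0^2)$ is preserved verbatim. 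One must also ensure $r\le p$ so that the Higgs machinery of Section~5 applies and that the log smooth locus $U$ and the goodness of $\mathcal{H}$ specialize well; all of this holds after shrinking $\Spec R$ further.

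Third, having fixed such an $s$, I would base-change to an algebraic closure $\overline{\kappa(s)}$ and invoke Theorem~\ref{restriction-for-Higgs} directly: the fibre $(\cE_s,\theta_s)$ is a reflexive logarithmic Higgs sheaf of rank $r\le p$ on the normal projective pair $(X_s,D_s)$, it is slope $(L_1,\dots,L_{n-1})$-stable (resp. semistable with the torsion-free Jordan--Hölder condition), $L_1$ is ample, $d>0$, and $\mathcal{H}_s$ is good for $(X_s,D_s)$ lying in $|L_1^{\otimes m}|$ with $m$ satisfying the required inequality; the theorem then yields that $j_s^{[*]}(\cE_s,\theta_s)$ on $(H_s,D_s\cap H_s)$ is slope $(j_s^*L_2,\dots,j_s^*L_{n-1})$-(semi)stable. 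Since $H$ is good, restriction of $\cE$ to $H$ is already reflexive in codimension one on $H$, so $(\cE,\theta)|_H$ and $j^{[*]}(\cE,\theta)$ agree up to codimension-two modification on $H$ and have the same slopes; hence (semi)stability of one is equivalent to that of the other, both in characteristic zero and at $s$. By openness, (semi)stability at the special fibre $s$ forces it at the generic fibre, i.e. over $k$, which is the claim.

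The main obstacle is not the specialization argument itself --- which is routine --- but checking that the characteristic-zero $\QQ$-Chern numbers used to phrase Theorem~\ref{restriction-for-Higgs-char-0} genuinely coincide with the reflexive Chern numbers of \cite{La-Chern} used in positive characteristic, \emph{and} that they are locally constant in the family $\mathcal{X}/\Spec R$. On varieties with quotient singularities in codimension two this is exactly what Lemma~\ref{surface-cover} together with \cite[Remark~5.9]{La-Chern} provides (pass to a finite cover making $\cE$ locally free, where the comparison and deformation invariance are standard), but one must be careful that the cover $\pi\colon Y\to X$ also spreads out over $R$ and that it remains a cover making $\pi^{[*]}\cE$ locally free fibrewise after specialization; shrinking $\Spec R$ handles this. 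The secondary subtlety is purely bookkeeping: making sure that ``good for $(X,D)$'' is an open condition in the family and that the goodness of $H$ is inherited by $H_s$ for $p\gg 0$, which follows from Bertini-type openness and the fact that the log smooth locus $U\subset X$ spreads out to a fibrewise log smooth open $\mathcal{U}\subset\mathcal{X}$ with $\mathcal{H}\cap\mathcal{U}$ fibrewise big in $\mathcal{H}$.
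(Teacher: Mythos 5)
Your proposal follows essentially the same route as the paper's own proof: spread out over a finitely generated $\ZZ$-algebra, use openness of slope (semi)stability in flat families, handle the crucial non-obvious point that $\int_X\Delta(\cE)L_2\dots L_{n-1}$ stays constant under specialization by passing (after reducing to surfaces) to the finite cover of Lemma \ref{surface-cover} where $\cE$ becomes locally free, and then apply Theorem \ref{restriction-for-Higgs} at a geometric fibre of large residue characteristic so that $\beta_r\to 0$ and $r\le p$. The only difference is that you leave the verification that the spread-out cover satisfies $j_{\bar s}^*(\tilde\pi^{[*]}\tilde\cE)\simeq \tilde\pi_{\bar s}^{[*]}(\tilde\cE_{\bar s})$ at the level of a remark, whereas the paper carries out this reflexivity/diagram argument explicitly; this is routine to fill in and does not affect the correctness of your plan.
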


\begin{proof} 
	We can find a subring $R\subset k$, which is finitely generated over $\ZZ$ and there exists a flat projective morphism $\cX \to S=\Spec R$ with a relative reduced Weil divisor $\cD$ on $\cX/S$ such that $(X, D)\simeq (\cX\times _S\Spec k, \cD \times _S\Spec k)$. We can assume that 
there exist line bundles $\cL_1,..., \cL_n $ on $\cX$ lifting 	
$L_1, ...,L_{n-1}$, a relative logarithmic Higgs sheaf $(\tilde \cE , \tilde \theta : T_{\cX/S}(\log \cD) \otimes _{\cO_X}\tilde \cE \to \tilde \cE)$ lifting   $(\cE , \theta )$ and a relative effective Cartier divisor 
$\cH \in  |\cL_1^{\otimes m}|$ lifting $H$ (in particular, $\cH\to S$ is flat). Shrinking $S$ if necessary we can assume that the following conditions are satisfied:
\begin{enumerate}
\item $S$ is regular,
	\item all fibers of $\cX\to S$ and  $\cH\to S$  are geometrically integral and geometrically normal,
	\item  $\cL_1,..., \cL_n $ are relatively ample,
	\item $T_{\cX /S} (\log \, \cD)\otimes \cL_1^{\otimes m_0}$ is relatively globally generated,
	\item for all closed points $s\in S$  the fiber $\cX_s$ is liftable modulo $W_2(\kappa (s))$ (see the proof of \cite[Theorem 7]{La3}),
		\item  for all geometric points $\bar s$ of $S$, $\cH_{\bar s}$ is good for $(\cX _{\bar s}, \cD _{\bar s})$,
	\item for all geometric points $\bar s$ of $S$ the sheaf $\tilde \cE_{\bar s}$ is a coherent reflexive $\cO_{\cX_{\bar s}}$-module,
	\item a fixed Jordan--H\"older filtration $\cE_{\bullet}$ of $(\cE, \theta)$ extends to a filtration $\tilde \cE_{\bullet}$ of $(\tilde \cE , \tilde \theta)$.
\end{enumerate} 

Following the proof of \cite[Theorem 4.2]{Ma} (see also \cite[Proposition 2.3.1]{HL}), one can see that geometric slope (semi)stability of logarithmic Higgs sheaves is an open condition in flat families. 
It is sufficient to prove that there exists some geometric point $\bar s$ of $S$, the logarithmic Higgs sheaf $(\tilde \cE , \tilde \theta )|_{\cH_{\bar s}}$ on $(\cX _{\bar s},\cH _{\bar s})$ is  slope $(\cL_2|_{\cH _{\bar s}},\dots ,\cL_{n-1}|_{\cH _{\bar s}})$-(semi)stable. Then  the restriction of $(\tilde \cE , \tilde \theta)|_{\cH}$ to the fiber of $\cH\to S$ over the generic geometric point of $S$ is also slope (semi)stable, proving the theorem.

By the above openness of semistability, we can assume that for all geometric points $\bar s$ of $S$ the logarithmic Higgs sheaf $(\tilde \cE _{\bar s}, \tilde \theta _{\bar s})$ is slope (semi)stable. By the same argument the restriction of the  filtration $\tilde \cE_{\bullet}$ to $\cX_{\bar s}$ gives a Jordan--H"older filtration of  $(\tilde \cE _{\bar s}, \tilde \theta _{\bar s})$. Note also that restrictions of quotients of this filtration to $\cH_{\bar s}$ are torsion free for $\bar s$ over an open subset of $S$.
We need to check that there exists some non-empty open subset $U\subset S$ such that for all geometric points $\bar s$ over  closed points of $U$ we have
$$\int_{\cX_{\bar s}}\Delta (\tilde \cE _{{\bar s}})\cL_2|_{\cX_{\bar s}}\dots \cL_{n-1}|_{\cX_{\bar s}}= \int_X\Delta (\cE)L_2\dots L_{n-1}.$$
This is not obvious as $\tilde \cE$ is not locally free and Chern numbers of reflexive sheaves do not remain constant in flat families (see Example \ref{singular-quadric}).
Using Theorems \ref{main1}, \ref{properties-of-ch_2} and \cite[Theorem 5.8]{La-Chern} we can reduce to the surface case. By Lemma \ref{surface-cover} we can find a normal projective surface $Y$ and a finite covering $\pi: Y\to X$ such such that $\pi^{[*]}\cE$  is locally free. Then, shrinking $S$ if necessary, we can find a flat projective morphism $\tilde \pi: \cY \to S$ and a morphism $\cY \to \cX$ lifting $\pi: Y\to X$. We can also assume that all fibers of $g: \cY\to S$ are geometrically integral and geometrically normal. Since $S$ is normal, the schemes $\cX$ and $\cY$ are also normal.
So we can consider  $\tilde \pi^{[*]}\tilde \cE$, which is reflexive on $\cY$.
This sheaf is locally free outside of a closed subscheme $Z\subset \cY$ of codimension $\ge 2$. Since $Z$ does not intersect the generic fiber of $\cY\to S$,  $\tilde \pi^{[*]}\tilde \cE$ is locally free over a non-empty open subset $S'=S\backslash g(Z)\subset S$. Now let us consider a commutative diagram
$$
\xymatrix{
	\cY_s \ar[r]^{j_s}\ar[d]^{\tilde \pi _s}&\cY\ar[d]^{\tilde \pi}\\
\cX_s \ar[r]^{i_s}& \cX. \\
}$$
 Since $j_s^*(\tilde \pi^{[*]}\tilde \cE)$ is locally free for $s\in S'$ we have an induced map $\varphi _s$ that fits into a commutative diagram
 $$
 \xymatrix{
 \tilde \pi_s^{*} i_s^*(\tilde \cE)\ar[r]\ar[d]^{\simeq}& \tilde \pi_s^{[*]} (i_s^*(\tilde \cE))\ar[d]^{\varphi _s}\\
 j_s^*(\tilde \pi^{*}\tilde \cE)\ar[r]&	j_s^*(\tilde \pi^{[*]}\tilde \cE).\\
 }$$
Let us set $U:=\{x\in X: \tilde \cE _x\hbox{ is a free $\cO_{X,x}$-module}\}$.
Since $\varphi_s$ is an isomorphism over a big open subset 
$\cY_s \cap \tilde \pi^{-1}( U)$ 
of $\cY_S$ and $\tilde \pi_s^{[*]} (i_s^*(\tilde \cE))$ is reflexive, $\varphi _s$ is an isomorphism.
So by Theorem \ref{properties-of-ch_2}  and Lemma \ref{surface-cover} we have
\begin{align*}
		\int_{\cX_s}\Delta (\cE _s)=\frac {1}{\deg \tilde \pi_s} \int_{\cY_s}\Delta (\tilde \pi_s^{[*]}\cE _s)=
\frac {1}{\deg \pi} \int_Y\Delta (\pi^{[*]}\cE)= 	\int_X\Delta (\cE)
\end{align*}
as claimed.
Now the required assertion follows by  applying Theorem \ref{restriction-for-Higgs} to fibers over geometric points $\bar s$ of $S$ with large characteristic of the residue field (then $\beta _r (\bar s)\to 0$).
\end{proof}

\medskip

The same argument as above show also that Theorem \ref{Main1} implies
the following strong restriction theorem of Bogomolov's type:

\begin{Theorem}\label{Mehta-Ramanathan}
	Let  $\cE$ be a coherent reflexive $\cO_X$-module of rank $r\ge 2$. Let  $m$ be an integer such that
	$$m>\left\lfloor \frac{r-1}{ r}\int_X\Delta (\cE)L_2\dots L_{n-1} +\frac{1}{r(r-1)d}
\right\rfloor $$
and let $H\in |L_1^{\otimes m}|$  be a normal hypersurface. \begin{enumerate}
	\item 
	If $\cE$ is slope $(L_1,...,L_{n-1})$-stable 	then $\cE|_H$ is  slope $(L_2|_H,\dots ,L_{n-1}|_H)$-stable.
	\item If $\cE$ is slope $(L_1,...,L_{n-1})$-semistable 	and restrictions of all quotients of a Jordan--H\"older filtration of $\cE$ to $H$ are torsion free	then $\cE|_H$ is  slope $(L_2|_H,\dots ,L_{n-1}|_H)$-semistable. 
\end{enumerate}	
\end{Theorem}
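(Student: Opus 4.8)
The plan is to mimic the spreading-out argument of Theorem~\ref{restriction-for-Higgs-char-0}, but now invoking the non-logarithmic strong restriction result of the positive-characteristic part of the paper, namely Theorem~\ref{Main1} (equivalently Theorem~\ref{strong-restriction-stable}), rather than the Higgs version. So the first step is to choose a finitely generated $\ZZ$-subalgebra $R\subset k$ over which everything is defined: a flat projective model $\cX\to S=\Spec R$ with $(X)\simeq \cX\times_S\Spec k$, line bundles $\cL_1,\dots,\cL_{n-1}$ lifting $L_1,\dots,L_{n-1}$, a reflexive coherent sheaf $\tilde\cE$ lifting $\cE$, and a relative Cartier divisor $\cH\in|\cL_1^{\otimes m}|$ lifting $H$. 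Shrinking $S$, arrange that $S$ is regular, all fibers of $\cX\to S$ and $\cH\to S$ are geometrically integral and geometrically normal, the $\cL_i$ are relatively ample, and for all geometric points $\bar s$ the sheaf $\tilde\cE_{\bar s}$ is reflexive on $\cX_{\bar s}$. Since $X$ has quotient singularities in codimension~$2$ (hence in particular is regular, or has $F$-liftable singularities, in codimension~$2$ after reduction), we may also arrange that the reductions $\cX_{\bar s}$ satisfy whatever mild lifting hypothesis Theorem~\ref{Main1} needs --- but in fact Theorem~\ref{Main1} as stated needs \emph{no} lifting hypothesis on $X$ at all, only characteristic $p>0$, so this is automatic.

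Next, I would handle the Chern-number matching, which is the subtle point exactly as in the proof of Theorem~\ref{restriction-for-Higgs-char-0}: Chern numbers of reflexive sheaves are not constant in flat families (Example~\ref{singular-quadric}), so one cannot simply say $\int_{\cX_{\bar s}}\Delta(\tilde\cE_{\bar s})\cL_2\dots\cL_{n-1}=\int_X\Delta(\cE)L_2\dots L_{n-1}$. Using Theorems~\ref{main1}, \ref{properties-of-ch_2} and \cite[Theorem~5.8]{La-Chern} reduce to the surface case; then Lemma~\ref{surface-cover} produces a finite cover $\pi:Y\to X$ with $\pi^{[*]}\cE$ locally free, spread this cover out to $\tilde\pi:\cY\to S$, observe that $\tilde\pi^{[*]}\tilde\cE$ is reflexive and locally free away from a closed subset $Z$ not meeting the generic fiber, so locally free over $S'=S\setminus g(Z)$. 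The commutative-diagram argument with $\varphi_s$ (comparing $\tilde\pi_s^{[*]}i_s^*\tilde\cE$ with $j_s^*\tilde\pi^{[*]}\tilde\cE$) shows $\varphi_s$ is an isomorphism for $s\in S'$, whence by Theorem~\ref{properties-of-ch_2} and Lemma~\ref{surface-cover}
$$\int_{\cX_s}\Delta(\cE_s)=\tfrac{1}{\deg\tilde\pi_s}\int_{\cY_s}\Delta(\tilde\pi_s^{[*]}\cE_s)=\tfrac{1}{\deg\pi}\int_Y\Delta(\pi^{[*]}\cE)=\int_X\Delta(\cE)$$
for all closed points $s$ in a nonempty open $U\subset S'$. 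This guarantees that the numerical bound on $m$ in Theorem~\ref{Main1} is satisfied fiberwise over $U$.

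Then I would use openness of geometric slope (semi)stability in flat families --- following the proof of \cite[Theorem~4.2]{Ma} / \cite[Proposition~2.3.1]{HL} --- to reduce to showing the conclusion for a single geometric fiber of large residue characteristic. Picking such a geometric point $\bar s$ over a closed point of $U$: on $\cX_{\bar s}$ the sheaf $\tilde\cE_{\bar s}$ is slope $(L_1,\dots,L_{n-1})$-stable (by openness, since $\cE$ is), $\cH_{\bar s}$ is a normal hypersurface in $|\cL_1^{\otimes m}|$, the $\Delta$-number matches $\int_X\Delta(\cE)L_2\dots L_{n-1}$, and crucially $\beta_r(\bar s)\to 0$ as the residue characteristic grows, so for $\bar s$ of large enough characteristic the floor term
$$\left\lfloor\tfrac{r-1}{r}\int_{\cX_{\bar s}}\Delta(\tilde\cE_{\bar s})L_2\dots L_{n-1}+\tfrac{1}{dr(r-1)}+\tfrac{(r-1)\beta_r(\bar s)}{dr}\right\rfloor$$
drops to $\left\lfloor\tfrac{r-1}{r}\int_X\Delta(\cE)L_2\dots L_{n-1}+\tfrac{1}{dr(r-1)}\right\rfloor$, so the hypothesis $m>\lfloor\cdots\rfloor$ of Theorem~\ref{Main1} is met. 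Applying Theorem~\ref{Main1} to $\tilde\cE_{\bar s}$ on $\cX_{\bar s}$ gives that $\tilde\cE_{\bar s}|_{\cH_{\bar s}}$ is slope $(L_2|_{\cH_{\bar s}},\dots,L_{n-1}|_{\cH_{\bar s}})$-stable; openness then propagates stability to the fiber over the generic geometric point of $S$, i.e.\ to $\cE|_H$ over $k$. For part~(2), the same argument with the semistable version (Corollary~\ref{Bogomolov-restriction}), together with the hypothesis that the Jordan--H\"older quotients restrict to torsion-free sheaves (which can be arranged to hold over an open subset of $S$ by spreading out the filtration), yields semistability of $\cE|_H$.

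The main obstacle is precisely the Chern-number continuity in the family: without Lemma~\ref{surface-cover} and the finite-cover trick one cannot control $\int_{\cX_{\bar s}}\Delta(\tilde\cE_{\bar s})L_2\dots L_{n-1}$, and this is exactly where the hypothesis that $X$ has quotient singularities in codimension~$2$ (so that a local finite Galois cover trivializing the sheaf exists in codimension~$2$, by \cite{GKPT2}) is essential. Everything else --- the existence of the model, openness of (semi)stability, the asymptotic vanishing of $\beta_r$ --- is by now standard and parallels the proof of Theorem~\ref{restriction-for-Higgs-char-0} verbatim with the Higgs field deleted.
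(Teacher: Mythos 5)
Your proposal is correct and follows essentially the same route as the paper, which proves Theorem \ref{Mehta-Ramanathan} by exactly the spreading-out argument of Theorem \ref{restriction-for-Higgs-char-0} with the Higgs field removed: reduction to a model over a finitely generated $\ZZ$-algebra, matching of $\int_X\Delta(\cE)L_2\dots L_{n-1}$ on fibers via Lemma \ref{surface-cover} (this is where quotient singularities in codimension $2$ enter), openness of geometric slope (semi)stability, and application of Theorem \ref{Main1} (resp.\ Corollary \ref{Bogomolov-restriction}) on a fiber of large residue characteristic where $\beta_r(\bar s)\to 0$. No gaps to report.
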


\begin{Remark}
Although Theorem \ref{Main1} works for any normal varieties in positive characteristic, it does not seem easy to use a similar spreading out argument to obtain even the usual Mehta--Ramanathan theorem for ample multipolarizations on a general normal projective variety in characteristic zero.
The problem is that the choice of spreading out depends on $m$ as we need to
spread out divisors  $H\in |L_1^{\otimes m}|$. But since Chern numbers of reflexive sheaves are in general not well behaved in families of normal varieties, we cannot choose one $m$ so that Theorem \ref{Main1} works for this fixed $m$ on even one geometric fiber $\cX_{\bar s}$.
However, using Corollary \ref{Bogomolov-restriction} one can show bounds on the maximal destabilizing slope of $\cE|_H$ on any normal variety in terms of numerical invariants of reductions of $\cE$.
\end{Remark}

\subsection{Bogomolov's inequality for logarithmic Higgs sheaves}

We will need an analogue of the first part of Lemma \ref{short-exact-seq} in the characteristic zero case (the analogue of the second part also holds but we will not need it):

\begin{Lemma}\label{short-exact-seq-0}
Let  $L=(L_1,...,L_{n-2})$ be a collection of nef line bundles on $X$.
If $$0\to \cE_1\to \cE \to \cE_2\to 0$$
is a left exact sequence of reflexive sheaves on $X$, which is also right exact 
on some big open subset of $X$  then
$$\int _X \ch _2 (\cE)L_1...L_{n-2} \le \int _X \ch _2 (\cE _1) L_1...L_{n-2}  +\int _X \ch _2 (\cE _2)L_1...L_{n-2} .$$
\end{Lemma}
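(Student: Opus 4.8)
The plan is to reduce Lemma \ref{short-exact-seq-0} to the surface case, exactly as in the proof of Lemma \ref{short-exact-seq}, and then invoke the characteristic-zero counterpart of the surface estimate from \cite{La-Chern}. First I would note that both sides of the claimed inequality depend continuously on $L_1,\dots,L_{n-2}$ as $\QQ$-multilinear forms on $N^1(X)_{\QQ}$, so by continuity we may assume each $L_i$ is an ample $\QQ$-line bundle; passing to a common multiple, we may assume the $L_i$ are very ample line bundles (the inequality is homogeneous of degree $1$ in each $L_i$, so scaling does not affect it). Next, by Theorem \ref{properties-of-ch_2}, (2) we may enlarge the ground field and assume $k$ is uncountable, and by part (3) of that theorem (whose char-$0$ analogue is recorded in \cite[Theorem 5.8]{La-Chern}) we may restrict $\cE$, $\cE_1$, $\cE_2$ to a very general complete intersection surface $S\in |L_1|\cap\cdots\cap|L_{n-2}|$. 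On such a general $S$ the restricted sequence $0\to \cE_1|_S\to \cE|_S\to \cE_2|_S$ is still left exact and stays right exact on a big open subset of $S$, and the restricted sheaves remain reflexive (using \cite[Corollary 1.1.14]{HL}). This reduces the statement to: for reflexive sheaves on a normal projective surface $Y$ with at most quotient singularities (here $Y=S$, which inherits quotient singularities in codimension $2$, hence is automatically a quotient surface since it is a surface), a short exact sequence that is locally split in codimension $2$ satisfies subadditivity of $\int_Y \ch_2$.

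The surface case itself is where the real content lies, and I expect this to be the main obstacle — but it should already be available in \cite{La-Chern}. The key point is the characterization of $\int_Y \ch_2(\cF)$ for a reflexive sheaf $\cF$ on a quotient surface via a finite cover: by Lemma \ref{surface-cover} there is a finite morphism $\pi: Z\to Y$ with $Z$ a normal projective surface such that $\pi^{[*]}\cF$ is locally free, and $\int_Y \ch_2(\cF) = \tfrac{1}{\deg\pi}\int_Z \ch_2(\pi^{[*]}\cF)$. Applying this simultaneously to $\cE$, $\cE_1$, $\cE_2$ — after choosing one cover $\pi$ that trivializes all three reflexive hulls at once (which one can arrange by composing the covers, or by the argument of \cite[Lemma 3.19]{GKPT2}) — I would pull the sequence back: $\pi^*$ is right exact, so we get $\pi^{[*]}\cE_1 \to \pi^{[*]}\cE \to \pi^{[*]}\cE_2 \to 0$ with the first map injective on a big open subset of $Z$. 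Now all three sheaves are locally free, so on the smooth locus $Z_{\mathrm{reg}}$ — which is big — this is an honest exact sequence of vector bundles up to a cokernel supported in codimension $\ge 2$, and the classical inequality $\ch_2(\cE) \le \ch_2(\cE_1)+\ch_2(\cE_2)$ for subsheaves of vector bundles (the cokernel of $\pi^{[*]}\cE_1\to \pi^{[*]}\cE$ being a skyscraper contributing nonnegatively to $-\ch_2$) gives the result on $Z$; dividing by $\deg\pi$ transfers it to $Y$.

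Alternatively — and this is probably the cleaner write-up — one may simply quote the relevant statement directly from \cite{La-Chern}, since the paper's own Chern-class formalism in characteristic zero is set up there precisely so that subadditivity of $\ch_2$ along sequences that are right exact in codimension $2$ holds on quotient surfaces; the proof in \cite{La-Chern} goes through the finite-cover reduction just sketched, or equivalently through the $\QQ$-Chern class machinery on a resolution. So the body of the proof of Lemma \ref{short-exact-seq-0} would read: reduce to the surface case by the continuity/restriction argument above, then apply the corresponding surface statement from \cite{La-Chern}. The only subtlety to be careful about is that the reduction to a surface must preserve both the left-exactness everywhere and the right-exactness on a big open subset; this follows because restricting to a very general hyperplane section commutes with taking kernels and cokernels of the given maps away from a codimension-$2$ locus, and a very general section misses the relevant bad loci.
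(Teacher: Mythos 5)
Your proposal follows essentially the same route as the paper: reduce to a very general complete intersection surface, pass to the finite cover of Lemma \ref{surface-cover} on which all three reflexive pullbacks are locally free (one cover suffices, e.g.\ by applying the lemma to $\cE\oplus\cE_1\oplus\cE_2$), prove subadditivity of $\ch_2$ upstairs, and divide by $\deg\pi$. Two points in your cover-level argument need repair, though. First, the skyscraper you name is the wrong one: the cokernel of $\pi^{[*]}\cE_1\to\pi^{[*]}\cE$ has rank $\rk\cE_2$; what is a skyscraper is the cokernel of the induced map $\pi^{[*]}\cE/\pi^{[*]}\cE_1\to\pi^{[*]}\cE_2$. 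Second, and more seriously, you only assert injectivity of $\pi^{[*]}\cE_1\to\pi^{[*]}\cE$ on a big open subset; for the bookkeeping to close you need the reflexivized pullback sequence to be \emph{left exact on all of the cover}, i.e.\ $\pi^{[*]}\cE_1=\ker(\pi^{[*]}\cE\to\pi^{[*]}\cE_2)$, so that the quotient embeds into $\pi^{[*]}\cE_2$ with finite-length cokernel. If exactness in the middle held only on a big open subset, the quotient could have a torsion kernel whose length enters the estimate with the \emph{wrong} sign, and the inequality would not follow. The paper gets global left exactness exactly as in Lemma \ref{short-exact-seq}: the kernel is reflexive (kernel of a map from a reflexive to a torsion-free sheaf) and agrees with $\pi^{[*]}\cE_1$ on a big open subset, hence equals it. Finally, rather than computing ``$\ch_2$ on $Z_{\mathrm{reg}}$'' (which is not a global intersection-theoretic statement, and $\ch_2$ of the non-reflexive quotient is not defined on the singular cover in the characteristic-zero setup), the paper compares Euler characteristics, $\chi(\pi^{[*]}\cE)\le\chi(\pi^{[*]}\cE_1)+\chi(\pi^{[*]}\cE_2)$, and then applies Riemann--Roch for locally free sheaves on the normal cover; since $c_1$'s add and ranks add, the linear terms cancel and the $\chi$-inequality becomes the $\ch_2$-inequality, which descends by Lemma \ref{surface-cover}(2). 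With these adjustments your argument coincides with the paper's proof; the alternative of simply quoting \cite{La-Chern} is not what the paper does (it proves the surface statement here via the cover).
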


\begin{proof}
As in the proof of Lemma \ref{short-exact-seq} we first reduce to the surface case. Then by Lemma \ref{surface-cover} there exists a normal projective surface $Y$ and a finite morphism $\pi: Y\to X$ 
such that  $\pi ^{[*]}\cE$,  $\pi ^{[*]}\cE_1$ and $\pi ^{[*]}\cE _2$ are locally free. 
As in the proof of Lemma \ref{short-exact-seq} we see that the  sequence
$$ 0\to\pi ^{[*]} \cE_1\to\pi ^{[*]} \cE \to\pi ^{[*]} \cE_2\to 0 $$
is  left exact on $Y$ and right exact on some big open subset of $Y$. Therefore
$$\chi(Y,\pi ^{[*]} \cE )\le \chi(Y,\pi ^{[*]} \cE _1)+\chi(Y,\pi ^{[*]} \cE _2).$$
Using the Riemann--Roch theorem for locally free sheaves on normal projective surfaces this can be rewritten as
$$\int _Y \ch _2 (\pi ^{[*]} \cE) \le \int _Y \ch _2 (\pi ^{[*]}\cE _1)  +\int _Y \ch _2 (\pi ^{[*]}\cE _2) .$$
Dividing by the degree of $\pi$, we get  the required inequality from the second part of Lemma \ref{surface-cover}.	
\end{proof}

\begin{Theorem} 
Let  $L=(L_1,...,L_{n-1})$ be a collection of nef line bundles on $X$ such that $L_1^2L_2....L_{n-1}>0$.
For any slope $(L_1,...,L_{n-1})$-semistable logarithmic reflexive Higgs sheaf $(\cE, \theta)$ we have
	$$\int_X \Delta (\cE) L_2...L_{n-1}\ge 0.$$
\end{Theorem}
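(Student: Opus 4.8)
The plan is to prove this statement by the same spreading-out argument used in the proof of Theorem \ref{restriction-for-Higgs-char-0}, replacing the restriction-theorem input by the positive-characteristic Bogomolov inequality of Theorem \ref{log-Bogomolov-with-lifting} (i.e. Theorem \ref{Main3}). By Corollary \ref{deformation-to-system} and Lemma \ref{Chern-classes-filtrations}(3) one may even first replace $(\cE,\theta)$ by the reflexive hull of the associated graded of its deformation to a slope-semistable system of Hodge sheaves, since this only decreases $\int_X\Delta(\cE)L_2\dots L_{n-1}$; this is not essential but makes the semistability bookkeeping below cleaner.

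First I would choose a finitely generated $\ZZ$-subalgebra $R\subset k$, a flat projective morphism $\cX\to S=\Spec R$ with a relative reduced Weil divisor $\cD$ on $\cX/S$, line bundles $\cL_1,\dots,\cL_{n-1}$ on $\cX$ and a relative reflexive logarithmic Higgs sheaf $(\tilde\cE,\tilde\theta)$ on $\cX/S$ recovering $(X,D)$, $L_1,\dots,L_{n-1}$ and $(\cE,\theta)$ after the base change $\Spec k\to S$. After shrinking $S$ I would arrange that: $S$ is regular; all fibers of $\cX\to S$ are geometrically integral and geometrically normal; $\tilde\cE_{\bar s}$ is a coherent reflexive $\cO_{\cX_{\bar s}}$-module for every geometric point $\bar s$; each $\cL_i$ restricts to a nef line bundle on every geometric fiber (intersection numbers being constant in flat families, $(\cL_1|_{\cX_{\bar s}})^2\cL_2|_{\cX_{\bar s}}\cdots\cL_{n-1}|_{\cX_{\bar s}}=L_1^2L_2\cdots L_{n-1}>0$, so $\cL_1\cdots\cL_{n-1}$ stays numerically non-trivial on each fiber); $(\tilde\cE_{\bar s},\tilde\theta_{\bar s})$ is slope $(\cL_1|_{\cX_{\bar s}},\dots,\cL_{n-1}|_{\cX_{\bar s}})$-semistable for every geometric $\bar s$ (geometric slope semistability of logarithmic Higgs sheaves being an open condition in flat families, cf. the proof of \cite[Theorem 4.2]{Ma} and \cite[Proposition 2.3.1]{HL}); and, for every closed point $s$ of large residue characteristic, the pair $(\cX_s,\cD_s)$ is almost liftable to $W_2(\kappa(s))$ and has $F$-liftable singularities in codimension $2$. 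This last point is where the hypotheses on the singularities of $X$ and of $(X,D)$ enter: it follows from the facts recalled in Subsection \ref{F-liftability} (reductions of quotient surface singularities, resp. of log canonical surface pairs, are $F$-liftable for large $p$, resp. for a dense set of primes) together with the standard lifting-modulo-$W_2$ argument for the log smooth big open locus as in the proof of \cite[Theorem 7]{La3}.

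Next I would establish the constancy of the Chern number along the family, namely that after a further shrinking of $S$,
\[
\int_{\cX_{\bar s}}\Delta(\tilde\cE_{\bar s})\,\cL_2|_{\cX_{\bar s}}\cdots\cL_{n-1}|_{\cX_{\bar s}}=\int_X\Delta(\cE)L_2\cdots L_{n-1}
\]
for all geometric points $\bar s$ over a dense open subset of $S$. Using Theorems \ref{main1}, \ref{properties-of-ch_2} and \cite[Theorem 5.8]{La-Chern} this reduces to the surface case, and there — since Chern numbers of reflexive sheaves that are not locally free are not deformation invariant (Example \ref{singular-quadric}) — I would use the finite covering trick of Lemma \ref{surface-cover}: spread out a finite morphism $\pi\colon Y\to X$ with $\pi^{[*]}\cE$ locally free to a morphism $\cY\to\cX$ over $S$, check (exactly as in the proof of Theorem \ref{restriction-for-Higgs-char-0}) that the natural map $\tilde\pi_s^{[*]}(i_s^*\tilde\cE)\to j_s^*(\tilde\pi^{[*]}\tilde\cE)$ is an isomorphism over a dense open of $S$ because both sheaves are reflexive and agree over a big open of the fiber, and then conclude from the invariance of Chern numbers of the locally free sheaf $\tilde\pi^{[*]}\tilde\cE$ in flat families together with the degree-weighted pullback formula of Lemma \ref{surface-cover}.

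Finally, I would pick a closed point $s\in S$ of residue characteristic $p>r=\rk\cE$ lying in the dense open subsets produced above, and apply Theorem \ref{log-Bogomolov-with-lifting} to the slope-semistable reflexive logarithmic Higgs sheaf $(\tilde\cE_{\bar s},\tilde\theta_{\bar s})$ of rank $r\le p$ on the pair $(\cX_{\bar s},\cD_{\bar s})$, which is almost liftable to $W_2$ and has $F$-liftable singularities in codimension $2$; this yields $\int_{\cX_{\bar s}}\Delta(\tilde\cE_{\bar s})\,\cL_2|_{\cX_{\bar s}}\cdots\cL_{n-1}|_{\cX_{\bar s}}\ge 0$, and combined with the Chern-number constancy it gives $\int_X\Delta(\cE)L_2\cdots L_{n-1}\ge 0$. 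The main obstacle is the spreading-out bookkeeping: simultaneously arranging that the reductions $(\cX_s,\cD_s)$ inherit the liftability and $F$-liftability-in-codimension-$2$ needed by Theorem \ref{log-Bogomolov-with-lifting}, and proving the Chern-number constancy for the non-locally-free sheaf $\tilde\cE$ via the finite covering of Lemma \ref{surface-cover}; the remaining ingredients are routine openness and continuity statements.
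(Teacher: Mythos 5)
There is a genuine gap, and it is exactly at the point where the statement goes beyond ample polarizations. Your spreading-out step asserts that after shrinking $S$ each $\cL_i$ restricts to a \emph{nef} line bundle on every geometric fiber. Nefness, unlike ampleness, is not an open condition in families: the locus of points $s\in S$ over which $\cL_i|_{\cX_{\bar s}}$ is nef is in general only a countable intersection of open subsets, and over an arithmetic base $S=\Spec R$ it may contain the generic point but no closed point at all (this is closely related to the failure of nefness/strong semistability under reduction modulo $p$, which is a well-known open problem). Constancy of the intersection numbers $L_1^2L_2\cdots L_{n-1}$ does not help here. Consequently you cannot guarantee even a single closed fiber of large residue characteristic on which Theorem \ref{log-Bogomolov-with-lifting} is applicable with the restricted collection, and the same problem infects your appeal to openness of slope semistability, which in the cited references (\cite{Ma}, \cite{HL}) is an ample-polarization statement. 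The paper avoids this entirely: it proves the inequality first for \emph{ample} collections (reducing to the surface case by Theorem \ref{restriction-for-Higgs-char-0} and then spreading out, using that finite quotients of smooth affine log surface pairs are $F$-liftable in large characteristic, cf. \cite[Lemma 4.21]{Zd}), and then deduces the nef case purely in characteristic zero by a perturbation argument: fix an ample $A$, set $L_i(t)=L_i+tA$, use that the Harder--Narasimhan filtration of $(\cE,\theta)$ with respect to $(L_1(t),\dots,L_{n-1}(t))$ is independent of small $t\in\QQ_{>0}$, apply a characteristic-zero analogue of Lemma \ref{needed-for-boundedness} (coming from Lemma \ref{short-exact-seq-0}), and let $t\to 0$. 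Your proposal contains no substitute for this limiting step, so as written it only proves the ample case.

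Two further points deserve attention even in the ample case. First, you want to arrange that the $n$-dimensional reductions $(\cX_s,\cD_s)$ have $F$-liftable singularities in codimension $2$, and you justify this partly by the claim that reductions of log canonical surface pairs are $F$-liftable for a dense set of primes; but the paper explicitly states this fact without proof and is careful \emph{not} to use it, relying instead (after restricting to surfaces) on the quotient structure of the singularities and \cite[Lemma 4.21]{Zd}. If you work directly in dimension $n$ you must supply an argument for the codimension-$2$ quotient structure of the log pair and its reduction; this is plausible but not routine bookkeeping, and as stated it leans on an unproved assertion. Second, your Chern-number constancy step reduces to surfaces via Theorem \ref{properties-of-ch_2}, whose restriction statement requires very ample classes, so for nef $L_i$ you would again need a multilinearity/limit argument; this is minor compared with the nefness issue above, but it reinforces that the clean route is: ample case by spreading out, nef case by perturbation in characteristic zero.
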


\begin{proof}	
First assume that $L_1,...,L_{n-1}$ are ample. Then by the above theorem we can restrict to the surface case.  Since finite quotients of smooth affine log surface pairs are $F$-liftable in large characteristics (cf. \cite[Lemma 4.21]{Zd}), we can apply Theorem \ref{log-Bogomolov-with-lifting} and an easy spreading out argument.

In general, we reduce to the above case by an argument analogous to that from the proof of \cite[3.6]{La1}. Namely, we fix an ample line bundle $A$ and consider the classes $L_i(t)=c_1(L_i)+tc_1(A)$ in $N_L(X)\otimes \QQ $ for $t\in \QQ_{>0}$. These classes are ample and the Harder--Narasimhan filtration of $(\cE, \theta)$ with respect to $(L_1(t),...,L_{n-1}(t))$ is independent of $t$ for small $t\in \QQ_{>0}$. 
We have an analogue of Lemma \ref{needed-for-boundedness} for  normal projective varieties in characteristic zero that have quotient singularities in codimension $2$ (this follows from Lemma \ref{short-exact-seq-0} in the same way as Lemma \ref{needed-for-boundedness} follows from Lemma \ref{short-exact-seq}).
Applying this result to the above filtration, using the inequality  for ample collections of line bundles and taking the limit as $t\to 0$ gives the required inequality.
\end{proof}

\section{Appendix: inverse Cartier transform after Lan--Sheng--Zuo}\label{Appendix}

In this section we recall the construction of  inverse Cartier transform from Ogus--Vologodsky's correspondence \cite{OV}, following \cite{LSZ2}. For simplicity of notation we consider only the  non-logarithmic case. The logarithmic case is essentially the same.

\subsection{Results of Deligne and Illusie}\label{Deligne-Illusie}

Below we recall the construction from \cite{DI} of a canonical splitting of the Cartier operator that is associated to a fixed lifting of Frobenius of a smooth $F$-liftable variety. 

\medskip
 
Let $k$ be a perfect field of characteristic $p>0$ and let us set $S=\Spec k$ and $\tilde S=\Spec W_2(k)$.
Let $X$ be a smooth $k$-variety with a fixed lifting $\tilde X/ \tilde S$.
Let $X'$ be the fiber product of $X$ over the absolute Frobenius morphism of $S$. Then we have an induced relative Frobenius morphism $F_{X/S}:X\to X'$. Note that $X'$ has a natural lifting $\tilde X'$ to $\tilde S$, which is defined
as the base change of $\tilde X\to \tilde S$ via $\tilde S\to \tilde S$ coming from $\sigma _2: W_2(k)\to W_2(k)$.
Let us assume that $F_{X/S}: X\to X'$ has a lifting $\tilde F_{X/S}: \tilde X\to \tilde X'$ so that we have a commutative diagram
$$\xymatrix{ 
	X\ar[rr]\ar[dd]\ar[rd]^{F_{X/S}}& &\tilde X\ar[dd] \ar[rd]^{\tilde F_{X/S}}&\\
	&X'\ar[rr]\ar[ld]&&\tilde X'\ar[ld]\\
	S\ar[rr]&&\tilde S&\\
}$$ 
Since the map
$\tilde F^*: \tilde F^*_{X/S}\Omega_{\tilde X'/\tilde S}^1\to \Omega_{\tilde X/\tilde S}^1$
vanishes after pulling back to $X$, we can define 
$$\zeta=p^{-1}\tilde F^*: F^*_{X/S}\Omega_{X'/S}^1\to \Omega_{X/S}^1$$
Since $d\zeta=0$ we can consider $\zeta$ as the map of sheaves of abelian groups $F^*_{X/S}\Omega_{X'/S}^1\to Z_{X/S}^1$, where $Z^1_{X/S}$ is the kernel of $d: \Omega ^1_{X/S}\to \Omega ^2_{X/S}$.
Its adjoint 
$\zeta^{\ad}: \Omega_{X'/S}^1\to F_{X/S, *} Z_{X/S}^1$
is $\cO_{X'}$-linear and it splits the composition  
$$ F_{X/S, *} Z_{X/S}^1\longrightarrow \cH^1 (F_{X/S, *}\Omega _{X/S}^{\bullet})\mathop{\longrightarrow}^{C_{X/S}} \Omega_{X'/S}^1$$
of the Cartier operator with the canonical projection.

\subsection{Inverse Cartier transform}

Assume that $X$ is smooth and there exists a global lifting $\tilde X$ of $X$ to $W_2(k)$.

Let $(\cE, \theta)$ be a  Higgs $\cO_{X'}$-modules with a nilpotent Higgs field of level $\le (p-1)$.
We want to construct  an $\cO_X$-module $V$ with an integrable
connection $\nabla$, whose $p$-curvature is nilpotent of level $\le (p-1)$.
The pair  $(V, \nabla)$ will be denoted by $C^{-1}_{\tilde X/W_2(k)}(\cE,\theta)$
and called the \emph{inverse Cartier transform} of $(E, \theta)$. Let us fix a $W_2(k)$-lifting $\tilde X'$ of $X'/k$ and  take a covering $\{\tilde U_{\alpha}\} _{\alpha\in I}$ of $\tilde X$ such that  for each $\alpha \in I$ there exists $\tilde F_{\alpha}: \tilde U_{\alpha}\to \tilde U_{\alpha}'$ lifting 
the relative Frobenius morphism  $F_{\alpha}: U_{\alpha}\to U_{\alpha}'$.
By previous subsection, the lifting  $\tilde F_{\alpha}$ allows us to construct
$ \zeta _{\alpha}=p^{-1}\tilde F_{\alpha}^*: F^*_{\alpha}\Omega_{U'_{\alpha}/k}^1\to \Omega_{U_{\alpha}/k}^1.$
Therefore over each $U_{\alpha}$ we can define $(V_{\alpha}, \nabla_{\alpha})$ by setting
$V_{\alpha}:=F^*_{\alpha}(\cE|_{U_{\alpha}})$
and
$\nabla_{\alpha}:= \nabla_{can}+\zeta_{\alpha}(F^*_{\alpha}\theta|_{U_{\alpha}'}).$
To glue $(V_{\alpha}, \nabla_{\alpha})$ and $(V_{\beta}, \nabla_{\beta})$ over $U_{\alpha\beta}=U_{\alpha}\cap U_{\beta}$ one uses  the following lemma due to Deligne and Illusie (see \cite{DI}):

\begin{Lemma}
	There exist $\cO_{U_{\alpha\beta}'}$-linear maps $h_{\alpha\beta}: \Omega _{U'_{\alpha\beta}}\to (F_{U'_{\alpha\beta}})_* \cO_{\alpha\beta}$ such that
	\begin{enumerate}
		\item  for all $\alpha,\beta$ we have 
		$$\zeta_{\alpha}^{\ad}-\zeta_{\beta}^{\ad }=dh_{\alpha\beta},$$
		\item  for all $\alpha,\beta,\gamma$ we have over $U_{\alpha\beta\gamma}=U_{\alpha}\cap U_{\beta}\cap U_{\gamma}$
		$$h_{\alpha\beta}+h_{\beta\gamma}=h_{\alpha\gamma}.$$
	\end{enumerate}
\end{Lemma}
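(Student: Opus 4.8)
The plan is to read the maps $h_{\alpha\beta}$ off directly from the two chosen Frobenius liftings, following the argument of Deligne and Illusie in \cite{DI} (and \cite{LSZ2}). Restricted to $\tilde U_{\alpha\beta}$, both $\tilde F_{\alpha}$ and $\tilde F_{\beta}$ lift the same relative Frobenius morphism $F_{\alpha\beta}\colon U_{\alpha\beta}\to U'_{\alpha\beta}$, so the induced ring maps $\tilde F_{\alpha}^{\sharp},\tilde F_{\beta}^{\sharp}\colon \cO_{\tilde U'_{\alpha\beta}}\to \cO_{\tilde U_{\alpha\beta}}$ agree modulo $p$. Hence $\tilde F_{\alpha}^{\sharp}-\tilde F_{\beta}^{\sharp}$ takes values in $p\,\cO_{\tilde U_{\alpha\beta}}$, and since $p^{2}=0$ in $W_{2}(k)$, multiplication by $p$ identifies $\cO_{U_{\alpha\beta}}$ with $p\,\cO_{\tilde U_{\alpha\beta}}$; dividing by $p$ therefore yields a well-defined additive operator $D_{\alpha\beta}:=p^{-1}(\tilde F_{\alpha}^{\sharp}-\tilde F_{\beta}^{\sharp})$. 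A short computation with $p^{2}=0$ shows that $D_{\alpha\beta}$ depends only on the reduction modulo $p$ of its argument, hence descends to a map $\cO_{U'_{\alpha\beta}}\to (F_{U'_{\alpha\beta}})_{*}\cO_{U_{\alpha\beta}}$, and that it obeys the twisted Leibniz rule $D_{\alpha\beta}(xy)=F_{\alpha\beta}^{\sharp}(x)\,D_{\alpha\beta}(y)+F_{\alpha\beta}^{\sharp}(y)\,D_{\alpha\beta}(x)$, i.e.\ it is a $k$-derivation of $\cO_{U'_{\alpha\beta}}$ with values in $(F_{U'_{\alpha\beta}})_{*}\cO_{U_{\alpha\beta}}$. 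Equivalently, $D_{\alpha\beta}$ corresponds to an $\cO_{U'_{\alpha\beta}}$-linear map
$$h_{\alpha\beta}\colon \Omega_{U'_{\alpha\beta}}\longrightarrow (F_{U'_{\alpha\beta}})_{*}\cO_{U_{\alpha\beta}},\qquad h_{\alpha\beta}(dx)=D_{\alpha\beta}(x),$$
and this is the candidate I would use.

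With this definition, part $(2)$ is immediate: over $U_{\alpha\beta\gamma}$ one has, straight from the construction,
$$D_{\alpha\beta}+D_{\beta\gamma}=p^{-1}\bigl((\tilde F_{\alpha}^{\sharp}-\tilde F_{\beta}^{\sharp})+(\tilde F_{\beta}^{\sharp}-\tilde F_{\gamma}^{\sharp})\bigr)=p^{-1}(\tilde F_{\alpha}^{\sharp}-\tilde F_{\gamma}^{\sharp})=D_{\alpha\gamma},$$
whence $h_{\alpha\beta}+h_{\beta\gamma}=h_{\alpha\gamma}$.

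For part $(1)$ I would unwind the definition $\zeta_{\alpha}=p^{-1}\tilde F_{\alpha}^{*}$ on $1$-forms. For a local section $x$ of $\cO_{U'_{\alpha\beta}}$ with a lift $\tilde x$, the section $\zeta_{\alpha}(1\otimes dx)$ is the reduction modulo $p$ of $p^{-1}d(\tilde F_{\alpha}^{\sharp}\tilde x)$, and similarly for $\beta$; subtracting, writing $\tilde F_{\alpha}^{\sharp}\tilde x-\tilde F_{\beta}^{\sharp}\tilde x=p\,u$ with $u$ a lift of $D_{\alpha\beta}(x)$, and using $d(pu)=p\,du$ together with the compatibility of $d$ with reduction modulo $p$, one gets $\zeta_{\alpha}(1\otimes dx)-\zeta_{\beta}(1\otimes dx)=d\bigl(D_{\alpha\beta}(x)\bigr)=d\bigl(h_{\alpha\beta}(dx)\bigr)$. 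Passing to adjoints, this is exactly the asserted identity $\zeta_{\alpha}^{\ad}-\zeta_{\beta}^{\ad}=dh_{\alpha\beta}$ of maps $\Omega_{U'_{\alpha\beta}}\to (F_{U'_{\alpha\beta}})_{*}Z^{1}_{U_{\alpha\beta}}$, the target sitting inside $(F_{U'_{\alpha\beta}})_{*}\Omega^{1}_{U_{\alpha\beta}}$ since $d^{2}=0$.

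I do not expect a genuine obstacle here: this is the classical Deligne--Illusie computation, and the only care required is the bookkeeping with the $W_{2}(k)$-structure --- checking that division by $p$ is well defined, that $D_{\alpha\beta}$ descends to the special fibre and is a twisted derivation, and that the de Rham differential commutes with the reduction maps used above. The logarithmic version is obtained by repeating the same argument with logarithmic differentials and logarithmic derivations, using that $\tilde F_{\alpha}$ and $\tilde F_{\beta}$ are compatible with the fixed lifting of the divisor; the formal manipulations are identical, which is why the excerpt may simply assert that the logarithmic case is the same.
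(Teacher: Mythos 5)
Your proposal is correct and is exactly the classical Deligne--Illusie construction that the paper invokes: the paper states this lemma without proof, citing \cite{DI} (cf.\ also \cite{LSZ2}), and your argument --- defining $h_{\alpha\beta}$ via $p^{-1}(\tilde F_{\alpha}^{\sharp}-\tilde F_{\beta}^{\sharp})$, using flatness over $W_2(k)$ to divide by $p$, checking the twisted Leibniz rule so that the derivation corresponds to an $\cO_{U'_{\alpha\beta}}$-linear map on $\Omega_{U'_{\alpha\beta}}$, reading off the cocycle identity by telescoping, and unwinding $\zeta_{\alpha}^{\ad}-\zeta_{\beta}^{\ad}=dh_{\alpha\beta}$ on exact forms --- is precisely the cited argument. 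No gaps; the logarithmic variant goes through verbatim as you indicate, using compatibility of the Frobenius liftings with the lifted divisor.
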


Let $h_{\alpha\beta}': F_{U'_{\alpha\beta}}^*\Omega _{U'_{\alpha\beta}}\to  \cO_{\alpha\beta}$ be adjoint to $h_{\alpha\beta}$. 
Now we define gluing maps $g_{\alpha\beta}:V_{\alpha}|_{U_{\alpha\beta}}\to V_{\beta}|_{U_{\alpha\beta}}$
using
$h_{\alpha\beta}'(F^*\theta |_{U_{\alpha\beta}}): F^*\cE|_{U_{\alpha\beta}}\to F^*\cE|_{U_{\alpha\beta}}\otimes F^*\Omega_{U'_{\alpha\beta}}\to F^*\cE|_{U_{\alpha\beta}}$
by setting
$$g_{\alpha\beta}:=\exp (h_{\alpha\beta}(F^*\theta |_{U_{\alpha\beta}}))=\sum_{i=0}^{p-1}\frac{(h_{\alpha\beta}(F^*\theta |_{U_{\alpha\beta}}))^i}{i!}.$$
The maps $g_{\alpha\beta}$ allow us to glue $(V_{\alpha}, \nabla_{\alpha})$ and $(V_{\beta}, \nabla_{\beta})$ over $U_{\alpha\beta}$ to a global object $(V, \nabla)\in \Mic_{p-1}(X/k)$.

\section*{Acknowledgements}

The author would like to thank P. Achinger and T. Kawakami for useful conversations.
A part of the paper was written while the author was an External Senior Fellow at Freiburg Institute for Advanced Studies (FRIAS), University of Freiburg, Germany. 
The author would like to thank Stefan Kebekus for his hospitality during the author's stay in FRIAS.

The  author was partially supported by Polish National Centre (NCN) contract numbers 2018/29/B/ST1/01232 and 2021/41/B/ST1/03741. 
The research leading to these results has received funding from the European Union's Horizon 2020 research and innovation programme under the Maria Sk{\l}o\-do\-wska-Curie grant agreement No 754340.

\end{document}